\tikzstyle{smat}=[fill=none, draw=none, shape=rectangle, tikzit fill={rgb,255: red,191; green,191; blue,191}]
\tikzstyle{gmat}=[fill=white, draw=none, shape=rectangle, tikzit fill={rgb,255: red,227; green,255; blue,216}]
\tikzstyle{gcoord}=[fill=white, draw=none, shape=rectangle, text=green, tikzit fill={rgb,255: red,110; green,255; blue,122}, scale=0.7, inner sep=1pt]
\tikzstyle{bcoord}=[fill=white, draw=none, shape=rectangle, tikzit fill={rgb,255: red,191; green,191; blue,191}, scale=0.7, inner sep=1pt]
\tikzstyle{rcoord}=[fill=white, draw=none, shape=rectangle, text=red, scale=0.7, inner sep=1pt, tikzit fill=red]
\tikzstyle{bcoord2}=[fill=white, draw=none, shape=rectangle, inner sep=1pt, scale=1.0]
\tikzstyle{gco2}=[fill=white, draw=none, shape=rectangle, text=green, tikzit fill={rgb,255: red,110; green,255; blue,122}, scale=0.9, inner sep=1pt]
\tikzstyle{bco2}=[fill=white, draw=none, shape=rectangle, tikzit fill={rgb,255: red,191; green,191; blue,191}, scale=0.9, inner sep=1pt]
\tikzstyle{rco2}=[fill=white, draw=none, shape=rectangle, text=red, scale=0.9, inner sep=1pt, tikzit fill=red]
\tikzstyle{arrow 1}=[->]
\tikzstyle{dotted}=[-, draw={rgb,255: red,191; green,191; blue,191}, fill=none]
\tikzstyle{sred}=[-, draw=red]
\newtheorem{Thm}{Theorem}[section]
\newtheorem{Lem}[Thm]{Lemma}
\newtheorem{Cor}[Thm]{Corollary}
\newtheorem{Prop}[Thm]{Proposition}
\newtheorem{Conj}[Thm]{Conjecture}
\newtheorem{example}[Thm]{Example}
\newtheorem{remark}[Thm]{Remark}
\newtheorem{Def}[Thm]{Definition}
\newcommand{\E}{\mathcal{E}}
\newcommand{\Z}{\mathbb{Z}}
\newcommand{\C}{\mathbb{C}}
\newcommand{\M}{{\mathcal M}}
\newcommand{\bsm}{\begin{smallmatrix}}
\newcommand{\esm}{\end{smallmatrix}}
\newcommand{\bg}{\mathbf{g}}
\def\cqfd{\hfill $\Box$ \medskip}
\def\CC{{\mathcal C}}
\def\resp{{\em resp.\ }}
\def\<{\langle\,}
\def\>{\,\rangle}
\def\P{{\mathbb P}}
\def\g{\mathfrak g}
\def\<{\langle}
\def\>{\rangle}
\def\ra{\rightarrow}
\def\1{\mathbf 1}
\def\P{{\mathcal P}}
\def\a{\alpha}
\def\L{\Lambda}
\def\mod{{\rm mod}\,}
\def\id{{\rm id}}
\def\de{\delta}
\def\De{\Delta}
\def\Sl{\mathfrak{sl}}
\def\la{\lambda}
\def\AA{\mathcal{A}}
\def\YY{\mathcal{Y}}
\def\M{{\mathfrak M}}
\def\O{\mathcal{O}}
\def\1{{\mathbf 1}}
\def\hg{\widehat{\mathfrak{g}}}
\def\G{\Gamma}
\def\tG{\widetilde{\Gamma}}
\def\be{\mathbf e}
\def\bi{\mathbf{i}}
\def\bj{\mathbf{j}}
\def\be{\mathbf{e}}
\def\Si{\Sigma}
\def\SS{\mathcal{S}}
\def\ds{\displaystyle}
\def\br{\mathbf{r}}
\def\hatsl{\widehat{\mathfrak{sl}}}
\def\mod{\mathrm{mod}}
\def\QQ{\mathcal{Q}}
\def\bQ{\underline{Q}}
\newcommand{\cQ}{\mathbf{Q}}
\def\be{\mathbf{e}}
\newcommand{\red}{\color{red}}
\newcommand{\blue}{\color{blue}}
\newcommand{\green}{\color{green}}
\newcommand{\df}{\colon}
\newcommand{\mugreen}{\mu_{\mathrm{grn}}}
\newcommand{\tmugreen}{\widetilde{\mu}_{\mathrm{grn}}}
\newcommand{\Mgreen}{M_{\mathrm{grn}}}
\newcommand{\Igrn}{I_{\mathrm{grn}}}
\newcommand{\bt}{\mathbf{t}}
\newcommand{\bs}{\mathbf{s}}
\newcommand{\bS}{\mathbf{S}}
\newcommand{\bT}{\mathbf{T}}
\newcommand{\uSS}{\underline{\mathcal{S}}}
\newcommand{\BB}{\mathcal{B}}
\newcommand{\tb}{\mathbf{\mathfrak{t}}}
\newcommand{\Psib}{{\Psi}}
\newcommand{\mfr}{\mathfrak{r}}
\newcommand{\tT}{\widetilde{\Theta}}
\newcommand{\tTheta}{\widetilde{\Theta}}
\newcommand{\Tp}{\Theta'}
\newcommand{\VV}{\mathcal{V}}
\def\bQQ{\underline{\mathcal{Q}}}
\def\uP{\underline{P}}
\begin{document}

\title{\bf Representations of shifted quantum affine algebras \\ 
and cluster algebras I. The simply-laced case}
\author{C. Geiss, D. Hernandez and B. Leclerc}

\date{}


\maketitle

\begin{abstract}
We introduce a family of cluster algebras of infinite rank associated with root systems of type $A$, $D$, $E$. 
We show that suitable completions of these cluster algebras are isomorphic to the Grothendieck rings of the 
categories $\mathcal{O}_\mathbb{Z}$ of the corresponding shifted quantum affine algebras. 
The cluster variables of a class of distinguished initial seeds are certain formal power series defined by E. Frenkel and the second author, which satisfy a system of functional relations called $QQ$-system. We conjecture that all cluster monomials are classes of simple
objects of $\mathcal{O}_\mathbb{Z}$. In the final section, we show that these cluster algebras contain infinitely many cluster subalgebras
isomorphic to the coordinate ring of the open double Bruhat cell of the corresponding simple simply-connected algebraic group.
This explains the similarity between $QQ$-system relations and certain generalized minor identities discovered by Fomin and Zelevinsky.
\end{abstract}

 {\footnotesize \emph{2020 Mathematics Subject Classification} 17B67 (primary), 13F60, 17B10, 17B37, 82B23, 35J25 (secondary).} 

\setcounter{tocdepth}{2}
{\small \tableofcontents}

\section{Introduction}

Let $\g$ be a simple Lie algebra over $\C$, and let $U_q(\hg)$ be the corresponding untwisted quantum affine algebra for a 
generic quantum parameter $q$. In recent years, cluster algebras have become a powerful new tool for studying the tensor structure of the category of finite-dimensional modules over $U_q(\hg)$ \cite{HL0, HL1, HL3, Q, KKOP1, KKOP, BC1, BC2}.

Shifted quantum affine algebras $U_q^\mu(\hg)$ are a larger class of algebras introduced by Finkelberg and Tsymbaliuk \cite{FT} in their study of quantized $K$-theoretic Coulomb branches of $3d$ $N = 4$ SUSY quiver gauge theories. They depend on an integral coweight $\mu$ of $\g$. When $\mu = 0$, the algebra $U_q^0(\hg)$ is a central extension of $U_q(\hg)$ and it has essentially the same representation theory. When $\mu \not = 0$ the representation theory becomes very different, and for instance if $\mu$ is anti-dominant, then $U_q^\mu(\hg)$ does not have any non-trivial finite-dimensional representation. In \cite{H}, the second author has started a systematic study of the representation theory of $U_q^\mu(\hg)$. 
He has introduced a category $\O^\mu$ containing infinite-dimensional representations, and shown that the Grothendieck group of $\O^{\rm sh} := \bigoplus_{\mu \in P^\vee} \O^\mu$ has a natural ring structure coming from an operation on representations called fusion product. 

The aim of this paper is to show that the Grothendieck ring $K_0(\O_\Z)$ has an explicit cluster algebra structure. Here $\O_\Z$ denotes a full subcategory of $\O^{\rm sh}$ defined by certain integrality conditions on the loop-weights of the representations.

\begin{figure}[t]
\[
\def\objectstyle{\scriptstyle}
\def\lablestyle{\scriptstyle}
\xymatrix@-0.8pc{
&{}\save[]+<0cm,1.5ex>*{\vdots}\restore&{}\save[]+<0cm,1.5ex>*{\vdots}\restore  
&{}\save[]+<0cm,1.5ex>*{\vdots}\restore
\\
&\bullet\ar[rd]\ar[u]&& \ar[ld] \bullet\ar[u] 
\\
&&\ar[ld] \bullet \ar[uu]\ar[rd]&&
\\
&\ar[uu]{\red\bullet}\ar[rd]&&\ar[ld]\ar[uu] \bullet 
\\
&&\ar[ld]\ar[uu] {\red\bullet} \ar[rd]&&
\\
& {\red\bullet}\ar[uu] \ar[rd]&&\ar[ld]\ar[uu] {\red\bullet} 
\\
&&\ar[ld]\ar[uu] {\red\bullet} \ar[rd]&&
\\
&\ar[uu] {\red\bullet}\ar[rd] &&\ar[ld]\ar[uu] \bullet 
\\
&&\ar[ld]\ar[uu] \bullet\ar[rd] &&
\\
&\ar[uu] \bullet &\ar[u]&\ar[uu] \bullet 
\\
&{}\save[]+<0cm,3ex>*{\vdots}\restore&{}\save[]+<0cm,3ex>*{\vdots}\restore  
&{}\save[]+<0cm,3ex>*{\vdots}\restore
}
\qquad
\xymatrix@-1,5pc{
&{}\save[]+<0cm,1.5ex>*{\vdots}\restore&{}\save[]+<0cm,1.5ex>*{\vdots}\restore  
&{}\save[]+<0cm,1.5ex>*{\vdots}\restore
\\
&\bullet\ar[rd]\ar[u]&& \ar[ld]\ar[u] \bullet 
\\
&&\ar[ld] \bullet \ar[uu]\ar[rd]&&
\\
&\ar[uu]{\red\bullet}\ar[d]&&\ar[ldd]\ar[uu] \bullet 
\\
&{\green\bullet}\ar[rd]
\\
&&\ar[uuu]\ar[d] {\red\bullet} &&
\\
&&\ar[ld] {\green\bullet} \ar[rd]&&
\\
&\ar[uuu] {\red\bullet} \ar[d]&&\ar[d]\ar[uuuu] {\red\bullet} 
\\
& {\green\bullet} \ar[rd]&&\ar[ld] {\green\bullet} 
\\
&&\ar[d]\ar[uuu] {\red\bullet}&&
\\
&&\ar[ld] {\green\bullet} \ar[rdd]&&
\\
&\ar[uuu] {\red\bullet}\ar[d] &&  
\\
& {\green\bullet}\ar[rd] &&\ar[ld]\ar[uuuu] \bullet 
\\
&&\ar[ld]\ar[uuu] \bullet\ar[rd] &&
\\
&\ar[uu] \bullet &\ar[u]&\ar[uu] \bullet 
\\
&{}\save[]+<0cm,1.5ex>*{\vdots}\restore&{}\save[]+<0cm,1.5ex>*{\vdots}\restore  
&{}\save[]+<0cm,1.5ex>*{\vdots}\restore
}
\]
\caption{\label{fig0} {\it The quiver $\G_e$ (with its red subquiver $G_\cQ$), and the quiver $\G_{w_0}$ in type $A_3$.}}
\end{figure}
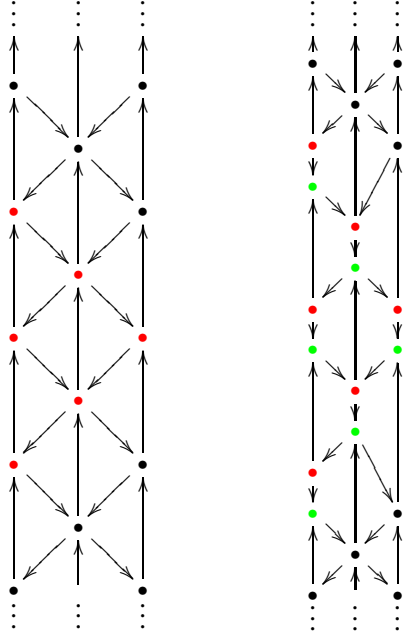

More precisely, in Section~\ref{Sect-CA} we introduce a class of infinite rank cluster algebras $\AA_w$ labelled by elements $w$ of the Weyl group $W$ of $\g$. The cluster algebra $\AA_e$ associated with the unit element $e\in W$ was already considered in \cite{HL2}, in connection with certain subcategories $\O^+$ and $\O^-$ of the Hernandez-Jimbo category $\O$ of a Borel subalgebra $U_q(\mathfrak{b})$ of $U_q(\hg)$ \cite{HJ}. 
Its defining quiver $\G$ is a doubly-infinite quiver, which in the $A$, $D$, $E$ cases coincides with the Auslander-Reiten quiver of the derived category $D^b(K\cQ)$ of a Dynkin quiver $\cQ$ of the same type, with added vertical up arrows corresponding to the Auslander-Reiten translation. For $w = w_0$, the longest element of~$W$, an easy way of describing the quiver $\G_{w_0}$ of an initial seed of $\AA_{w_0}$ is as follows (see \S\ref{subsec-Coxeter}). Let $G_{\cQ}$ denote the finite subquiver of $\G$ corresponding to the Auslander-Reiten quiver of the abelian module category $\mod(K\cQ)$. Replace each vertex of $G_\cQ$ by a pair of a red and a green vertex connected by a down arrow, and rearrange the incident arrows so that $3$-cycles of $\G$ involving at least two vertices of $G_\cQ$ become $4$-cycles in the new quiver~$\G_{w_0}$. 
This is illustrated in Figure~\ref{fig0} for an equi-oriented quiver  $\cQ$ of type~$A_3$.

A remarkable feature of $\G_{w_0}$, which plays a central rôle in our constructions, is that if we perform a sequence 
of quiver mutations at all red vertices (\resp at all green vertices), we get an isomorphic quiver in which the middle finite part consisting of red and green vertices has been shifted one step up (\resp one step down). (These red (\resp green) mutations commute with each other, so the sequence order is irrelevant.) Iterating infinitely many times this sequence of, 
say, green mutations, we can shift down infinitely many times the middle red-green part of $\G_{w_0}$, and obtain in the limit the quiver $\G_e$. This will allow us to regard $\G_e$ as a ``reference seed at infinity'' for the cluster algebra $\AA_{w_0}$. We will give a precise meaning to this in Section~\ref{sect-stable-g}, where we will attach certain ``stabilized $g$-vectors'' to the cluster variables of $\AA_{w_0}$.

In Section~\ref{sect-formal-power-series}, following \cite{FH2}, we introduce rings of formal power series endowed with an action of the Weyl group $W$. In fact, for our purposes it is convenient to consider a ring $\Pi'$ a bit larger than the ring $\Pi$ of \cite{FH2}, and to extend to $\Pi'$ the action of $W$ defined in \cite{FH2}. Then in Section~\ref{sec-QQ}, following \cite{FH3}, we define elements $\mathcal{Q}_{w(\varpi_i), a}\in\Pi'$ by acting with $W$ on certain generators of $\Pi'$. Here $w\in W$, $a\in \C^*$, and $\varpi_i$ denotes a fundamental weight of $\g$. We then quote from \cite{FH3} an important system of algebraic identities satisfied by the elements $\mathcal{Q}_{w(\varpi_i), a}\in\Pi'$, named $QQ$-system. This $QQ$-system (also called full $QQ$-system) has a long history, starting from work of Bazhanov-Lukyanov and Zamolodchikov \cite{BLZ} on quantum KdV systems, continuing with work of Masoero-Raimondo \cite{MR} on the corresponding opers introduced by Feigin and Frenkel, and pursued in related work of Frenkel and the second author on XXZ-type models \cite{FH,FH3}, and in work of Koroteev, Sage and Zeitlin on $q$-opers \cite{KSZ} (we refer the reader to \cite[\S 6.2]{FKSZ} and \cite{FH3} for a detailed historical account). For us the $QQ$-system served as a guiding principle for designing appropriate initial cluster seeds.

After these preparations, we introduce in Section~\ref{subsection-KZ} a topological subring $K_\Z$ of a component of $\Pi'$. 
By construction, $K_\Z$ is topologically generated by certain elements $\bQ_{w(\varpi_i), q^r}$. Here $(i,r)$ runs through the canonical set of labels of the vertices of $\G_{w_0}$, and $\bQ_{w(\varpi_i), q^r}$ is a suitable rescaling of a component of $\mathcal{Q}_{w(\varpi_i), q^r}$, which satisfies a coefficient-free version of the $QQ$-system relations (Proposition~\ref{Prop-renorm-QQ}). We can then formulate and prove our first main theorem (Theorem~\ref{main-Thm}), which describes an explicit injective ring homomorphism $F\colon \AA_{w_0} \to K_\Z$ such that
the topological closure of the image is equal to $K_\Z$. The images under $F$ of the cluster variables of the initial seed with quiver $\G_{w_0}$ are all of the form $\bQ_{w(\varpi_i), q^r}$, and the initial exchange relations at red or green vertices are mapped by $F$ to instances of the $QQ$-system relations.

In Section~ \ref{sub-sec-shift-quantum}, we recall following \cite{FT} and \cite{H} the necessary background on shifted quantum affine algebras and their category $\O^{\mathrm{sh}}$, and 
we explain that the ring $K_\Z$ is isomorphic to the Gro\-then\-dieck ring $K_0(\O_\Z)$. 
It follows from Theorem~\ref{main-Thm} that $\AA_{w_0}$ can also be regarded as a subring of $K_0(\O_\Z)$, whose topological closure is equal to $K_0(\O_\Z)$. Our main conjecture (Conjecture~\ref{main-conj}) then states that every cluster monomial of $\AA_{w_0}$ is the class of a simple object of $\O_\Z$. In the rest of Section~\ref{sect-shift-quantum} we collect evidences supporting this conjecture. In particular we prove that it holds when $\g = \mathfrak{sl}_2$. 
In this simple case, one can give an explicit list of cluster variables and clusters, and show that every simple object in $\O_\Z$ has a unique factorization into a fusion product of prime simple objects, thus generalizing the seminal results of $\cite{CP}$. We also note that, for every $\g$, one can regard the cluster algebra $\AA_e$ as a subalgebra of $\AA_{w_0}$ isomorphic to the Grothendieck ring of the subcategory $\CC_\Z$ of finite-dimensional modules of $\O_\Z$. Then, putting together results of \cite{HL2, KKOP, H}, we see that the conjecture holds for all cluster monomials of $\AA_e$, that is for finite-dimensional modules of $\O_\Z$.   

As explained in \cite{H}, there are strong relations between the category $\O^{\mathrm{sh}}$ and the Hernandez-Jimbo category $\O$ for $U_q(\mathfrak{b})$. For instance their parametrizing set of simple objects are identical. Yet, it does not seem possible to extend to the full category $\O$ the results of \cite{HL2} for its subcategories $\O^+$ and $\O^-$. One reason is that in the case of $U_q(\mathfrak{b})$, the tensor product of a positive and a negative prefundamental representations is never simple, in contrast with what happens in $\O^{\mathrm{sh}}$. Another notable difference is indicated in Remark~\ref{Rem-sh-Borel}.

A large part of our construction works equally well whether $\g$ is of simply-laced type or not. However, when $\g$ is simply-laced, we have additional results relating the cluster structure of $K_\Z$ to the cluster structure of the coordinate ring of the open double Bruhat cell of a simply-connected algebraic group $G$ with Lie algebra $\g$. 
Moreover, the combinatorial description of the quiver of an initial seed of $K_\Z$ is significantly simpler when $\g$ is simply-laced. This is why in this paper we restrict our attention to the simply-laced case, and defer the non simply-laced case to a forthcoming paper. 

In Section~\ref{sect-Bruhat-Wronskian}, where the assumption that $\g$ is simply-laced is necessary, 
we show that the cluster algebra $\AA_{w_0}$ is generated by an infinite family of cluster subalgebras, all isomorphic to
the Berenstein-Fomin-Zelevinsky cluster algebra structure on the coordinate ring of the open double Bruhat cell $G^{w_0,w_0}$. More precisely, for each choice of an orientation $\cQ$ of the Dynkin diagram of~$\g$, or equivalently, for each choice of a Coxeter element $c$ of $W$, we have a one-parameter family of such subalgebras parametrized by $q^{2\Z}$, which are related with each other by the sequences of mutations at green or red vertices mentioned above. It then follows from Theorem~\ref{main-Thm} that one can identify the cluster algebra $\C[G^{w_0,w_0}]$ with each member of an infinite series of finite rank cluster subalgebras of $\C\otimes K_\Z$, in such a way that the generalized minors $\Delta_{v(\varpi_i),w(\varpi_i)}$, which are the cluster variables of the standard initial seeds of $\C[G^{w_0,w_0}]$, get identified with elements of $K_\Z$ of the form
$\bQ_{w(\varpi_i),q^r}$. In these identifications, the generalized minor identities of \cite[Theorem 1.17]{FZ-Bruhat} translate into instances of $QQ$-system relations.

A similar connection between generalized minor identities and $QQ$-system relations has already been pointed out by
Koroteev and Zeitlin \cite{KZ}. They have shown that the $QQ$-systems occurring in the theory of $q$-opers \cite{KSZ, FKSZ} emerge as the relations between generalized minors of certain sections $\mathcal{G}(z)$, called $(G,q)$-Wronskians, of a principal $G$-bundle on the projective line.
Imitating their result, we present in \S \ref{subsec-qW} an analogous construction of quantum $(G,c)$-Wronskians $(g_c(z))$ in the Grothendieck ring $K_\Z$. Here $c\in W$ is a fixed Coxeter element, $z\in q^{2\Z}$, and $g_c(z)$ is an element of the algebraic group $G(K_\Z)$ satisfying quantum Wronskian identities of the form:
\begin{equation}\label{eq-qW}
\Delta_{c^k(\varpi_i),\,c^\ell(\varpi_i)}(g_c(z)) = \Delta_{c^{k-1}(\varpi_i),\,c^\ell(\varpi_i)}(g_c(q^2z)),
\quad (i\in I,\ 1\le k\le m_i,\ 0\le \ell \le m_i,\ z\in q^{2\Z}),
\end{equation}
(see Section~\ref{sect-Bruhat-Wronskian} below for unexplained notation). 
In our setting, the formulas defining $g_c(z)$ and the fact that they solve the system of equations (\ref{eq-qW}) appear as direct consequences of the cluster algebra structure of $K_\Z$, with its sequence of cluster subalgebras isomorphic to $\C[G^{w_0,w_0}]$. 

\medskip
{\bf Acknowledgements.} This research was started  in November 2021 during a Research In Pair stay at the Mathematisches For\-schungs\-institute Oberwolfach. We want to thank MFO for excellent working conditions. 
C.G. and B.L. acknowledge partial support from ERC grant QAffine, and from PAPIIT (UNAM) grant IN116723.  
C.G. thanks LMNO (Université de Caen Normandie) for an invitation in June 2023, and acknowleges partial support from 
the Simons Foundation and the Mathematisches Forschungsinstitut Oberwolfach for his combined visit to MFO and Paris in January 2024. We are also grateful to E. Frenkel for allowing us to use some of the results of \cite{FH3} before their publication. Finally, we thank an anonymous referee for reading the manuscript in detail and preparing a very helpful list of questions and corrections.

\section{Basic definitions}

\subsection{Cartan matrix and Dynkin diagram of a simple Lie algebra}\label{ssecCartan}
Let $\g$ be a simple Lie algebra over $\C$, with Cartan generators $e_i, f_i, h_i\ (i\in I)$.
We denote by $n:= |I|$ the rank of $\g$.
Let $C=(c_{ij})_{i,j\in I}$ be the Cartan matrix of $\g$, which encodes the Serre presentation.
In this paper we will assume that $\g$ is of simply-laced type, that is of type $A$, $D$, $E$ in the Cartan-Killing classification. This means that $C$ is a symmetric matrix.
The Dynkin diagram of $\g$ is the unoriented graph with vertex set $I$ and incidence matrix $2\,\mathrm{Id} - C$.

\subsection{Weights, roots, and Weyl group}

Let $\mathfrak{h} := \oplus_{i\in I}\C h_i$ denote the Cartan subalgebra of $\g$, 
and let $\mathfrak{h}^*$ be its dual vector space. 
We denote by $(\varpi_i\mid i\in I)$ the basis of $\mathfrak{h}^*$ dual to 
$(h_i\mid i\in I)$, and call its elements the fundamental weights.
We denote by $\a_i\ (i\in I)$ the simple roots of $\g$, defined by
\begin{equation}\label{fundw_roots}
\a_i = \sum_{j\in I} c_{ji} \varpi_j,\qquad (i,j\in I). 
\end{equation}
They span respectively the root lattice and the weight lattice:
\[
 Q := \bigoplus_{i\in I} \Z\a_i\ \subset \  P := \bigoplus_{i\in I} \Z\varpi_i
 \ \subset \ \mathfrak{h}^*.
\]
Let $Q_+ := \bigoplus_{i\in I} \Z_{\ge 0}\a_i $.
The weight lattice $P$ is endowed with a partial ordering given by:
\[
 \la \ge \mu \quad \Longleftrightarrow \quad \la-\mu\in Q_+,\qquad (\la,\mu \in P).
\]

The Weyl group $W$ of $\g$ is generated by the simple reflexions $s_i$, which act on $\mathfrak{h}^*$ by
\begin{equation}\label{action-s-fw-sr}
 s_i(\la) = \la - \la(h_i) \a_i,\qquad (i\in I,\ \la\in \mathfrak{h}^*).
\end{equation}
In particular, we have
\[
s_i(\varpi_j) = \varpi_j - \de_{ij}\a_i,\quad s_i(\a_j) = \a_j - c_{ij}\a_i,\qquad (i,j\in I).
\]
Let $w_0$ be the longest element of $W$. It induces an involution $\nu : I \to I$ defined via the equality
\[
 w_0(\alpha_i) = -\alpha_{\nu(i)},\quad (i\in I).
\]

\subsection{Basic infinite quiver}\label{ssec1-4}

\begin{figure}[t]
\[
\def\objectstyle{\scriptstyle}
\def\lablestyle{\scriptstyle}
\xymatrix@-1.0pc{
&{}\save[]+<0cm,1.5ex>*{\vdots}\restore&{}\save[]+<0cm,1.5ex>*{\vdots}\restore  
&{}\save[]+<0cm,1.5ex>*{\vdots}\restore
\\
&{(1,2)}\ar[rd]\ar[u]&
&\ar[ld] (3,2) \ar[u]
\\
&&\ar[ld] (2,1) \ar[rd]\ar[uu]&&
\\
&\ar[uu]{(1,0)}\ar[rd]&
&\ar[ld] (3,0) \ar[uu]
\\
&&\ar[uu]\ar[ld] (2,-1) \ar[rd]&&
\\
&\ar[uu](1,-2) \ar[rd] &&\ar[ld] (3,-2)\ar[uu]
\\
&&\ar[ld] \ar[uu](2,-3) \ar[rd]&&
\\
&\ar[uu](1,-4) \ar[rd] &&\ar[ld] (3,-4)\ar[uu]
\\
&&\ar[ld] \ar[uu](2,-5) \ar[rd]&&
\\
&\ar[uu](1,-6) &&\ar[uu] (3,-6) 
\\
&{}\save[]+<0cm,0ex>*{\vdots}\ar[u]\restore&{}\save[]+<0cm,0ex>*{\vdots}\ar[uu]\restore  
&{}\save[]+<0cm,0ex>*{\vdots}\ar[u]\restore
\\
}
\]
\caption{\label{Fig0} {\it The basic quiver $\G$ in type $A_3$.}}
\end{figure}
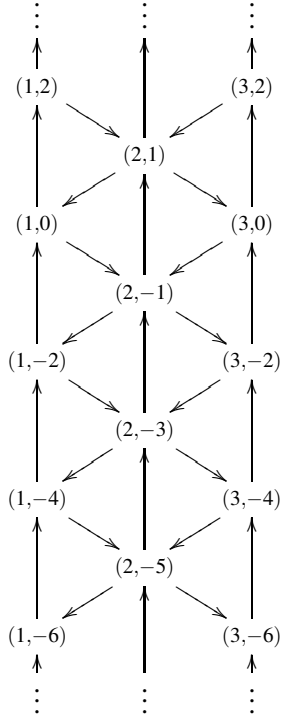

Following \cite{HL1}, we attach an infinite quiver to the Cartan matrix $C$.
Put $\widetilde{V} = I \times \Z$.
We introduce a quiver $\tG$ with vertex set $\widetilde{V}$.
The arrows of $\tG$ are given by
\[
(i,r) \to (j,s) 
\quad \Longleftrightarrow \quad
(c_{ij}\not = 0 
\quad \mbox{and} \quad
s=r+c_{ij}).
\]
It is easy to check that the oriented graph $\tG$ has two isomorphic connected components. 
We pick one of them and call it $\Gamma$. The vertex set of $\Gamma$ is denoted by $V$. 
We call $\Gamma = \Gamma_C$ the \emph{basic infinite quiver of $C$}.
An example in type $A_3$ is shown in Figure~\ref{Fig0}.

\section{Quivers and cluster algebras}\label{Sect-CA}

Let $U_q(\widehat{\g})$ denote the quantum affinization of the Lie algebra $\g$.
In \cite{HL2}, the cluster algebra $\AA_\G$ defined by the basic infinite quiver $\G$ was introduced, and it was shown that a completion of $\AA_\G$ is isomorphic to the Grothendieck rings of two tensor subcategories $\O^+_{\Z}$ and $\O^-_{\Z}$ of the category $\O$ of representations of the Borel subalgebra $U_q(\widehat{\mathfrak{b}})$ of $U_q(\widehat{\g})$ defined in \cite{HJ}. More recently, it was shown in \cite{H} that $\AA_\G$ is isomorphic to the Grothendieck ring of the subcategory $\mathcal{C}_\Z$ of $\O^{\rm sh}$ whose objects are the finite-dimensional representations of $\O_\Z$, see below Theorem~\ref{fdrep}. 
The aim of this section is to introduce a wider class of cluster algebras $\AA_w$ labelled by $w\in W$, where $\AA_e := \AA_\G$ corresponds to the unit element $e$ of $W$.

\subsection{Type $A_1$}\label{sect-3.1}

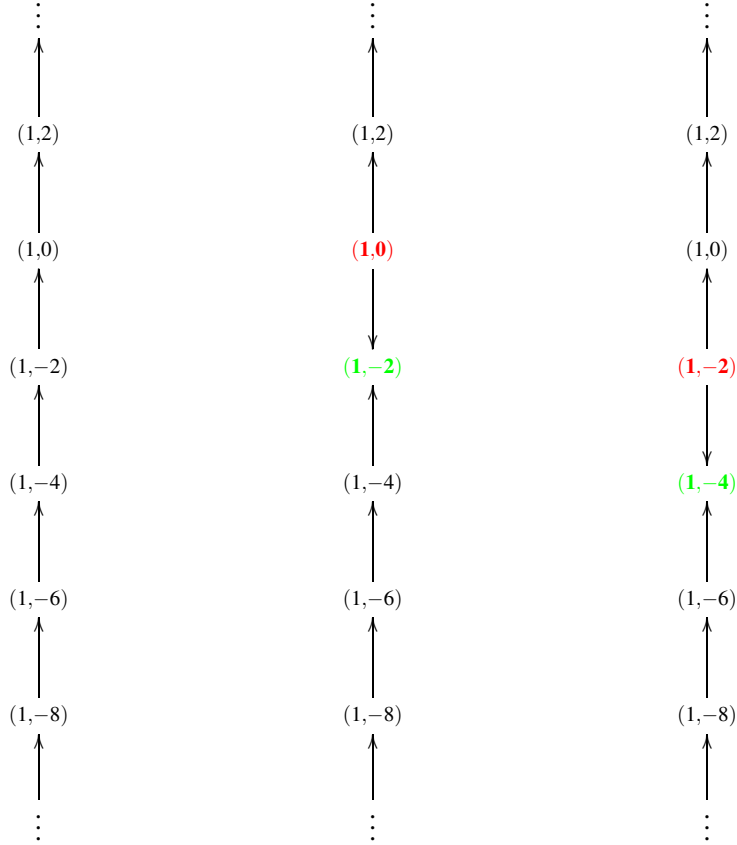
\begin{figure}[t]\label{fig-A1}
\[
\def\objectstyle{\scriptstyle}
\def\lablestyle{\scriptstyle}
\xymatrix@-1.0pc{
&{}\save[]+<0cm,1.5ex>*{\vdots}\restore&
\\
&
\\
&{(1,2)}\ar[uu]&
&
\\
&&
\\
&\ar[uu]{(1,0)}
\\
&&
\\
&\ar[uu](1,-2) 
\\
&&
\\
&\ar[uu](1,-4) 
\\
&&
\\
&\ar[uu](1,-6) 
\\
&&
\\
&\ar[uu](1,-8) 
\\
&&
\\
&{}\save[]+<0cm,0ex>*{\vdots}\ar[uu]\restore&
\\
}
\qquad\qquad
\xymatrix@-1.0pc{
&{}\save[]+<0cm,1.5ex>*{\vdots}\restore&
\\
&
\\
&{(1,2)}\ar[uu]&
&
\\
&&
\\
&\ar[uu]{\mathbf{\red(1,0)}}\ar[dd]
\\
&&
\\
&\mathbf{\green(1,-2)}
\\
&&
\\
&\ar[uu](1,-4)
\\
&&
\\
&\ar[uu](1,-6) 
\\
&&
\\
&\ar[uu](1,-8) 
\\
&&
\\
&{}\save[]+<0cm,0ex>*{\vdots}\ar[uu]\restore&
\\
}
\qquad\qquad
\xymatrix@-1.0pc{
&{}\save[]+<0cm,1.5ex>*{\vdots}\restore&
\\
&
\\
&{(1,2)}\ar[uu]&
&
\\
&&
\\
&\ar[uu]{(1,0)}
\\
&&
\\
&\ar[uu]{\mathbf{\red(1,-2)}}\ar[dd]
\\
&&
\\
&\mathbf{\green(1,-4)}
\\
&&
\\
&\ar[uu](1,-6) 
\\
&&
\\
&\ar[uu](1,-8) 
\\
&&
\\
&{}\save[]+<0cm,0ex>*{\vdots}\ar[uu]\restore&
\\
}
\]
\caption{\label{Fig1} {\it The quivers $\G_e$, $\G_{s_1,0}$ and $\G_{s_1,-2}$ in type $A_1$.}}
\end{figure}

The construction is very simple when $\g = \mathfrak{sl}_2$, but since it is the basis of the general construction we first present it in some detail. 

In this case, the quiver $\G = \G_e$ is simply an equi-oriented quiver of type $A_\infty$. Let $r\in 2\Z$. To obtain the new quiver $\G_{s_1,\,r}$, we just change the orientation of the arrow 
$(1,r) \leftarrow (1,r-2)$ and replace it by a down arrow $(1,r) \to (1,r-2)$. 
For instance, the quivers $\G_e$, $\G_{s_1,\,0}$ and $\G_{s_1,\,-2}$ are displayed in Figure~\ref{Fig1}.

Note that the new quiver $\G_{s_1,\,r}$ is again of type $A_\infty$, but that it is no longer equi-oriented. Note also that if we perform on $\G_{s_1,\,r}$ a quiver mutation at vertex $(1,r)$ 
then we get exactly the quiver $\G_{s_1,\,r+2}$. Similarly, if we perform on $\G_{s_1,\,r}$ a quiver mutation at vertex $(1,r-2)$ 
then we get the quiver $\G_{s_1,\,r-2}$. Hence, by induction, all the quivers $\G_{s_1,\,r}\ (r\in 2\Z)$ are mutation-equivalent. Therefore they define the same cluster algebra, which we denote by $\AA_{s_1}$.

Next, we remark that, for a finite $n\in\Z_{>0}$, two arbitrary quivers of type $A_n$ are mutation-equivalent. Hence there is a single cluster algebra of cluster-type $A_n$, up to isomorphism. In contrast, the quivers $\G$ and $\G_{s_1,\,r}$ cannot be obtained from each other through a \emph{finite} sequence of mutations, hence the cluster algebras $\AA_{e}$ and $\AA_{s_1}$ are different.

However, if we perform in $\G_{s_1,\,r}$ an \emph{infinite} sequence of mutations at successive vertices 
\[
(1,r-2), (1,r-4), (1,r-6),\ \ldots,
\]
we will obtain in the limit the quiver $\G$ of the cluster algebra $\AA_e$. On the other hand, in contrast with a finite sequence of mutations, this infinite sequence of mutations cannot be reversed (what should be the first mutation of the reversed sequence ?). 

To emphasize the special role of the two vertices $(1,r)$ and $(1,r-2)$ of $\G_{s_1,\,r}$, we will paint the source vertex $(1,r)$ in red and the sink vertex $(1,r-2)$ in green. In order to generalize to higher rank root systems, it is better to think of $\G_{s_1,\,r}$ as being obtained from $\G$ in three steps:
\begin{enumerate}
 \item[(i)] insert a new vertex $*$ between $(1,r)$ and $(1,r-2)$;
 \item[(ii)] replace the arrow $(1,r) \leftarrow (1,r-2)$ by a pair of arrows 
$(1,r) \to * \leftarrow (1,r-2)$;
 \item[(iii)] change the labels of the lower half as follows : 
 \[
 * \mapsto (1,r-2), \qquad (1,r-2k) \mapsto (1,r-2k-2),\ (k \ge 1).
 \]
\end{enumerate}

\subsection{Type $A, D, E$ with $w=s_i$}
\label{subsec2-2}

\begin{figure}[t]
\[
\def\objectstyle{\scriptstyle}
\def\lablestyle{\scriptstyle}
\xymatrix@-1.0pc{
&{}\save[]+<0cm,1.5ex>*{\vdots}\restore&{}\save[]+<0cm,1.5ex>*{\vdots}\restore  
&{}\save[]+<0cm,1.5ex>*{\vdots}\restore
\\
&{(1,2)}\ar[rd]\ar[u]&
&\ar[ld] (3,2) \ar[u]
\\
&&\ar[ld] (2,1) \ar[rd]\ar[uu]&&
\\
&\ar[uu]{\mathbf{\red(1,0)}}\ar[d]&&\ar[ldd] (3,0) \ar[uu]
\\
&{\mathbf{\green(1,-2)}}\ar[rd]
\\
&&\ar[uuu]\ar[ld] (2,-1) \ar[rd]&&
\\
&\ar[uu](1,-4) \ar[rd] &&\ar[ld] (3,-2)\ar[uuu]
\\
&&\ar[ld] \ar[uu](2,-3) \ar[rd]&&
\\
&\ar[uu](1,-6) \ar[rd] &&\ar[ld] (3,-4)\ar[uu]
\\
&&\ar[ld] \ar[uu](2,-5) \ar[rd]&&
\\
&\ar[uu](1,-8) &&\ar[uu] (3,-6) 
\\
&{}\save[]+<0cm,0ex>*{\vdots}\ar[u]\restore&{}\save[]+<0cm,0ex>*{\vdots}\ar[uu]\restore  
&{}\save[]+<0cm,0ex>*{\vdots}\ar[u]\restore
\\
}
\qquad
\xymatrix@-1.0pc{
&{}\save[]+<0cm,1.5ex>*{\vdots}\restore&{}\save[]+<0cm,1.5ex>*{\vdots}\restore  
&{}\save[]+<0cm,1.5ex>*{\vdots}\restore
\\
&{(1,2)}\ar[rd]\ar[u]&
&\ar[ld] (3,2) \ar[u]
\\
&&\ar[ld] (2,1) \ar[rd]\ar[uu]&&
\\
&\ar[uu]{(1,0)}\ar[rd]&
&\ar[ld] (3,0) \ar[uu]
\\
&&\ar[uu] \mathbf{\red(2,-1)}\ar[d] &&
\\
&&\ar[ld] \mathbf{\green(2,-3)} \ar[rd]&&
\\
&\ar[uuu](1,-2) \ar[rd] &&\ar[ld] (3,-2)\ar[uuu]
\\
&&\ar[ld] \ar[uu](2,-5) \ar[rd]&&
\\
&\ar[uu](1,-4) \ar[rd] &&\ar[ld] (3,-4)\ar[uu]
\\
&&\ar[ld] \ar[uu](2,-5) \ar[rd]&&
\\
&\ar[uu](1,-6) &&\ar[uu] (3,-6) 
\\
&{}\save[]+<0cm,0ex>*{\vdots}\ar[u]\restore&{}\save[]+<0cm,0ex>*{\vdots}\ar[uu]\restore  
&{}\save[]+<0cm,0ex>*{\vdots}\ar[u]\restore
\\
}
\]
\caption{\label{Fig2} {\it The quivers $\G_{s_1,\,0}$ and $\G_{s_2,\,-1}$ in type $A_3$.}}
\end{figure}
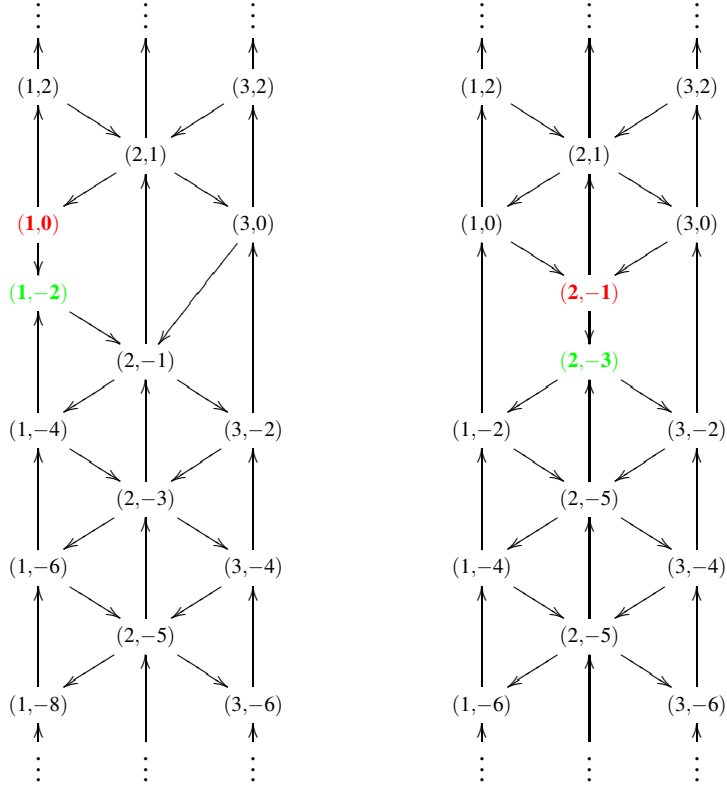

Let us now take $\g$ of type $A, D, E$, and let us define the cluster algebras $\AA_{s_i}$ associated with the simple reflections $s_i$. We start by defining the quivers $\G_{s_i,\,r}$.

\begin{Def}\label{Def2-1}
Let $(i,r)\in V$, the vertex set of $\G = \G_e$. The quiver $\G_{s_i,\,r}$ is obtained from 
$\G_e$ by performing the following operations:
\begin{enumerate}
 \item[(i)] insert a new vertex $*$ between vertices $(i,r)$ and $(i,r-2)$;
 \item[(ii)] replace the arrow $(i,r) \leftarrow (i,r-2)$ by a pair of arrows 
$(i,r) \to * \leftarrow (i,r-2)$;
 \item[(iii)] for $j$ with $c_{ij}<0$, replace the arrow $(i,r) \to (j,r+c_{ij})$ by an arrow $* \to (j,r + c_{ij})$;
 \item[(iv)] change the labels of the vertices on the lower half of column $i$ as follows:
\[
 * \mapsto (i,r-2), \qquad (i,r-2k) \mapsto (i,r-2k-2),\ (k \ge 1).
 \] 
\end{enumerate}
\end{Def}

For example, the quivers $\G_{s_1,\,0}$ and $\G_{s_2,\,-1}$ in type $A_3$ are displayed in Figure~\ref{Fig2}.
The proof of the following lemma is elementary, but it is a key property for our constructions.

\begin{figure}[t]
\[
\def\objectstyle{\scriptstyle}
\def\lablestyle{\scriptstyle}
\xymatrix@-1.0pc{
&{}\save[]+<0cm,1.5ex>*{\vdots}\restore&{}\save[]+<0cm,1.5ex>*{\vdots}\restore  
&{}\save[]+<0cm,1.5ex>*{\vdots}\restore
\\
&{(1,2)}\ar[rd]\ar[u]&
&\ar[ld] (3,2) \ar[u]
\\
&&\ar[ld] (2,1) \ar[rd]\ar[uu]&&
\\
&\ar[uu]{(1,0)}\ar[rd]&
&\ar[ld] (3,0) \ar[uu]
\\
&&\ar[uu] \mathbf{\red(2,-1)}\ar[d] &&
\\
&&\ar[ld] \mathbf{\green(2,-3)} \ar[rd]&&
\\
&\ar[uuu](1,-2) \ar[rd] &&\ar[ld] (3,-2)\ar[uuu]
\\
&&\ar[ld] \ar[uu](2,-5) \ar[rd]&&
\\
&\ar[uu](1,-4)  && (3,-4)\ar[uu]
\\
&{}\save[]+<0cm,0ex>*{\vdots}\ar[u]\restore&{}\save[]+<0cm,0ex>*{\vdots}\ar[uu]\restore  
&{}\save[]+<0cm,0ex>*{\vdots}\ar[u]\restore
\\
}
\xymatrix@-1.0pc{
&{}\save[]+<0cm,1.5ex>*{\vdots}\restore&{}\save[]+<0cm,1.5ex>*{\vdots}\restore  
&{}\save[]+<0cm,1.5ex>*{\vdots}\restore
\\
&{(1,2)}\ar[rd]\ar[u]&
&\ar[ld] (3,2) \ar[u]
\\
&& \mathbf{\red(2,1)} \ar[uu] \ar[dd]&&
\\
&\ar[uu]{(1,0)}\ar[rdd]&
& (3,0) \ar[uu]\ar[ldd]
\\
&& \ar[ul]\mathbf{\green
(2,-1)}\ar[ur] &&
\\
&&\ar[ld] (2,-3)\ar[u] \ar[rd]&&
\\
&\ar[uuu](1,-2) \ar[rd] &&\ar[ld] (3,-2)\ar[uuu]
\\
&&\ar[ld] \ar[uu](2,-5) \ar[rd]&&
\\
&\ar[uu](1,-4)  && (3,-4)\ar[uu]
\\
&{}\save[]+<0cm,0ex>*{\vdots}\ar[u]\restore&{}\save[]+<0cm,0ex>*{\vdots}\ar[uu]\restore  
&{}\save[]+<0cm,0ex>*{\vdots}\ar[u]\restore
\\
}
\xymatrix@-1.0pc{
&{}\save[]+<0cm,1.5ex>*{\vdots}\restore&{}\save[]+<0cm,1.5ex>*{\vdots}\restore  
&{}\save[]+<0cm,1.5ex>*{\vdots}\restore
\\
&{(1,2)}\ar[rd]\ar[u]&
&\ar[ld] (3,2) \ar[u]
\\
&& \mathbf{\red(2,1)} \ar[uu] \ar[d]&&
\\
&& \ar[dl]\mathbf{\green(2,-1)}\ar[dr] &&
\\
&\ar[uuu]{(1,0)}\ar[rd]&
& (3,0) \ar[uuu]\ar[ld]
\\
&&\ar[ld] (2,-3)\ar[uu] \ar[rd]&&
\\
&\ar[uu](1,-2) \ar[rd] &&\ar[ld] (3,-2)\ar[uu]
\\
&&\ar[ld] \ar[uu](2,-5) \ar[rd]&&
\\
&\ar[uu](1,-4)  && (3,-4)\ar[uu]
\\
&{}\save[]+<0cm,0ex>*{\vdots}\ar[u]\restore&{}\save[]+<0cm,0ex>*{\vdots}\ar[uu]\restore  
&{}\save[]+<0cm,0ex>*{\vdots}\ar[u]\restore
\\
}
\]
\caption{\label{Fig3} {\it Mutation of $\G_{s_2,\,-1}$ at vertex $(2,-1)$ produces $\G_{s_2,\,1}$.}}
\end{figure}
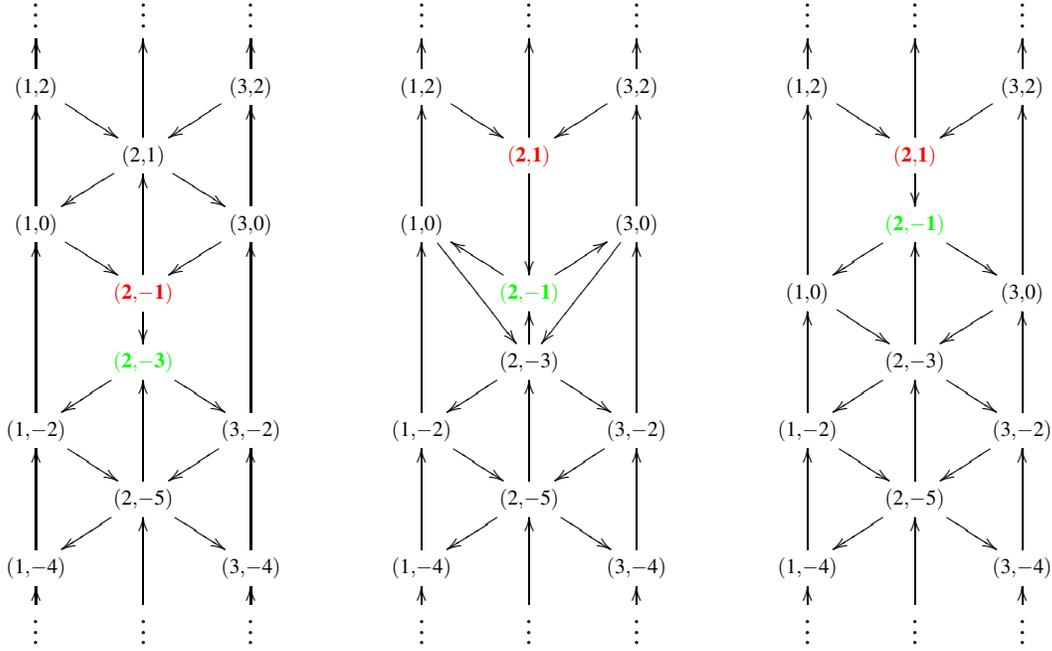

\begin{Lem} \label{Lem22} Let $\g$ be of type $A, D, E$.
\begin{enumerate}
 \item[\rm (i)] If we mutate the quiver $\G_{s_i,\,r}$ at vertex $(i,r)$, 
 then we obtain the quiver $\G_{s_i,\,r+2}$.
 \item[\rm (ii)] If we mutate the quiver $\G_{s_i,\,r}$ at vertex $(i,r-2)$, 
 then we obtain the quiver $\G_{s_i,\,r-2}$.
\end{enumerate}
\end{Lem}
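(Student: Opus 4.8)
The plan is to verify both statements by a direct, local inspection of quiver mutation, exploiting the fact that the quiver $\G_{s_i,r}$ differs from the equi-oriented-in-column-$i$ quiver $\G_e$ only in a bounded neighbourhood of the vertices $(i,r)$ and $(i,r-2)$ (in the notation of Definition~\ref{Def2-1}, the vertex $*$ sitting between them before relabelling). Recall that a mutation $\mu_k$ at a vertex $k$ (i) reverses all arrows incident to $k$, and (ii) for every path $a\to k\to b$ adds an arrow $a\to b$, then cancels $2$-cycles. Since all vertices involved have degree at most $3$ or $4$ in these quivers, each computation touches only finitely many arrows, and the mutated quiver will again be a ``local modification'' of $\G_e$ of the same shape, so it suffices to match it against the definition of $\G_{s_i,r\pm 2}$.

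For part~(i), I would mutate $\G_{s_i,r}$ at the red source vertex $(i,r)$. In $\G_{s_i,r}$ this vertex receives an arrow from $(i,r+2)$, sends an arrow to the inserted vertex $*=(i,r-2)$, and sends an arrow to each $(j,r+c_{ij})$ with $c_{ij}<0$ (while step~(iii) of the Definition has already redirected the \emph{incoming} neighbours $(j,s)$ with $j\sim i$ to point at $*$, not at $(i,r)$). After $\mu_{(i,r)}$: the arrows at $(i,r)$ reverse, so now $(i,r)\leftarrow(i,r+2)$ becomes $(i,r)\to(i,r+2)$ and $(i,r)\to(i,r-2)$ becomes $(i,r)\leftarrow (i,r-2)$; the only $2$-paths through $(i,r)$ are $(i,r+2)\to(i,r)\to(i,r\pm\cdots)$ which after cancellation produce exactly the arrows one needs so that $(i,r)$ now plays the role that $*$ played, and $(i,r+2)$ plays the role of the new red source. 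Renaming the vertices of column $i$ by shifting the index up by $2$ (the inverse of step~(iv)), the resulting quiver is identified, arrow by arrow, with $\G_{s_i,r+2}$. Part~(ii) is the symmetric computation: mutating $\G_{s_i,r}$ at the green sink vertex $(i,r-2)$ reverses its two incident arrows, the resulting $2$-paths reproduce the arrows making the old green vertex into a new plain column vertex and $*$ (now one step lower) into the new sink, and after the relabelling shifting column~$i$ down by $2$ one recognizes $\G_{s_i,r-2}$.

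The only genuinely delicate point is bookkeeping the arrows between column $i$ and its neighbouring columns $j$ (with $c_{ij}<0$): one must check that the $3$-cycles of $\G_e$ through $(i,r)$ and through $(i,r-2)$ interact correctly with the redirections of step~(iii), so that no spurious arrow between $*$ and a neighbour column, or between two neighbour columns, survives after $2$-cycle cancellation. I expect this to be the main obstacle, though it is entirely mechanical; it is cleanest to treat the case $\g=\mathfrak{sl}_2$ first (already done in \S\ref{sect-3.1}, where there are no neighbour columns), then the rank-$2$ pieces $A_2$ around the edge $i\sim j$, and finally observe that the general $A,D,E$ case decomposes into these rank-$2$ interactions since column $i$ meets column $j$ only through the single arrow governed by $c_{ij}$. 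One may also shortcut part~(ii) by noting that $\mu_{(i,r-2)}$ in $\G_{s_i,r}$ is, up to the relabelling of step~(iv), the inverse of $\mu_{(i,r-2)}$ in $\G_{s_i,r-2}$ applied to part~(i) with $r$ replaced by $r-2$, so (ii) follows formally from (i) by $\mu_k^2=\mathrm{id}$.
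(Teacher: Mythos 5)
Your overall strategy---local inspection of the mutation rule, reduction to the rank-two subsystems determined by $\alpha_i$ and its Dynkin neighbours, treating each edge $i\sim j$ separately, and deriving part~(ii) from part~(i) via $\mu^2=\mathrm{id}$---is sound, and it is exactly the approach the paper takes. However, your description of the local arrow structure of $\G_{s_i,r}$ has the directions wrong in several places, and the $2$-path analysis built on it is consequently carried out on the wrong quiver. In the paper's convention (Section~\ref{ssec1-4}) the vertical arrows of $\G$ point upward, so in $\G_{s_i,r}$ the red vertex $(i,r)$ is a \emph{source} for the vertical arrows: it sends arrows to both $(i,r+2)$ and $*=(i,r-2)$, and it does not receive an arrow from $(i,r+2)$. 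Moreover, step~(iii) of Definition~\ref{Def2-1} redirects the \emph{outgoing} oblique arrows $(i,r)\to(j,r+c_{ij})$ so that they now emanate from $*$, while the \emph{incoming} oblique arrows $(j,r+1)\to(i,r)$ with $c_{ij}=-1$ are left in place---the opposite of what you write.

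Consequently the $2$-paths that actually matter when mutating at $(i,r)$ are $(j,r+1)\to(i,r)\to(i,r+2)$, whose composed arrow cancels the pre-existing oblique arrow $(i,r+2)\to(j,r+1)$, and $(j,r+1)\to(i,r)\to*$, which genuinely creates the new arrows $(j,r+1)\to(i,r-2)$. After the reversal at $(i,r)$, the vertex $(i,r+2)$ becomes the vertical source and $(i,r)$ the vertical sink, and one checks that the resulting quiver coincides with $\G_{s_i,r+2}$ vertex by vertex (no extra relabelling of column~$i$ is needed, contrary to what your sketch suggests). So your reduction to rank~$2$ is the right idea, but as written the mechanical check is performed on a quiver that is not $\G_{s_i,r}$; the directions need to be fixed before the argument is a proof.
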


\begin{proof}
This is a straightforward application of the rules of quiver mutation.
Given the explicit description of $\G_{s_i,r}$, we see that we can reduce the proof to an independent verification for every rank 2 root subsystem containing $\a_i$, and this is immediate. An illustration in type $A_3$ is given in Figure~\ref{Fig3}, where the left quiver is $\G_{s_2,\,-1}$,
the middle quiver is obtained by mutation at vertex $(2,-1)$, and
the right quiver is $\G_{s_2,\,1}$, identical to the middle one up to a vertical shift of vertex $(2,-1)$.  \cqfd
\end{proof}

Hence, for a fixed $i\in I$, all the quivers $\G_{s_i,\,r}$ are mutation-equivalent, and they define the same cluster algebra.

\begin{Def}\label{Def2-3}
Let $\g$ be of type $A, D, E$ and let $i\in I$. The cluster algebra $\AA_{s_i}$ is the cluster algebra with quiver $\G_{s_i,\,r}$ for an arbitrary $(i,r)\in V$. 
\end{Def}

\subsection{Type $A, D, E$ with arbitrary $w$}\label{subsec2-3}

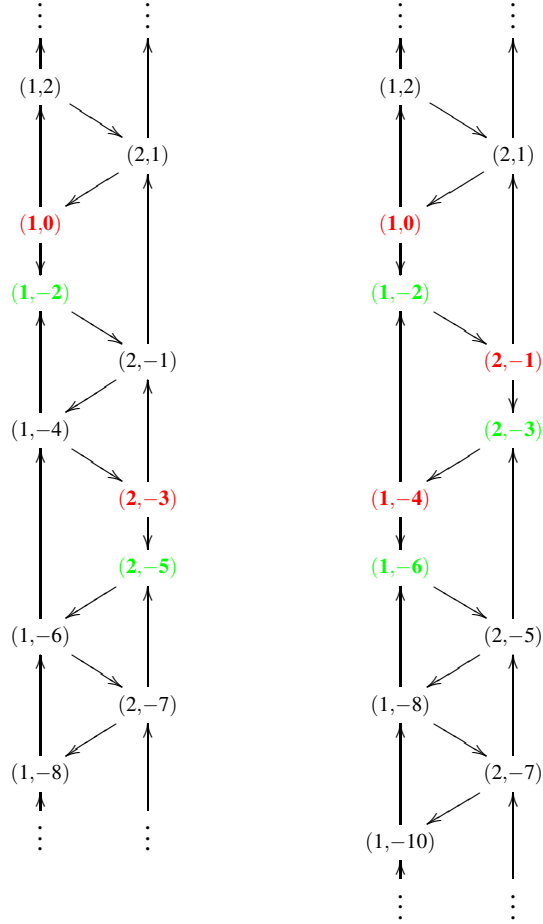
\begin{figure}[t]
\[
\def\objectstyle{\scriptstyle}
\def\lablestyle{\scriptstyle}
\xymatrix@-1.0pc{
&{}\save[]+<0cm,1.5ex>*{\vdots}\restore&{}\save[]+<0cm,1.5ex>*{\vdots}\restore  
\\
&{(1,2)}\ar[rd]\ar[u]&
\\
&&\ar[ld] (2,1) \ar[uu]&&
\\
&\ar[uu]{\mathbf{\red(1,0)}}\ar[d]&
\\
&{\mathbf{\green(1,-2)}}\ar[rd]
\\
&&\ar[uuu]\ar[ld] (2,-1) &&
\\
&\ar[uu](1,-4) \ar[rd] 
\\
&& \ar[uu]\mathbf{\red(2,-3)}\ar[d] &&
\\
&& \ar[ld] \mathbf{\green(2,-5)}
\\
&\ar[uuu](1,-6) \ar[rd] 
\\
&&\ar[ld] \ar[uu](2,-7) &&
\\
&\ar[uu](1,-8) 
\\
&{}\save[]+<0cm,0ex>*{\vdots}\ar[u]\restore&{}\save[]+<0cm,0ex>*{\vdots}\ar[uu]\restore  
\\
}
\quad
\xymatrix@-1.0pc{
&{}\save[]+<0cm,1.5ex>*{\vdots}\restore&{}\save[]+<0cm,1.5ex>*{\vdots}\restore  
\\
&{(1,2)}\ar[rd]\ar[u]&
\\
&&\ar[ld] (2,1) \ar[uu]&&
\\
&\ar[uu]{\red\mathbf{(1,0)}}\ar[d]&
\\
&{\mathbf{\green(1,-2)}}\ar[rd]
\\
&&\ar[uuu] \mathbf{\red(2,-1)}\ar[d] &&
\\
&& \ar[ld]\mathbf{\green(2,-3)}
\\
&\ar[uuu]\mathbf{\red(1,-4)} \ar[d] 
\\
&{\mathbf{\green(1,-6)}}\ar[rd]
\\
&& \ar[uuu](2,-5)\ar[ld] &&
\\
&\ar[uu](1,-8) \ar[rd] 
\\
&&\ar[ld] \ar[uu](2,-7) &&
\\
&\ar[uu](1,-10) 
\\
&{}\save[]+<0cm,0ex>*{\vdots}\ar[u]\restore&{}\save[]+<0cm,0ex>*{\vdots}\ar[uu]\restore  
\\
}
\]
\caption{\label{Fig4} {\it The quivers $\G_{(1,2),(0,-3)}$ and $\G_{(1,2,1),(0,-1,-4)}$ in type $A_2$.}}
\end{figure}

Let $w\in W$, and pick a reduced decomposition $w = s_{i_1}\cdots s_{i_k}$. We construct the quiver of an initial
seed of $\AA_w$ by induction on $k=\ell(w)$.

First, we choose a vertex of $\G$ of the form $(i_1, r_1)\in V$, and construct $\G_{s_{i_1},\, r_1}$ as in \S\ref{subsec2-2}. 
Note that the lower part of $\G_{s_{i_1},\,r_1}$, below vertex $(i_1, r_1 -2)$, is exactly the same as the corresponding part of $\G$ (except for the relabelling of vertices). 
So, we can pick a vertex $(i_2, r_2)\in V$ of $\G_{s_{i_1},\, r_1}$ in this lower part,
and perform at this vertex the same operations as for the construction of $\G_{s_{i_2},\, r_2}$. 
We thus get the new quiver $\G_{(i_1,\,i_2), (r_1,\,r_2)}$. To illustrate, the left quiver in Figure~\ref{Fig4} is the quiver $\G_{(1,2),(0,-3)}$ in type $A_2$. 

Obviously, we can continue in this way and construct successively quivers $\G_{(i_1,\ldots,i_l), (r_1,\ldots, r_l)}$ for $l\le k$. For example, the right quiver in Figure~\ref{Fig4} is the quiver $\G_{(1,2,1),(0,-1,-4)}$ in type $A_2$. 

Our rule for labelling the vertices is that the vertex set is always the set $V$ defined in \S\ref{ssec1-4}, and in each vertical subquiver, the second coordinates are arranged in decreasing order as we go down. If the red vertices are $(i_1,r_1), \ldots, (i_k,r_k)$ in this labelling, then we denote the resulting quiver by $\G_{(i_1,\ldots, i_k),\ (r_1,\ldots, r_k)}$. 
Clearly, for a given reduced decomposition $w = s_{i_1}\cdots s_{i_k}$, we obtain an infinite number
of quivers $\G_{(i_1,\ldots, i_k),\ (r_1,\ldots, r_k)}$ depending on the choice of $(r_1,\ldots, r_k)$.
However, the next proposition shows that they define the same cluster algebra.

\begin{Prop}\label{Prop24}
For a fixed reduced decomposition $w = s_{i_1}\cdots s_{i_k}$, the quivers $\G_{(i_1,\ldots,i_k), (r_1,\ldots, r_k)}$ are all mutation-equivalent. 
\end{Prop}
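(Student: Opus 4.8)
The plan is to proceed by induction on $k=\ell(w)$, reducing the claim to a statement about moving a single ``red--green column'' up or down, which generalizes Lemma~\ref{Lem22}. First I would observe that for $k=1$ this is exactly Lemma~\ref{Lem22}: all the $\G_{s_i,\,r}$ are mutation-equivalent via single mutations at the red or green vertex of the inserted column. For the inductive step, fix a reduced decomposition $w=s_{i_1}\cdots s_{i_k}$ and suppose the result is known for $w'=s_{i_1}\cdots s_{i_{k-1}}$. Given two tuples $(r_1,\ldots,r_k)$ and $(r_1',\ldots,r_k')$, I would first use the inductive hypothesis on the first $k-1$ insertions to pass from $\G_{(i_1,\ldots,i_k),(r_1,\ldots,r_k)}$ to some quiver of the form $\G_{(i_1,\ldots,i_k),(r_1',\ldots,r_{k-1}',r_k'')}$; this requires checking that the sequence of mutations realizing the $W'$-equivalence can be performed while the $k$-th column sits below everything, and that it does not touch the $k$-th red--green column (which is true because, by the labelling convention, that column lies strictly below the support of the mutations, and the lower part of each intermediate quiver agrees with the lower part of the corresponding $\G_{w'}$-quiver up to relabelling). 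Then I would move the $k$-th column from height $r_k''$ to height $r_k'$ using the analogue of Lemma~\ref{Lem22}, i.e.\ a (finite) sequence of mutations at the red vertex $(i_k,\,\cdot\,)$ or at the green vertex $(i_k,\,\cdot\,)$ of that column.

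The key technical point to establish is therefore a one-column version of Lemma~\ref{Lem22} \emph{in the presence of the other inserted columns above}: mutating $\G_{(i_1,\ldots,i_k),(r_1,\ldots,r_k)}$ at the lowest red vertex $(i_k,r_k)$ yields $\G_{(i_1,\ldots,i_k),(r_1,\ldots,r_k+2)}$, and mutating at the lowest green vertex $(i_k,r_k-2)$ yields $\G_{(i_1,\ldots,i_k),(r_1,\ldots,r_k-2)}$, provided the column above, at positions $(i_k, \ge r_k)$ or $(i_k, \le r_k-2)$ respectively, is ``clear'' of other red--green pairs in the relevant window — which one can always arrange by first choosing $r_k$ sufficiently negative (resp.\ the target sufficiently negative) using the inductive freedom in step one, so the local picture around the mutated vertex is identical to the local picture in $\G_{s_{i_k},\,r_k}$. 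Just as in Lemma~\ref{Lem22}, the verification is local: the mutation at $(i_k,r_k)$ only involves $(i_k,r_k)$, its in- and out-neighbours, which all lie in the rank-2 root subsystems containing $\a_{i_k}$, and the arrow configuration there coincides with the one already analyzed. I would include one worked picture (as in Figure~\ref{Fig3} / Figure~\ref{Fig4}) in type $A_2$ or $A_3$ to make the bookkeeping transparent.

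The main obstacle I anticipate is purely combinatorial bookkeeping: making precise that the mutation sequences from the inductive hypothesis for $w'$ can be ``inserted'' into the larger quiver without interference, i.e.\ that the presence of the extra $k$-th column below does not alter any of the arrows touched by those mutations, and conversely that relabelling vertices after each step is consistent with the global labelling convention (second coordinates decreasing down each vertical subquiver). This is where one must be careful that the lower part of $\G_{(i_1,\ldots,i_{k-1}),(\ldots)}$ below the lowest inserted green vertex is genuinely identical, as a labelled quiver up to the prescribed relabelling, to the corresponding part of $\G_e$ — a fact already used implicitly in the construction in \S\ref{subsec2-3} and which I would state explicitly as a preliminary remark. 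Once that separation-of-columns principle is in hand, the proof is an immediate assembly of Lemma~\ref{Lem22}-type local checks, and no genuinely new idea beyond what is in the proof of Lemma~\ref{Lem22} is needed.
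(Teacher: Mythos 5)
Your proposal is correct and takes essentially the same approach as the paper, whose proof is the one-line assertion that the result ``follows immediately from Lemma~\ref{Lem22}''; your induction on $k$ together with the locality of quiver mutation is the natural way to make that assertion rigorous. One small remark: the caveat about first pushing the $k$-th column sufficiently far down so that its neighbourhood is ``clear'' of other red--green pairs is not actually needed, since the proof of Lemma~\ref{Lem22} is a rank-$2$ local verification that remains valid verbatim even when an adjacent column also carries a red--green pair; but your more conservative route is certainly also valid and perhaps easier to check.
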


\begin{proof}
This follows immediately from Lemma~\ref{Lem22}. \cqfd 
\end{proof}

In fact, the next proposition gives a stronger result.

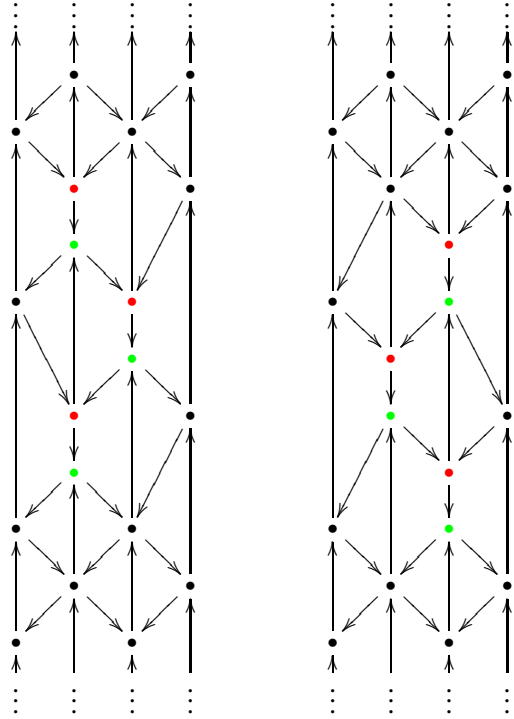
\begin{figure}[t]
\[
\def\objectstyle{\scriptstyle}
\def\lablestyle{\scriptstyle}
\xymatrix@-1.0pc{
{}\save[]+<0cm,1ex>*{\vdots}\restore{}&\save[]+<0cm,1ex>*{\vdots}\restore&{}\save[]+<0cm,1ex>*{\vdots}\restore&{}\save[]+<0cm,1ex>*{\vdots}\restore  
\\
&\bullet\ar[ld]\ar[rd]\ar[u]&& \bullet\ar[ld]\ar[u]
\\
\bullet \ar[uu]\ar[rd]&&\ar[ld] \bullet\ar[rd] \ar[uu]
\\
&\ar[uu]{\red\bullet}\ar[d]&&\ar[ldd]\bullet\ar[uu]
\\
&{\green\bullet}\ar[ld]\ar[rd]
\\
\bullet\ar[uuu]\ar[rdd]&&\ar[uuu] {\red\bullet}\ar[d] 
\\
&& \ar[ld]{\green\bullet}\ar[rd]
\\
&\ar[uuu]{\red\bullet} \ar[d] &&\bullet\ar[uuuu] \ar[ldd]
\\
&{\green\bullet}\ar[ld]\ar[rd]
\\
\bullet\ar[uuuu]\ar[rd]&& \ar[uuu]\bullet\ar[ld]\ar[rd] 
\\
&\ar[uu]\bullet\ar[ld] \ar[rd] &&\bullet\ar[uuu]\ar[ld]
\\
\bullet\ar[uu]&& \ar[uu]\bullet 
\\
{}\save[]+<0cm,0ex>*{\vdots}\ar[u]\restore{}&\save[]+<0cm,0ex>*{\vdots}\ar[uu]\restore&{}\save[]+<0cm,0ex>*{\vdots}\ar[u]\restore&{}\save[]+<0cm,0ex>*{\vdots}\ar[uu]\restore  
\\
}
\qquad\qquad
\xymatrix@-1.0pc{
{}\save[]+<0cm,1ex>*{\vdots}\restore{}&\save[]+<0cm,1ex>*{\vdots}\restore&{}\save[]+<0cm,1ex>*{\vdots}\restore&{}\save[]+<0cm,1ex>*{\vdots}\restore  
\\
&\bullet\ar[ld]\ar[rd]\ar[u]&& \bullet\ar[ld]\ar[u]
\\
\bullet \ar[uu]\ar[rd]&&\ar[ld] \bullet\ar[rd] \ar[uu]
\\
&\ar[uu]\bullet\ar[ldd]\ar[rd]&&\ar[ld]\bullet\ar[uu]
\\
&&\ar[uu] {\red\bullet}\ar[d] 
\\
\bullet\ar[uuu]\ar[rd]&& \ar[ld]{\green\bullet}\ar[rdd] 
\\
& {\red\bullet} \ar[uuu]\ar[d]
\\
&{\green\bullet}\ar[ldd] \ar[dr]&&\bullet\ar[uuuu]\ar[ld]
\\
&& \ar[uuu]\ar[d]{\red\bullet}
\\
\bullet\ar[uuuu]\ar[rd]&& {\green\bullet}\ar[ld]\ar[rd] 
\\
&\ar[uuu]\bullet \ar[ld]\ar[rd] &&\bullet\ar[uuu]\ar[ld]
\\
\bullet\ar[uu]&& \ar[uu]\bullet 
\\
{}\save[]+<0cm,0ex>*{\vdots}\ar[u]\restore{}&\save[]+<0cm,0ex>*{\vdots}\ar[uu]\restore&{}\save[]+<0cm,0ex>*{\vdots}\ar[u]\restore&{}\save[]+<0cm,0ex>*{\vdots}\ar[uu]\restore  
\\
}
\]
\caption{\label{Fig5bis} {\it A local $3$-move.}}
\end{figure}

\begin{figure}[t]
\[
\def\objectstyle{\scriptstyle}
\def\lablestyle{\scriptstyle}
\xymatrix@-1.0pc{
{}\save[]+<0cm,1ex>*{\vdots}\restore&{}\save[]+<0cm,1ex>*{\vdots}\restore  
\\
{(1,2)}\ar[rd]\ar[u]&
\\
&\ar[ld] (2,1) \ar[uu]
\\
\ar[uu]{\mathbf{\red(1,0)}}\ar[d]&
\\
\fbox{$\scriptstyle\mathbf{\green(1,-2)}$}\ar[rd]
\\
&\ar[uuu] \mathbf{\red(2,-1)}\ar[d] 
\\
& \ar[ld]\mathbf{\green(2,-3)}
\\
\ar[uuu]\mathbf{\red(1,-4)} \ar[d] 
\\
{\mathbf{\green(1,-6)}}\ar[rd]
\\
& \ar[uuu](2,-5)\ar[ld] 
\\
\ar[uu](1,-8) \ar[rd] 
\\
&\ar[ld] \ar[uu](2,-7) 
\\
\ar[uu](1,-10) 
\\
{}\save[]+<0cm,0ex>*{\vdots}\ar[u]\restore&{}\save[]+<0cm,0ex>*{\vdots}\ar[uu]\restore  
\\
}
\xymatrix@-1.0pc{
{}\save[]+<0cm,1.5ex>*{\vdots}\restore&{}\save[]+<0cm,1.5ex>*{\vdots}\restore  
\\
{(1,2)}\ar[rd]\ar[u]&
\\
&\ar[ld] (2,1) \ar[uu]
\\
\ar[uu]{\mathbf{(1,0)}}\ar[rdd]&
\\
\mathbf{(1,-2)}\ar[u]\ar[ddd]
\\
&\ar[uuu]\ar[lu] \mathbf{(2,-1)}\ar[d] 
\\
& \ar[ld]\mathbf{(2,-3)}
\\
\fbox{$\scriptstyle\mathbf{(1,-4)}$} \ar[d]\ar[ruu] 
\\
{\mathbf{(1,-6)}}\ar[rd]
\\
& \ar[uuu](2,-5)\ar[ld] 
\\
\ar[uu](1,-8) \ar[rd] 
\\
&\ar[ld] \ar[uu](2,-7) 
\\
\ar[uu](1,-10)
\\
{}\save[]+<0cm,0ex>*{\vdots}\ar[u]\restore&{}\save[]+<0cm,0ex>*{\vdots}\ar[uu]\restore  
\\
}
\xymatrix@-1.0pc{
&{}\save[]+<0cm,1.5ex>*{\vdots}\restore&{}\save[]+<0cm,1.5ex>*{\vdots}\restore  
\\
&{(1,2)}\ar[rd]\ar[u]&
\\
&&\ar[ld] (2,1) \ar[uu]
\\
&\ar[uu]{\mathbf{(1,0)}}\ar[rdd]&
\\
&\mathbf{(1,-2)}\ar[u]\ar@/_{2pc}/[dddd]
\\
&&\ar[uuu]\ar[ldd] \mathbf{(2,-1)} 
\\
&& \fbox{$\scriptstyle\mathbf{(2,-3)}$}\ar[ldd]
\\
&\mathbf{(1,-4)} \ar[uuu]\ar[ru] 
\\
&{\mathbf{(1,-6)}}\ar[rd]\ar[u]
\\
&& \ar[uuu](2,-5)\ar[ld] 
\\
&\ar[uu](1,-8) \ar[rd] 
\\
&&\ar[ld] \ar[uu](2,-7) 
\\
&\ar[uu](1,-10)
\\
&{}\save[]+<0cm,0ex>*{\vdots}\ar[u]\restore&{}\save[]+<0cm,0ex>*{\vdots}\ar[uu]\restore  
\\
}
\xymatrix@-1.0pc{
&{}\save[]+<0cm,1.5ex>*{\vdots}\restore&{}\save[]+<0cm,1.5ex>*{\vdots}\restore  
\\
&{(1,2)}\ar[rd]\ar[u]&
\\
&&\ar[ld] (2,1) \ar[uu]
\\
&\ar[uu]{\mathbf{(1,0)}}\ar[rdd]&
\\
&\mathbf{(1,-2)}\ar[u]\ar@/_{2pc}/[dddd]
\\
&&\ar[uuu]\ar[ldd] \mathbf{(2,-1)} 
\\
&& \ar[ld]\mathbf{(2,-3)}\ar[ddd]
\\
&\mathbf{(1,-4)} \ar[uuu] 
\\
&{\mathbf{(1,-6)}}\ar[ruu]
\\
&& (2,-5)\ar[ld] 
\\
&\ar[uu](1,-8) \ar[rd] 
\\
&&\ar[ld] \ar[uu](2,-7) 
\\
&\ar[uu](1,-10) 
\\
&{}\save[]+<0cm,0ex>*{\vdots}\ar[u]\restore&{}\save[]+<0cm,0ex>*{\vdots}\ar[uu]\restore  
\\
}
\xymatrix@-1.0pc{
{}\save[]+<0cm,1.5ex>*{\vdots}\restore&{}\save[]+<0cm,1.5ex>*{\vdots}\restore  
\\
{(1,2)}\ar[rd]\ar[u]&
\\
&\ar[ld] (2,1) \ar[uu]
\\
\ar[uu](1,0)\ar[rd]&
\\
&\ar[uu] \mathbf{\red(2,-1)}\ar[d] 
\\
& \ar[ld]\mathbf{\green(2,-3)} 
\\
 \mathbf{\red(1,-2)} \ar[uuu]\ar[d]
\\
\mathbf{\green(1,-4)} \ar[dr]
\\
& \ar[uuu]\ar[d]\mathbf{\red(2,-5)}
\\
& \mathbf{\green(2,-7)}\ar[ld] 
\\
\ar[uuu](1,-6) \ar[rd] 
\\
&\ar[ld] \ar[uu](2,-9) 
\\
\ar[uu](1,-8) 
\\
{}\save[]+<0cm,0ex>*{\vdots}\ar[u]\restore&{}\save[]+<0cm,0ex>*{\vdots}\ar[uu]\restore  
\\
}
\]
\caption{\label{Fig5} {\it A sequence of 3 mutations from $\G_{(1,2,1),(0,-1,-4)}$ to $\G_{(2,1,2),(-1,-2,-5)}$.}}
\end{figure}
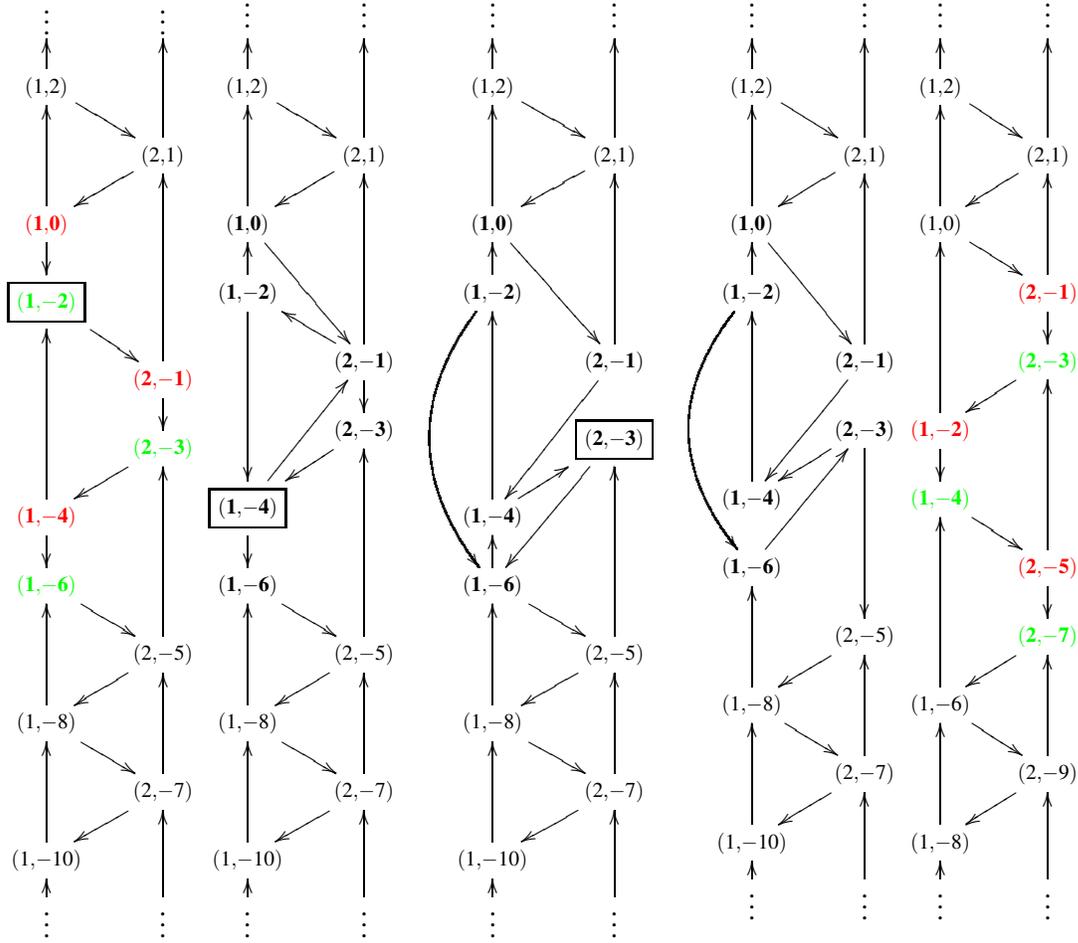

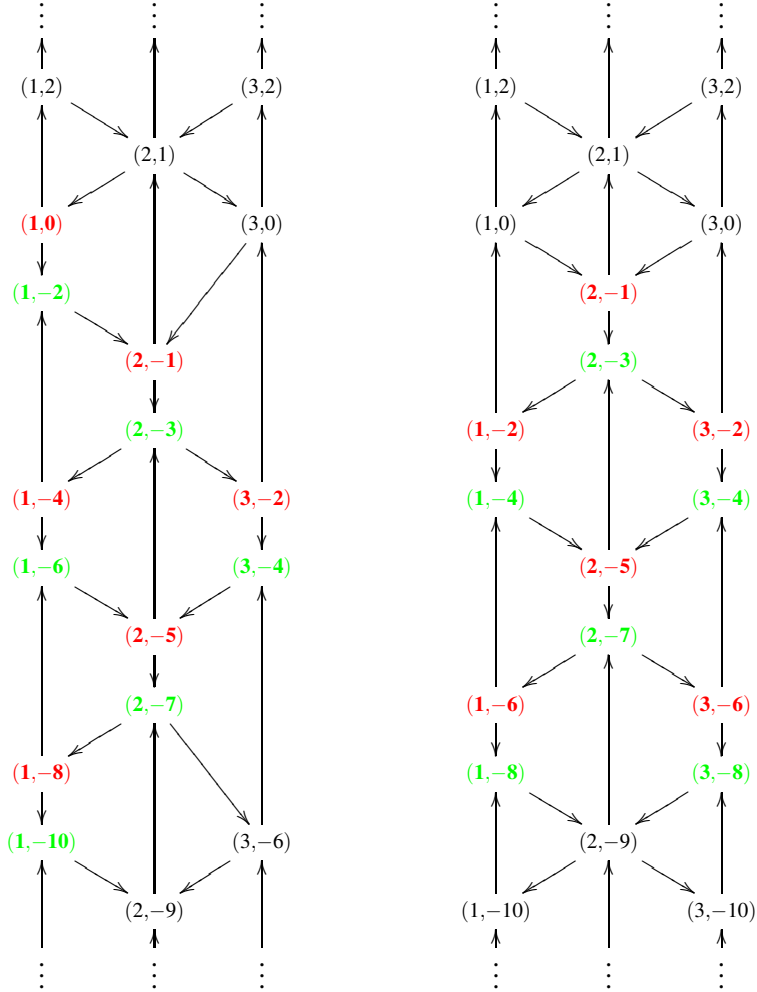
\begin{figure}[t]
\[
\def\objectstyle{\scriptstyle}
\def\lablestyle{\scriptstyle}
\xymatrix@-1.0pc{
&{}\save[]+<0cm,1.5ex>*{\vdots}\restore&{}\save[]+<0cm,1.5ex>*{\vdots}\restore  
&{}\save[]+<0cm,1.5ex>*{\vdots}\restore
\\
&{(1,2)}\ar[rd]\ar[u]&
&\ar[ld] (3,2) \ar[u]
\\
&&\ar[ld] (2,1) \ar[rd]\ar[uu]&&
\\
&{\ar[uu]}{\red\mathbf{(1,0)}}\ar[d]&&\ar[ldd] (3,0) \ar[uu]
\\
&{\green\mathbf{(1,-2)}}\ar[rd]
\\
&&\ar[uuu] \mathbf{\red(2,-1)}\ar[d]&&
\\
&&\ar[ld] \mathbf{\green(2,-3)} \ar[rd]&&
\\
&\ar[uuu]\mathbf{\red(1,-4)} \ar[d] && \ar[d] \mathbf{\red(3,-2)}\ar[uuuu]
\\
&\mathbf{\green(1,-6)} \ar[rd] &&\ar[ld] \mathbf{\green(3,-4)}
\\
&& \ar[uuu]\mathbf{\red(2,-5)}\ar[d] &&
\\
&&\ar[ld]\mathbf{\green(2,-7)} \ar[rdd]&&
\\
&\ar[uuu]\mathbf{\red(1,-8)}\ar[d]  
\\
&\mathbf{\green(1,-10)} \ar[rd]&&\ar[ld] (3,-6)\ar[uuuu]
\\
&& \ar[uuu](2,-9) &&
\\
&{}\save[]+<0cm,0ex>*{\vdots}\ar[uu]\restore&{}\save[]+<0cm,0ex>*{\vdots}\ar[u]\restore  
&{}\save[]+<0cm,0ex>*{\vdots}\ar[uu]\restore
\\
}
\qquad
\xymatrix@-1.0pc{
&{}\save[]+<0cm,1.5ex>*{\vdots}\restore&{}\save[]+<0cm,1.5ex>*{\vdots}\restore  
&{}\save[]+<0cm,1.5ex>*{\vdots}\restore
\\
&{(1,2)}\ar[rd]\ar[u]&
&\ar[ld] (3,2) \ar[u]
\\
&&\ar[ld] (2,1) \ar[rd]\ar[uu]&&
\\
&\ar[uu]{(1,0)}\ar[rd]&
&\ar[ld] (3,0) \ar[uu]
\\
&&\ar[uu] \mathbf{\red(2,-1)}\ar[d] &&
\\
&&\ar[ld]\ar[rd] \mathbf{\green(2,-3)} &&
\\
&\ar[uuu]\mathbf{\red(1,-2)}\ar[d]  && \mathbf{\red(3,-2)}\ar[uuu]\ar[d]
\\
&\mathbf{\green(1,-4)} \ar[rd] &&\ar[ld] \mathbf{\green(3,-4)}
\\
&& \ar[d]\ar[uuu]\mathbf{\red(2,-5)} &&
\\
&&\ar[ld] \mathbf{\green(2,-7)} \ar[rd]&&
\\
&\ar[uuu]\mathbf{\red(1,-6)}\ar[d]  && \mathbf{\red(3,-6)}\ar[d]\ar[uuu]
\\
&\mathbf{\green(1,-8)} \ar[rd] &&\ar[ld] \mathbf{\green(3,-8)}
\\
&& \ar[uuu](2,-9) \ar[ld]\ar[rd]&&
\\
&\ar[uu](1,-10) &&\ar[uu] (3,-10) 
\\
&{}\save[]+<0cm,0ex>*{\vdots}\ar[u]\restore&{}\save[]+<0cm,0ex>*{\vdots}\ar[uu]\restore  
&{}\save[]+<0cm,0ex>*{\vdots}\ar[u]\restore
\\
}
\]
\caption{\label{Fig6} {\it Two initial seeds for the cluster algebra $\AA_{w_0}$ in type $A_3$.}}
\end{figure}

\begin{Prop}\label{Prop25}
The cluster algebra defined by a quiver $\G_{(i_1,\ldots,i_k), (r_1,\ldots, r_k)}$ depends only on the Weyl group element $w$. 
\end{Prop}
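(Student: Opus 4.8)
The plan is to combine Proposition~\ref{Prop24} with the theorem of Tits (Matsumoto): any two reduced decompositions of $w$ are connected by a finite sequence of elementary moves, which in the simply-laced case are the commutation moves $s_is_j\rightsquigarrow s_js_i$ (for $c_{ij}=0$) and the braid moves $s_is_js_i\rightsquigarrow s_js_is_j$ (for $c_{ij}=-1$). Since Proposition~\ref{Prop24} already shows that the cluster algebra attached to a \emph{fixed} reduced word is independent of the heights $(r_1,\ldots,r_k)$, it suffices to prove: if the reduced word $\bi'$ is obtained from $\bi=(i_1,\ldots,i_k)$ by a single elementary move (at positions $l,l{+}1$, resp. $l,l{+}1,l{+}2$), then for a suitable choice of heights the quiver $\G_{\bi',\br'}$ is mutation-equivalent to $\G_{\bi,\br}$. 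First I would fix the heights so that $r_1\gg r_2\gg\cdots\gg r_k$ with large gaps; then the block produced at the $m$-th step of the construction lives in a bounded height-window around $r_m$, and outside a bounded region $\Omega$ containing the blocks involved in the move everything coincides, up to the relabelling of \S\ref{subsec2-3}, with the matching part of the basic quiver $\G$. The whole argument then reduces to a finite verification inside~$\Omega$.

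\textbf{Commutation move.} When $c_{i_l i_{l+1}}=0$, the operations (i)--(iv) of Definition~\ref{Def2-1} that create the $s_{i_l}$-block only affect column $i_l$ and its arrows to the columns $\ell$ with $c_{i_l\ell}<0$, and symmetrically for $s_{i_{l+1}}$; in particular neither block relabels or reroutes anything in the other's column. Hence the two insertions act on essentially disjoint data and can be carried out in either order; using Lemma~\ref{Lem22} to slide each block to a prescribed height in its own column (as in the proof of Proposition~\ref{Prop24}), one sees that $\G_{\bi,\br}$ and $\G_{\bi',\br'}$ become the same quiver. The only point to check is the effect on a column $\ell$ adjacent to \emph{both} $i_l$ and $i_{l+1}$, which then carries one rerouted arrow from each block; this is again a finite local check.

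\textbf{Braid move.} This is the substantive case; put $i:=i_l=i_{l+2}$ and $j:=i_{l+1}$, so $c_{ij}=-1$ and the rank-$2$ subsystem $\langle\a_i,\a_j\rangle$ is of type $A_2$. Inside $\Omega$ the word $\bi$ contributes two blocks in column $i$ sandwiching one block in column $j$; for $\bi'$ the roles of $i$ and $j$ are exchanged. I would exhibit an explicit sequence of three quiver mutations, each performed at a vertex belonging to one of these three blocks, which transforms $\G_{\bi,\br}$ into a quiver that -- after the standard relabelling ordering second coordinates decreasingly down each column -- is exactly $\G_{\bi',\br'}$ for the heights $\br'$ obtained from $\br$ in the evident way. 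This is the local $3$-move of Figure~\ref{Fig5bis}, worked out in type $A_2$ in Figure~\ref{Fig5}. The verification is finite: one checks that the three mutations alter arrows only among the finitely many vertices of columns $i$, $j$ and their neighbours lying in $\Omega$ -- so that the blocks coming from the prefix $i_1\cdots i_{l-1}$ and the suffix beyond position $l{+}2$ are left untouched -- and that the colours and labels of the outcome are precisely those prescribed by \S\ref{subsec2-3} for $\bi'$. Since $c_{ij}=-1$ pins down the local shape, in types $D$ and $E$ this adds only finitely many configurations of rank $\le 3$ (the columns adjacent to $i$ or $j$, each receiving a single arrow from a block vertex), handled identically.

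Combining the two cases, every elementary move on the reduced words of $w$ is realized, up to relabelling, by a finite (possibly empty) sequence of mutations; hence all quivers $\G_{(i_1,\ldots,i_k),(r_1,\ldots,r_k)}$ associated with reduced words of $w$ are mutation-equivalent, and together with Proposition~\ref{Prop24} this shows that the cluster algebra depends only on~$w$. The step I expect to cause the most trouble is the braid case: ensuring the three-mutation sequence is genuinely local -- no side effect creeping along a neighbouring column -- and that the red/green colouring and the vertex labels emerge exactly as in the construction of \S\ref{subsec2-3}; this is why recording the model computation of Figures~\ref{Fig5bis} and~\ref{Fig5} is worthwhile.
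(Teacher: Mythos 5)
Your proposal is correct and follows essentially the same approach as the paper's proof: reduce via Matsumoto/Tits to a single commutation or braid move, use Proposition~\ref{Prop24} (i.e.\ Lemma~\ref{Lem22}) to isolate the relevant blocks, handle the commutation case directly, and realize the braid case by the local $3$-mutation sequence recorded in Figures~\ref{Fig5bis} and~\ref{Fig5}. The only superficial difference is that you arrange isolation by choosing heights with large gaps, whereas the paper explicitly invokes Proposition~\ref{Prop24} to mutate the blocks apart before performing the three mutations; these are the same mechanism.
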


\begin{proof}
Since $\g$ is assumed to be of type $A, D, E$, the braid relations satisfied by the generators of $W$ are all of the form 
\[
s_is_js_i = s_js_is_j,\ (c_{ij} = -1),\quad\mbox{or}\quad s_is_k = s_ks_i,\ (c_{ik} = 0).
\]
We need to show that if two words $(i_1,\ldots,i_k)$ and $(j_1,\ldots,j_k)$ are related by one of these two types of braid moves, then the corresponding quivers $\G_{(i_1,\ldots,i_k), (r_1,\ldots, r_k)}$ and 
$\G_{(j_1,\ldots,j_k), (s_1,\ldots, s_k)}$ are related by a finite sequence of mutations.

Suppose that for some $1\le l\le k-2$ we have $i_l=i_{l+2}=i$, $i_{l+1}=j$ with $c_{ij}=-1$.
By Proposition~\ref{Prop24}, we see that, mutating only at red or green vertices, we can freely move up or down any of the $k$ pairs of red and green vertices of $\G_{(i_1,\ldots,i_k), (r_1,\ldots, r_k)}$ corresponding to the indices $i_1,\ldots,i_k$, as long as we keep them in the same order when reading from top to bottom. By this procedure we can perform a sequence of mutations on  
$\G_{(i_1,\ldots,i_k), (r_1,\ldots, r_k)}$ which will isolate the red and green vertices corresponding to $i_l, i_{l+1}, i_{l+2}$ so that locally, the mutated quiver looks like the left quiver of Figure~\ref{Fig5bis}.

We claim that this left quiver  can be transformed into the right quiver of Figure~\ref{Fig5bis} by a sequence of 3 mutations. This sequence is exhibited in Figure~\ref{Fig5}, where we have restricted ourselves to quivers of type $A_2$ to save space, and we have labelled the vertices to improve readability.
The mutations are performed at vertices $(1,-2)$, $(1,-4)$, $(2,-3)$, successively. The result of this sequence of 3 mutations is the 4th quiver of Figure~\ref{Fig5}, and the 5th quiver is the target quiver $\G_{(2,1,2),(-1,-2,-5)}$. One can see that the 4th and 5th quivers are identical up to the change of labelling:
\[
 (1,-4) \mapsto (2,-3),
 \qquad (1,a)\mapsto (1,a+2),\quad (a\le -6),
 \qquad (2,b)\mapsto (2,b-2),\quad (b\le -3).
\]
This change of labelling seems artificial at this stage, but we will see in Remark~\ref{rem-mut-g-vector} below that this problem disappears when we use the canonical labelling given by ``stabilized $g$-vectors''. It is easy to check that the same sequence of mutations transforms the left quiver of Figure~\ref{Fig5bis} into the right quiver, that is, that the arrows connecting the central part to the two sides also mutate as expected.

Finally, the case of the trivial braid moves $s_is_k = s_ks_i,\ (c_{ik} = 0)$ follows again immediately 
from Lemma~\ref{Lem22}.
\cqfd 
\end{proof}

\begin{Def}\label{Def3-6}
The cluster algebra with defining quiver $\G_{(i_1,\ldots,i_k), (r_1,\ldots, r_k)}$ is denoted by $\AA_w$.  
\end{Def}

For instance, in Figure~\ref{Fig6} we display two initial seeds of the cluster algebra $\AA_{w_0}$ in type $A_3$, corresponding respectively to the two reduced expressions $s_1s_2s_1s_3s_2s_1$ and
$s_2s_1s_3s_2s_1s_3$ of $w_0$. 

\begin{remark} \label{Remark3-7}
{\rm
As noted above in the case of type $A_1$, by Proposition~\ref{Prop24} it is possible to find an \emph{infinite} sequence of mutations transforming the quiver of an initial seed of $\AA_w$ into the basic infinite quiver $\G$, that is, into the quiver of an initial seed of $\AA_e$. More precisely, mutating once at each green vertex of an initial seed of $\AA_w$ produces a new seed whose quiver is a one step downward translation of the initial one. Thus, repeating an infinite number of times this operation we can push down to infinity the middle part consisting of red and green vertices and recover the quiver~$\G$, with its canonical labelling of vertices.
}
\end{remark}

\subsection{Initial seeds of $\AA_{w_0}$ associated with a Coxeter element}\label{subsec-Coxeter}

\begin{figure}[t]
\[
\def\objectstyle{\scriptstyle}
\def\lablestyle{\scriptstyle}
\xymatrix@-1.0pc{
&{}\save[]+<0cm,1.5ex>*{\vdots}\restore&{}\save[]+<0cm,1.5ex>*{\vdots}\restore  
&{}\save[]+<0cm,1.5ex>*{\vdots}\restore
\\
&{(1,2)}\ar[rd]\ar[u]&
&\ar[ld] (3,2) \ar[u]
\\
&&\ar[ld] (2,1) \ar[rd]\ar[uu]&&
\\
&\ar[uu]\mathbf{\red(1,0)}\ar[rd]&
&\ar[ld] (3,0) \ar[uu]
\\
&&\ar[uu]\ar[ld] \mathbf{\red(2,-1)} \ar[rd]&&
\\
&\ar[uu]\mathbf{\red(1,-2)} \ar[rd] &&\ar[ld] \mathbf{\red(3,-2)}\ar[uu]
\\
&&\ar[ld] \ar[uu]\mathbf{\red(2,-3)} \ar[rd]&&
\\
&\ar[uu]\mathbf{\red(1,-4)} \ar[rd] &&\ar[ld] (3,-4)\ar[uu]
\\
&&\ar[ld] \ar[uu](2,-5) \ar[rd]&&
\\
&\ar[uu](1,-6) &&\ar[uu] (3,-6) 
\\
&{}\save[]+<0cm,0ex>*{\vdots}\ar[u]\restore&{}\save[]+<0cm,0ex>*{\vdots}\ar[uu]\restore  
&{}\save[]+<0cm,0ex>*{\vdots}\ar[u]\restore
\\
}
\qquad
\def\objectstyle{\scriptstyle}
\def\lablestyle{\scriptstyle}
\xymatrix@-1.0pc{
&{}\save[]+<0cm,1.5ex>*{\vdots}\restore&{}\save[]+<0cm,1.5ex>*{\vdots}\restore  
&{}\save[]+<0cm,1.5ex>*{\vdots}\restore
\\
&{(1,2)}\ar[rd]\ar[u]&
&\ar[ld] (3,2) \ar[u]
\\
&&\ar[ld] (2,1) \ar[rd]\ar[uu]&&
\\
&\ar[uu]{(1,0)}\ar[rd]&
&\ar[ld] (3,0) \ar[uu]
\\
&&\ar[uu]\ar[ld] \mathbf{\red(2,-1)} \ar[rd]&&
\\
&\ar[uu]\mathbf{\red(1,-2)} \ar[rd] &&\ar[ld] \mathbf{\red(3,-2)}\ar[uu]
\\
&&\ar[ld] \ar[uu]\mathbf{\red(2,-3)} \ar[rd]&&
\\
&\ar[uu]\mathbf{\red(1,-4)} \ar[rd] &&\ar[ld] \mathbf{\red(3,-4)}\ar[uu]
\\
&&\ar[ld] \ar[uu](2,-5) \ar[rd]&&
\\
&\ar[uu](1,-6) &&\ar[uu] (3,-6) 
\\
&{}\save[]+<0cm,0ex>*{\vdots}\ar[u]\restore&{}\save[]+<0cm,0ex>*{\vdots}\ar[uu]\restore  
&{}\save[]+<0cm,0ex>*{\vdots}\ar[u]\restore
\\
}
\]
\caption{\label{Fig7b} {\it The subquivers $G_c$ corresponding to the Coxeter elements $s_1s_2s_3$ and $s_2s_1s_3$ in type $A_3$.}}
\end{figure}
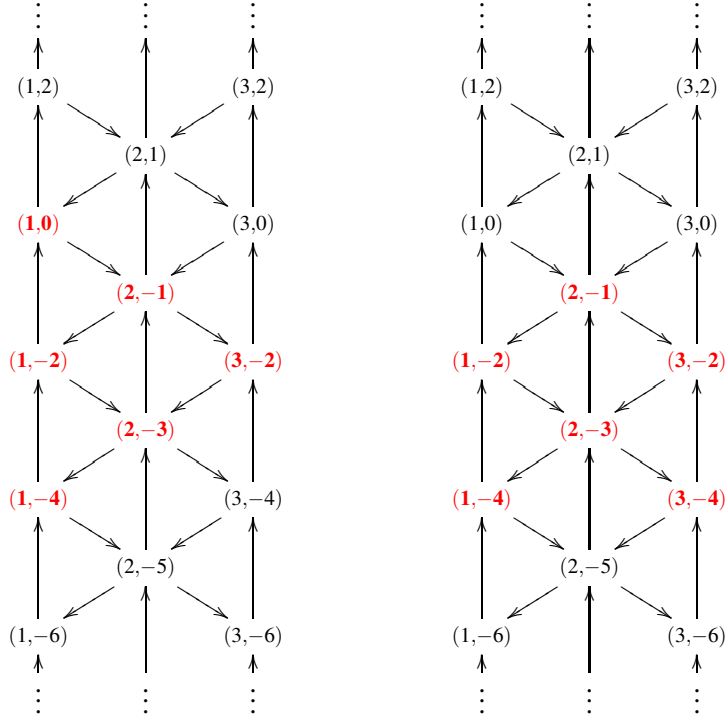

Let $\cQ$ be an orientation of the Dynkin diagram of $\g$. In other words, $\cQ$ is a Dynkin quiver of the
same Dynkin type as $\g$.
For $i \in I$, we denote by $s_i(\cQ)$ the quiver obtained from $\cQ$ by changing the orientation of every
arrow with source $i$ or target $i$.  Let $w = s_{i_1}\cdots s_{i_k} \in W$ be a reduced decomposition. We say that
$\bi = (i_1,\ldots , i_k)$ is \emph{adapted} to $\cQ$ if and only if $i_k$ is a source of $\cQ$, $i_{k-1}$ is a source of $s_{i_k} (\cQ)$, $\ldots$\,, $i_1$ is a source of $s_{i_{2}}\cdots s_{i_k} (\cQ)$. There is a unique Coxeter element having reduced expressions adapted to $\cQ$. We shall denote it by $c$. Moreover the map $\cQ \mapsto c$ is a bijection between orientations $\cQ$ of the Dynkin diagram and Coxeter elements of $W$.

Let $c = s_{i_1}\cdots s_{i_n}$ denote a fixed Coxeter element of $W$, and let $\cQ$ be the corresponding Dynkin quiver. 
Then the basic infinite quiver $\Gamma_C$ is isomorphic to the Auslander-Reiten quiver of the bounded derived category $D^b(K\cQ)$ of the path algebra $K\cQ$ over a field $K$, in which we have added arrows corresponding to the action of the Auslander-Reiten translation (these are all the vertical arrows $(i,r) \to (i,r+2)$ of $\Gamma_C$). 

We can therefore identify the Auslander-Reiten quiver of the abelian category $\mod(K\cQ)$ with a finite connected full subquiver $G_c$ of $\Gamma_C$, whose number of vertices is equal to the number of positive roots of $\g$. 
This is illustrated in type $A_3$ in Figure~\ref{Fig7b}. The two Coxeter elements $s_1s_2s_3$ and $s_2s_1s_3$ correspond to the two full subquivers $G_c$ with vertices painted in red. 

It is then easy to formulate an alternative recipe for obtaining the quiver of an initial seed of $\AA_{w_0}$. 
Pick a Coxeter element $c$ and determine the subquiver $G_c$ of $\G_C$. Then at each vertex of $G_c$ perform the procedure described in Definition~\ref{Def2-1}. The resulting quiver $\G_c$ is the quiver of an initial seed of $\AA_{w_0}$. For example, applying this recipe to the quivers of Figure~\ref{Fig7b}, we get the quivers of Figure~\ref{Fig6}. 

Strictly speaking, $\G_c$ is only determined up to a vertical translation, but because of Proposition~\ref{Prop24} this does not affect the mutation class of the quivers obtained by means of this procedure.

\section{Stabilized $g$-vectors}\label{sect-stable-g}

If we choose a reference seed, we can parametrize cluster variables of $\AA_{w_0}$ using their $g$-vectors with respect to this reference seed. This parametrization depends on the choice of reference seed. 
For instance, we can choose a reference seed coming from a Coxeter element $c$, as described in \S \ref{subsec-Coxeter}. 
But, as noted above, this is only determined up to a vertical translation. If we mutate this reference seed up or down, the $g$-vector of a given cluster variable will change. 

In this section we will show that if we consider the sequence of $g$-vectors of a fixed cluster variable $x$ with respect to a sequence of such reference seeds obtained by successive downward translations, then this sequence of $g$-vectors stabilizes after a finite number of downward translations. The stabilized limit $g$-vector can be regarded as the $g$-vector of $x$ with respect to the limit reference seed obtained by translating downward a given Coxeter initial seed an infinite number of times. As noted above (see Remark~\ref{Remark3-7}), the quiver of this limit of seeds is the basic quiver $\G_C$, which is independent of the choice of Coxeter element $c$. After studying this limiting procedure in detail, we will give explicit descriptions of stabilized $g$-vectors of the cluster variables of our Coxeter-type initial seeds.

\subsection{$g$-vectors}\label{subsec4-1} 
We start by recalling the basic facts about $g$-vectors which we will need. 

Let $x$ denote the $k$th cluster variable in a seed $S$, and let $\Sigma$ be another seed. In \cite{FZ-CA4}, Fomin and Zelevinsky have given two recursive relations for calculating the $g$-vector of $x$ with respect to the reference seed $\Sigma$. Namely, we can mutate at $x$ in the seed $S$ and get a new cluster variable $\mu_k(x)$, or mutate $\Sigma$ in direction $l$ and get a new reference seed $\mu_l(\Sigma)$. Then the first recursion expresses the $g$-vector of $x$ with respect to $\mu_l(\Si)$ in terms of the $g$-vector of $x$ with respect to $\Si$, and the second recursion expresses the $g$-vector of $\mu_k(x)$ with respect to $\Sigma$ in terms of the $g$-vector of $x$ with respect to $\Sigma$.

To state these relations we will need some more notation. Let $\G$ denote the quiver of the seed $\Si$, and let $V$ be its vertex set. Let $l\in V$ and let $\Si' := \mu_l(\Si)$. 
We denote by $\mathbf{g} := (g_v)_{v\in V}$ (\resp $\mathbf{g'} := (g'_v)_{v\in V}$) the $g$-vector of the cluster variable $x$ with respect to $\Si$ (\resp with respect to $\Si'$). Then we have
\begin{equation}\label{recursion1}
 g'_v = 
\left\{
\begin{array}{ll}
-g_v & \mbox{if $v = l$},
\\[1mm]
g_v\, +\, N_{l \to v}\, g_l & \mbox{if $v\not = l$ and $g_l \ge 0$},
\\[1mm]
g_v\, +\, N_{v \to l}\, g_l & \mbox{if $v\not = l$ and $g_l \le 0$},
\end{array}
\right. 
\end{equation}
where for $a,b\in V$, we denote by $N_{a\to b}$ the number of arrows from $a$ to $b$ in $\Gamma$. 
This relation is
\cite[Conjecture 7.12]{FZ-CA4}, and it
is now a theorem by \cite[Theorem 1.7]{DWZ2}. 

To state the second recursion relation, we also need $c$-vectors (see \cite{NZ}). Let $\mathbf{c} := (c_v)_{v\in V}$ denote the $c$-vector of $x$ with respect to $\Si$. 
By \cite[Theorem 1.7]{DWZ2} and \cite{NZ}, it is known that the coordinates $c_v$ of $\mathbf{c}$ are either all non-negative, or all non-positive. 
Accordingly we can write unambiguously $\mathbf{c} \ge 0$, or $\mathbf{c} \le 0$.
Let $x^* := \mu_k(x)$, and let 
$\mathbf{g^*}$ denote the $g$-vector of $x^*$ with respect to $\Si$.
Let $\mathbf{g}^v\,(v\not = k)$ be the $g$-vectors with respect to $\Si$ of the cluster variables $x_v$ of $S$ other than $x = x_k$. 
Then by \cite[Proposition 6.6, Equations 6.12, 6.13]{FZ-CA4}
we have, 
\begin{equation}\label{recursion2}
\mathbf{g^*} =
\left\{
\begin{array}{ll}
-\mathbf{g} +\, \sum_{v\in V} M_{v \to k}\, \mathbf{g}^v & \mbox{if $\mathbf{c} \ge 0$},
\\[2mm]
-\mathbf{g} +\, \sum_{v\in V} M_{k \to v}\, \mathbf{g}^v & \mbox{if $\mathbf{c} \le 0$},
\end{array}
\right. 
\end{equation}
where for $a,b\in V$, we denote by $M_{a \to b}$ the number of arrows from $a$ to $b$ in the quiver $G$ of the seed $S$. 

Finally, to determine in practice whether $\mathbf{c} \ge 0$ or $\mathbf{c} \le 0$, we can make use of 
\cite[Equation 2.8]{NZ}, which implies that
\begin{equation}\label{Eq4}
\sum_{v\in V} \left(M_{v \to k} - M_{k \to v}\right)\, \mathbf{g}^v = \sum_{v\in V} c_v\, \mathbf{b}_v^0,
\end{equation}
where $\mathbf{b}_v^0 := \sum_{l\in V}\left(N_{l \to v} - N_{v\to l}\right) \mathbf{e}_l$ are the column vectors of the exchange matrix $B^0$ of $\Si$. Indeed the left-hand side of (\ref{Eq4}) can be calculated from the datum of $S$, and by expanding this sum as a linear combination of the known columns of $B^0$, we can calculate the coefficients $c_v$ and check their sign.  

\subsection{An infinite sequence of mutations}\label{subsec4-2}

\begin{figure}[t]
\[
\def\objectstyle{\scriptstyle}
\def\lablestyle{\scriptstyle}
\xymatrix@-1.0pc{
&{}\save[]+<0cm,1.5ex>*{\vdots}\restore
\\
&
\\
&\mathbf{e}_{(1,2)}\ar[uu]
\\
&
\\
&\ar[uu]{\mathbf{\red e_{(1,0)}}}\ar[dd]
\\
&
\\
&\mathbf{\green e_{(1,-2)}} 
\\
&
\\
&\ar[uu] \mathbf{e}_{(1,-4)} 
\\
&
\\
&\ar[uu]\mathbf{e}_{(1,-6)} 
\\
&
\\
&\ar[uu]\mathbf{e}_{(1,-8)} 
\\
&
\\
&{}\save[]+<0cm,0ex>*{\vdots}\ar[uu]\restore
\\
}
\xymatrix@-1.0pc{
&{}\save[]+<0cm,1.5ex>*{\vdots}\restore
\\
&
\\
&{(1,2)}\ar[uu]
\\
&
\\
&\ar[uu]{\mathbf{\red {(1,0)}}}\ar[dd]
\\
&
\\
&\mathbf{\green {(1,-2)}} 
\\
&
\\
&\ar[uu] {(1,-4)} 
\\
&
\\
&\ar[uu]{(1,-6)} 
\\
&
\\
&\ar[uu]{(1,-8)} 
\\
&
\\
&{}\save[]+<0cm,0ex>*{\vdots}\ar[uu]\restore
\\
}
\quad
\xymatrix@-1.0pc{
&{}\save[]+<0cm,1.5ex>*{\vdots}\restore
\\
&
\\
&\mathbf{e}_{(1,2)}\ar[uu]
\\
&
\\
&\ar[uu]{\mathbf{\red e_{(1,0)}}}\ar[dd]
\\
&
\\
&\mathbf{\green -e_{(1,-2)}} 
\\
&
\\
&\ar[uu] \mathbf{e}_{(1,-4)} 
\\
&
\\
&\ar[uu]\mathbf{e}_{(1,-6)} 
\\
&
\\
&\ar[uu]\mathbf{e}_{(1,-8)} 
\\
&
\\
&{}\save[]+<0cm,0ex>*{\vdots}\ar[uu]\restore
\\
}
\xymatrix@-1.0pc{
&{}\save[]+<0cm,1.5ex>*{\vdots}\restore
\\
&
\\
&{(1,2)}\ar[uu]
\\
&
\\
&\ar[uu](1,0)
\\
\\
&\mathbf{\red(1,-2)}\ar[uu]\ar[dd]
\\
&
\\
&\mathbf{\green(1,-4)} 
\\
&
\\
&\ar[uu](1,-6) 
\\
&
\\
&\ar[uu](1,-8) 
\\
&
\\
&{}\save[]+<0cm,0ex>*{\vdots}\ar[uu]\restore
\\
}
\quad
\xymatrix@-1.0pc{
&{}\save[]+<0cm,1.5ex>*{\vdots}\restore
\\
&
\\
&\mathbf{e}_{(1,2)}\ar[uu]
\\
&
\\
&\ar[uu]{\mathbf{\red e_{(1,0)}}}\ar[dd]
\\
&
\\
&\mathbf{\green -e_{(1,-2)}} 
\\
&
\\
&\ar[uu] \mathbf{-e}_{(1,-4)} 
\\
&
\\
&\ar[uu]\mathbf{e}_{(1,-6)} 
\\
&
\\
&\ar[uu]\mathbf{e}_{(1,-8)} 
\\
&
\\
&{}\save[]+<0cm,0ex>*{\vdots}\ar[uu]\restore
\\
}
\xymatrix@-1.0pc{
&{}\save[]+<0cm,1.5ex>*{\vdots}\restore
\\
&
\\
&{(1,2)}\ar[uu]
\\
&
\\
&\ar[uu]{(1,0)}
\\
&
\\
&\ar[uu](1,-2)\ar[uu]
\\
\\
&\mathbf{ \red(1,-4)}\ar[uu]\ar[dd]
\\
&
\\
&\mathbf{ \green(1,-6)} 
\\
&
\\
&\ar[uu](1,-8) 
\\
&
\\
&{}\save[]+<0cm,0ex>*{\vdots}\ar[uu]\restore
\\
}
\quad
\xymatrix@-1.0pc{
&{}\save[]+<0cm,1.5ex>*{\vdots}\restore
\\
&
\\
&\mathbf{e}_{(1,2)}\ar[uu]
\\
&
\\
&\ar[uu]{\mathbf{\red e_{(1,0)}}}\ar[dd]
\\
&
\\
&\mathbf{\green -e_{(1,-2)}} 
\\
&
\\
&\ar[uu] \mathbf{-e}_{(1,-4)} 
\\
&
\\
&\ar[uu] \mathbf{-e}_{(1,-6)} 
\\
&
\\
&\ar[uu]\mathbf{e}_{(1,-8)} 
\\
&
\\
&{}\save[]+<0cm,0ex>*{\vdots}\ar[uu]\restore
\\
}
\xymatrix@-1.0pc{
&{}\save[]+<0cm,1.5ex>*{\vdots}\restore
\\
&
\\
&{(1,2)}\ar[uu]
\\
&
\\
&\ar[uu]{(1,0)}
\\
&
\\
&\ar[uu](1,-2)\ar[uu]
\\
\\
&(1,-4)\ar[uu]
\\
&
\\
&\mathbf{ \red(1,-6)}\ar[uu]\ar[dd] 
\\
&
\\
&\mathbf{ \green(1,-8)}
\\
&
\\
&{}\save[]+<0cm,0ex>*{\vdots}\ar[uu]\restore
\\
}
\]
\caption{\label{Fig7c} {\it The first steps of the sequence of $g$-vectors in type $A_1$.}}
\end{figure}
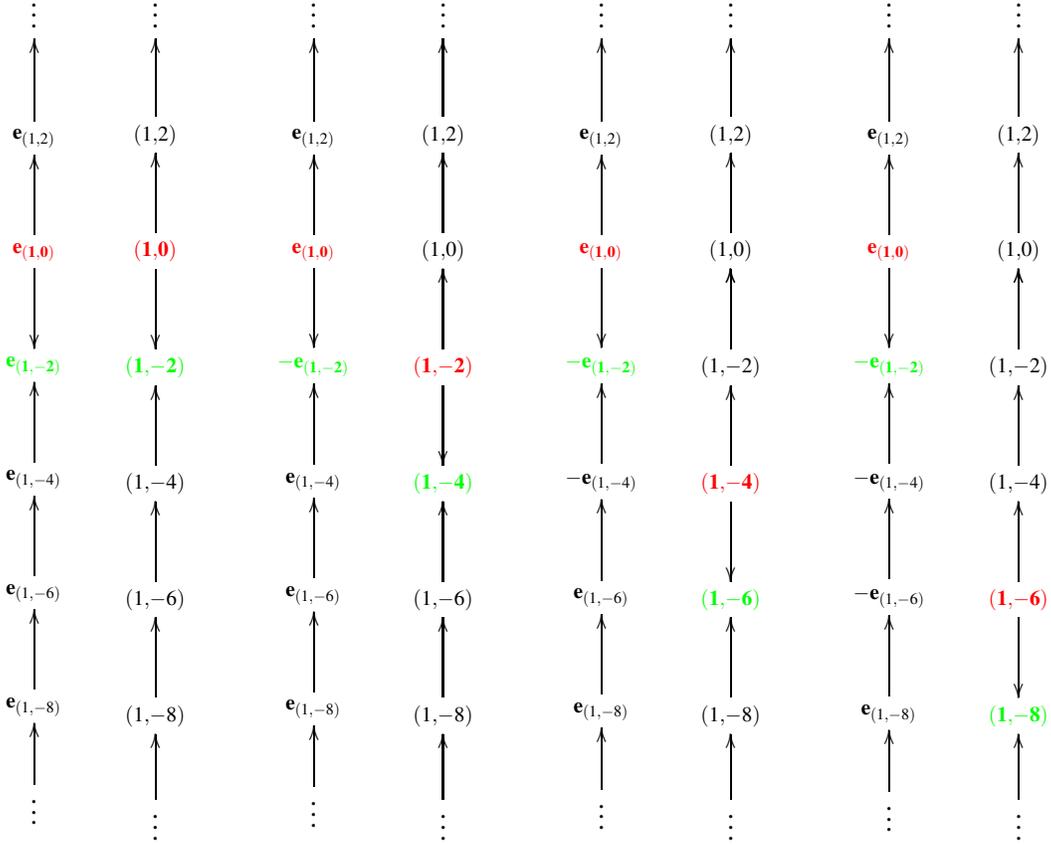

From now on, we fix a Coxeter element $c$, and we consider the quiver $\G := \G_c$ of an initial seed $\Si$ of $\AA_{w_0}$ associated with $c$, as described in \S\ref{subsec-Coxeter}. This seed is only defined up to a vertical translation, so for definiteness, let us decide that the highest red vertex of $\Gamma$ is of the form $(i,0)$, as in the left quiver of Figure~\ref{Fig6}.   

Then we define the following infinite sequence of mutations. We first mutate at all green vertices of $\Gamma$, that is, at all sinks of the vertical subquivers of type $A_\infty$ of $\Gamma$. Here the mutation order is irrelevant because all these mutations commute with each other. Let us denote by $\G^{(1)}$ the quiver of the new seed $\Si^{(1)}$ obtained after this finite sequence of $\ell(w_0)$ mutations. Then, by Lemma~\ref{Lem22}, the quiver $\G^{(1)}$ is obtained from $\G$ by a one-step downward translation. So we can iterate this procedure and mutate at every green vertex of $\G^{(1)}$ to get a new seed $\Si^{(2)}$ whose quiver $\G^{(2)}$ is obtained from $\G$ by a two-step downward translation. And so on. In this way we obtain an infinite sequence $\Si^{(m)}\, (m \ge 0)$ of seeds of $\AA_{w_0}$, with quivers $\G^{(m)}$ such that $\lim_{m\to\infty} \G^{(m)} = \G_{C}$.

Our aim is to study the sequences of $g$-vectors of all cluster variables of $\Si$ with respect to the sequence of reference seeds $\Si^{(m)}$.

\begin{figure}[t]
\[
\def\objectstyle{\scriptstyle}
\def\lablestyle{\scriptstyle}
\xymatrix@-1.0pc{
{}\save[]+<0cm,1.5ex>*{\vdots}\restore&{}\save[]+<0cm,1.5ex>*{\vdots}\restore  
\\
{(1,2)}\ar[rd]\ar[u]
\\
&\ar[ld] (2,1) \ar[uu]
\\
\ar[uu]{\red\mathbf{(1,0)}}\ar[d]
\\
{\mathbf{\green(1,-2)}}\ar[rd]
\\
&\ar[uuu] \mathbf{\red(2,-1)}\ar[d] 
\\
& \ar[ld]\mathbf{\green(2,-3)}
\\
\ar[uuu]\mathbf{\red(1,-4)} \ar[d] 
\\
{\mathbf{\green(1,-6)}}\ar[rd]
\\
& \ar[uuu](2,-5)\ar[ld] 
\\
\ar[uu](1,-8) \ar[rd] 
\\
&\ar[ld] \ar[uu](2,-7) 
\\
\ar[uu](1,-10) 
\\
{}\save[]+<0cm,0ex>*{\vdots}\ar[u]\restore&{}\save[]+<0cm,0ex>*{\vdots}\ar[uu]\restore  
\\
}
\xymatrix@-1.0pc{
{}\save[]+<0cm,1.5ex>*{\vdots}\restore&{}\save[]+<0cm,1.5ex>*{\vdots}\restore  
\\
{\be_{(1,2)}}\ar[rd]\ar[u]
\\
&\ar[ld] \be_{(2,1)} \ar[uu]
\\
\ar[uu]{\red\mathbf{\be_{(1,0)}}}\ar[d]
\\
{\mathbf{\green-\be_{(1,-2)}+\be_{(2,-1)}}}\ar[rd]
\\
&\ar[uuu] \mathbf{\red\be_{(2,-1)}}\ar[d] 
\\
& \ar[ld]\mathbf{\green-\be_{(2,-3)}+\be_{(1,-4)}}
\\
\ar[uuu]\mathbf{\red\be_{(1,-4)}} \ar[d] 
\\
{\mathbf{\green-\be_{(1,-6)}+\be_{(2,-5)}}}\ar[rd]
\\
& \ar[uuu]\be_{(2,-5)}\ar[ld] 
\\
\ar[uu]\be_{(1,-8)} \ar[rd] 
\\
&\ar[ld] \ar[uu]\be_{(2,-7)} 
\\
\ar[uu]\be_{(1,-10)} 
\\
{}\save[]+<0cm,0ex>*{\vdots}\ar[u]\restore&{}\save[]+<0cm,0ex>*{\vdots}\ar[uu]\restore  
\\
}
\xymatrix@-1.0pc{
{}\save[]+<0cm,1.5ex>*{\vdots}\restore&{}\save[]+<0cm,1.5ex>*{\vdots}\restore  
\\
{\be_{(1,2)}}\ar[rd]\ar[u]
\\
&\ar[ld] \be_{(2,1)} \ar[uu]
\\
\ar[uu]{\red\mathbf{\be_{(1,0)}}}\ar[d]
\\
{\mathbf{\green-\be_{(1,-2)}+\be_{(2,-1)}}}\ar[rd]
\\
&\ar[uuu] \mathbf{\red\be_{(2,-1)}}\ar[d] 
\\
& \ar[ld]\mathbf{\green-\be_{(1,-4)}}
\\
\ar[uuu]\mathbf{\red-\be_{(1,-4)}+\be_{(2,-3)}} \ar[d] 
\\
{\mathbf{\green-\be_{(2,-5)}}}\ar[rd]
\\
& \ar[uuu] {-\be_{(2,-5)}+\be_{(1,-6)}}\ar[ld] 
\\
\ar[uu]{-\be_{(1,-8)} + \be_{(2,-7)}} \ar[rd] 
\\
&\ar[ld] \ar[uu]\be_{(2,-7)} 
\\
\ar[uu]\be_{(1,-10)} 
\\
{}\save[]+<0cm,0ex>*{\vdots}\ar[u]\restore&{}\save[]+<0cm,0ex>*{\vdots}\ar[uu]\restore  
\\
}
\xymatrix@-1.0pc{
{}\save[]+<0cm,1.5ex>*{\vdots}\restore&{}\save[]+<0cm,1.5ex>*{\vdots}\restore  
\\
{\be_{(1,2)}}\ar[rd]\ar[u]
\\
&\ar[ld] \be_{(2,1)} \ar[uu]
\\
\ar[uu]{\red\mathbf{\be_{(1,0)}}}\ar[d]
\\
{\mathbf{\green-\be_{(1,-2)}+\be_{(2,-1)}}}\ar[rd]
\\
&\ar[uuu] \mathbf{\red\be_{(2,-1)}}\ar[d] 
\\
& \ar[ld]\mathbf{\green-\be_{(1,-4)}}
\\
\ar[uuu]\mathbf{\red-\be_{(1,-4)}+\be_{(2,-3)}} \ar[d] 
\\
{\mathbf{\green-\be_{(2,-5)}}}\ar[rd]
\\
& \ar[uuu] {-\be_{(1,-6)}}\ar[ld] 
\\
\ar[uu]{-\be_{(2,-7)}} \ar[rd] 
\\
&\ar[ld] \ar[uu]{-\be_{(2,-7)}+\be_{(1,-8)}} 
\\
\ar[uu]{-\be_{(1,-10)}+\be_{(2,-9)} }
\\
{}\save[]+<0cm,0ex>*{\vdots}\ar[u]\restore&{}\save[]+<0cm,0ex>*{\vdots}\ar[uu]\restore  
\\
}
\]
\caption{\label{Fig7d} {\it The first steps of the sequence of $g$-vectors in type $A_2$.}}
\end{figure}

\begin{example}\label{exa-A1}
{\rm
As a warm-up example, let us consider the $A_1$-case. Here there is only one Coxeter element $c=s_1=w_0$.
In this case, all mutations in the sequence occur at vertices which are sinks, and it is easy to 
apply Equation~(\ref{recursion1}). This is illustrated in Figure~\ref{Fig7c}, where we have displayed the first
4 steps of the mutation sequence. For each step we show (left) the initial seed $\Si$ and (right) the mutated 
reference seed $\Si^{(m)}$. The $g$-vectors of the cluster variables of $\Si$ with respect to $\Si^{(m)}$ are
written at the corresponding vertices of $\Si$.
The end result is that the $g$-vectors of the cluster variables of $\Si$ with respect to the limit reference seed are 
\[
 \mathbf{g}^{(\infty)}_{(1,2k)} 
= \be_{(1,2k)},\quad 
 \mathbf{g}^{(\infty)}_{(1,2l)} 
= -\be_{(1,2l)},
\qquad (k\ge 0,\ l< 0).
\]
}
\end{example}

\begin{example}\label{exa-A2}
{\rm
Let us now consider the $A_2$-case. We choose the Coxeter element $c=s_1s_2$. 
In Figure~\ref{Fig7d}, we have displayed the $g$-vectors of the cluster variables of $\Si$ with respect to the reference seeds $\Si^{(m)}$ for $m\le 3$.
In this case it is again easy to check that the $g$-vectors stabilize. The stabilized $g$-vectors
with respect to the limit reference seed are :
\begin{eqnarray*}
 \mathbf{g}^{(\infty)}_{(1,2k)} 
&=&
\left\{
\begin{array}{ll}
\be_{(1,2k)} & \mbox{if $k \ge 0$},
\\[2mm]
-\be_{(1,2k)} +\, \be_{(2,2k+1)} & \mbox{if $-2\le k \le -1 $},
\\[2mm]
-\be_{(2,2k+1)} & \mbox{if $k \le -3 $},
\end{array}
\right.  
\\[3mm]
\mathbf{g}^{(\infty)}_{(2,2k-1)} 
&=&
\left\{
\begin{array}{ll}
\be_{(2,2k-1)} & \mbox{if $k \ge 0$},
\\[2mm]
-\be_{(1,2k-2)} & \mbox{if $k \le -1 $}.
\end{array}
\right.  
\end{eqnarray*}
}
\end{example}

\begin{figure}[t]
\[
\def\objectstyle{\scriptstyle}
\def\lablestyle{\scriptstyle}
\xymatrix@-1.0pc{
&{}\save[]+<0cm,1.5ex>*{\vdots}\restore
\\
&\mathbf{e}_{(1,2)}\ar[u]
\\
&
\\
&\ar[uu]{\mathbf{\red e_{(1,0)}}}\ar[d]
\\
&\mathbf{\green -e_{(1,-2)}} 
\\
&
\\
&\ar[uu] \mathbf{-e}_{(1,-4)} 
\\
&
\\
&\ar[uu] \mathbf{-e}_{(1,-6)} 
\\
&
\\
&\ar[uu]\mathbf{-e}_{(1,-8)} 
\\
&
\\
&\ar[uu] \mathbf{-e}_{(1,-10)} 
\\
&
\\
&\ar[uu]\mathbf{-e}_{(1,-12)} 
\\
&
\\
&\ar[uu]\mathbf{-e}_{(1,-14)} 
\\
&{}\save[]+<0cm,0ex>*{\vdots}\ar[u]\restore
\\
}
\qquad
\xymatrix@-1.0pc{
{}\save[]+<0cm,1.5ex>*{\vdots}\restore&{}\save[]+<0cm,1.5ex>*{\vdots}\restore  
\\
{\be_{(1,2)}}\ar[rd]\ar[u]
\\
&\ar[ld] \be_{(2,1)} \ar[uu]
\\
\ar[uu]{\red\mathbf{\be_{(1,0)}}}\ar[d]
\\
{\mathbf{\green-\be_{(1,-2)}+\,\be_{(2,-1)}}}\ar[rd]
\\
&\ar[uuu] \mathbf{\red\be_{(2,-1)}}\ar[d] 
\\
& \ar[ld]\mathbf{\green-\be_{(1,-4)}}
\\
\ar[uuu]\mathbf{\red-\be_{(1,-4)}+\be_{(2,-3)}} \ar[d] 
\\
{\mathbf{\green-\be_{(2,-5)}}}\ar[rd]
\\
& \ar[uuu] {-\be_{(1,-6)}}\ar[ld] 
\\
\ar[uu]{-\be_{(2,-7)}} \ar[rd] 
\\
&\ar[ld] \ar[uu]{-\be_{(1,-8)}} 
\\
\ar[uu]{-\be_{(2,-9)} }\ar[rd]
\\
&\ar[uu]{-\be_{(1,-10)}} \ar[ld]
\\
\ar[uu]{-\be_{(2,-11)} }
\\
{}\save[]+<0cm,0ex>*{\vdots}\ar[u]\restore&{}\save[]+<0cm,0ex>*{\vdots}\ar[uu]\restore  
\\
}
\xymatrix@-1.0pc{
&{}\save[]+<0cm,1.5ex>*{\vdots}\restore&{}\save[]+<0cm,1.5ex>*{\vdots}\restore  
&{}\save[]+<0cm,1.5ex>*{\vdots}\restore
\\
&{\be_{(1,2)}}\ar[rd]\ar[u]&
&\ar[ld] \be_{(3,2)} \ar[u]
\\
&&\ar[ld] \be_{(2,1)} \ar[rd]\ar[uu]&&
\\
&{\ar[uu]}{\red\be_{(1,0)}}\ar[d]&&\ar[ldd] \be_{(3,0)} \ar[uu]
\\
&{\green\mathbf{-e_{(1,-2)}+\,e_{(2,-1)}}}\ar[rd]
\\
&&\ar[uuu] \mathbf{\red e_{(2,-1)}}\ar[d]&&
\\
&&\ar[ld] \mathbf{\green -e_{(1,-4)}+\, e_{(3,-2)}} \ar[rd]&&
\\
&\ar[uuu]\mathbf{\red -e_{(1,-4)}+\,e_{(2,-3)}} \ar[d] && \ar[d] \mathbf{\red e_{(3,-2)}}\ar[uuuu]
\\
&\mathbf{\green -e_{(2,-5)} +\,e_{(3,-4)}} \ar[rd] &&\ar[ld] \mathbf{\green -e_{(1,-6)}}
\\
&& \ar[uuu]\mathbf{\red -e_{(1,-6)} + \, e_{(3,-4)}}\ar[d] &&
\\
&&\ar[ld]\mathbf{\green -e_{(2,-7)}} \ar[rdd]&&
\\
&\ar[uuu]\mathbf{\red -e_{(2,-7)} + \, e_{(3,-6)}}\ar[d]  
\\
&\mathbf{\green -e_{(3,-8)}} \ar[rd]&&\ar[ld] {-\be_{(1,-8)}}\ar[uuuu]
\\
&& \ar[uuu]{-\be_{(2,-9)}} \ar[ld]\ar[rd]&&
\\
&\ar[uu] {-\be_{(3,-10)}} &&\ar[uu] {-\be_{(1,-10)}} 
\\
&{}\save[]+<0cm,0ex>*{\vdots}\ar[u]\restore&{}\save[]+<0cm,0ex>*{\vdots}\ar[uu]\restore  
&{}\save[]+<0cm,0ex>*{\vdots}\ar[u]\restore
\\
}
\]
\caption{\label{Fig7e} {\it The stabilized $g$-vectors of an initial seed in type $A_n\ (n\le 3)$.}}
\end{figure}
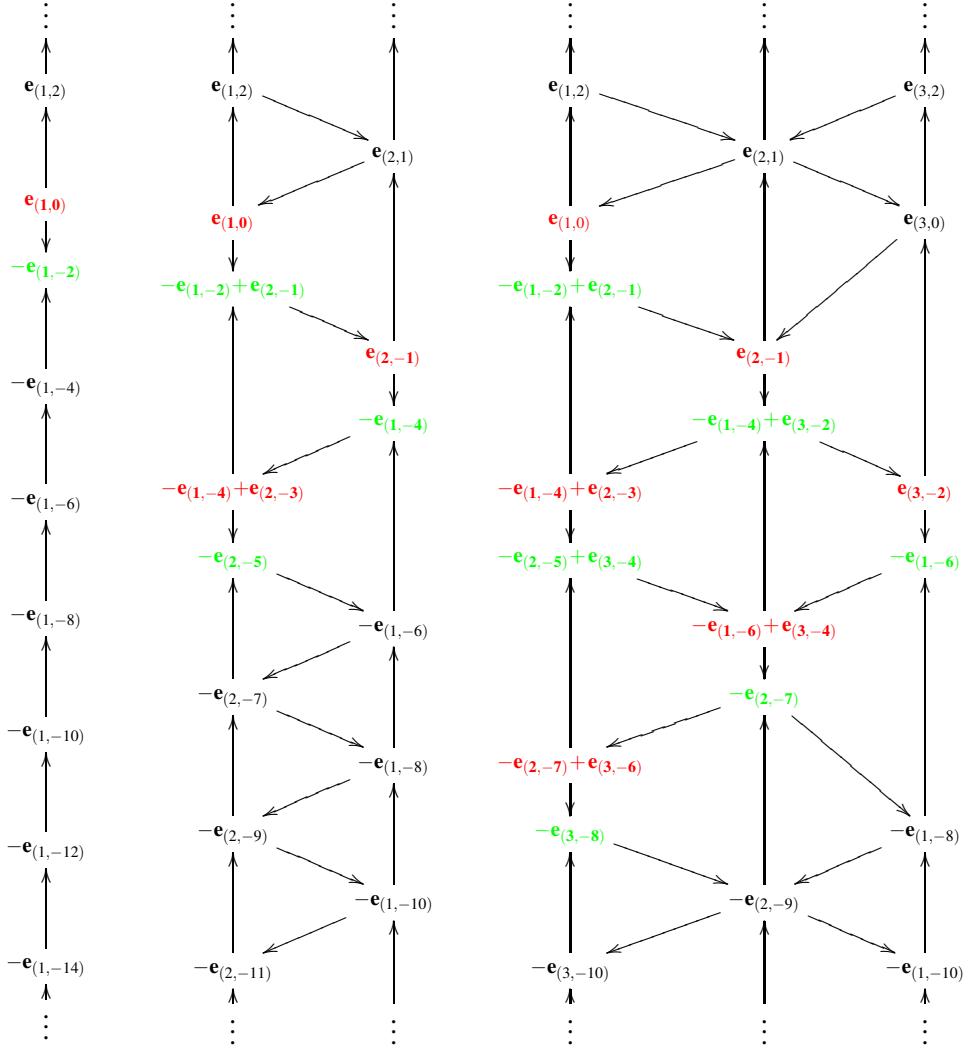

\begin{example}\label{examples-stables}
{\rm
The stabilized $g$-vectors of the cluster variables of a typical initial seed in types $A_1$, $A_2$ and $A_3$
are displayed in Figure~\ref{Fig7e}.
}
\end{example}

\subsection{Calculation of stabilized $g$-vectors}\label{subsec4-3}

In this section we obtain an explicit expression of the above sequence of $g$-vectors, and show that this sequence stabilizes. To do this we first introduce a partition of the quiver $\G$ into horizontal slices. We then show that the $G$-matrices whose column vectors are the $g$-vectors have a block diagonal form, where the diagonal blocks correspond to the slices of the partition. These blocks can be explicitly calculated using the action of the Weyl group $W$ on the weight lattice.

\subsubsection{Slices for $\G$}\label{subsec4-3-1}

Let $\cQ$ be a Dynkin quiver with vertex set $I$, as in \S\ref{subsec-Coxeter}. Let $c$ be the corresponding Coxeter element, and $\G = \G_c$ the corresponding quiver of an initial seed of $\AA_{w_0}$.  We will use the height function 
$l_c : I\ra\Z_{\ge 0}$, which is uniquely determined by the following two
properties:
\begin{itemize}
    \item $0\in\mathrm{Im}(l_c)$,
    \item If $i \ra j$ is an arrow in $\cQ$ then $l_c(i)=l_c(j)+1$.
\end{itemize}
This is well-defined since the underlying Dynkin diagram is a tree. Note that if $l_c(i)=0$, then $i$ is a sink of $\cQ$. 

With this notation, the vertex set $V$ of $\G$ can be written as 
\[
V = \bigcup_{i\in I}  \{(i,k)\in I\times\Z \mid k\in l_c(i)+2\Z\}.
\]
An arrow $(i,m)\ra (j,n)$ in $\G$ is called \emph{vertical} if $i=j$, otherwise it is called \emph{oblique}. For every vertical down arrow $(i,k)\ra (i,k-2)$  we
draw the vertex $(i,k)$ \emph{red} and $(i,k-2)$ \emph{green}.  All other vertices remain black.

\begin{remark} \label{rem:slices}
{\rm
(1) If we delete all oblique arrows from $\G$,  the red vertices  are
precisely the sources, and the green vertices are precisely the sinks  of the remaining quiver.

(2) The vertical down arrows are in natural bijection with the vertices of
the Auslander-Reiten quiver of the path algebra $K\cQ$.

(3) We have for each $m\in \Z_{\ge 0}$ an embedding
$\iota_m : \cQ\ra \G$ defined by:
\[
 i \mapsto (i, l_c(i)+2m), \mbox{ and }  
                      (i\ra j) \mapsto ((i, l_c(i)+2m) \ra (j, l_c(j)+2m)).
\]
}
\end{remark}

The next Lemma is easy to check.
\begin{Lem} \label{lem:slices}
Let $\alpha\df (i,m)\ra (j,n)$ be an oblique arrow in $\G$.
\begin{itemize}
    \item[(a)]  
    The start point $s(\alpha)=(i,m)$ of $\alpha$ is either green or black. 
    The endpoint $t(\alpha)=(j,n)$ of $\alpha$ is either red or black.
    \item[(b1)]
    If $s(\alpha)$ is green,  there exists an \emph{opposite} oblique arrow
    \[
    \alpha^+\df (j, n+2)\ra (i, m+2) \mbox{ and } t(\alpha^+)=(i, m+2) \mbox{ is red}.
    \]
    \item[(b2)] 
    If $s(\alpha)$ is black,  there exists a \emph{parallel} oblique arrow 
    \[\alpha^+\df (i,m+2)\ra (j, n+2),\]
    except when $(j, n+2)$  is green.  
    \item[(c1)] 
    If $t(\alpha)$ is red,  there exists an \emph{opposite} oblique arrow
    \[
    \alpha^-\df (j, n-2)\ra (i, m-2) \mbox{ and  } s(\alpha^-)=(j, n-2)  \mbox{ is green.}
    \]
    \item[(c2)]
    If $t(\alpha)$ is black,  there exists a \emph{parallel} oblique arrow 
    \[\alpha^-\df (i,m-2)\ra (j, n-2),\]
    except when $(i, m-2)$ is red. 
\end{itemize} 
\end{Lem}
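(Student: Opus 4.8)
The plan is to read every assertion off from the local structure of $\G = \G_c$, which by \S\ref{subsec-Coxeter} is obtained from $\Gamma_C$ by performing the doubling of Definition~\ref{Def2-1} at each vertex of the finite subquiver $G_c$. I would keep track of the origin of each feature of $\G$: a red vertex is a (relabelled) vertex of $G_c$; a green vertex is one of the inserted vertices $*$; a black vertex is a vertex of $\Gamma_C$ outside $G_c$ (relabelled); and an oblique arrow of $\G$ descends from an oblique arrow $(a,b)\to(a',b-1)$ of $\Gamma_C$ (with $c_{aa'}=-1$), with its source possibly moved onto a green vertex by step~(iii) of Definition~\ref{Def2-1}. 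A preliminary remark I would record is that in each column the vertices of $G_c$ occupy a contiguous arithmetic progression in $l_c(i)+2\Z$ --- the standard fact that, under the identification of $G_c$ with the Auslander--Reiten quiver of $\mod(K\cQ)$ (Remark~\ref{rem:slices}(2)), the indecomposables sharing a coordinate form a $\tau$-interval. Hence the relabelling of step~(iv), applied columnwise, is a monotone piecewise translation of each column, so it changes the coordinates but not the \emph{shape} of any local configuration; in particular each inserted vertex $*$ ends up exactly two steps below its red parent.

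Part~(a) is then immediate: by step~(iii) every oblique out-arrow of a $G_c$-vertex has been transferred to that vertex's green child, so no oblique arrow leaves a red vertex, whence $s(\alpha)$ is green or black; dually, by step~(ii) an inserted vertex receives only the two vertical arrows, from its red parent and from the vertex below it, and no step of the construction ever redirects an oblique arrow onto such a vertex, so $t(\alpha)$ is red or black. For (b1)--(c2) I would use that away from the red--green region $\G$ coincides, up to relabelling, with $\Gamma_C$, which is invariant under the vertical shift, while around a $G_c$-vertex $\rho$ the doubling merely spreads the local mesh over the red--green pair formed by $\rho$ and its green child $\gamma$: the vertical up-arrow into $\rho$ becomes $\rho\to\gamma\leftarrow(\cdot)$ and the oblique out-arrows of $\rho$ in $\Gamma_C$ are re-routed out of $\gamma$. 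Given $\alpha\colon(i,m)\to(j,n)$, the upward vertical shift then either produces a parallel partner $\alpha^+\colon(i,m+2)\to(j,n+2)$ --- the main case of (b2), where $s(\alpha)$ is black and $(j,n+2)$ is not green --- or meets this splitting: if $s(\alpha)$ is green then $s(\alpha)=\gamma$ for a red $\rho=(i,m+2)$, $\alpha$ is one of the arrows re-routed out of $\gamma$, and the opposite oblique arrow of $\Gamma_C$, pointing into $\rho$ from the same neighbouring column and keeping its target under the construction, supplies the $\alpha^+\colon(j,n+2)\to(i,m+2)$ of (b1); while if $(j,n+2)$ is green then part~(a) forbids a parallel $\alpha^+$, which is the exception stated in (b2). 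Cases (c1) and (c2) follow symmetrically with the downward shift, the exception in (c2) being where a parallel translate would have to start at a red vertex.

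The only genuine work --- and what I expect to be the sole, essentially routine, obstacle --- is to verify in each of the finitely many local configurations (classified by which of a bounded set of neighbours of $\alpha$ lie in $G_c$) that the columnwise relabelling of step~(iv) shifts both endpoints of the partner arrow by exactly $\pm2$, so that the coordinates in (b1)--(c2) emerge exactly as asserted, and that the oblique arrows joining the doubled central region to the surrounding black part of $\G$ behave as in the generic translation-invariant situation. This is a direct inspection, entirely parallel to the sample quivers of Figures~\ref{Fig6} and~\ref{Fig7b}, and needs no idea beyond careful bookkeeping.
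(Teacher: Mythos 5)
Your argument is correct, and since the paper dismisses this lemma with ``easy to check'' and records no proof, there is nothing to compare it against; the direct verification you sketch --- tracking the origin of each feature of $\G$ back to $\Gamma_C$ and Definition~\ref{Def2-1}, reading (a) off from steps (ii)--(iii), and obtaining (b1)--(c2) from the shift-invariance of $\Gamma_C$ plus the re-routing at $G_c$-vertices --- is exactly the expected route, and the ``careful bookkeeping'' you defer does go through. One small caution: the phrase that the columnwise relabelling ``changes the coordinates but not the shape of any local configuration'' should not be read too strongly, since different columns are shifted by different amounts (proportional to the number of $G_c$-vertices they contain), so that in the lower black part oblique arrows typically join $(i,m)$ to $(j,n)$ with $|m-n|\ne 1$; what your argument actually uses, and what is true, is that each column is shifted monotonically and that \emph{within one column} consecutive positions remain coherently spaced, so that the green child lands exactly two steps below its red parent and the partner arrow lands at the asserted coordinates, with the exceptional cases of (b2)/(c2) corresponding precisely to the insertion points where this spacing jumps from $2$ to $4$.
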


\begin{Def}
For $m\in\Z$, let  
\[
   I(m):=\{(i, l_c(i)+2m)\mid i\in I\}\subset V, 
\]
and denote by $\cQ(m)$ the full subquiver of  $\G$ with vertex set $I(m)$.
\end{Def}

Note, that $\cQ(m)$ contains no vertical arrows of $\G$, and for $m\geq 0$,
the quiver $\cQ(m)$ is canonically isomorphic to $\cQ$. We regard the subquivers $\cQ(m)$ 
as \emph{horizontal slices} of $\G$. This is illustrated in Figure~\ref{Fig7f}, in which
for the quiver 
\[
 \cQ : 1 \leftarrow 2 \leftarrow 3 \ra 4
\]
of type $A_4$, the slices $\cQ(0), \cQ(-1), \ldots,  \cQ(-6)$ of $\G$ are surrounded by grey lines.
The next proposition is easy to check.
\begin{figure}
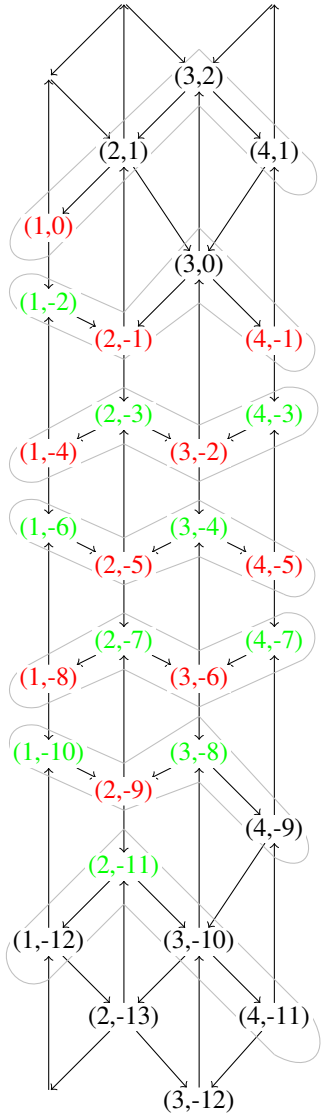

\begin{center} 
\tikzfig{quiverA4-1new}
\end{center} 
\caption{\label{Fig7f} {\it The horizontal slices of $\G$ in type $A_4$ for $c=s_1s_2s_4s_3$.}}
\end{figure}

\begin{Prop} \label{prop:slices}
 Let $m\in\Z$. 
\begin{itemize}
\item[(a)]
 Each green vertex of $\cQ(m)$ is a source and each red vertex of $\cQ(m)$ is a sink.   
\item[(b)]
Let $I_{\,\mathrm{red}}(m)$ denote the subset of red vertices of $\cQ(m)$. Then the shift map
\[
I(m)\ra I(m-1),\quad  (i, l(i)+2m)\mapsto (i, l(i)+2m-2)\quad (i\in I),
\]
induces an isomorphism of quivers
\[
\left(\prod_{v\in I_{\,\mathrm{red}}(m)} s_v\right)\  \cQ(m) \stackrel{\sim}{\longrightarrow} \cQ(m-1),
\]
where $s_v$  denotes the quiver reflection which changes the orientation of all arrows 
incident with $v$.  
In particular, the underlying unoriented graph of $\cQ(m)$ is isomorphic to the Dynkin diagram of $\g$ for all $m\in\Z$.
\item[(c)]
Dually, let $I_{\,\mathrm{grn}}(m)$ denote the subset of green vertices of $\cQ(m)$. Then the shift map
\[
I(m)\ra I(m+1),\quad  (i, l(i)+2m)\mapsto (i, l(i)+2m+2)\quad (i\in I),
\]
induces an isomorphism of quivers
\[
\left(\prod_{v\in I_{\,\mathrm{grn}}(m)} s_v\right)\  \cQ(m) \stackrel{\sim}{\longrightarrow} \cQ(m+1).
\]
\item[(d)] 
Let $\nu\df I\ra I$ be the Nakayama permutation defined by $w_0(\a_i) = -\a_{\nu(i)}$. 
Suppose that for
some $m<0$ we have $I_{\mathrm{red}}(m)=\emptyset$.  Then the twisted shift
\[
I(0) \ra I(m), \quad  (i, l_c(i)) \mapsto (\nu(i), l_c(\nu(i))+2m)\quad (\forall i\in I)
\]
induces an isomorphism $\cQ \stackrel{\sim}{\longrightarrow}  \cQ(m)$ of quivers.
\end{itemize}
\end{Prop}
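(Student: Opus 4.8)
\emph{Proof strategy.} The statement is purely combinatorial, about the quiver $\G=\G_c$, and I would derive all four parts from Lemma~\ref{lem:slices} together with the local description of $\G$ in Remark~\ref{rem:slices}. Part (a) I would dispose of first, since it is needed even to make sense of (b)--(d): the vertices of $\cQ(m)$ lie in pairwise distinct columns of $\G$, so every arrow of $\cQ(m)$ is oblique, and by Lemma~\ref{lem:slices}(a) an oblique arrow never starts at a red vertex nor ends at a green one; hence in $\cQ(m)$ a green vertex is a source and a red vertex is a sink. Consequently no arrow of $\cQ(m)$ joins two red vertices, nor two green ones, so the reflections $s_v$ with $v\in I_{\mathrm{red}}(m)$ (resp.\ $v\in I_{\mathrm{grn}}(m)$) commute pairwise and, by (a), each acts on $\cQ(m)$ as a bona fide quiver reflection at a sink (resp.\ a source); thus the products $\prod_{v\in I_{\mathrm{red}}(m)}s_v$ and $\prod_{v\in I_{\mathrm{grn}}(m)}s_v$ are well defined.

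Next I would prove (b) --- and (c) is obtained by the mirror argument, interchanging ``red'' with ``green'', the clauses $(b1),(b2)$ of Lemma~\ref{lem:slices} with $(c1),(c2)$, and the downward shift with the upward one. Since $\sigma\colon I(m)\to I(m-1)$, $(i,l_c(i)+2m)\mapsto(i,l_c(i)+2m-2)$, is a bijection of vertex sets, it suffices to match the arrows of $\bigl(\prod_{v\in I_{\mathrm{red}}(m)}s_v\bigr)\cQ(m)$ with those of $\cQ(m-1)$. Given an arrow $\alpha\colon u\to v$ of $\cQ(m)$, which is oblique, (a) tells us $v$ is red or black. If $v$ is red, reflecting turns $\alpha$ into $v\to u$, and Lemma~\ref{lem:slices}(c1) supplies exactly the opposite arrow from $\sigma(v)$ to $\sigma(u)$, an arrow of $\cQ(m-1)$. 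If $v$ is black, reflecting leaves $\alpha$ alone, and Lemma~\ref{lem:slices}(c2) supplies the parallel arrow from $\sigma(u)$ to $\sigma(v)$ in $\cQ(m-1)$ --- unless $\sigma(u)$ is red. The point I would need to nail down is that this exceptional clause never fires when $v$ is black: a short direct inspection of the shape of $\G$ near a red vertex (equivalently, near the top of a column's red--green band) shows that if $\sigma(u)$ is red then every oblique arrow out of $u$ in $\cQ(m)$ has red target, so $v$ cannot be black. Running the clauses $(b1),(b2)$ upward then shows every arrow of $\cQ(m-1)$ arises in one of these two ways, so $\sigma$ is an isomorphism once $\cQ(m)$ has been reflected at its red vertices. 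The ``in particular'' part follows because quiver reflections preserve the underlying unoriented graph: from $\cQ(m)\cong\cQ$ for $m\ge0$ (observed just before the proposition) and (b) applied repeatedly downward, the underlying graph of $\cQ(m)$ is the Dynkin diagram of $\g$ for all $m\in\Z$.

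Finally, for (d) I would iterate (b). For each $m'$ with $0\ge m'>m$, (b) gives $\bigl(\prod_{v\in I_{\mathrm{red}}(m')}s_v\bigr)\cQ(m')\stackrel{\sim}{\longrightarrow}\cQ(m'-1)$ via the shift $i\mapsto(i,l_c(i)+2(m'-1))$; composing these, starting from $\cQ(0)\cong\cQ$, produces an isomorphism $w\cdot\cQ\stackrel{\sim}{\longrightarrow}\cQ(m)$ realized by the \emph{untwisted} shift $\iota_m\colon i\mapsto(i,l_c(i)+2m)$, where $w\cdot\cQ$ denotes the quiver obtained from $\cQ$ by the quiver reflections at all the red vertices in the slices $\cQ(0),\dots,\cQ(m+1)$, read from top to bottom. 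When $I_{\mathrm{red}}(m)=\emptyset$ (with $m<0$) these are all the red vertices of $\G$ --- below the slice where a column's red--green band terminates, that column carries no further red vertex --- and reading them slice by slice is exactly the reduced decomposition of $w_0$ adapted to $\cQ$ that was used in \S\ref{subsec-Coxeter} to build $\G_c$ from $G_c$; hence $w$ realizes the action of $w_0$. Since the adapted reflection sequence of $w_0$ returns the orientation $\cQ$ to itself while relabelling its vertices by the Nakayama involution $\nu$ (the quiver counterpart of $w_0(\a_i)=-\a_{\nu(i)}$), we get $w\cdot\cQ\cong\cQ$ via $i\mapsto\nu(i)$; composing with $\iota_m$ yields precisely the twisted shift $(i,l_c(i))\mapsto(\nu(i),l_c(\nu(i))+2m)$ as the required isomorphism $\cQ\stackrel{\sim}{\longrightarrow}\cQ(m)$.

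I expect the main obstacle to be the arrow bookkeeping in (b) and (c): one must be certain that, between consecutive slices, exactly the arrows incident to a red (resp.\ green) vertex reverse and no arrow is created or destroyed --- that is, that the exceptional clauses in Lemma~\ref{lem:slices}(b2),(c2) never interfere. A secondary difficulty lies in (d): recognizing the ordered slice-by-slice product of the red reflections as a reduced word for $w_0$ (which is where the hypothesis that $c$ is the Coxeter element adapted to $\cQ$ is essential) and identifying the induced permutation of the Dynkin diagram with $\nu$.
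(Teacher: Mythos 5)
Your strategy --- deriving all four parts from Lemma~\ref{lem:slices} and Remark~\ref{rem:slices}, establishing (a) first, then matching arrows for (b), (c), and iterating (b) for (d) --- is the natural one, and the paper itself offers no more (it simply declares the proposition ``easy to check''). Part (a) is handled correctly, and reducing (b), (c) to the clauses of Lemma~\ref{lem:slices} is the right move.

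The genuine gap is the point you flag but do not actually close: you need that the exceptional clauses of Lemma~\ref{lem:slices}(b2), (c2) never fire for an arrow lying in a slice, and the ``short direct inspection'' you appeal to is not available from Lemma~\ref{lem:slices} alone. Concretely, suppose $u\to v$ in $\cQ(m)$ with $v$ black and $\sigma(u)$ red. Neither endpoint is reflected, so the reflected-and-shifted quiver would contain $\sigma(u)\to\sigma(v)$; but by (a) the red vertex $\sigma(u)$ cannot be the source of any oblique arrow, so that arrow is absent and (b) would fail --- hence this configuration must be impossible. Lemma~\ref{lem:slices} does dispose of two of the three subcases: $\sigma(u)$ red forces the $\cQ(m-1)$-arrow between $\sigma(u)$ and $\sigma(v)$ to be $\sigma(v)\to\sigma(u)$, and applying (b1) to it when $\sigma(v)$ is green makes $v$ red, while applying (b2) when $\sigma(v)$ and $u$ are both black yields a parallel arrow $v\to u$ contradicting $u\to v$. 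But when $u$ is green while $v$ and $\sigma(v)$ are both black, (b2) and (c2) both fall into their exceptions and the lemma says nothing. Ruling out this residual case requires an independent structural input, namely the mesh/convexity property of the Auslander--Reiten quiver $G_c$ inside $\G$: when two red vertices of column $i$ are consecutive (so the $G_c$-vertices they represent are $\tau$-adjacent, with $u$ the inserted green in between), all middle terms of the almost-split sequence joining them lie in $G_c$, which is precisely what forces $v$ to be red. That fact is true but is not a consequence of Lemma~\ref{lem:slices}, and supplying it is what your argument for (b) (and dually (c)) is missing.

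Two smaller remarks. The ``in particular'' clause should be organised as a downward induction on $m$ starting from $\cQ(0)\cong\cQ$, since your arrow-matching already needs to know that both slices have the Dynkin diagram as underlying graph. And in (d), by part (a) the red vertices are the \emph{sinks} of their slices, so the slice-by-slice red reflections are sink reflections: the red reading is not the word ``adapted to $\cQ$'' in the paper's sense (the adapted word is the green reading, adapted to $\cQ(h_c)$), and the statement that a reduced word for $w_0$ realised by successive sink reflections carries $\cQ$ back to itself up to the Nakayama involution $\nu$ is a correct but nontrivial classical assertion that you quote rather than prove.
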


\subsubsection{Transformation of the G-matrix under green mutations}

From now we fix $I=\{1, 2, \ldots, n\}$.  We define a 
total ordering on the 
set $V$ of vertices of $\G$ by the following rule:
\begin{equation}  \label{eq:Gam-order}
(i, l_c(i)+2a) < (j, l_c(j)+ 2b) \Longleftrightarrow (a<b)  \mbox{ or } (a=b \mbox{ and } i<j)
\end{equation}
In the sequel, when we consider a matrix with rows and columns indexed by $V$, we always assume that its rows and columns are ordered using this ordering.

Recall the sequence of quivers $\G^{(k)}\ (k\ge 0)$ defined in \S\ref{subsec4-2}. 
All these quivers are isomorphic to 
$\G^{(0)} = \G$ by substituting every vertex $(i,l)$ by $(i,l-2k)$.
In particular, every quiver $\G^{(k)}$ has again $N$ red and $N$ green vertices given by the end points
of the $N$ down arrows, where 
$N$ is the number of positive roots of the Dynkin diagram.
Let $V^{(k)}_{\,\mathrm{grn}}\subset V$ denote the subset of green vertices of $\G^{(k)}$.
We define the following sequence of mutations of $\G^{(k)}$:
\[
\mugreen^{(k)}:=  \prod_{(i,l)\in V^{(k)}_{\,\mathrm{grn}}} \mu_{(i,l)}.
\]
Thus $\mugreen^{(k)}$ consists of $N$ pairwise commuting mutations.
As explained in \S\ref{subsec4-2}, we have 
\[
\mugreen^{(k)}(\G^{(k)})=\G^{(k+1)}.
\]

Let 
\[
G^{(0)}= \left(g^{(0)}_{x,y}\right)_{x, y\in V}\in\Z^{V\times V}
\]
be the \emph{initial} g-matrix with $g^{(0)}_{x,y}=\delta_{x,y}$,
where $\delta$ is the Kronecker symbol.  
The convention is that the $g$-vectors are the column vectors of the $G$-matrix.

Now, we define recursively 
\[
G^{(k+1)}:=\tmugreen^{(k)}(G^{(k)}),
\]
where $\tmugreen^{(k)}$ consists in applying to the column vectors of $G^{(k)}$ the transformations of \S\ref{subsec4-1} (\ref{recursion1}) for all green vertices $l$ of $\G^{(k)}$. Thus, the columns of $G^{(k)}$ are the $g$-vectors
of the cluster variables of the initial seed $\mathbf{x}$ attached to $\G^{(0)}$ with respect 
to the reference seed $\mugreen^{(k-1)}\circ\cdots\circ\mugreen^{(0)}(\mathbf{x})$ attached to $\G^{(k)}$.

For each $i\in I$ let $\bt_i\in\Z^{I\times I}$ be the matrix
with entries 
\begin{equation}  \label{eq:t_i}
    t^{(i)}_{jk}= 
\left\{
\begin{array}{cl}
\delta_{jk} &\mbox{if } k\neq i,\\
\delta_{ji}-c_{ji} &\mbox{if } k=i.   
\end{array}
\right.
\end{equation}

\begin{remark} \label{rem:refl}
{\rm
Comparing with Eq.\,(\ref{fundw_roots}), (\ref{action-s-fw-sr}), we see that $\bt_i$ is the matrix of  the simple reflection $s_i\in W$ with respect to the basis of fundamental weights. 
Note that left multiplication of a matrix $M$
by $\bt_i$ can be seen as a row transformation:  row $i$ of $M$ is added to each row $j$ of $M$ such that $j$ is connected to $i$ by an edge in the Dynkin diagram, and then row $i$ of $M$ is multiplied by $-1$.

Observe also that the transposed matrix $\bs_i:=\bt_i^t$ is the matrix of the same reflection
$s_i\in W$ with respect to the basis of simple roots.
}
\end{remark}

\begin{Lem} \label{lem:green1}
    Suppose that in each row of $G^{(k)}$ which corresponds to a 
    green vertex of $\G^{(k)}$, all entries are non-negative.  
    Then, the transformation $\tmugreen^{(k)}$ of $G^{(k)}$  can be
    realized by left multiplication by a block diagonal matrix
    $\Mgreen^{(k)}$, where each diagonal block $\Mgreen^{(k)}(m)$
    corresponds to the subset $I(m)\subset V$.
More precisely,  in this case we have
\[
\Mgreen^{(k)}(m):=  \prod_{i\in I^{(k)}_{\mathrm{grn}}(m)} \bt_i,
\]    
where
\[
\Igrn^{(k)}(m):=\{i\in I \mid (i, l_c(i)+2m)\in\G^{(k)}\ 
\mathrm{ is\ green}\},
\]
and we identify each $(i, l_c(i)+2m)\in I(m)$ with $i\in I$.
\end{Lem}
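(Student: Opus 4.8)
The plan is to apply the $g$-vector recursion~(\ref{recursion1}) once at each green vertex of $\G^{(k)}$, using the non-negativity hypothesis to decide which branch of~(\ref{recursion1}) is in force, and then to recognise the resulting integer matrix from the slice description of $\G$ in \S\ref{subsec4-3-1}. Two structural facts about $\G^{(k)}$ will be used throughout; since $\G^{(k)}$ is obtained from $\G=\G^{(0)}$ by the label shift $(i,l)\mapsto(i,l-2k)$, it suffices to check them for $\G$. \textbf{(a)} \emph{No two green vertices are adjacent in $\G^{(k)}$.} Indeed, by Remark~\ref{rem:slices}(1) a green vertex emits no vertical arrow; by Lemma~\ref{lem:slices}(a) every oblique arrow leaving it ends at a red or black vertex, and every arrow entering it is vertical and starts at a red or black vertex. \textbf{(b)} \emph{Every arrow of $\G^{(k)}$ leaving a green vertex $l\in I(m)$ is oblique, ends in the slice $I(m)$, and the set of its endpoints is exactly the set of neighbours of $l$ in the Dynkin diagram, with a single arrow to each} (single, because $\g$ is of type $A,D,E$). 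That the endpoints lie in $I(m)$ is the only delicate point: by Lemma~\ref{lem:slices}(b1) each such arrow, say $\alpha\df(i,m)\to(j,n)$ with $l=(i,m)$, is matched with an oblique arrow $\alpha^+\df(j,n+2)\to(i,m+2)$ whose target $(i,m+2)$ is the red partner of $l$ and hence lies in $I(m+1)$; together with the fact, read off from the construction of $\G_c$ in Definition~\ref{Def2-1}(iii)--(iv), that no oblique arrow of $\G$ has a red endpoint in a slice other than its own, this forces $(j,n)\in I(m)$. That $l$ is a source of $\cQ(m)$, whose underlying graph is the Dynkin diagram, is Proposition~\ref{prop:slices}(a) (and the last assertion of~\ref{prop:slices}(b)).

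Next I would run the recursion. Fix a column of $G^{(k)}$, i.e.\ the $g$-vector $\bg=(g_v)_{v\in V}$ of a cluster variable of $\mathbf{x}$ with respect to $\Si^{(k)}$, and perform the mutations $\mu_l$ for $l\in V^{(k)}_{\mathrm{grn}}$ one after another (the order being immaterial, since they pairwise commute). When the mutation at $l$ is reached, the arrows of the current quiver incident to $l$ still coincide with those of $\G^{(k)}$ --- by~(a), $l$ lies outside the closed neighbourhood of every previously treated green vertex, and mutation at a vertex alters only the arrows incident to that vertex and those between its neighbours --- and likewise the current $l$-coordinate of $\bg$ still equals $g_l$, because by~(a) no previously treated green vertex has $l$ among its out-neighbours. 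Since $g_l\ge 0$ by hypothesis, the middle branch of~(\ref{recursion1}) applies: the $l$-coordinate is negated and $N_{l\to v}\,g_l$ is added to the $v$-coordinate for $v\ne l$, where $N_{l\to v}$ is the number of arrows $l\to v$ in $\G^{(k)}$. By~(a) this correction is zero whenever $v$ is green, so after treating all $l\in V^{(k)}_{\mathrm{grn}}$ the new column $\bg'$ is given by
\[
g'_{v}=
\left\{
\begin{array}{ll}
-\,g_{v} & \mbox{if $v$ is green,}\\[1mm]
g_{v}+\sum_{l\in V^{(k)}_{\mathrm{grn}}}N_{l\to v}\,g_{l} & \mbox{if $v$ is not green.}
\end{array}
\right.
\]
Being independent of the chosen column, this exhibits $\tmugreen^{(k)}$ as left multiplication by the matrix $\Mgreen^{(k)}\in\Z^{V\times V}$ whose $(v,w)$ entry is $-\delta_{v,w}$ if $v,w$ are both green, $N_{w\to v}$ if $w$ is green and $v$ is not, and $\delta_{v,w}$ if $w$ is not green.

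Finally I would identify $\Mgreen^{(k)}$. Its only possibly nonzero off-diagonal entries are the $N_{w\to v}$ with $w$ green, and by~(b) these vanish unless $w$ and $v$ lie in a common slice $I(m)$; hence $\Mgreen^{(k)}$ is block diagonal with blocks indexed by the slices. Identifying $I(m)$ with $I$ via $(i,l_c(i)+2m)\leftrightarrow i$, fact~(b) says that the $I(m)$-block has $i$-th row equal to $\be_i$ when $i\notin\Igrn^{(k)}(m)$ and equal to $-\be_i+\sum_{j\sim i}\be_j$ (sum over the Dynkin neighbours $j$ of $i$) when $i\in\Igrn^{(k)}(m)$. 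By Remark~\ref{rem:refl} this is exactly the effect of left multiplication by $\bt_i$; and since by~(a) the set $\Igrn^{(k)}(m)$ is independent in the Dynkin diagram, the matrices $\bt_i$ for $i\in\Igrn^{(k)}(m)$ commute and the corresponding row operations do not interfere, so composing them over all $i\in\Igrn^{(k)}(m)$ yields $\Mgreen^{(k)}(m)=\prod_{i\in\Igrn^{(k)}(m)}\bt_i$. I expect the main obstacle to be fact~(b): ruling out cross-slice arrows out of green vertices (equivalently, into red vertices). Once that is secured from the analysis of \S\ref{subsec4-3-1}, the remaining steps are a bookkeeping exercise made painless by the independence of the green vertices.
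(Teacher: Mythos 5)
Your proof is correct and follows essentially the same route as the paper's: establish that green vertices are pairwise non-adjacent (so the order of the green mutations is immaterial and the recursion~(\ref{recursion1}) is applied with the original arrows of $\G^{(k)}$ and the original $l$-coordinate at each step), observe that the sign hypothesis forces the $g_l\ge0$ branch, note via the slice structure that all arrows out of a green vertex land in its own slice $I(m)$ and realize the Dynkin neighbourhood, and recognise the resulting row operation as left multiplication by the block $\prod\bt_i$ using Remark~\ref{rem:refl}. The only small blemish is in the last paragraph: the vectors $\be_i$ and $-\be_i+\sum_{j\sim i}\be_j$ that you list are the \emph{columns} of the block $\Mgreen^{(k)}(m)$, not its rows (which is also what matches column $i$ of $\bt_i$); since you then correctly invoke Remark~\ref{rem:refl} for the row-operation interpretation of left multiplication, the conclusion is unaffected, but the wording should be fixed.
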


Note that by construction $\Igrn^{(k+1)}(m)=\Igrn^{(k)}(m+1)$.

\begin{proof}
    Recall, that $\G^{(k)}$ is obtained from $\G=\G^{(0)}$ by 
    relabelling the vertices. More precisely we have to substitute 
    $(j, l)$ by $(j,l-2k)$  for all $(j,l)\in V$,  and keep the colouring.
    In particular, the subquivers $\cQ(m)$  can also be considered for $\G^{(k)}$.
    Up to the corresponding shift they have the same properties as in $\G$.
    
    Thus, by Remark~\ref{rem:slices} and Proposition~\ref{prop:slices}, 
    two different green vertices of $\G^{(k)}$ are never joined by an arrow.  
    The sign hypothesis on $G^{(k)}$ implies that we are in the second case of Equation~(\ref{recursion1}).
    It follows that the transformation $\tilde{\mu}_{(i, l_c(i)+2m)}$ of $G^{(k)}$ corresponding to a single green vertex $(i, l_c(i)+2m)\in \cQ(m)\subset\G^{(k)}$  is a row transformation. 
    Moreover, transformations for different green vertices commute.
    For this reason it is sufficient to show that the matrix for the row 
    transformation corresponding to a single green vertex has the alluded block diagonal 
    form.  In fact,  by Proposition~\ref{prop:slices},
    for a green vertex  $(i, l_c(i)+2m)\in \cQ(m)$ as above, the arrows in $\G^{(k)}$
    which start at $(i, l_c(i)+2m)$ are precisely of the form 
    \[
    (i, l_c(i)+2m) \ra (j, l_c(j)+2m) \mbox{ with } c_{i,j}<0.
    \]
    This means that, by Equation~(\ref{recursion1}) and Remark~\ref{rem:refl}, the corresponding transformation is given by putting the 
    matrix $\bt_i$ in the diagonal block corresponding to $\cQ(m)$, whilst 
    all other diagonal blocks are identity matrices. \cqfd      
\end{proof}

Let $\cQ'$ be a Dynkin quiver with vertex set $I=\{1,2,\ldots,n\}$, and let $N$ denote the number of its positive roots. Recall that we say that a sequence $\bi=(i_r, i_{r-1}, \ldots, i_1)$ of vertices 
 of $\cQ'$ is \emph{adapted} to $\cQ'$ if $r\leq N$, and for every $k=0, 1, \ldots, r-1$ we have 
 that $i_{k+1}$ is a source of $s_{i_k}\cdots s_{i_2}s_{i_1}(\cQ')$.
    Then $(i_r, \ldots, i_1)$ is a reduced expression for $w= s_{i_r}\cdots s_{i_1}\in W$, and we will
    also say that $s_{i_r}\cdots s_{i_1}$ is adapted to $\cQ'$. 

\begin{Lem} \label{lem:reflt}
Let $\bi=(i_r, i_{r-1}, \ldots, i_1)$ be adapted to the Dynkin quiver $\cQ'$.
    Then, with the notation from~(\ref{eq:t_i}), the matrix
    \[
    T:=\bt_{i_{r-1}}\cdots \bt_{i_2}\bt_{i_1}\in \Z^{I\times I}
    \]
    has the following properties:
    \begin{itemize}
        \item Each row of $T$ is the expansion of a certain root in terms of the simple roots.
        \item Row $i_r$ of $T$ corresponds to a \emph{positive} root.
    \end{itemize}
\end{Lem}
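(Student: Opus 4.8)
The plan is to recognize $T$ as the matrix of an element of $W$ and then read off the two assertions from elementary facts about reduced words. By Remark~\ref{rem:refl}, left multiplication by $\bt_i$ realizes the simple reflection $s_i$ on the basis of fundamental weights; since the matrices act on column vectors, the rightmost factor $\bt_{i_1}$ acts first, so $T=\bt_{i_{r-1}}\cdots\bt_{i_2}\bt_{i_1}$ is exactly the matrix, in the fundamental-weight basis, of the element $u:=s_{i_{r-1}}\cdots s_{i_2}s_{i_1}\in W$. Taking the transpose and using that $\bt_i^t=\bs_i$ is the matrix of $s_i$ in the basis of simple roots, one gets $T^t=\bs_{i_1}\bs_{i_2}\cdots\bs_{i_{r-1}}$, which is the matrix of $u^{-1}=s_{i_1}s_{i_2}\cdots s_{i_{r-1}}$ in the simple-root basis. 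Therefore the $j$-th row of $T$ (equal to the $j$-th column of $T^t$) is precisely the coordinate vector of $u^{-1}(\a_j)$ expressed in the simple roots. Since $u^{-1}(\a_j)$ is the image of a simple root under an element of $W$, it is a root; this proves the first assertion.

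For the second assertion I would argue that the row indexed by $i_r$, which by the above is the coordinate vector of $u^{-1}(\a_{i_r})$, is a \emph{positive} root. As recalled just before the statement, adaptedness of $\bi=(i_r,i_{r-1},\ldots,i_1)$ to $\cQ'$ means that this word is a reduced expression for $w=s_{i_r}\cdots s_{i_1}$; in particular its contiguous subword $(i_{r-1},\ldots,i_1)$ is reduced, so $\ell(u)=r-1$, whereas $\ell(s_{i_r}u)=\ell(w)=r=\ell(u)+1$. By the standard length criterion $\ell(s_k v)=\ell(v)+1\Longleftrightarrow v^{-1}(\a_k)>0$ (applied with $v=u$ and $k=i_r$), this forces $u^{-1}(\a_{i_r})>0$, as required.

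There is no genuine difficulty here; the only point needing care is the bookkeeping — keeping straight which basis is in play (fundamental weights for the $\bt_i$ acting on columns, simple roots for their transposes, hence for the rows of $T$) and which element of $W$ occurs ($u$ versus $u^{-1}$). Note that adaptedness is invoked only to guarantee that $(i_r,\ldots,i_1)$ is reduced (and that the labels $i_k$ are meaningful); alternatively one could induct on $r$, using the row-operation description of left multiplication by $\bt_{i_r}$ from Remark~\ref{rem:refl} to track how the row labelled by the source $i_r$ transforms, but the length-function argument above is more direct.
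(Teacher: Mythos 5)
Your proof is correct, and the argument for the first bullet coincides exactly with the paper's: one reads row $j$ of $T$ as the $j$-th column of $T^t = \bs_{i_1}\cdots\bs_{i_{r-1}}$, which by Remark~\ref{rem:refl} is the expansion in simple roots of $s_{i_1}\cdots s_{i_{r-1}}(\a_j)$, hence a root. For the second bullet the paper quotes the classical parametrization of the inversion set of $w = s_{i_r}\cdots s_{i_1}$ — that $\a_{i_1},\ s_{i_1}(\a_{i_2}),\ \ldots,\ s_{i_1}\cdots s_{i_{r-1}}(\a_{i_r})$ are precisely the positive roots made negative by $w$ — and singles out the last of these. You instead apply the length criterion $\ell(s_k v)=\ell(v)+1 \Longleftrightarrow v^{-1}(\a_k)>0$ directly with $v = u = s_{i_{r-1}}\cdots s_{i_1}$ and $k = i_r$, using that $(i_r,\ldots,i_1)$ is reduced so that $\ell(s_{i_r}u)=\ell(u)+1$. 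These two arguments are standard and essentially equivalent (the inversion-set description is typically proved by iterating the length criterion), so this is a close variant rather than a different route; your version is marginally more economical in that it invokes a single instance of the length criterion rather than the full parametrization. Both proofs use adaptedness of $\bi$ to $\cQ'$ only to guarantee that the word is reduced, which you correctly flag.
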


\begin{proof}
By construction, $(i_r, \ldots, i_1)$ is a reduced expression for $w= s_{i_r}\cdots s_{i_1}\in W$. 
    Let $(\be_i)_{i\in I}$ be the coordinate basis of $\Z^I$. Then for $j\in I$ we have, in view of 
    Remark~\ref{rem:refl}, 
 \[
 (\be_j^t (\bt_{i_{r-1}}\cdots \bt_{i_1}))^t= (\bs_{i_1}\cdots \bs_{i_{r-1}})\be_j,  
 \]   
which is the expansion of a root in terms of the simple roots.

It is classical  that in this situation  
\[
\alpha_{i_1},\ s_{i_1}(\alpha_{i_2}),\ s_{i_1}s_{i_2}(\alpha_{i_3}), \ldots,\  
s_{i_1}s_{i_2}\cdots s_{i_{r-1}}(\alpha_{i_r})
\]
is precisely a list of the \emph{positive} roots which are sent by $w$ to negative roots.  
Thus, in particular
\[
\be_{i_r}^t(\bt_{i_{r-1}}\cdots \bt_{i_2}\bt_{i_1})
\]
is the expansion of a positive root in terms of the simple roots. \cqfd
\end{proof}

\smallskip
Using the notation of Lemma~\ref{lem:green1}, we define for each $m\in\Z$ :    
\[
\begin{array}{lcl}
\bT_m         &:=&  \prod_{i\in \Igrn^{(0)}(m)}\! \bt_i \in\Z^{I\times I},\\[2mm]
\bS_m         &:=& \prod_{i\in \Igrn^{(0)}(m)}\! \bs_i \in\Z^{I\times I},\\[2mm]
S_m           &:=& \prod_{i\in \Igrn^{(0)}(m)}\! s_i \in W.
\end{array}
\]
All factors of these products pairwise commute since different green vertices are never
joined by an arrow. Let
\[
h_c:=  \min\left\{m\in\Z\mid \Igrn^{(0)}(m)\neq\emptyset\right\}.
\]
Note that $h_c<0$.
If $m\not \in \{-1,-2,\ldots, h_c\}$ then $\Igrn^{(0)}(m) = \emptyset$, and it is understood that $\bT_m = \mathrm{Id}_n$,
the identity matrix of size $n$.
It follows from the construction of $\G$ that $S_{-1}\cdots S_{-2} \cdots S_{h_c}$ is a reduced expression for the longest element $w_0\in W$  which is adapted to $ \cQ(h_c)$. 

\begin{Thm} \label{thm:g-vect1}
    For each $k \ge 0$, the $G$-matrix $G^{(k)}$ is of block diagonal 
    form $\mathrm{diag}(G^{(k)}(m))\mid m\in\Z)$, where each diagonal block $G^{(k)}(m)$ corresponds to
    the vertex set $I(m)\subset V$.  More precisely,  we have 
\[
G^{(k)}(m)=     \bT_{m+k-1}\cdots \bT_{m+1}\cdot \bT_m
\]
under the usual identifications.  In particular,  for $k\gg |m|$ we have
\begin{equation}\label{eq:Gkgg0}
G^{(k)}(m)=
\left\{
\begin{array}{cl}
    \mathrm{Id}_n &\mbox{if } m>0,\\
    \bT_{-1} \bT_{-2}\cdots \bT_m  &\mbox{if } h_c\leq m \leq -1,\\
    \bT_{-1} \bT_{-2}\cdots \bT_{h_c} &\mbox{if } m\leq h_c,
\end{array}
\right.
\end{equation}
which shows that the sequence of $G$-matrices $(G^{(k)})_{k\ge 0}$ has a well-defined 
limit 
\[
G^{(\infty)} := \lim_{k\to +\infty} G^{(k)}.
\]
\end{Thm}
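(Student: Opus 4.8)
The plan is to prove the block-diagonal structure and the explicit formula $G^{(k)}(m) = \bT_{m+k-1}\cdots\bT_{m+1}\bT_m$ by induction on $k$, the case $k=0$ being immediate since $G^{(0)} = \mathrm{Id}$ and the empty product is the identity. For the inductive step, the key input is Lemma~\ref{lem:green1}, which says that when the green rows of $G^{(k)}$ are non-negative, the transformation $\tmugreen^{(k)}$ is left multiplication by the block diagonal matrix $\Mgreen^{(k)} = \mathrm{diag}(\Mgreen^{(k)}(m))$ with $\Mgreen^{(k)}(m) = \prod_{i\in\Igrn^{(k)}(m)}\bt_i = \bT_{m+k}$ (using $\Igrn^{(k)}(m) = \Igrn^{(0)}(m+k)$). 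Granting the sign hypothesis, we get $G^{(k+1)}(m) = \bT_{m+k}\cdot G^{(k)}(m) = \bT_{m+k}\bT_{m+k-1}\cdots\bT_m$, which is the claimed formula with $k$ replaced by $k+1$. So the heart of the matter is to verify the sign hypothesis of Lemma~\ref{lem:green1} holds at every step, i.e. that each green row of $G^{(k)}$ has only non-negative entries.

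First I would set up the slice-by-slice analysis: the green vertices of $\G^{(k)}$ in slice $I(m)$ are those $i\in\Igrn^{(k)}(m) = \Igrn^{(0)}(m+k)$, and by the inductive formula the corresponding rows of $G^{(k)}(m)$ are the rows indexed by $\Igrn^{(0)}(m+k)$ of the product $\bT_{m+k-1}\cdots\bT_m$. Now set $m' = m+k$; since $\Igrn^{(0)}(m')\neq\emptyset$ forces $h_c\le m'\le -1$, the product $\bT_{m+k-1}\cdots\bT_m = \bT_{m'-1}\cdots\bT_{m'-k}$ is, by Proposition~\ref{prop:slices}(b) and the remarks following it, exactly a product $\bt_{j_1}\cdots\bt_{j_\ell}$ corresponding to a sequence adapted to the Dynkin quiver $\cQ(m'-k)$, with the factors $\bT_{m'}, \bT_{m'+1},\ldots$ (which would complete it to the adapted reduced word for $w_0$) removed from the left. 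The crucial observation is that the green vertices $i\in\Igrn^{(0)}(m')$ are precisely the sources of the slice $\cQ(m')$, hence exactly the indices that appear as the "next" letters $i_{r}$ in the sense of Lemma~\ref{lem:reflt} applied to the sequence we have built up to slice $m'$. Therefore by the second bullet of Lemma~\ref{lem:reflt}, the corresponding rows of $\bT_{m'-1}\cdots\bT_{m'-k}$ are expansions of \emph{positive} roots, so all their entries are non-negative. This closes the induction.

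Once the formula $G^{(k)}(m) = \bT_{m+k-1}\cdots\bT_m$ is established, the stabilization is a bookkeeping matter. For $k\gg|m|$: if $m>0$ then all of $\bT_{m+k-1},\ldots,\bT_m$ are identity matrices (since $\Igrn^{(0)}(j) = \emptyset$ for $j\ge 0$), giving $G^{(k)}(m) = \mathrm{Id}_n$. If $h_c\le m\le -1$, then for $k$ large the factors with index $\ge 0$ are trivial and the factors with index $<h_c$ do not occur, so the product collapses to $\bT_{-1}\bT_{-2}\cdots\bT_m$. If $m\le h_c$, then once $k$ is large the product runs over all non-trivial $\bT_j$, namely $\bT_{-1}\bT_{-2}\cdots\bT_{h_c}$. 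In each case the value is independent of $k$ for $k$ large, so $G^{(\infty)}:=\lim_{k\to\infty}G^{(k)}$ exists, with block $G^{(\infty)}(m)$ given by~(\ref{eq:Gkgg0}).

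The main obstacle, and the step I would be most careful about, is the verification of the sign hypothesis — specifically, matching up the green vertices of $\cQ(m')$ with the "distinguished letters" of Lemma~\ref{lem:reflt}. The point is that the word $\bt_{i_{r-1}}\cdots\bt_{i_1}$ in that lemma is built from a sequence \emph{starting} from $\cQ'$, whereas our product $\bT_{m'-1}\cdots\bT_{m'-k}$ is read off from slices and needs to be identified as such an adapted word with $\cQ' = \cQ(m'-k)$ — this is where Proposition~\ref{prop:slices}(b), iterated, is essential, together with the fact (stated before Theorem~\ref{thm:g-vect1}) that $S_{-1}\cdots S_{h_c}$ is a reduced word for $w_0$ adapted to $\cQ(h_c)$. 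One must check that the green vertices of a slice, when passing from $\cQ(m'-k)$ up to $\cQ(m')$ via the reflections $S_{m'-k},\ldots,S_{m'-1}$, play the role of the source letters $i_r$ at the relevant stage; this is exactly the content of Proposition~\ref{prop:slices}(a)--(c), but writing it down cleanly for all slices simultaneously requires some care with indices.
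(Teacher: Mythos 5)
Your proof is correct and follows essentially the same strategy as the paper's: induction on $k$ using Lemma~\ref{lem:green1} for the block-diagonal transformation, with the sign hypothesis verified by appealing to Lemma~\ref{lem:reflt} once one knows (via Proposition~\ref{prop:slices}(a),(c)) that the sequence of green vertices read off from slices $\cQ(m),\ldots,\cQ(m+k)$ is adapted to $\cQ(m)$. One small imprecision worth noting: you say the green vertices of $\cQ(m')$ are ``precisely'' the sources of that slice, but Proposition~\ref{prop:slices}(a) only asserts that green vertices \emph{are} sources (black vertices may also be sources); fortunately only the forward implication is needed, so this does not affect the argument.
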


\begin{proof}
In order to prove the first claim  we proceed by induction on $k$.  Trivially,  all rows of $G^{(0)}$ are non-negative.   
Thus, by Lemma~\ref{lem:green1},  the matrix $G^{(1)}$ is of the desired shape. 
For the induction step we may assume that we have in each diagonal block
\[
G^{(k)}(m)= \bT_{m+k-1}\cdots \bT_m = \bt_{i_r}\cdots\bt_{i_2}\bt_{i_1},
\]
where $(i_r, \ldots, i_2, i_1)$ is the sequence obtained by reading  consecutively the first components
of the green vertices in the slices $\cQ(m+k-1), \cQ(m+k-2), \ldots, \cQ(m)$.  Now, the sequence 
$(i_r, \ldots, i_1)$
is adapted to  $\cQ(m)$ by Proposition~\ref{prop:slices}(c).
Without loss
of generality we may assume that $j_1, j_2, \ldots, j_g$ are the first components of the green vertices of $\cQ(m+k)$.  Now,  we can apply Lemma~\ref{lem:reflt}  to the sequence
$(j_a, i_r, i_{r-1}, \ldots, i_1)$ for $1\leq a\leq g$ and $\cQ'= \cQ(m)$, and conclude that 
row $j_a$ of $G^{(k)}(m)$ is non-negative.  It follows by Lemma~\ref{lem:green1}, that
$G^{(k+1)}(m)$ also has the desired shape. 

The second claim follows since $\bT_i=\mathrm{Id}_n$ for  $i\geq 0$  or $i < h_c$.  \cqfd
\end{proof}

\begin{remark}\label{rem4-13}
{\rm
We deduce from Theorem~\ref{thm:g-vect1}  the following additional information about the columns
of $G^{(k)}$. Let $(i, l)\in I(m)\subset V$.
 \begin{itemize}
     \item 
     The support of $\bg_{(i,l)}^{(k)}$ is contained in $I(m)$.
     \item 
     If $m>0$,  we have $\bg_{(i,l)}^{(k)}=\be_{(i,l)}$.
     Note, that $\bT_{-1} \bT_{-2}\cdots \bT_{h_c}$  represents $w_0$, thus it is the negative of the permutation matrix for the Nakayama permutation $\nu$. Therefore, for $m\leq h_c$ and $k\gg 0$, we have
\[
     \bg_{(i, l_c(i)+2m)}^{(k)}=-\be_{(\nu(i), l_c(\nu(i))+2m)}.
\]     
 \end{itemize}   
}
\end{remark}

\begin{figure}[t]
\[
\def\objectstyle{\scriptstyle}
\def\lablestyle{\scriptstyle}
\xymatrix@-1.0pc{
{}\save[]+<0cm,1ex>*{\vdots}\restore&{}\save[]+<0cm,1ex>*{\vdots}\restore  
\\
{\be_{(1,2)}}\ar[rd]\ar[u]&
\\
&\ar[ld] \be_{(2,1)} \ar[uu]
\\
\ar[uu]{\mathbf{\red\be_{(1,0)}}}\ar[d]&
\\
\mathbf{\green-\be_{(1,-2)}+\be_{(2,-1)}}\ar[rd]
\\
&\ar[uuu] \mathbf{\red\be_{(2,-1)}}\ar[d] 
\\
& \ar[ld]\mathbf{\green-\be_{(1,-4)}}
\\
\ar[uuu]\mathbf{\red-\be_{(1,-4)}+\be_{(2,-3)}} \ar[d] 
\\
{\mathbf{\green-\be_{(2,-5)}}}\ar[rd]
\\
& \ar[uuu]{-\be_{(1,-6)}}\ar[ld] 
\\
\ar[uu]{-\be_{(2,-7)}} \ar[rd] 
\\
&\ar[ld] \ar[uu]{-\be_{(1,-8)}} 
\\
\ar[uu]{-\be_{(2,-9)}} 
\\
{}\save[]+<0cm,0ex>*{\vdots}\ar[u]\restore&{}\save[]+<0cm,0ex>*{\vdots}\ar[uu]\restore  
\\
}
\quad
\xymatrix@-1.0pc{
&{}\save[]+<0cm,1.5ex>*{\vdots}\restore&{}\save[]+<0cm,1.5ex>*{\vdots}\restore  
\\
&{\be_{(1,2)}}\ar[rd]\ar[u]&
\\
&&\ar[ld] \be_{(2,1)} \ar[uu]
\\
&\ar[uu]{{\be_{(1,0)}}}\ar[rdd]&
\\
&\mathbf{\blue{\be_{(1,-2)}}}\ar[u]\ar@/_{2pc}/[dddd]
\\
&&\ar[uuu]\ar[ldd] {\be_{(2,-1)}} 
\\
&& \ar[ld]\mathbf{\blue{-\be_{(2,-5)}+\be_{(1,-4)}}}\ar[ddd]
\\
&\mathbf{-\be_{(2,-3)}+\be_{(1,-2)}} \ar[uuu] 
\\
&{\blue{-\be_{(2,-5)}}}\ar[ruu]
\\
&& {-\be_{(1,-6)}}\ar[ld] 
\\
&\ar[uu] {-\be_{(2,-7)}} \ar[rd] 
\\
&&\ar[ld] \ar[uu] {-\be_{(1,-8)}} 
\\
&\ar[uu]{-\be_{(2,-9)}} 
\\
&{}\save[]+<0cm,0ex>*{\vdots}\ar[u]\restore&{}\save[]+<0cm,0ex>*{\vdots}\ar[uu]\restore  
\\
}
\quad
\xymatrix@-1.0pc{
{}\save[]+<0cm,1.5ex>*{\vdots}\restore&{}\save[]+<0cm,1.5ex>*{\vdots}\restore  
\\
{\be_{(1,2)}}\ar[rd]\ar[u]&
\\
&\ar[ld] \be_{(2,1)} \ar[uu]
\\
\ar[uu]\be_{(1,0)}\ar[rd]&
\\
&\ar[uu] \mathbf{\red\be_{(2,-1)}}\ar[d] 
\\
& \ar[ld]\mathbf{\green-\be_{(2,-3)}+\be_{(1,-2)}} 
\\
 \mathbf{\red\be_{(1,-2)}} \ar[uuu]\ar[d]
\\
\mathbf{\green-\be_{(2,-5)}} \ar[dr]
\\
& \ar[uuu]\ar[d]\mathbf{\red-\be_{(2,-5)}+\be_{(1,-4)}}
\\
& \mathbf{\green-\be_{(1,-6)}}\ar[ld] 
\\
\ar[uuu]{-\be_{(2,-7)}} \ar[rd] 
\\
&\ar[ld] \ar[uu] {-\be_{(1,-8)}}
\\
\ar[uu] {-\be_{(1,-9)}} 
\\
{}\save[]+<0cm,0ex>*{\vdots}\ar[u]\restore&{}\save[]+<0cm,0ex>*{\vdots}\ar[uu]\restore  
\\
}
\]
\caption{\label{Fig5b} {\it Mutation of stabilized $g$-vectors from $\G_{(1,2,1),(0,-1,-4)}$ to $\G_{(2,1,2),(-1,-2,-5)}$.}}
\end{figure}
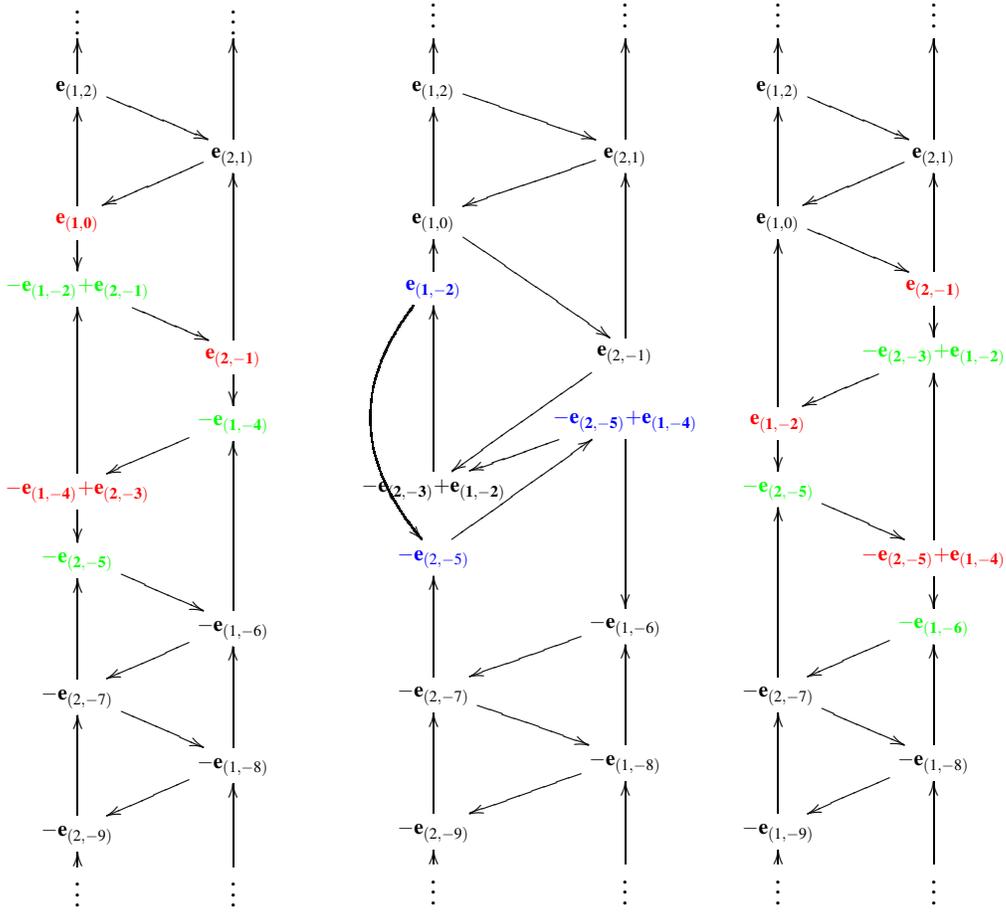

\begin{example}
{\rm
We illustrate Theorem~\ref{thm:g-vect1} in type $A_2$. This is a continuation of Example~\ref{exa-A2}.
Here the quiver $\cQ$ is $1 \leftarrow 2$, and $h_c=-3$. For $m\in \Z$, we have $I(m) = \{(1, 2m), (2,2m+1)\}$. 
We have
\[
 \bt_1 = 
 \pmatrix{-1 & 0 \cr 1 & 1},
 \qquad
 \bt_2 = 
 \pmatrix{1 & 1 \cr 0 & -1},
\]
and 
\[
\bT_m = 
\left\{
\begin{array}{cl}
\mathrm{Id}_2 & \mbox{if\ \ } m \ge 0, \\
\bt_1 & \mbox{if\ \ } m = -1, \\
\bt_2 & \mbox{if\ \ } m = -2, \\
\bt_1 & \mbox{if\ \ } m = -3, \\
\mathrm{Id}_2 & \mbox{if\ \ } m \le -4. \\
\end{array}
\right.
\]
Hence,
\[
\bT_{-1}\bT_{-2} = \pmatrix{-1 & -1 \cr 1 & 0},
 \qquad
\bT_{-2}\bT_{-3} = \pmatrix{0 & 1 \cr -1 & -1},
\qquad
\bT_{-1}\bT_{-2}\bT_{-3} = \pmatrix{0 & -1 \cr -1 & 0}.
\]
The non-trivial diagonal blocks of $G^{(1)}$ are:
\begin{eqnarray*}
G^{(1)}(-1) &=& \bT_{-1}, \\
G^{(1)}(-2) &=& \bT_{-2}, \\
G^{(1)}(-3) &=& \bT_{-3}. 
\end{eqnarray*}
The non-trivial diagonal blocks of $G^{(2)}$ are:
\begin{eqnarray*}
G^{(2)}(-1) &=& \bT_0\bT_{-1} = \bT_{-1}, \\
G^{(2)}(-2) &=& \bT_{-1}\bT_{-2}, \\
G^{(2)}(-3) &=& \bT_{-2}\bT_{-3}, \\
G^{(2)}(-4) &=& \bT_{-3}\bT_{-4} = \bT_{-3}. 
\end{eqnarray*}
The non-trivial diagonal blocks of $G^{(3)}$ are:
\begin{eqnarray*}
G^{(3)}(-1) &=& \bT_1\bT_0\bT_{-1} = \bT_{-1}, \\
G^{(3)}(-2) &=& \bT_0\bT_{-1}\bT_{-2} = \bT_{-1}\bT_{-2}, \\
G^{(3)}(-3) &=& \bT_{-1}\bT_{-2}\bT_{-3}, \\
G^{(3)}(-4) &=& \bT_{-2}\bT_{-3}\bT_{-4} = \bT_{-2}\bT_{-3},\\
G^{(3)}(-5) &=& \bT_{-3}\bT_{-4}\bT_{-5} = \bT_{-3}.
\end{eqnarray*}
And so on. Thus, we see that in the limit the non trivial blocks of the stabilized $G$-matrix 
$G^{(\infty)}$ are
\begin{eqnarray*}
G^{(\infty)}(-1) &=& \bT_{-1}, \\
G^{(\infty)}(-2) &=& \bT_{-1}\bT_{-2}, \\
G^{(\infty)}(m) &=& \bT_{-1}\bT_{-2}\bT_{-3},\ (m\le -3),
\end{eqnarray*}
in agreement with the left quiver of Figure~\ref{Fig5b}.
}
\end{example}

\begin{remark}\label{rem-mut-g-vector}
{\rm
Now that we have determined the stabilized $g$-vectors of the cluster variables of an initial seed, we can use Equation~(\ref{recursion2}) to calculate the stabilized $g$-vector of a cluster variable obtained from this initial seed by any explicit sequence of mutations. 

For instance, if we replace in Figure~\ref{Fig5} the original labelling of the vertices by the corresponding stabilized $g$-vectors we get Figure~\ref{Fig5b}. In Figure~\ref{Fig5b}, the middle seed is obtained from the left seed by the same sequence of three mutations as in Figure~\ref{Fig5}. The stabilized $g$-vectors of the three new cluster variables, calculated using Equation~(\ref{recursion2}), are painted in blue. The right seed is identical to the middle one : it is obtained by just moving the lower blue vertex in the left column to the right column and adjusting correspondingly the arrows. Note that now there is no need of relabelling the vertices like we did in Figure~\ref{Fig5}.
}
\end{remark}

\subsubsection{A knitting algorithm} \label{ssec:dshift}

We will now deduce from Theorem~\ref{thm:g-vect1} a simple algorithm for calculating 
the stabilized $g$-vectors.  

For $\mathbf{v}=\left(v_{(i,l)}\right)_{(i,l)\in V}\in\Z^{V}$ and  
$s\in\Z$, we define the \emph{shifted vector}
$\mathbf{v}[s]=\left(v'_{(i,l)}\right)_{(i,l)\in V}$ by:
\[
v'_{i,l}:= v_{i, l-2s},\qquad  ((i,l)\in V).
\]

\begin{Thm} \label{thm:g-vect2}
Recall the notation of Theorem~\ref{thm:g-vect1}. For $(i,l)\in V$, denote by 
    $\bg^{(k)}_{(i,l)}\in\Z^{V}$ the column vector of $G^{(k)}$ in column $(i,l)$.
Suppose that $k\gg 0$.    
    
\begin{itemize}
 \item[(a)]
 If there is a vertical \emph{up}-arrow $(i,l)\ra (i,l+2)$ in $\G$, then we have: 
    \begin{equation}\label{eq-knit1}
    \bg^{(k)}_{(i,l)}=\bg^{(k)}_{(i,l+2)}[-1]. 
    \end{equation}
 \item[(b)]
If there is a vertical \emph{down}-arrow $(i,l+2)\ra (i,l)$ in $\G$, then we have:  
\begin{equation}\label{eq-knit2}
    \bg^{(k)}_{(i,l)}
    = -\bg^{(k)}_{(i,l+2)}[-1] + \sum_{(i,l)\to (j,s)}\bg^{(k)}_{(j,s)}.
\end{equation}
\end{itemize}
\end{Thm}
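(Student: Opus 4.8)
The plan is to deduce both identities from the explicit block form of $G^{(k)}$ in Theorem~\ref{thm:g-vect1}, matched against the local shape of $\Gamma$ near a vertical arrow. Throughout, fix $(i,l)\in I(m)\subset V$ and take $k$ large enough (depending on $m$) that $\bT_j=\mathrm{Id}_n$ for all $j\geq m+k-1$. Comparing $G^{(k)}(m)=\bT_{m+k-1}\cdots\bT_m$ with $G^{(k)}(m+1)=\bT_{m+k}\cdots\bT_{m+1}$ and using $\bT_{m+k}=\mathrm{Id}_n$ gives the basic recursion
\[
G^{(k)}(m)=G^{(k)}(m+1)\,\bT_m,
\]
which also holds, trivially, when $m\geq 0$ and when $m<h_c$. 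I will use two further ingredients. First, the dictionary from Theorem~\ref{thm:g-vect1} and Remark~\ref{rem4-13}: under the identification $I(m)\leftrightarrow I$ (via $(t,l_c(t)+2m)\leftrightarrow t$), the column of $G^{(k)}$ at $(t,l_c(t)+2m)$ is supported on $I(m)$ and identifies with the $t$-th column of $G^{(k)}(m)$; moreover the shift $\mathbf{v}\mapsto\mathbf{v}[-1]$ restricts to a bijection $\Z^{I(m+1)}\to\Z^{I(m)}$ that is the identity under $I(m+1)\leftrightarrow I\leftrightarrow I(m)$, so it sends $\bg^{(k)}_{(t,\,l_c(t)+2(m+1))}$ to the vector on $I(m)$ with the same profile, namely the $t$-th column of $G^{(k)}(m+1)$. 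Second, the shape of $\bT_m$: since the green vertices of a slice are pairwise non-adjacent, $\bT_m=\prod_{i'\in\Igrn^{(0)}(m)}\bt_{i'}$ is a product of commuting matrices, each modifying only its own coordinate; hence the $t$-th column of $\bT_m$ is $\be_t$ if $t\notin\Igrn^{(0)}(m)$, and equals the $t$-th column $-\be_t+\sum_{s\sim t}\be_s$ of $\bt_t$ (the expansion of $s_t(\varpi_t)$, cf. Remark~\ref{rem:refl}) if $t\in\Igrn^{(0)}(m)$.

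Part (a) is then immediate. If $(i,l)\to(i,l+2)$ is a vertical up-arrow, then $(i,l)$ is not a sink of the vertical subquiver of $\Gamma$, hence not green (Remark~\ref{rem:slices}(1)), so $i\notin\Igrn^{(0)}(m)$. Thus the $i$-th column of $\bT_m$ is $\be_i$, the $i$-th columns of $G^{(k)}(m)$ and $G^{(k)}(m+1)$ coincide by the recursion, and the dictionary turns this into $\bg^{(k)}_{(i,l)}=\bg^{(k)}_{(i,l+2)}[-1]$.

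For part (b), a vertical down-arrow $(i,l+2)\to(i,l)$ forces $(i,l)$ green and $(i,l+2)$ red (colouring convention), so $i\in\Igrn^{(0)}(m)$ and the $i$-th column of $\bT_m$ is $-\be_i+\sum_{j\sim i}\be_j$. By the recursion and linearity, the $i$-th column of $G^{(k)}(m)$ equals $-\big(G^{(k)}(m+1)\be_i\big)+\sum_{j\sim i}\big(G^{(k)}(m+1)\be_j\big)$; reading each summand through the dictionary ($G^{(k)}(m+1)\be_i$ is the profile of $\bg^{(k)}_{(i,\,l_c(i)+2(m+1))}=\bg^{(k)}_{(i,l+2)}$, the red vertex, placed back on $I(m)$, i.e.\ $\bg^{(k)}_{(i,l+2)}[-1]$) gives
\[
\bg^{(k)}_{(i,l)}=-\,\bg^{(k)}_{(i,l+2)}[-1]+\sum_{j\sim i}\bg^{(k)}_{(j,\,l_c(j)+2(m+1))}[-1].
\]
It remains to recognize the sum as $\sum_{(i,l)\to(j,s)}\bg^{(k)}_{(j,s)}$. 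By the explicit construction of $\Gamma=\Gamma_c$, its oblique arrows all lie inside the slices $\cQ(m)$ (cf.\ Lemma~\ref{lem:slices} and Figure~\ref{Fig7f}); since $(i,l)$ has no vertical out-arrow, and since by Proposition~\ref{prop:slices}(a) the green vertex $(i,l)$ is a source of $\cQ(m)$, whose underlying graph is the Dynkin diagram of $\g$, the out-arrows of $(i,l)$ in $\Gamma$ are precisely $(i,l)\to(j,\,l_c(j)+2m)$, one for each $j\sim i$. Finally $j\notin\Igrn^{(0)}(m)$ for each such $j$ (a Dynkin-neighbour of a green vertex of a slice cannot itself be green, as greens of a slice are never adjacent), so by the recursion $G^{(k)}(m+1)\be_j$ is also the $j$-th column of $G^{(k)}(m)$, whence $\bg^{(k)}_{(j,\,l_c(j)+2(m+1))}[-1]=\bg^{(k)}_{(j,\,l_c(j)+2m)}=\bg^{(k)}_{(j,s)}$ for the out-neighbour $(j,s)=(j,\,l_c(j)+2m)$. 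This proves (b).

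The delicate step is the combinatorial input used in part (b): that in $\Gamma$ every green vertex has exactly one oblique out-arrow into each adjacent column and that each such arrow stays inside its horizontal slice. This is where the fine structure of the construction of $\Gamma_c$ — the insertions and relabellings of Definition~\ref{Def2-1}, iterated along a reduced word for $w_0$ — really enters; it is, however, already encoded in Lemma~\ref{lem:slices} and Proposition~\ref{prop:slices}, which may be invoked directly.
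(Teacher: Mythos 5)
Your argument follows essentially the same route as the paper's: both reduce to the block recursion $G^{(k)}(m)=G^{(k)}(m+1)\,\bT_m$ valid for $k\gg 0$, read part (a) off the trivial $i$th column of $\bT_m$ when $(i,l)$ is not green, and in part (b) expand the $i$th column of $\bT_m$ as $-\be_i+\sum_{j\sim i}\be_j$ and then shift indices using part (a). The only cosmetic difference is that the paper phrases the intermediate sum in (b) via the in-arrows of the red sink $(i,l+2)$ of $\cQ(m+1)$ before invoking (a), while you go directly through the out-arrows of the green source $(i,l)$ of $\cQ(m)$ and justify the shift by noting that a Dynkin-neighbour of a green in a slice cannot itself be green; these are the same calculation.
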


\begin{proof}
    Suppose that $(i,l)\in I(m)$. 
    If $m \ge 0$ then we are always in case (a), and  $\bg^{(k)}_{(i,l)} = \be_{(i,l)}$ by construction, so Equation~(\ref{eq-knit1}) is trivially verified. 
    Similarly, if $m < h$ then we are also in case (a) and Equation~(\ref{eq-knit1}) follows obviously from the second point of Remark~\ref{rem4-13}. 
    
    So we can assume that $h\le m <0$. By Eq.~(\ref{eq:Gkgg0}), we have 
    \[
    G^{(k)}(m)= G^{(k)}(m+1)\cdot \bT_{m}.
    \]
    
    If we are in case (a) then $(i,l)$ is \emph{not} a green vertex of $\G$, thus by definition of $\bT_{m}$,
    column $i$ of $\bT_{m}$ is the unit vector $\be_i$, so the matrices $G^{(k)}(m)$ and 
    $G^{(k)}(m+1)$ have the same column $i$, which is precisely the content of Equation~(\ref{eq-knit1}). 
    
    If we are in case (b) then $(i,l)$ is a green vertex of $\G$, and again by definition of $\bT_{m}$, column $i$ of $\bT_{m}$ has an entry $-1$ on row $i$, an entry $+1$ on every row $j$ such that $c_{ij} = -1$, and $0$ on all other rows. By Proposition~\ref{prop:slices}, $(i,l)$ is a source of $ \cQ(m)$ and $(i,l+2)$ is a red vertex which is a sink of $\cQ(m+1)$. 
    Thus we obtain
    \[
    \bg^{(k)}_{(i,l)}=\left(-\bg^{(k)}_{(i,l+2)} + \sum_{(j,s+2)\to (i,l+2)}\bg^{(k)}_{(j,s+2)}\right)[-1] 
    = -\bg^{(k)}_{(i,l+2)}[-1] + \sum_{(i,l)\to (j,s)}\bg^{(k)}_{(j,s)},
    \]
    where the second equality follows from case (a).
    \cqfd
\end{proof}

Using Theorem~\ref{thm:g-vect2}, we can calculate very easily the stabilized $g$-vectors $\bg^{(\infty)}_{(i,l)}$, starting
from the initial datum $\bg^{(\infty)}_{(i,l)} = \be_{(i,l)}$ for all vertices $(i,l)$ of the 
upper slices $\cQ(m)\ (m\ge 0)$, and then going down using alternately equations (\ref{eq-knit1}) and 
(\ref{eq-knit2}). This algorithm resembles the knitting algorithm of Auslander-Reiten theory. 
More precisely, remember that the green vertices of $\G$ are in one-to-one correspondence with the 
vertices of the Auslander-Reiten quiver of $\cQ$. The following easy corollary of Theorem~\ref{thm:g-vect2}, which involves only green vertices, can be regarded as an analogue of the classical \emph{mesh relations}.

\begin{Cor}
Suppose that $(i,l)$ and $(i,l-4)$ are green vertices of $\G$. Then for every (oblique) arrow $(i,l) \to (j,s)$, vertex $(j,s-2)$ is green, and we have
\[
 \bg^{(\infty)}_{(i,l-4)} + \bg^{(\infty)}_{(i,l)}[-2] = \sum_{(i,l)\to (j,s)} \bg^{(\infty)}_{(j,s-2)}[-1].
\]
\end{Cor}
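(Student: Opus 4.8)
The plan is to deduce the identity from the two knitting relations (\ref{eq-knit1}) and (\ref{eq-knit2}) of Theorem~\ref{thm:g-vect2} (which, as explained there, also hold for the stabilized vectors $\bg^{(\infty)}$), the only genuinely nontrivial ingredient being a short combinatorial argument about the colours of the vertices involved.

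First I would fix $m$ with $(i,l)\in I(m)$. Since $(i,l)$ and $(i,l-4)$ are green and, in $\G$, the red partner of a green vertex sits immediately above it, the vertex $(i,l-2)\in I(m-1)$ is red and $(i,l-4)\in I(m-2)$. Applying (\ref{eq-knit2}) at the green vertex $(i,l-4)$ (whose red partner is $(i,l-2)$) and then (\ref{eq-knit1}) at the red vertex $(i,l-2)$ (which has a vertical up-arrow to $(i,l)$), one obtains
\[
\bg^{(\infty)}_{(i,l-4)} = -\bg^{(\infty)}_{(i,l-2)}[-1] + \sum_{(i,l-4)\to(j,u)}\bg^{(\infty)}_{(j,u)} = -\bg^{(\infty)}_{(i,l)}[-2] + \sum_{(i,l-4)\to(j,u)}\bg^{(\infty)}_{(j,u)}.
\]
It then remains to match $\sum_{(i,l-4)\to(j,u)}\bg^{(\infty)}_{(j,u)}$ with $\sum_{(i,l)\to(j,s)}\bg^{(\infty)}_{(j,s-2)}[-1]$.

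By Proposition~\ref{prop:slices}(a) the green vertices $(i,l)$ and $(i,l-4)$ are sources of the slices $\cQ(m)$ and $\cQ(m-2)$ respectively, and every oblique arrow of $\G$ lies inside a single slice (see Remark~\ref{rem:slices} and Lemma~\ref{lem:slices}); hence the outgoing oblique arrows of $(i,l)$ are the arrows $(i,l)\to(j,s)$ of $\cQ(m)$ with $s=l_c(j)+2m$, one for each Dynkin-neighbour $j$ of $i$, and those of $(i,l-4)$ are the $(i,l-4)\to(j,u)$ with $u=l_c(j)+2(m-2)=s-4$, so both sums range over the same set of $j$'s. The key claim is that \emph{each such $(j,s)$ is red}, so that $(j,s-2)$ is its green partner; this is the step I expect to be the main obstacle. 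I would argue by contradiction: if $(j,s)$ were not red, then the edge of $\cQ(m)$ between $(i,l)$ and $(j,s)$ is incident to no red vertex, hence not reversed by $\prod_{v\in I_{\mathrm{red}}(m)}s_v$, so under the isomorphism $\cQ(m-1)\cong\bigl(\prod_{v\in I_{\mathrm{red}}(m)}s_v\bigr)\cQ(m)$ of Proposition~\ref{prop:slices}(b) it would produce an arrow $(i,l-2)\to(j,s-2)$ leaving $(i,l-2)$ in $\cQ(m-1)$ — impossible, since $(i,l-2)$ is red and therefore a sink of $\cQ(m-1)$ by Proposition~\ref{prop:slices}(a).

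Granting the claim, $(j,s-2)$ is green, in particular not red, so the vertical arrow joining $(j,s-4)$ to $(j,s-2)$ points up; relation (\ref{eq-knit1}) at $(j,s-4)$ then gives $\bg^{(\infty)}_{(j,u)}=\bg^{(\infty)}_{(j,s-4)}=\bg^{(\infty)}_{(j,s-2)}[-1]$. Summing over $j$ and substituting into the identity above yields $\bg^{(\infty)}_{(i,l-4)}+\bg^{(\infty)}_{(i,l)}[-2]=\sum_{(i,l)\to(j,s)}\bg^{(\infty)}_{(j,s-2)}[-1]$, which is the assertion (the same claim also furnishing the statement that every $(j,s-2)$ is green). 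Apart from the colour claim, every step is a mechanical application of Theorem~\ref{thm:g-vect2} together with the slice combinatorics already recorded in Proposition~\ref{prop:slices}.
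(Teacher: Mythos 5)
Your proof is correct. The paper states this corollary without proof, as an ``easy'' consequence of Theorem~\ref{thm:g-vect2}, so there is no explicit argument to compare against; yours is the natural derivation and it fills the gap soundly. The mechanical steps are fine: applying (\ref{eq-knit2}) at the green vertex $(i,l-4)$, then (\ref{eq-knit1}) at its red partner $(i,l-2)$, gives
$\bg^{(\infty)}_{(i,l-4)} + \bg^{(\infty)}_{(i,l)}[-2] = \sum_{(i,l-4)\to(j,u)}\bg^{(\infty)}_{(j,u)}$,
and since both $(i,l)$ and $(i,l-4)$ are sources of their respective slices, the two sums run over the same set of Dynkin neighbours $j$, with $u=s-4$. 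The one genuinely non-routine step is your colour claim — that every endpoint $(j,s)$ is red — and your proof of it via Proposition~\ref{prop:slices}(b) (if $(j,s)$ were black, the shift isomorphism would force an arrow leaving the red, hence sink, vertex $(i,l-2)$ in $\cQ(m-1)$) is correct. This is exactly what is needed both for the green-ness assertion in the corollary and for the final application of (\ref{eq-knit1}) at $(j,s-4)$. One small remark: the fact that every oblique arrow of $\G$ lies within a single slice is not literally stated in Remark~\ref{rem:slices} or Lemma~\ref{lem:slices}; it is however used explicitly in the proof of Lemma~\ref{lem:green1}, so this is a citation imprecision rather than a gap in your argument.
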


\subsubsection{Reformulation in terms of a braid group action}\label{subsec434}

We now give a reformulation of the previous calculations in terms of a braid group action on $\Z^V$. 
\begin{Def}
For $i\in I$ we define an automorphism $\theta_i$ of the free $\Z$-module $\Z^V$ by:
\[
\theta_i\left(\be_{(j,a)}\right) :=
\left\{
\begin{array}{ll}
\be_{(j,a)} &\mbox{if } j\neq i,\\[2mm]
-\be_{(i,a-2)} + \ds\sum_{k\, :\, c_{ik}=-1} \be_{(k,a-1)} &\mbox{if } k=i,   
\end{array}
\right.
\qquad ((j,a)\in V).
\]
\end{Def}

The following lemma shows that the $\theta_i\ (i\in I)$ define an action on $\Z^V$ of the braid group attached to the Weyl group $W$. This is similar to the braid group action defined by Chari in 
\cite{C}. 
\begin{Lem}
The automorphisms $\theta_i\ (i\in I)$ satisfy the braid relations. 
\end{Lem}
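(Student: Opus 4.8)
The plan is to verify the braid relations for the $\theta_i$ directly, reducing everything to a computation on the restriction of $\Z^V$ to a rank-2 parabolic, exactly as in the proof of Lemma~\ref{Lem22}. First I would observe that for $c_{ik}=0$ the operators $\theta_i$ and $\theta_k$ obviously commute: $\theta_i$ only modifies the $\be_{(i,a)}$ using $\be_{(i,a-2)}$ and the $\be_{(j,a-1)}$ with $c_{ij}=-1$; since $c_{ik}=0$ the index $k$ does not occur among the latter, and symmetrically, so the two automorphisms act on disjoint ``column blocks'' plus shared neighbours that are untouched. Hence $\theta_i\theta_k=\theta_k\theta_i$, which is the first family of braid relations in the simply-laced case.

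The substantial case is $s_is_js_i=s_js_is_j$ when $c_{ij}=-1$. Here I would fix $a\in\Z$ and check $\theta_i\theta_j\theta_i(\be_{(\ell,a)})=\theta_j\theta_i\theta_j(\be_{(\ell,a)})$ for each $\ell\in I$. For $\ell\notin\{i,j\}$ both sides fix $\be_{(\ell,a)}$ unless $\ell$ is a neighbour of $i$ or $j$; a neighbour $k$ of $i$ contributes $\be_{(k,a-1)}$ terms when $\theta_i$ is applied, so one must track how these propagate, but since $\theta_j$ fixes $\be_{(k,\cdot)}$ for $k\neq j$, and the only shared neighbour situation is $k$ adjacent to both (impossible in a tree other than through the edge $i\!-\!j$ itself), the computation again localizes. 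The genuinely nontrivial identity is thus the $2\times 2$ one for $\ell\in\{i,j\}$, which I would carry out by noting that $\theta_i$ restricted to the span of $\{\be_{(i,a)},\be_{(j,a)}\}_{a}$ is, up to the bookkeeping shift $a\mapsto a-1$ and $a\mapsto a-2$, precisely the matrix $\bt_i$ of Remark~\ref{rem:refl} acting on the weight lattice of the $A_2$ subsystem spanned by $\a_i,\a_j$; since the $\bt_i$ satisfy the braid relations (they represent elements of $W$), so do the $\theta_i$. Concretely, one computes both triple compositions explicitly on $\be_{(i,a)}$ and $\be_{(j,a)}$ and checks they agree; this is a short finite calculation with integer vectors, entirely analogous to the rank-2 verification invoked in Lemma~\ref{Lem22}.

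The main obstacle, such as it is, is purely combinatorial bookkeeping: keeping track of the shifts $[-1]$ and $[-2]$ hidden in the definition of $\theta_i$ so that the identification with the reflection matrices $\bt_i$ of the associated rank-2 root subsystem is made correctly, and checking that the ``neighbour'' terms $\be_{(k,a-1)}$ for $k$ adjacent to $i$ but not to $j$ behave as spectators under the relevant compositions. Once this reduction is in place, the braid relation is inherited from the corresponding identity $\bt_i\bt_j\bt_i=\bt_j\bt_i\bt_j$ among the matrices of simple reflections, which holds because, as noted in Remark~\ref{rem:refl}, $\bt_i$ is the matrix of $s_i\in W$ in the basis of fundamental weights and $W$ has the braid relations by definition. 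I would present the argument by first stating the reduction, then displaying $\theta_i$ on the two-dimensional block and matching it with $\bt_i$ (with the shift made explicit), and finally invoking the known braid relations for $W$; this keeps the proof short and transparent rather than a brute-force expansion.
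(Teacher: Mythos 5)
Your overall strategy—check the relations on each basis vector $\be_{(\ell,a)}$, observing that for $\ell\notin\{i,j\}$ the vector is fixed and concentrating the work on $\ell\in\{i,j\}$—is the same as the paper's. But there are two slips worth flagging. First, a small one: you say that for $\ell\notin\{i,j\}$ both sides fix $\be_{(\ell,a)}$ ``unless $\ell$ is a neighbour of $i$ or $j$.'' That qualifier is wrong: by definition $\theta_i(\be_{(\ell,a)})=\be_{(\ell,a)}$ for \emph{every} $\ell\neq i$, regardless of adjacency. The neighbour terms $\be_{(k,a-1)}$ with $c_{ik}=-1$ appear only in the expansion of $\theta_i(\be_{(i,a)})$, not as a modification of $\theta_i$ on $\be_{(k,\cdot)}$. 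So the $\ell\notin\{i,j\}$ case is genuinely trivial with no caveats.

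The second slip is more substantive. You claim the braid relation ``is inherited from $\bt_i\bt_j\bt_i=\bt_j\bt_i\bt_j$'' by restricting to the rank-2 block. That reduction only controls the image of both compositions in the quotient by the span of the $\be_{(k,\cdot)}$ with $k\notin\{i,j\}$; the ``spectator'' terms you mention live precisely in what gets quotiented out, and the $\bt_i$-relation says nothing about whether they cancel. So the claimed inheritance is not a proof; it needs the separate verification you allude to, and once you do that honestly you are back to the direct computation. The paper's argument is both shorter and avoids any appeal to $\bt_i$: since $\theta_j$ fixes $\be_{(i,a)}$, one has $\theta_j\theta_i\theta_j(\be_{(i,a)})=\theta_j\theta_i(\be_{(i,a)})$, and an explicit expansion of $\theta_j\theta_i(\be_{(i,a)})$ shows it contains no $\be_{(i,\cdot)}$ terms (the two $\be_{(i,a-2)}$ contributions cancel), so $\theta_i$ fixes it, giving $\theta_i\theta_j\theta_i(\be_{(i,a)})=\theta_j\theta_i(\be_{(i,a)})$ as well. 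The spectator terms agree automatically because both triple compositions literally reduce to evaluating $\theta_j\theta_i$ on $\be_{(i,a)}$, not merely because the rank-2 images coincide. You would do well to drop the appeal to $\bt_i$ and just present that calculation.
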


\begin{proof}
Let $i,j$ be two distinct elements of $I$.
If $k\not\in \{i,j\}$ then 
$\theta_i(\be_{(k,a)}) = \theta_j(\be_{(k,a)}) = \be_{(k,a)}$ for every $(k,a)\in V$, 
and the braid relations are trivially verified on $\be_{(k,a)}$.

If $c_{ij}=0$, then 
\[
\theta_i\theta_j(\be_{(i,a)}) 
= \theta_i(\be_{(i,a)}) 
= -\be_{(i,a-2)} + \sum_{k\, :\, c_{ik}=-1} \be_{(k,a-1)}
= \theta_j\theta_i(\be_{(i,a)})
\]
since all indices $k$ occurring in the sum are different from $j$.

If $c_{ij}=-1$, then 
\[
\theta_j\theta_i\theta_j(\be_{(i,a)}) 
= \theta_j\left(-\be_{(i,a-2)} + \sum_{k\, :\, c_{ik}=-1} \be_{(k,a-1)}\right)
= -\be_{(j,a-3)} + \sum_{k\not = j\, :\, c_{ik}=-1} \be_{(k,a-1)}
+ \sum_{l\not = i\, :\, c_{jl}=-1} \be_{(l,a-2)}.
\]
Since this sum does not contain any vector of the form $\be_{(i,b)}$, it is invariant under $\theta_i$.
Hence 
\[
 \theta_j\theta_i\theta_j(\be_{(i,a)}) = \theta_j\theta_i(\be_{(i,a)}) = \theta_i\theta_j\theta_i(\be_{(i,a)}),
\]
so the braid relations are also satisfied on $\be_{(i,a)}$. 
\cqfd
\end{proof}

Recall that reading the first indices of the green vertices of $\G$ from top to bottom we get 
a sequence $\bi = (i_1,\ldots ,i_N)\in I^N$ such that $s_{i_1}\cdots s_{i_N}$ is a reduced decomposition of $w_0\in W$. For example, in Figure~\ref{Fig7f} we have $N= 10$ and
$\bi = (1,2,4,1,3,2,4,1,3,2)$.

Let $((i_1,a_1)\ldots , (i_N,a_N))\in V^N$ denote the sequence of green vertices 
of $\G$ in the same ordering.
For $1\le t \le N$, let
$
 s_t := \sharp\{j<t\mid i_j = i_t\}.
$
Finally, for $i\in I$, let $(i,m_i)\in V$ denote the highest red vertex of $\G$ 
in column $i$.

\begin{Prop}\label{prop-braid}
For $1\le t \le N$ we have
\[
 \bg_{(i_t,a_t)}^{(\infty)} = 
 \theta_{i_1}\theta_{i_2}\cdots \theta_{i_t}(\be_{(i_t,m_{i_t})})[-s_t].
\]
\end{Prop}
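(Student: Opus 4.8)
The plan is to prove the identity by induction on $t$, using the knitting algorithm of Theorem~\ref{thm:g-vect2} as the computational engine and the braid relations satisfied by the operators $\theta_i$ to reorganise the terms that arise. First I would record some elementary bookkeeping. Since $s_t=\#\{j<t\mid i_j=i_t\}$, the vertex $(i_t,a_t)$ is the $(s_t+1)$-st green vertex of column $i_t$ counted from the top; as the green vertices of a column $i$ sit at heights $m_i-2,\ m_i-6,\ m_i-10,\ \dots$, this gives $a_t=m_{i_t}-4s_t-2$, and the red vertex directly above $(i_t,a_t)$ is $(i_t,\,a_t+2)$. Moreover, reading the green vertices of $\G$ from top to bottom groups them slice by slice --- those of $\cQ(-1)$ first, then $\cQ(-2)$, down to $\cQ(h_c)$ --- and $\theta_i\theta_k=\theta_k\theta_i$ whenever $i,k$ are distinct green vertices of the same slice (since then $c_{ik}=0$: by Proposition~\ref{prop:slices}(a) two adjacent vertices cannot both be sources of $\cQ(m)$); hence $\theta_{i_1}\cdots\theta_{i_N}=\Theta_{-1}\Theta_{-2}\cdots\Theta_{h_c}$ with $\Theta_m:=\prod_{i\in\Igrn^{(0)}(m)}\theta_i$, and $s_{i_1}\cdots s_{i_N}=S_{-1}S_{-2}\cdots S_{h_c}=w_0$ is exactly the reduced word appearing in Theorem~\ref{thm:g-vect1}.

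The base case $t=1$ and the inductive step would be handled uniformly. I would apply Theorem~\ref{thm:g-vect2}(b) at the green vertex $(i_t,a_t)$, which receives a vertical down-arrow from the red vertex $(i_t,a_t+2)$:
\[
\bg^{(\infty)}_{(i_t,a_t)}=-\,\bg^{(\infty)}_{(i_t,a_t+2)}[-1]+\sum_{(i_t,a_t)\to(j,s)}\bg^{(\infty)}_{(j,s)}.
\]
Every vertex occurring on the right lies strictly above $(i_t,a_t)$, so I would knit each of them upward along vertical up-arrows using Theorem~\ref{thm:g-vect2}(a); by Lemma~\ref{lem:slices} and Proposition~\ref{prop:slices} each such path terminates either at a vertex of a slice $\cQ(m')$ with $m'\ge0$, whose stabilized $g$-vector is the basis vector $\be$, or at an earlier green vertex $(i_{t'},a_{t'})$ with $t'<t$, up to a shift recording the number of up-arrows crossed. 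Substituting the induction hypothesis for the latter, the right-hand side becomes a $\Z$-linear combination of basis vectors and of (suitably shifted) terms $\theta_{i_1}\cdots\theta_{i_{t'}}(\be_{(i_{t'},m_{i_{t'}})})$. Using the definition of $\theta_{i_t}$ together with the braid relations for the $\theta_i$ --- which let one move $\theta_{i_t}$ past the operators it commutes with and merge the contribution $-\bg^{(\infty)}_{(i_t,a_t+2)}[-1]$ with the contributions $\bg^{(\infty)}_{(j,s)}$ into the single application $\theta_{i_1}\cdots\theta_{i_{t-1}}\theta_{i_t}(\be_{(i_t,m_{i_t})})$ --- this combination collapses to $\theta_{i_1}\cdots\theta_{i_t}(\be_{(i_t,m_{i_t})})[-s_t]$. (For $t=1$ there are no earlier green vertices; $(i_1,a_1+2)$ is the highest red vertex of its column, hence $\bg^{(\infty)}_{(i_1,a_1+2)}=\be_{(i_1,m_{i_1})}$ by iterating Theorem~\ref{thm:g-vect2}(a), and the claim reduces to the definition of $\theta_{i_1}$.)

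The hard part is the bookkeeping in the inductive step: one must identify, for the red vertex $(i_t,a_t+2)$ and for each target $(j,s)$ of an oblique arrow out of $(i_t,a_t)$, precisely which green vertex $(i_{t'},a_{t'})$ (if any) is reached by knitting upward and with which shift, and then check that feeding the induction hypothesis into the knitting relation reproduces term by term the product $\theta_{i_1}\cdots\theta_{i_t}$ applied to $\be_{(i_t,m_{i_t})}$ and shifted by $-s_t$. The slice combinatorics of Proposition~\ref{prop:slices} (which vertices of $\cQ(m)$ are green or red, and how $\cQ(m)$ arises from $\cQ(m-1)$ by quiver reflections) and the mesh-type relation for green vertices deduced from Theorem~\ref{thm:g-vect2} are what make this matching go through. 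A more computational alternative would bypass the recursion entirely: viewing each $\theta_i$ as a lift of the reflection matrix $\bt_i$ compatible with the slice grading of $\Z^V$, so that the partial products $\theta_{i_1}\cdots\theta_{i_t}$ are the graded analogue of the $\bt_{i_1}\cdots\bt_{i_t}$ of Lemma~\ref{lem:reflt}, one would compare slice by slice and identify $\theta_{i_1}\cdots\theta_{i_t}(\be_{(i_t,m_{i_t})})[-s_t]$ with the $i_t$-th column of the matrix $\bT_{-1}\bT_{-2}\cdots\bT_{m_t}$, placed in the slice $I(m_t)$ containing $(i_t,a_t)$, which equals $\bg^{(\infty)}_{(i_t,a_t)}$ by Theorem~\ref{thm:g-vect1}.
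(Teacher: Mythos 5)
Your plan is correct and takes essentially the paper's approach: the paper's entire proof is the sentence ``This follows readily from Theorem~\ref{thm:g-vect2} by induction on $1\le t\le N$'' followed by a worked $A_3$ example, and your inductive knitting argument supplies the intended structure; your alternative route via the block $G$-matrices of Theorem~\ref{thm:g-vect1} is also valid. One small inaccuracy in your second paragraph: it is not true that every vertex on the right of the knitting relation lies strictly above $(i_t,a_t)$ --- a green vertex has oblique arrows to targets at height $a_t+1$ \emph{and} at height $a_t-1$ (e.g.\ the arrow $(2,-3)\to(1,-4)$ in the $A_3$ quiver of Figure~\ref{Fig6}). This does not harm the argument: by Lemma~\ref{lem:slices}(a) every oblique target out of a green vertex is red or black, hence carries a vertical up-arrow; and by Lemma~\ref{lem:slices}(b1) and Proposition~\ref{prop:slices} such a target lies in the same slice $\cQ(m)$ as $(i_t,a_t)$, so a single step of upward knitting already moves it into the strictly higher slice $\cQ(m+1)$, where by the ordering~(\ref{eq:Gam-order}) any green vertex encountered is genuinely earlier than $(i_t,a_t)$.
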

\begin{proof}
This follows readily from Theorem~\ref{thm:g-vect2} by induction on $1\le t\le N$. To avoid cumbersome notation, we will just check it below on a typical example. \cqfd 
\end{proof}

\begin{example}
{\rm
We consider again the quiver of Example~\ref{examples-stables} in type $A_3$. Thus 
$\bi = (1,2,1,3,2,1)$, and the sequence of green vertices is
$((1,-2), (2,-3), (1,-6), (3,-4), (2,-7), (1,-10))$. 
The highest red vertices are
$(1,m_1) = (1,0)$, $(2,m_2) = (2,-1)$, $(3,m_3) = (3,-2)$.
We have
\[
\begin{array}{lll}
&\theta_1(\be_{(1,0)})&= -\be_{(1,-2)} + \be_{(2,-1)} = \bg_{(1,-2)}^{(\infty)},\\[2mm]
&\theta_1\theta_2(\be_{(2,-1)})&= \theta_1\left(-\be_{(2,-3)} + \be_{(1,-2)} + \be_{(3,-2)}\right)
= -\be_{(1,-4)}+\be_{(3,-2)} = \bg_{(2,-3)}^{(\infty)},\\[2mm]
&\theta_1\theta_2\theta_1(\be_{(1,0)})[-1] &= \theta_1\theta_2\left(-\be_{(1,-4)} + \be_{(2,-3)}\right)
= \theta_1\left(-\be_{(2,-5)} + \be_{(3,-4)}\right) 
\\[2mm]
&&= -\be_{(2,-5)} + \be_{(3,-4)} = \bg_{(1,-6)}^{(\infty)},\\[2mm]
&\theta_1\theta_2\theta_1\theta_3(\be_{(3,-2)}) &= \theta_1\theta_2\left(-\be_{(3,-4)} + \be_{(2,-3)}\right) = \theta_1\left(-\be_{(2,-5)} + \be_{(1,-4)}\right)
\\[2mm]
&&= -\be_{(1,-6)} = \bg_{(3,-4)}^{(\infty)},\\[2mm]
&\theta_1\theta_2\theta_1\theta_3\theta_2(\be_{(2,-1)})[-1] &=
\theta_1\theta_2\theta_1\theta_3\left(-\be_{(2,-5)}+\be_{(1,-4)}+\be_{(3,-4)}\right)
\\[2mm]
&&= \theta_1\theta_2\left(-\be_{(1,-6)}+\be_{(2,-5)}-\be_{(3,-6)}\right)
\\[2mm]
&&= \theta_1(-\be_{(2,-7)}) = -\be_{(2,-7)} = \bg_{(2,-7)}^{(\infty)},\\[2mm]
&\theta_1\theta_2\theta_1\theta_3\theta_2\theta_1(\be_{(1,0)})[-2] &=
\theta_1\theta_2\theta_3\left(-\be_{(2,-7)} + \be_{(3,-6)}\right)
=\theta_1\theta_2(-\be_{(3,-8)})
\\[2mm]
&&= -\be_{(3,-8)} = \bg_{(1,-10)}^{(\infty)}.
\end{array}
\]
}
\end{example}

\begin{remark}
{\rm
Proposition~\ref{prop-braid} gives an expression of the stabilized $g$-vectors $\bg_{(i,a)}^{(\infty)}$ for all green vertices $(i,a)$ of $\G$. This in fact determines \emph{all} stabilized $g$-vectors since, by Equation~(\ref{eq-knit1}), all the remaining ones are obtained from these by mere degree shifts.
So for \emph{every} vertex $(i,a)\in V$ the stabilized $g$-vector $\bg^{(\infty)}_{(i,a)}$ can be written in the form
\begin{equation}\label{form-theta-g-vect}
 \bg^{(\infty)}_{(i,a)} = \theta_{i_1}\cdots\theta_{i_{t}}(\be_{(i_t,m_{i_t})})[s]
\end{equation}
for some well-defined $0\le t \le N$ and $s\in\Z$. Here it is understood that when $t=0$, the monomial in the $\theta_i$ is empty. In that case $(i,a)$ belongs to the upper part of $\G$ and $\bg^{(\infty)}_{(i,a)} = \be_{(i,a)}$, a positive degree shift of $\bg^{(\infty)}_{(i,m_i)} = \be_{(i,m_i)}$.  
}
\end{remark}


\section{Rings of formal power series and Weyl group action}\label{sect-formal-power-series}

Following \cite{FH2}, we introduce certain rings of formal power series related to the representation theory of $U_q(\widehat{\g})$ and its shifted versions $U_q^\mu(\widehat{\g})$. (These connections will be explained in Section~\ref{sect-shift-quantum} below.) 
Then, again following \cite{FH2}, we introduce automorphisms of these rings which generate an action of the Weyl group $W$ of $\g$.

\subsection{Variables}

We fix $q\in\C^*$ of infinite multiplicative order. 
We denote by $\{\Psi_{i,a} \mid i\in I,a \in \C^*\}$ an infinite set of commuting variables. To every $\la\in P$ we also attach a commutative variable $[\la]$ such that 
\[
 [\la][\mu] = [\la + \mu],\quad (\la,\mu \in P).
\]
Next we introduce for $i\in I$ and $a\in\C^*$ the following Laurent monomials:
\begin{eqnarray}
 Y_{i,a} &:=& [\varpi_i]\frac{\Psi_{i,aq^{-1}}}{\Psi_{i,aq}}, 
\\
A_{i,a} &:=&
Y_{i,aq^{-1}}Y_{i,aq}
\prod_{j :\ c_{j,i} = -1}Y_{j,a}
^{-1},
\\ \label{def-tilde-Psi}
\widetilde{\Psi}_{i,a}  &:=&  \Psi_{i,a}^{-1}\prod_{j :\ c_{ij}=-1}\Psi_{j,aq}.
\end{eqnarray}
These monomials satisfy an important relation:
\begin{Lem}\label{Lem31}
For every $i\in I$ and $a\in \C^*$, we have:
\[
Y_{i,a} A_{i,aq^{-1}}^{-1}
=
[\varpi_i - \a_i] \frac{\widetilde{\Psi}_{i,aq^{-3}}}{\widetilde{\Psi}_{i,aq^{-1}}}.
\]
\end{Lem}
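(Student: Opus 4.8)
The plan is to prove the identity by directly expanding both sides as Laurent monomials in the commuting variables $\Psi_{j,b}$ and $[\la]$, using only the definitions of $Y_{i,a}$, $A_{i,a}$ and $\widetilde{\Psi}_{i,a}$ given above.

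First I would rewrite the left-hand side. Substituting $a\mapsto aq^{-1}$ in the definition of $A_{i,a}$ gives $A_{i,aq^{-1}} = Y_{i,aq^{-2}}\,Y_{i,a}\prod_{j:\,c_{ji}=-1}Y_{j,aq^{-1}}^{-1}$, so that multiplying by $Y_{i,a}$ cancels the two $Y_{i,a}$ factors and leaves $Y_{i,a}A_{i,aq^{-1}}^{-1} = Y_{i,aq^{-2}}^{-1}\prod_{j:\,c_{ji}=-1}Y_{j,aq^{-1}}$. Then, inserting $Y_{i,aq^{-2}}^{-1} = [-\varpi_i]\,\Psi_{i,aq^{-1}}/\Psi_{i,aq^{-3}}$ and $Y_{j,aq^{-1}} = [\varpi_j]\,\Psi_{j,aq^{-2}}/\Psi_{j,a}$, the left-hand side becomes the product of the $[\la]$-factor $\bigl[-\varpi_i+\sum_{j:\,c_{ji}=-1}\varpi_j\bigr]$ and the $\Psi$-factor $\frac{\Psi_{i,aq^{-1}}}{\Psi_{i,aq^{-3}}}\prod_{j:\,c_{ji}=-1}\frac{\Psi_{j,aq^{-2}}}{\Psi_{j,a}}$.

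For the $[\la]$-factor, I would invoke $(\ref{fundw_roots})$: since $\g$ is simply-laced we have $c_{ii}=2$ and $c_{ji}\in\{0,-1\}$ for $j\ne i$, hence $\a_i = 2\varpi_i - \sum_{j:\,c_{ji}=-1}\varpi_j$, so $-\varpi_i+\sum_{j:\,c_{ji}=-1}\varpi_j = \varpi_i-\a_i$, which is exactly the prefactor on the right-hand side. For the $\Psi$-factor, I would expand $\widetilde{\Psi}_{i,aq^{-3}}/\widetilde{\Psi}_{i,aq^{-1}}$ straight from $(\ref{def-tilde-Psi})$: it equals $\frac{\Psi_{i,aq^{-3}}^{-1}\prod_{j:\,c_{ij}=-1}\Psi_{j,aq^{-2}}}{\Psi_{i,aq^{-1}}^{-1}\prod_{j:\,c_{ij}=-1}\Psi_{j,a}}$, which coincides with the $\Psi$-factor above once one uses the symmetry $c_{ij}=c_{ji}$ of the Cartan matrix to identify the index sets $\{j:c_{ij}=-1\}$ and $\{j:c_{ji}=-1\}$. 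Comparing the two expressions proves the lemma.

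This is entirely routine; the only points requiring a little care are the bookkeeping of the $q$-shifts in the various monomials and the two uses of simply-lacedness — via $(\ref{fundw_roots})$ to identify the weight prefactor, and via the symmetry of $C$ to match the two products of $\Psi$'s. No genuine obstacle is expected.
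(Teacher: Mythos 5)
Your proof is correct and follows essentially the same direct monomial computation as the paper, differing only in that you expand the left-hand side into $\Psi$-monomials while the paper starts from the right-hand side and converts the $\widetilde\Psi$-ratio into $Y$-variables before comparing with $A_{i,aq^{-1}}^{-1}$.
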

\begin{proof}
It follows from the definition of $\widetilde{\Psi}_{i,a}$ that
\[
\frac{\widetilde{\Psi}_{i,aq^{-3}}}{\widetilde{\Psi}_{i,aq^{-1}}} =
\frac{\Psi_{i,aq^{-1}}}{\Psi_{i,aq^{-3}}}
\prod_{j :\ c_{ij}=-1} \frac{\Psi_{j,aq^{-2}}}{\Psi_{j,a}}
=[-\varpi_i + \alpha_i] Y_{i,aq^{-2}}^{-1}
\prod_{j :\ c_{ij}=-1} Y_{j,aq^{-1}}.
\]
The desired equality follows by comparison with the definition of $A_{i,aq^{-1}}^{-1}$. 
\cqfd 
\end{proof}

\subsection{The ring $\Pi$}\label{subsec-Pi}\label{sect3-2}

Let $\YY$ denote the ring of Laurent polynomials: 
\[
 \YY := \Z[Y_{i,a}^{\pm1};\ i\in I,\ a\in\C^*].
\]
Let $\M\subset \YY$ denote the multiplicative group of Laurent monomials in the variables $Y_{i,a}$. We denote
by $\omega$ the group homomorphism from $\M$ to $P$ defined by 
\[
 \omega(Y_{i,a}) := \varpi_i, \qquad (i\in I,\ a\in \C^*).
\]
Then we also have 
\[
 \omega(A_{i,a}) := \a_i, \qquad (i\in I,\ a\in \C^*).
\]
In \cite[Definition 2.1]{FH2}, completions $\widetilde{\YY}_w$ of this ring were introduced for every $w\in W$. The elements of $\widetilde{\YY}_w$ are formal power series of the form
\[
 \sum_{m\in S} a_m\,m,
\]
where $S$ is any subset of $\M$ such that $w(\omega(S))$ is contained in a finite union of cones of $P$ of the form
\[
 C_\la := \la - Q_+, \qquad (\la\in P),
\]
and for any $\la \in P$, the number of $a_m\not = 0$ with $\omega(m) = \la$ is finite.

\begin{example}\label{Exa3-2}
{\rm
The formal power series ring
\[
 \Z[[A_{i,a}^{-1}; i\in I,\ a\in \C^*]]
\]
is contained in $\widetilde{\YY}_e$, because $\omega(A_{i,a}^{-1}) = -\a_i \in - Q_+$ for all $i\in I$.
On the other hand, the ring 
\[
 \Z[[A_{i,a}; i\in I,\ a\in \C^*]]
\]
is contained in $\widetilde{\YY}_{w_0}$, since $w_0(\omega(A_{i,a})) = w_0(\a_i) \in - Q_+$ for all $i\in I$. 
Now fix $i\in I$ and $a\in\C^*$, and consider the series:
\begin{eqnarray}
\Si_{i,a}^+ &:=& \sum_{k\ge 0}\, \prod_{0<j\le k} A_{i,aq^{-2(j-1)}}^{-1} 
= 1 + A_{i,a}^{-1} + A_{i,a}^{-1}A_{i,aq^{-2}}^{-1} + A_{i,a}^{-1}A_{i,aq^{-2}}^{-1}A_{i,aq^{-4}}^{-1} +\cdots ,
\\
\Si_{i,a}^- &:=& -\sum_{k > 0}\, \prod_{0<j\le k} A_{i,aq^{2j}} 
= -\left(A_{i,aq^{2}} + A_{i,aq^{2}}A_{i,aq^{4}} + A_{i,aq^{2}}A_{i,aq^{4}}A_{i,aq^{6}} +\cdots\right). 
\end{eqnarray}
Then $\Si_{i,a}^+ \in \widetilde{\YY}_{w}$ for every $w\in W$ such that $w(\a_i)>0$, and 
$\Si_{i,a}^- \in \widetilde{\YY}_{w}$ for every $w\in W$ such that $w(\a_i)<0$. \cqfd
}
\end{example}

Next, one defines \cite[\S 2.5]{FH2} the ring
\[
 \Pi := \bigoplus_{w\in W} \widetilde{\YY}_w,
\]
in which addition and multiplication are performed component-wise. This comes with a diagonal embedding
$\YY \to \Pi$, and with projections $E_w : \Pi \to \widetilde{\YY}_w$ for every $w\in W$.

The following elements of $\Pi$ introduced in \cite[Definition 2.7]{FH2} play a key role.
\begin{Def}
For $i\in I$, $a\in \C^*$, and $w\in W$ put: 
\[
\Si_{i,a}^w := \Si_{i,a}^+\ \mbox{if} \ w(\a_i) > 0,\quad \mbox{and} \quad
\Si_{i,a}^w := \Si_{i,a}^- \ \mbox{if}\ w(\a_i) < 0.  
\]
Then by Example~\ref{Exa3-2} we have that $\Si_{i,a}^w \in \widetilde{\YY}_{w}$ for every $w\in W$. 
Define:
\[
 \Si_{i,a} := (\Si_{i,a}^w)_{w\in W} \in \Pi.
\]
\end{Def}

Then for every $i\in I$, $a\in \C^*$, $w\in W$ the element $\Si_{i,a}^w$ is invertible in $\widetilde{\YY}_{w}$. It follows that $\Si_{i,a}$ is invertible in $\Pi$ (see \cite[Lemma 2.8]{FH2}).

\subsection{The Weyl group action on $\Pi$}

Following \cite[\S3]{FH2}, for every $i\in I$ we introduce a ring endomorphism $\Theta_i = (\Theta_i^w)_{w\in W}$ of $\Pi$. Each component $\Theta_i^w$ is in fact a ring homomorphism from 
$\widetilde{\YY}_{w}$ to $\widetilde{\YY}_{ws_i}$ defined by :
\[
 \Theta_i^w(Y_{j,a}) = 
\left\{ 
\begin{array}{lc}
Y_{j,a} & \mbox{ if }j \not = i, \\[2mm]
Y_{i,a}A_{i,aq^{-1}}^{-1}\displaystyle\frac{\Si_{i,aq^{-3}}^{ws_i}}{\Si_{i,aq^{-1}}^{ws_i}} & \mbox{ if }j = i.
\end{array}
\right.
\]
It is shown in \cite{FH2} that these formulas combine into a well-defined continuous ring endomorphism $\Theta_i$ of $\Pi$.

\begin{example}
{\rm
We fix $\g = \mathfrak{sl}_2$. In this case $I = \{1\}$, so we can drop the index $i$ for simplicity of notation. Here $W = \{e,s\}$. Consider the Laurent polynomials:
\[
P_a := Y_a + Y_{aq^2}^{-1} \in \Z[Y_a^{\pm 1}; a\in \C^*]. 
\]
(These are the $q$-characters of the fundamental representations $L(Y_a)$ of 
$U_q(\widehat{\mathfrak{sl}}_2)$.) By the diagonal embedding of $\Z[Y_a^{\pm 1}; a\in \C^*]$ into $\Pi$, we get elements $(P_a^e, P_a^s) \in \Pi$, where $P_a^e = P_a^s = P_a$. We want to calculate their image under $\Theta$. Applying definitions, we have: 
\[
 \Theta^e(P_a^e) = Y_aA^{-1}_{aq^{-1}}\frac{\Si^-_{aq^{-3}}}{\Si^-_{aq^{-1}}} 
 + Y_{aq^2}^{-1}A_{aq}\frac{\Si^-_{aq}}{\Si^-_{aq^{-1}}}
= (\Si^-_{aq^{-1}})^{-1}\left(Y_{aq^{-2}}^{-1}\Si^-_{aq^{-3}} + Y_a\Si^-_{aq}\right)  
\in \widetilde{\YY}_{s}.
\]
Now, 
\[
 \Si^-_{aq^{-3}} = -A_{aq^{-1}} + A_{aq^{-1}}\Si^-_{aq^{-1}}, \qquad
 \Si^-_{aq} = 1 + A_{aq}^{-1}\Si^-_{aq^{-1}},
\]
hence
\[
 Y_{aq^{-2}}^{-1}\Si^-_{aq^{-3}} = -Y_a + Y_a\Si^-_{aq^{-1}}, \qquad
 Y_a\Si^-_{aq} = Y_a + Y_{aq^2}^{-1}\Si^-_{aq^{-1}},
\]
so that 
\[
 \Theta^e(P_a^e) = Y_a + Y_{aq^2}^{-1} = P_a^s \in \widetilde{\YY}_{s}.
\]
A similar calculation using the sums $\Si^+$ instead of $\Si^-$ shows that we also have:
\[
 \Theta^s(P_a^s) = Y_a + Y_{aq^2}^{-1} = P_a^e \in \widetilde{\YY}_{e}.
\]
Therefore, we get that $\Theta(P_a^e, P_a^s) = (P_a^e, P_a^s) \in \Pi$, that is, the diagonal embedding 
of $P_a$ in $\Pi$ is invariant by $\Theta$, a simple illustration of \cite[Theorem 5.1]{FH2}. \cqfd
}
\end{example}

\begin{Thm}[\cite{FH2}]\label{Thm-Weyl-relations}
The endomorphisms $\Theta_i$ satisfy the relations of the Coxeter generators $s_i\in W$. Hence the assignment $s_i \mapsto \Theta_i$ defines an action of the Weyl group $W$ on the ring $\Pi$ by ring automorphisms. 
\end{Thm}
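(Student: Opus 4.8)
The plan is to reduce the verification of all Coxeter relations to a computation on the topological generators $Y_{j,a}$. Since $\Pi$ is, component by component, a completion of $\YY=\Z[Y_{i,a}^{\pm 1}]$ and each $\Theta_i$ is a \emph{continuous} ring endomorphism of $\Pi$ (this is exactly what is quoted from \cite{FH2} just above the statement), it suffices to check that the assignment $s_i\mapsto\Theta_i$ respects the defining relations of $W$ when both sides are applied to each $Y_{j,a}$. In the simply-laced case those relations are only $\Theta_i^2=\mathrm{id}$, the commutations $\Theta_i\Theta_j=\Theta_j\Theta_i$ when $c_{ij}=0$, and the braid relations $\Theta_i\Theta_j\Theta_i=\Theta_j\Theta_i\Theta_j$ when $c_{ij}=-1$. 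Once $\Theta_i^2=\mathrm{id}$ is known, each $\Theta_i$ is its own inverse, hence an automorphism, so nothing beyond these three families of identities is needed.

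The commutation relation is the easy case. From the defining formula, $\Theta_i^w(Y_{i,a})$ lies in the completed subring generated by the variables $Y_{i,b}^{\pm 1}$ together with $Y_{k,b}^{\pm 1}$ for $k$ adjacent to $i$ in the Dynkin diagram, because $A_{i,b}$ and the series $\Si^\pm_{i,b}$ involve only those variables. If $c_{ij}=0$ then $j$ is not adjacent to $i$, so $\Theta_j$ — which alters only column $j$ — fixes $\Theta_i^w(Y_{i,a})$; together with the symmetric statement and the fact that $\Theta_i,\Theta_j$ both fix $Y_{k,a}$ for $k\notin\{i,j\}$, this gives $\Theta_i\Theta_j=\Theta_j\Theta_i$ on all generators.

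The remaining relations $\Theta_i^2=\mathrm{id}$ and, for $c_{ij}=-1$, $\Theta_i\Theta_j\Theta_i=\Theta_j\Theta_i\Theta_j$ are local to the rank-$\le 2$ root subsystem $\langle\a_i,\a_j\rangle$ (of type $A_1$ or $A_2$): only the columns indexed by $i$, resp. by $i$ and $j$, are moved, and every monomial occurring in the relevant formulas is indexed within that subsystem. I would carry out these checks exactly as in the worked $\mathfrak{sl}_2$ example in the text: rewrite $Y_{i,a}A_{i,aq^{-1}}^{-1}$ by means of Lemma~\ref{Lem31}, and then exploit the functional equations $\Si^\bullet_{i,a}=1+A_{i,a}^{-1}\Si^\bullet_{i,aq^{-2}}=-A_{i,aq^2}+A_{i,aq^2}\Si^\bullet_{i,aq^2}$ (valid for $\bullet\in\{+,-\}$, and immediate from the definitions) together with the invertibility of $\Si^w_{i,a}$ in $\widetilde{\YY}_w$, so that the $A$-factors and $\Si$-factors produced by iterating the substitutions telescope and cancel. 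For $\Theta_i^2$ this yields $\Theta_i^2(Y_{i,a})=Y_{i,a}$ directly; for the braid relation one evaluates both triple compositions on $Y_{i,a}$ and on $Y_{j,a}$ and checks that the two outputs coincide.

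I expect the braid relation for $c_{ij}=-1$ to be the main obstacle. Through a triple composition each factor $\Si^\bullet$ expands into an infinite series, and one must match these series term by term on the two sides $\Theta_i\Theta_j\Theta_i$ and $\Theta_j\Theta_i\Theta_j$; the functional equations reduce this to a finite identity for the $A_2$ subsystem, but tracking the signs and the spectral shifts $q^{\pm 1},q^{\pm 2}$ across the composition is delicate. A cleaner bookkeeping device, which I would set up first, is to pass to the larger ambient ring in which $Y_{i,a}=[\varpi_i]\Psi_{i,aq^{-1}}/\Psi_{i,aq}$: there, by Lemma~\ref{Lem31} (and $s_i(\varpi_i)=\varpi_i-\a_i$), the map $\Theta_i$ becomes the substitution $[\la]\mapsto[s_i\la]$, $\Psi_{j,b}\mapsto\Psi_{j,b}$ for $j\neq i$, and $\Psi_{i,b}\mapsto\widetilde\Psi_{i,bq^{-2}}\,\Si^{ws_i}_{i,bq^{-2}}$, so that the Coxeter relations reduce to a comparison of such substitutions, with the $\Si$-corrections controlled by the functional equations above. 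This is essentially the route followed in \cite{FH2}.
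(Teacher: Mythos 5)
This theorem is \emph{cited} from \cite{FH2} and the present paper gives no proof of it, so there is no in-paper argument to line up against. What the paper does reveal about the argument is limited to two hints: formula~(\ref{form-7}), quoted from \cite{FH2}, from which $\Theta_i^2=\mathrm{id}$ on $Y_{i,a}$, $A_{i,a}^{-1}$, $\Si_{i,a}$ follows by a short formal cancellation; and Remark~\ref{sproof}, which says the braid relations are proved in \cite{FH2} ``based on a restriction to the rank~$2$ case'', using the auxiliary series $\Si_{ji,a}$ defined by $\Theta_j(\Si_{i,a})=\Si_{ji,a}\Si_{j,aq^{-1}}^{-1}$ and the invariance $\Theta_i(\Si_{ji,a})=\Si_{ji,a}$. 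Your overall plan --- reduce to the topological generators $Y_{j,a}$, dispatch $c_{ij}=0$ by disjointness of supports, then handle $A_1$ and $A_2$ by the $q$-difference equations for $\Si^\pm$ and by passing to the $\Psi$-variables --- is consistent with this, and you say explicitly at the end that the $\Psi$-route is ``essentially the route followed in \cite{FH2}'', which is correct (it is also what the paper itself does for $\tTheta_i$ in \S\ref{subsection-Pi-prime}).

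Two cautions. First, the rank-$2$ locality claim needs a word of care: $\Theta_i(Y_{i,a})$ involves $Y_{k,\cdot}$ for \emph{every} $k$ adjacent to $i$ (inside $A_{i,\cdot}$), not just $k=j$; those extra columns are passive under $\Theta_i,\Theta_j$, so the reduction still works, but the expressions are not literally confined to columns $i,j$. Second, you leave the actual $A_2$ braid verification --- which you correctly identify as the hard step --- as a ``would carry out''; direct telescoping of the doubly-nested series is possible but markedly messier than the route via $\Si_{ji,a}$ and its $\Theta_i$-invariance. Note also that you cannot shortcut by imitating the paper's proof of Proposition~\ref{Prop-braid-rel} (the ``uniqueness plus $\varpi'$'' argument): that proof takes the present theorem as an input through Proposition~\ref{Prop-restriction}, so adapting it here would be circular.
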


We quote the following important formulas proved in \cite{FH2}:
\begin{equation}\label{form-7}
\Theta_i(A_{i,a}^{-1}) = A_{i,aq^{-2}}\frac{\Si_{i,a}}{\Si_{i,aq^{-4}}},\qquad
\Theta_i(\Si_{i,a}) = 1 - \Si_{i,a} = -A_{i,a}^{-1}\Si_{i,aq^{-2}}, \qquad (i\in I,\ a \in \C^*), 
\end{equation}
in which $A_{i,a} \in \YY$ is identified with its diagonal embedding $(A_{i,a})_{w\in W} \in \Pi$.

\subsection{The ring $\Pi'$ and its automorphisms} \label{subsection-Pi-prime}

In this paper, we will need to extend the construction of \cite{FH2} to a slightly larger ring. 
First we extend the ring $\YY$ by adding the new variables $\Psi_{i,a}$ and $[\la]$, and define:
\[
\YY' := \Z[\Psi_{i,a}^{\pm1}, [\la];\ i\in I,\ a\in\C^*,\ \la\in P]\ \supset\ \YY. 
\]
Let $\M'\subset \YY'$ denote the multiplicative group of Laurent monomials in the variables $\Psi_{i,a}$ and the variables $[\la]$. We denote
by $\omega'$ the group homomorphism from $\M'$ to $P$ defined by 
\[\omega'(\Psi_{i,a}) := 0,\qquad
 \omega'([\la]) := \la, \qquad (i\in I,\ a\in \C^*,\ \la\in P).
\]
The restriction of $\omega'$ to the subgroup $\M$ of $\M'$ coincides with the homomorphism $\omega$ defined above.

For each $w\in W$, we define a completion $\widetilde{\YY_w}'$ of $\YY'$ in a similar way as above, namely the elements 
of $\widetilde{\YY_w}'$ are formal power series of the form
\[
 \sum_{m\in S} a_m\,m,
\]
where $S$ is any subset of $\M'$ such that $w(\omega(S))$ is contained in a finite union of cones of $P$ of the form $C_\la$ ($\la\in P$) 
and for any $\la \in P$, the number of $a_m\not = 0$ with $\omega(m) = \la$ is finite.

\begin{Def}
We put 
\[
 \Pi' := \bigoplus_{w\in W} \widetilde{\YY_w}',
\]
and we denote by $E'_w : \Pi' \to \widetilde{\YY_w}'$ the projection morphisms.
\end{Def}
By construction, each component $\widetilde{\YY_w}$ is a subalgebra of $\widetilde{\YY_w}'$, and so $\Pi$ is a subalgebra of $\Pi'$.

For every $i\in I$ we introduce a ring endomorphism $\tT_i = (\tT_i^w)_{w\in W}$ of $\Pi'$. Each component $\tT_i^w$ is in fact a ring homomorphism from 
$\widetilde{\YY_{w}}'$ to $\widetilde{\YY_{ws_i}}'$ defined by :
\[
\tT_i^w([\la]) = [s_i(\la)],
\qquad
 \tT_i^w(\Psi_{j,a}) = 
\left\{ 
\begin{array}{lc}
\Psi_{j,a} & \mbox{ if }j \not = i, \\[2mm]
(1-[-\a_i])\widetilde{\Psi}_{i,aq^{-2}}\Si_{i,aq^{-2}}^{ws_i} & \mbox{ if }j = i,
\end{array}
\right.
\]
where the Laurent monomial $\widetilde{\Psi}_{i,aq^{-2}}$ is defined in Eq.~(\ref{def-tilde-Psi}). 
As before, these formulas combine into a well-defined continuous ring endomorphism $\tT_i$ of $\Pi'$.

\begin{Prop}\label{Prop-restriction}
The restriction of $\tT_i$ to $\Pi$ is equal to $\Theta_i$. 
\end{Prop}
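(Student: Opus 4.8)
The plan is to show that $\tT_i$ and $\Theta_i$ agree on a set of generators of the subalgebra $\Pi\subset\Pi'$, and then invoke continuity to conclude they agree on all of $\Pi$. The subring $\YY=\Z[Y_{i,a}^{\pm1}]$ is dense in $\Pi$ (in the sense that every element of $\widetilde{\YY}_w$ is a limit of its partial sums, which lie in the localization of $\YY$), and both $\tT_i$ and $\Theta_i$ are continuous ring homomorphisms, so it suffices to check that $\tT_i^w(Y_{j,a}) = \Theta_i^w(Y_{j,a})$ for all $j\in I$, $a\in\C^*$, $w\in W$. For $j\neq i$ both send $Y_{j,a}$ to $Y_{j,a}$, so the only real content is the case $j=i$.

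So first I would expand $Y_{i,a}$ using its definition $Y_{i,a} = [\varpi_i]\,\Psi_{i,aq^{-1}}/\Psi_{i,aq}$, apply $\tT_i^w$ using the given formulas for $\tT_i^w([\la])$ and $\tT_i^w(\Psi_{i,a})$, and simplify. Explicitly, $\tT_i^w(Y_{i,a}) = [s_i(\varpi_i)]\cdot \tT_i^w(\Psi_{i,aq^{-1}})\,\tT_i^w(\Psi_{i,aq})^{-1}$, and since $s_i(\varpi_i) = \varpi_i - \a_i$, the prefactor is $[\varpi_i-\a_i]$. The ratio $\tT_i^w(\Psi_{i,aq^{-1}})/\tT_i^w(\Psi_{i,aq})$ equals $\bigl(\widetilde{\Psi}_{i,aq^{-3}}\Si_{i,aq^{-3}}^{ws_i}\bigr)/\bigl(\widetilde{\Psi}_{i,aq^{-1}}\Si_{i,aq^{-1}}^{ws_i}\bigr)$, because the factor $(1-[-\a_i])$ cancels between numerator and denominator. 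Hence $\tT_i^w(Y_{i,a}) = [\varpi_i-\a_i]\,\dfrac{\widetilde{\Psi}_{i,aq^{-3}}}{\widetilde{\Psi}_{i,aq^{-1}}}\cdot\dfrac{\Si_{i,aq^{-3}}^{ws_i}}{\Si_{i,aq^{-1}}^{ws_i}}$. Now Lemma~\ref{Lem31} identifies $[\varpi_i-\a_i]\,\widetilde{\Psi}_{i,aq^{-3}}/\widetilde{\Psi}_{i,aq^{-1}}$ with $Y_{i,a}A_{i,aq^{-1}}^{-1}$, so the right-hand side becomes $Y_{i,a}A_{i,aq^{-1}}^{-1}\,\Si_{i,aq^{-3}}^{ws_i}/\Si_{i,aq^{-1}}^{ws_i}$, which is exactly the definition of $\Theta_i^w(Y_{i,a})$.

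There is a small point of rigor to address: one must make sure that this formal identity of power series is actually valid inside $\widetilde{\YY_{ws_i}}'$, i.e. that both sides lie in the completion and the manipulations (inverting $\Si$, inverting $\Psi$, multiplying) are legitimate there. This follows from the fact, recalled in the excerpt, that $\Si_{i,a}^{ws_i}$ is invertible in $\widetilde{\YY}_{ws_i}\subset\widetilde{\YY_{ws_i}}'$, and that $\widetilde{\Psi}$ and $[\la]$ are honest Laurent monomials, hence units in $\YY'$. The other point is density/continuity: I would note that $\Theta_i$ was constructed in \cite{FH2} precisely by extending the formula on $\YY$ by continuity, and $\tT_i$ was similarly constructed in \S\ref{subsection-Pi-prime} by continuity, so agreement on $\YY$ forces agreement on the closure $\Pi$ of (the localization generated by) $\YY$ inside $\Pi'$.

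I do not expect a serious obstacle here; the proof is essentially the verification that the formulas defining $\tT_i$ were reverse-engineered from those defining $\Theta_i$ via Lemma~\ref{Lem31}. The one place that needs a moment of care is the cancellation of the $(1-[-\a_i])$ factors and keeping track of the various $q$-shifts (the $aq^{-1}$ and $aq$ arguments feeding into the $\Psi$'s, which after applying $\tT_i^w$ produce $aq^{-3}$ and $aq^{-1}$ in the $\widetilde{\Psi}$ and $\Si$ terms); getting these shifts right is exactly what makes the $\Si$-ratio match the one in the definition of $\Theta_i^w$. So the "hard part," such as it is, is purely bookkeeping, and the conceptual content is entirely contained in Lemma~\ref{Lem31}.
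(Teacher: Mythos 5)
Your proposal is correct and follows essentially the same route as the paper: reduce to checking the identity on the Laurent monomials $Y_{j,a}$, handle $j\neq i$ trivially, and for $j=i$ expand via $Y_{i,a}=[\varpi_i]\Psi_{i,aq^{-1}}/\Psi_{i,aq}$, observe the cancellation of $(1-[-\alpha_i])$, and close with Lemma~\ref{Lem31}. The extra remarks you add about invertibility in $\widetilde{\YY_{ws_i}}'$ and continuity are sound but not spelled out in the paper, which takes them for granted.
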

\begin{proof}
It is enough to check that $\tT_i(Y_{j,a}) = \Theta_i(Y_{j,a})$ for every $j\in I$ and $a\in\C^*$. 
We have
\[
 Y_{j,a} = [\varpi_j] \frac{\Psi_{j,aq^{-1}}}{\Psi_{j,aq}},
\]
hence if $j\not = i$, since $\tT_i([\varpi_j]) = [s_i(\varpi_j)] = [\varpi_j]$ and $\tTheta_i(\Psi_{j,b}) = \Psi_{j,b}$ for every $b\in\C^*$, we get that $\tT_i(Y_{j,a}) = Y_{j,a}$.
Otherwise, if $j=i$ we have for every $w\in W$, using Lemma~\ref{Lem31},
\[
\tT_i^w(Y_{i,a}) = [s_i(\varpi_i)]\frac{\tT_i^w(\Psi_{i,aq^{-1}})}{\tT_i^w(\Psi_{i,aq})}
= [\varpi_i - \a_i] \frac{\widetilde{\Psi}_{i,aq^{-3}}}{\widetilde{\Psi}_{i,aq^{-1}}}
\frac{\Si_{i,aq^{-3}}^{ws_i}}{\Si_{i,aq^{-1}}^{ws_i}}
= Y_{i,a} A_{i,aq^{-1}}^{-1}\frac{\Si_{i,aq^{-3}}^{ws_i}}{\Si_{i,aq^{-1}}^{ws_i}}
= \Theta_i^w(Y_{i,a}),
\]
hence $\tT_i(Y_{i,a}) = \Theta_i(Y_{i,a})$. \cqfd
\end{proof}

The endomorphisms $\tTheta_i$ are also involutions of $\Pi'$, as shown by the next lemma.

\begin{Lem}\label{Lem-38}
For every $i\in I$ and $a\in\C^*$, we have 
\[
(\tTheta_i)^2(\Psi_{i,a}) = \Psi_{i,a}.  
\]
\end{Lem}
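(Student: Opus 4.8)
The plan is to prove the identity by a direct two-step computation, never leaving the ring $\Pi'$: first unfold the defining formula for $\widetilde{\Theta}_i$ once to get an explicit expression for $\widetilde{\Theta}_i(\Psi_{i,a})$, then apply $\widetilde{\Theta}_i$ to that expression factor by factor, using that $\widetilde{\Theta}_i$ is a (continuous) ring endomorphism of $\Pi'$ and reducing its effect on the $\Sigma$-factor to a formula we already know. The only genuinely non-routine ingredient will be Proposition~\ref{Prop-restriction}, which lets us replace $\widetilde{\Theta}_i(\Sigma_{i,b})$ by $\Theta_i(\Sigma_{i,b})$ and hence invoke formula~(\ref{form-7}); everything else is bookkeeping with Laurent monomials via Lemma~\ref{Lem31}.

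In detail, I would proceed as follows. Reading off the definition of $\widetilde{\Theta}_i$ and observing that, in the $w$-component, the superscript $ws_i$ on $\Sigma$ becomes $(ws_i)s_i = w$, one gets the identity in $\Pi'$
\[
\widetilde{\Theta}_i(\Psi_{i,a}) = (1-[-\a_i])\,\widetilde{\Psi}_{i,aq^{-2}}\,\Sigma_{i,aq^{-2}},
\]
where $\Sigma_{i,aq^{-2}}=(\Sigma_{i,aq^{-2}}^{w})_{w\in W}\in\Pi\subset\Pi'$ and $\widetilde{\Psi}_{i,aq^{-2}}$ is the Laurent monomial of (\ref{def-tilde-Psi}). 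Applying $\widetilde{\Theta}_i$ once more and using multiplicativity, I need: (i) $\widetilde{\Theta}_i([-\a_i]) = [s_i(-\a_i)] = [\a_i]$; (ii) an expansion of $\widetilde{\Theta}_i(\widetilde{\Psi}_{i,aq^{-2}})$, obtained from $\widetilde{\Psi}_{i,aq^{-2}} = \Psi_{i,aq^{-2}}^{-1}\prod_{j:c_{ij}=-1}\Psi_{j,aq^{-1}}$, from $\widetilde{\Theta}_i(\Psi_{j,b})=\Psi_{j,b}$ for $j\neq i$, and from $\widetilde{\Theta}_i(\Psi_{i,aq^{-2}}) = (1-[-\a_i])\widetilde{\Psi}_{i,aq^{-4}}\Sigma_{i,aq^{-4}}$; and (iii) $\widetilde{\Theta}_i(\Sigma_{i,aq^{-2}})$, which by Proposition~\ref{Prop-restriction} equals $\Theta_i(\Sigma_{i,aq^{-2}}) = -A_{i,aq^{-2}}^{-1}\Sigma_{i,aq^{-4}}$ by (\ref{form-7}). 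Multiplying the three together, the factor $\Sigma_{i,aq^{-4}}$ cancels against its inverse (it is a unit of $\Pi$), leaving
\[
(\widetilde{\Theta}_i)^2(\Psi_{i,a}) = (1-[\a_i])\,\big((1-[-\a_i])\widetilde{\Psi}_{i,aq^{-4}}\big)^{-1}\Big(\prod_{j:c_{ij}=-1}\Psi_{j,aq^{-1}}\Big)\big(-A_{i,aq^{-2}}^{-1}\big).
\]
To finish, I would rewrite $\prod_{j:c_{ij}=-1}\Psi_{j,aq^{-1}} = \Psi_{i,aq^{-2}}\widetilde{\Psi}_{i,aq^{-2}}$ and, from Lemma~\ref{Lem31} (with $a$ replaced by $aq^{-1}$) together with the definition of $Y_{i,a}$, expand $A_{i,aq^{-2}}^{-1} = [-\a_i]\,\frac{\Psi_{i,a}}{\Psi_{i,aq^{-2}}}\cdot\frac{\widetilde{\Psi}_{i,aq^{-4}}}{\widetilde{\Psi}_{i,aq^{-2}}}$; after these substitutions all remaining factors cancel in pairs and $\Psi_{i,a}$ survives.

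The one step that needs care — and the place I expect to have to be precise — is the simplification of the prefactor $(1-[\a_i])\big((1-[-\a_i])\widetilde{\Psi}_{i,aq^{-4}}\big)^{-1}$, because $1-[-\a_i]$ is \emph{not} invertible in $\Pi'$ (in the components $\widetilde{\YY_w}'$ with $w(\a_i)<0$ the series $\sum_{k\ge 0}[-k\a_i]$ is not an admissible power series). So $1-[-\a_i]$ must never be inverted in isolation. The remedy is that $u:=(1-[-\a_i])\widetilde{\Psi}_{i,aq^{-4}}$ \emph{is} a unit of $\Pi'$: indeed $u = \widetilde{\Theta}_i(\Psi_{i,aq^{-2}})\cdot\Sigma_{i,aq^{-4}}^{-1}$ is a product of units. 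Since $\Pi'$ is commutative and $u$ is a unit, the elementary identity $1-[\a_i] = -[\a_i]\,(1-[-\a_i])$ gives $u\cdot\big(-[\a_i]\widetilde{\Psi}_{i,aq^{-4}}^{-1}\big) = 1-[\a_i]$, hence $(1-[\a_i])\,u^{-1} = -[\a_i]\widetilde{\Psi}_{i,aq^{-4}}^{-1}$, which is exactly what the final monomial cancellation requires. Apart from this point, the proof is entirely formal.
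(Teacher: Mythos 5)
Your proof is correct and follows essentially the same route as the paper's: unfold the definition of $\widetilde{\Theta}_i$, compute $\widetilde{\Theta}_i(\Sigma_{i,aq^{-2}})$ via Proposition~\ref{Prop-restriction} and formula~(\ref{form-7}), and finish by Laurent-monomial bookkeeping via Lemma~\ref{Lem31}. One small correction to your commentary: $1-[-\alpha_i]$ \emph{is} invertible in $\Pi'$. It is true that $\sum_{k\ge 0}[-k\alpha_i]$ is not admissible in a component $\widetilde{\YY_w}'$ with $w(\alpha_i)<0$, but that does not rule out invertibility — in such a component the inverse is represented by the admissible series $-\sum_{k\ge 1}[k\alpha_i]$ instead, and indeed $(1-[-\alpha_i])\in\overline{\M}'$, the group of units introduced in \S\ref{subsection-Pi-prime}. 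This is why the paper's proof can simply cancel $(1-[\alpha_i])(1-[-\alpha_i])^{-1}(-[-\alpha_i])=1$ at the end. Your workaround of only ever inverting the product $(1-[-\alpha_i])\widetilde{\Psi}_{i,aq^{-4}}$ as a whole is correct and harmless, merely unnecessary.
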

\begin{proof}
Since $\Si_{i,aq^{-2}} \in \Pi$, it follows from Proposition~\ref{Prop-restriction} and from Equation (\ref{form-7}) that
\[
 \tTheta_i(\Si_{i,aq^{-2}}) = \Theta_i(\Si_{i,aq^{-2}}) = -A_{i,aq^{-2}}^{-1}\Si_{i,aq^{-4}}.
\]
On the other hand, for $j\not = i$ we have $\tTheta_i(\Psi_{j,b}) = \Psi_{j,b}$ for every $b\in\C^*$, therefore
\[
 \tTheta_i(\widetilde{\Psi}_{i,aq^{-2}}) = \tTheta_i(\Psi_{i,aq^{-2}}^{-1})
 \Psi_{i,aq^{-2}}\widetilde{\Psi}_{i,aq^{-2}}
 = (1-[-\a_i])^{-1}\widetilde{\Psi}_{i,aq^{-4}}^{-1}\Si_{i,aq^{-4}}^{-1}\Psi_{i,aq^{-2}}\widetilde{\Psi}_{i,aq^{-2}}. 
\]
Hence, using Lemma~\ref{Lem31}, we get 
\begin{eqnarray*}
 \tTheta_i(\widetilde{\Psi}_{i,aq^{-2}}) &=& (1-[-\a_i])^{-1}[-\a_i+\varpi_i]\Si_{i,aq^{-4}}^{-1}\Psi_{i,aq^{-2}} 
 Y_{i,aq^{-1}}^{-1}A_{i,aq^{-2}} \\
 &=& (1-[-\a_i])^{-1}[-\a_i]\Si_{i,aq^{-4}}^{-1}\Psi_{i,a} A_{i,aq^{-2}},
\end{eqnarray*}
and finally, 
\begin{eqnarray*}
(\tTheta_i)^2(\Psi_{i,a}) &=& \tTheta_i(1-[-\a_i])\tTheta_i(\widetilde{\Psi}_{i,aq^{-2}}) \tTheta_i(\Si_{i,aq^{-2}})
\\
&=& (1-[\alpha_i])(1-[-\a_i])^{-1}(-[-\a_i])\,\Psi_{i,a}
\\
&=& \Psi_{i,a}.
\end{eqnarray*}
\cqfd 
\end{proof}

Like the automorphisms $\Theta_i$, the automorphisms $\tTheta_i$ are compatible with the action of the ordinary simple reflexions $s_i$. 
Recall the subgroup $\overline{\mathcal{M}}$ of invertible elements of $\Pi$ introduced in \cite[Section 6.4]{FH2}.
We consider the subgroup $\overline{\mathcal{M}}'$ of invertible elements in $\Pi'$ generated by $\overline{\mathcal{M}}$, by the subgroup $\M'$ defined at the beginning of \S\ref{subsection-Pi-prime}, and by the elements $(1  - [\alpha])$ with $\alpha \in \Delta$.

By \cite[\S6]{FH2}, $\Theta_i$ defines an automorphism of  $\overline{\mathcal{M}}$.
But, from the defining formulas of $\tTheta_i$, we have that $\tTheta_i(\Psi_{j,b})\in \overline{\mathcal{M}}'$
for any $j\in I$ and $b\in\C^*$. Hence $\tTheta_i$ induces an automorphism of $\overline{\mathcal{M}'}$.

Recall the map $\varpi : \overline{\mathcal{M}}\rightarrow \mathcal{R}$ of \cite[Lemma 6.5]{FH2}\footnote{In \cite{FH2}, the generators of $\mathcal{R}$ are denoted by $y_i$, $a_{\a_i}$, etc. They correspond to our notation $[\varpi_i]$, $[\a_i]$, etc.}, which satisfies 
\[
\varpi(Y_{j,b}) = [\varpi_j],\quad \varpi(\Si_{j,b}) = (1-[-\a_j])^{-1}, \qquad  (j\in I,\ b\in\C^*).
\] 
Here $\mathcal{R} = \mathbb{Z}[[\pm \varpi_i],(1-[\alpha])^{\pm 1}]_{i\in I,\alpha\in \Delta}$ has a natural Weyl group action.
The morphism $\varpi$ can be extended to $\varpi' : \overline{\mathcal{M}}'\rightarrow \mathcal{R}$ by setting
\[
\varpi'(\Psi_{j,b}) = 1,\quad \varpi'([\la]) = [\la], \qquad \varpi'(1  - [\alpha]) = (1 - [\alpha])\qquad 
(j\in I,\ b\in\C^*,\ \la\in P,\ \alpha\in \Delta). 
\]
We still have the following.

\begin{Lem}\label{intsi} For $i\in I$, we have on the group $\overline{\mathcal{M}}'$ the relation
\begin{equation}\label{Eq-commut}
\varpi'\circ \tTheta_i = s_i\circ \varpi'.
\end{equation}
\end{Lem}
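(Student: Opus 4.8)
The relation (\ref{Eq-commut}) is multiplicative on both sides, so it suffices to check it on a set of generators of $\overline{\mathcal{M}}'$. By definition $\overline{\mathcal{M}}'$ is generated by $\overline{\mathcal{M}}$, by the group $\M'$ (i.e. by the $\Psi_{j,b}$ and the $[\la]$), and by the elements $1-[\alpha]$ for $\alpha\in\Delta$. On the subgroup $\overline{\mathcal{M}}$ the identity $\varpi\circ\Theta_i = s_i\circ\varpi$ is already established in \cite[\S6]{FH2}, and by Proposition~\ref{Prop-restriction} we have $\tTheta_i|_\Pi = \Theta_i$ while $\varpi'|_{\overline{\mathcal{M}}} = \varpi$; hence (\ref{Eq-commut}) holds on $\overline{\mathcal{M}}$. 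So the only thing to verify is the identity on the new generators $\Psi_{j,b}$, $[\la]$, and $1-[\alpha]$.

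\textbf{Key steps.} First, for $[\la]$ with $\la\in P$: we have $\tTheta_i([\la]) = [s_i(\la)]$ by definition, so $\varpi'(\tTheta_i([\la])) = [s_i(\la)] = s_i([\la]) = s_i(\varpi'([\la]))$, using that $\varpi'$ is the identity on the $[\la]$ and that the $W$-action on $\mathcal{R}$ sends $[\la]\mapsto[s_i\la]$. Second, for $1-[\alpha]$ with $\alpha\in\Delta$: since $1-[\alpha]$ is a polynomial in $[\alpha]$ and $\tTheta_i$, $\varpi'$, $s_i$ are all ring homomorphisms, this case reduces to the previous one: $\varpi'(\tTheta_i(1-[\alpha])) = 1-[s_i\alpha] = s_i(1-[\alpha]) = s_i(\varpi'(1-[\alpha]))$. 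Third — the only real computation — for $\Psi_{j,b}$: if $j\neq i$ then $\tTheta_i(\Psi_{j,b}) = \Psi_{j,b}$, and $\varpi'(\Psi_{j,b}) = 1$, while $s_i(\varpi'(\Psi_{j,b})) = s_i(1) = 1$, so both sides equal $1$. If $j=i$, then $\tTheta_i(\Psi_{i,b}) = (1-[-\a_i])\,\widetilde{\Psi}_{i,bq^{-2}}\,\Si_{i,bq^{-2}}^{ws_i}$, and applying $\varpi'$ we compute term by term: $\varpi'(1-[-\a_i]) = 1-[-\a_i]$; $\varpi'(\widetilde{\Psi}_{i,bq^{-2}}) = 1$ since $\widetilde{\Psi}_{i,bq^{-2}}$ is a Laurent monomial purely in the $\Psi$-variables (by (\ref{def-tilde-Psi})), on which $\varpi'$ takes the value $1$; and $\varpi'(\Si_{i,bq^{-2}}) = (1-[-\a_i])^{-1}$ by the extension formula for $\varpi'$ (matching $\varpi(\Si_{j,b}) = (1-[-\a_j])^{-1}$ from \cite[Lemma 6.5]{FH2}). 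Hence $\varpi'(\tTheta_i(\Psi_{i,b})) = (1-[-\a_i])\cdot 1\cdot (1-[-\a_i])^{-1} = 1 = s_i(1) = s_i(\varpi'(\Psi_{i,b}))$.

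\textbf{Main obstacle.} There is essentially no obstacle: the proof is a generator-by-generator check, and each case collapses to the value $1$ or to a trivial instance of the $W$-action on $[\la]$'s, once one observes that $\varpi'$ kills all the $\Psi$-type variables and that $\varpi'(\Si_{i,bq^{-2}})$ exactly cancels the prefactor $1-[-\a_i]$. The one point requiring a moment of care is confirming that $\widetilde{\Psi}_{i,aq^{-2}}$, as defined in (\ref{def-tilde-Psi}), is a monomial in the $\Psi$-variables only (no $[\la]$-factor), so that $\varpi'$ evaluates it to $1$; this is immediate from the formula $\widetilde{\Psi}_{i,a} = \Psi_{i,a}^{-1}\prod_{j:\,c_{ij}=-1}\Psi_{j,aq}$.
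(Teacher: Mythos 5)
Your proof is correct and follows essentially the same route as the paper's: a generator-by-generator check on $\overline{\mathcal{M}}'$, with the only nontrivial verification being $\varpi'(\tTheta_i(\Psi_{i,a})) = \varpi'(1-[-\a_i])\,\varpi'(\widetilde{\Psi}_{i,aq^{-2}})\,\varpi(\Si_{i,aq^{-2}}) = 1$, which is exactly the computation the paper exhibits. Your write-up is somewhat more systematic (explicitly treating $\overline{\mathcal{M}}$, $[\la]$, $1-[\alpha]$, and the $j\neq i$ case), but the content and the key cancellation $(1-[-\a_i])\cdot(1-[-\a_i])^{-1}=1$ are the same.
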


\begin{proof} This can be checked on all generators of $\overline{\mathcal{M}}'$. For instance:
\[
 \varpi'(\tTheta_i(\Psi_{i,a})) = \varpi'(1-[-\a_i])\varpi'(\widetilde{\Psi}_{i,aq^{-2}})\varpi(\Si_{i,aq^{-2}}) = 1  = s_i(\varpi'(\Psi_{i,a})).
\]
\end{proof}
\cqfd

Moreover the $\tTheta_i$ also satisfy the braid relations, as shown by the next proposition.
\begin{Prop}\label{Prop-braid-rel}
The endomorphisms $\tTheta_i$ of $\Pi'$ satisfy the braid relations.
Hence the assignment $s_i \mapsto \tTheta_i$ defines an action of the Weyl group $W$ on the ring $\Pi'$ by ring automorphisms.  
\end{Prop}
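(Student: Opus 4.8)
The plan is to verify the braid relations on a generating set, using that each $\tTheta_i$ is a continuous ring endomorphism of $\Pi'$. Since $\Pi'=\bigoplus_{w\in W}\widetilde{\YY_w}'$ and each $\widetilde{\YY_w}'$ is a completion of $\YY'=\Z[\Psi_{i,a}^{\pm1},[\la]]$, the subring $\Pi$ together with the elements $\Psi_{i,a}$ ($i\in I$, $a\in\C^*$) and $[\la]$ ($\la\in P$) generate $\Pi'$ topologically (indeed $\YY'$ is generated over $\Z$ by the $\Psi_{i,a}$ and the $[\la]$, while $\Pi$ accounts for the infinite sums in each component). So it suffices to show, for $i\ne j$, that $\tTheta_i\tTheta_j\tTheta_i$ and $\tTheta_j\tTheta_i\tTheta_j$ (when $c_{ij}=-1$), resp.\ $\tTheta_i\tTheta_j$ and $\tTheta_j\tTheta_i$ (when $c_{ij}=0$), agree on $\Pi$, on each $[\la]$, and on each $\Psi_{k,a}$. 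Moreover, because $\Psi_{i,aq^2}=[\varpi_i]\,\Psi_{i,a}\,Y_{i,aq}^{-1}$ and the $\tTheta$'s are ring homomorphisms, it is enough to treat one representative $\Psi_{i,a}$ for each coset $aq^{2\Z}\subset\C^*$.

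On $\Pi$ the statement is immediate from Proposition~\ref{Prop-restriction} (which gives $\tTheta_k|_{\Pi}=\Theta_k$ and $\Theta_k(\Pi)\subseteq\Pi$) together with Theorem~\ref{Thm-Weyl-relations}. On the $[\la]$ it follows from $\tTheta_k([\la])=[s_k(\la)]$ and the braid relations $s_is_js_i=s_js_is_j$, resp.\ $s_is_j=s_js_i$, holding in $W$. For $\Psi_{k,a}$ with $k\notin\{i,j\}$ there is nothing to do, since $\tTheta_i$ and $\tTheta_j$ both fix it. When $c_{ij}=0$, the defining formula for $\tTheta_i(\Psi_{i,a})$ only involves $[\la]$'s and the factors $\widetilde{\Psi}_{i,\cdot}$ and $\Si_{i,\cdot}$, which in turn involve the variables $\Psi_{l,\cdot}$ and $[\varpi_l]$ only for $l$ equal or adjacent to $i$; as $j$ is neither, $\tTheta_j$ fixes $\tTheta_i(\Psi_{i,a})$, and since $\tTheta_j$ fixes $\Psi_{i,a}$ as well, both composites equal $\tTheta_i(\Psi_{i,a})$ on $\Psi_{i,a}$ (and, by symmetry, on $\Psi_{j,a}$).

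The only substantial case is $c_{ij}=-1$ evaluated on $\Psi_{i,a}$ (the case of $\Psi_{j,a}$ is obtained by interchanging $i$ and $j$). Here I would use the observation that, since $\tTheta_j$ fixes $\Psi_{i,a}$, the identity $\tTheta_i\tTheta_j\tTheta_i(\Psi_{i,a})=\tTheta_j\tTheta_i\tTheta_j(\Psi_{i,a})$ is equivalent to the assertion that $z:=\tTheta_j\tTheta_i(\Psi_{i,a})$ is \emph{fixed} by $\tTheta_i$. I would compute $z$ explicitly, beginning from $\tTheta_i(\Psi_{i,a})=(1-[-\a_i])\,\widetilde{\Psi}_{i,aq^{-2}}\,\Si_{i,aq^{-2}}$, expanding $\widetilde{\Psi}_{i,aq^{-2}}=\Psi_{i,aq^{-2}}^{-1}\prod_{l:\,c_{il}=-1}\Psi_{l,aq^{-1}}$, and applying $\tTheta_j$ by means of the defining formula for $\tTheta_j(\Psi_{j,aq^{-1}})$, the rule $\tTheta_j([\la])=[s_j(\la)]$, Lemma~\ref{Lem31}, and the identities~(\ref{form-7}) (valid since $\tTheta_j|_{\Pi}=\Theta_j$ and $\Si_{i,aq^{-2}}\in\Pi$). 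Every index outside $\{i,j\}$ enters only as a spectator fixed by both $\tTheta_i$ and $\tTheta_j$, so the computation is really one of rank two in type $A_2$. Applying $\tTheta_i$ once more to $z$ — again via the formula for $\tTheta_i(\Psi_{i,a})$, Lemma~\ref{Lem31}, (\ref{form-7}) and the involutivity identities of Lemma~\ref{Lem-38} — one verifies that the outcome is $z$, which completes the proof.

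I expect this last rank-two verification to be the main obstacle. It is a finite computation, but a delicate one: one must track precisely the numerous $q$-shifts occurring in the arguments of the $\Psi$, $\widetilde{\Psi}$, $\Si$ and $A$ factors, and keep careful account of the plus/minus branches of the $\Si_{\cdot,\cdot}^{w}$, which get reshuffled when one composes the $\tTheta$'s component by component over $W$.
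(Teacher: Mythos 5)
Your setup — checking the braid relations on $[\la]$ and on the $\Psi_{k,a}$, dispatching the easy cases ($\Pi$ via Proposition~\ref{Prop-restriction} and Theorem~\ref{Thm-Weyl-relations}, $[\la]$ via the Weyl group, $k\notin\{i,j\}$ and $c_{ij}=0$ as spectators), and reducing the remaining $c_{ij}=-1$ case to showing that $z:=\tTheta_j\tTheta_i(\Psi_{i,a})$ is fixed by $\tTheta_i$ — is correct and is precisely the strategy of the paper's \emph{alternative} argument (Remark~\ref{sproof}), not of its main proof. The paper's main proof bypasses the rank-two computation entirely: from $\mathcal{R}_{ij}(Y_{k,a})=Y_{k,a}$ it deduces that $\mathcal{R}_{ij}(\Psi_{k,aq})\Psi_{k,aq}^{-1}$ is invariant under $a\mapsto aq^{-2}$, then uses the unique factorization $\Psi\times g$ with $\Lambda(g)=1$ in $\overline{\mathcal{M}}'$ to show this quantity is a constant $\chi\in\mathcal{R}$, and finally evaluates $\chi=1$ via the intertwining relation $\varpi'\circ\tTheta_i=s_i\circ\varpi'$ of Lemma~\ref{intsi}. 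That structural argument is uniform in $k$ and carries no shift bookkeeping, which is what you buy by not going the rank-two route.

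The gap in your proposal is that the one genuinely hard step is left as a promise, and the tools you list are not sufficient to discharge it. Remark~\ref{sproof} computes $z=(1-[-\a_i-\a_j])(1-[-\a_j])\,\Psi_{j,aq^{-3}}^{-1}\,\Sigma_{ji,aq^{-2}}$, where $\Sigma_{ji,a}$ is defined by $\Theta_j(\Sigma_{i,a})=\Sigma_{ji,a}\Sigma_{j,aq^{-1}}^{-1}$, and the decisive input is the nontrivial identity $\Theta_i(\Sigma_{ji,a})=\Sigma_{ji,a}$ quoted from \cite[(4.35)]{FH2}. This is exactly what makes the $A_2$ verification collapse: every other factor of $z$ is trivially handled by $\tTheta_i$. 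Your toolkit (Lemma~\ref{Lem31}, the formulas~(\ref{form-7}), and Lemma~\ref{Lem-38}) does not include this, and it does not follow formally from them; without it, the ``delicate'' $q$-shift-tracking computation you anticipate would in effect require re-deriving that identity for the doubly-infinite power series $\Sigma_{ji,a}$ from scratch. You should either import $\Theta_i(\Sigma_{ji,a})=\Sigma_{ji,a}$ explicitly or switch to the paper's shift-invariance/$\varpi'$ argument, which sidesteps the issue.
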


\begin{proof}
The braid relations satisfied by the $s_i$ are of the form
\[
(s_is_j)^{m_{ij}} = e, \qquad (i\not = j),
\]
where $m_{ij}=2$ (\resp $3$) if 
$c_{ij} = 0$ (\resp $-1$). Let us write
\[
 \mathcal{R}_{ij} := \left(\tTheta_i\tTheta_j\right)^{m_{ij}}.
\]
Clearly, we have $\mathcal{R}_{ij}([\la]) = [\la]$ for every $\la \in P$. It remains to show that $\mathcal{R}_{ij}(\Psi_{k,a}) = \Psi_{k,a}$ for every $k\in I$ and $a\in \C^*$. 

Now, by Proposition~\ref{Prop-restriction} and Theorem~\ref{Thm-Weyl-relations}, we know that
$\mathcal{R}_{ij}(Y_{k,a}) = Y_{k,a}$. It follows that 
\[
 \mathcal{R}_{ij}(\Psi_{k,aq})\Psi_{k,aq}^{-1} =
 \mathcal{R}_{ij}(\Psi_{k,aq^{-1}})\Psi_{k,aq^{-1}}^{-1},
 \qquad (k\in I,\ a\in\C^*).
\]

An element in $\overline{\mathcal{M}}'$ is a product $\Psi \times g$ with
$\Psi\in \mathcal{M}'$ and $g\in \overline{\mathcal{M}}$. This factorization is not unique, but
it becomes unique with the condition $\Lambda(g) = 1$ where $\Lambda:\overline{\mathcal{M}}\rightarrow
\mathcal{M}$ is defined in \cite[\S6]{FH2}. 
This implies that there is a unique factorization of the form
\[
\mathcal{R}_{ij}(\Psi_{k,aq})\Psi_{k,aq}^{-1} = M_aS_a
\]
where $M_a\in\mathcal{M}'$ and $S_a\in\overline{\mathcal{M}}$ with leading term $\Lambda(S_a) = 1$.
Because of this leading term, the above equality $M_aS_a = M_{aq^{-2}}S_{aq^{-2}}$
implies that $M_a = M_{aq^{-2}}$ does not depend on the spectral parameter $a$.
Therefore $S_a = S_{aq^{-2}}$ is also independent of $a$, and
$\mathcal{R}_{ij}(\Psi_{k,aq})\Psi_{k,aq}^{-1} = \chi \in \mathcal{R}$ is a constant.
It remains to show that $\chi = 1$. But, using Equation~(\ref{Eq-commut}) we have:
\[
\chi = \varpi'(\chi) = \varpi'\left(\mathcal{R}_{ij}(\Psi_{k,aq})\right)\varpi'\left(\Psi_{k,aq}^{-1}\right)
= (s_is_j)^{m_{ij}}\varpi'(\Psi_{k,aq}) = 1. 
\]
\cqfd
\end{proof}

\begin{remark}\label{sproof} 
{\rm
We can give another argument for the proof of the last Proposition, based on a restriction to the rank $2$ case as for the operators $\Theta_i$ in \cite{FH2}. 
The result is clear in type $A_1\times A_1$ as for $i\neq j$, $\tTheta_i\tTheta_j(\Psi_{i,a}) = \tTheta_i(\Psi_{i,a}) = \tTheta_j\tTheta_i(\Psi_{i,a})$.

In type $A_2$, we have for $i\neq j$
\begin{equation}\label{eq9}
(\tTheta_j\tTheta_i)(\Psi_{i,a}) =  (1 - [-\alpha_i-\alpha_j])(1 - [-\alpha_j]) \Psi_{j,aq^{-3}}^{-1}\Sigma_{ji,aq^{-2}},
\end{equation}
where $\Sigma_{ji,a}$ is defined by $\Theta_j(\Sigma_{i,a}) = \Sigma_{ji,a}\Sigma_{j,aq^{-1}}^{-1}$.
Clearly, we have
\[
\tTheta_i\left(\Psib_{j,aq^{-3}}^{-1}\right) = \Psib_{j,aq^{-3}}^{-1},\quad
\tTheta_i\left((1 - [-\alpha_i-\alpha_j])(1 - [-\alpha_j])\right)  
=
(1 - [-\alpha_j])(1 - [-\a_i-\alpha_j]).
\]
Since, by \cite[4.35]{FH2}, we have $\Theta_i(\Sigma_{ji,a}) = \Sigma_{ji,a}$, the right hand-side of 
Eq.~(\ref{eq9}) is invariant by $\tTheta_i$.
As $\Psi_{i,a}$ is invariant by $\tTheta_j$, it follows that 
\[
\tTheta_i\tTheta_j\tTheta_i(\Psi_{i,a}) = \tTheta_j\tTheta_i\tTheta_j(\Psi_{i,a}).
\]
\cqfd}
\end{remark}

Using Proposition~\ref{Prop-braid-rel} we can define for every $w\in W$ a ring automorphism $\tTheta_w$ of $\Pi'$ by 
\begin{equation}\label{eq-9}
 \tTheta_w := \tTheta_{i_1}\ldots \tTheta_{i_r},
\end{equation}
where $w=s_{i_1}\ldots s_{i_r}$ denotes an arbitrary factorization of $w$.

\begin{remark}\label{Rem3-12}
{\rm 
There are two possible variations of this Weyl group action, both extending the operators $\Theta_i$ on $\Pi$.

The first one is a braid group action. As in \cite[Remark 3.11]{FH3}, define an operator $\Tp_i$ on $\Pi'$ by: 
\[
\Tp_i([\la]) = [s_i(\la)],
\qquad
\Tp_i(\Psi_{j,a}) = 
\left\{ 
\begin{array}{lc}
\Psi_{j,a} & \mbox{ if }j \not = i, \\[2mm]
\widetilde{\Psi}_{i,aq^{-2}}\Si_{i,aq^{-2}} & \mbox{ if }j = i.
\end{array}
\right.
\]
Then the analogue of Proposition~\ref{Prop-restriction} is true with a similar proof, but the analogue of Lemma~\ref{Lem-38} is false (the order of $\Tp_i$ is not finite), 
as well as the analogue of Lemma~\ref{intsi}. However, the proof of Proposition \ref{Prop-braid-rel} explained in Remark \ref{sproof} works the same, and so the operators $\Tp_i$ satisfy the braid relations. For example, if $c_{ij} = -1$ we have
\[
(\Tp_j\Tp_i)(\Psi_{i,a}) =  \Psi_{j,aq^{-3}}^{-1}\Sigma_{ji,aq^{-2}}
\]
which is invariant by $\Tp_i$.

The second variation is an action of a finite covering of the Weyl group $W$, which will be discussed in \S\ref{subsec-finite-cov} below. \cqfd
}
\end{remark}

\section{$QQ$-systems} \label{sec-QQ}

Using the Weyl group action, we introduce distinguished elements $\QQ_{w(\varpi_i),\,a}$ 
of the ring $\Pi'$ satisfying a system of functional relations called the $QQ$-system.

\subsection{Normalization factors $\chi_{w(\varpi_i)}$}

We first introduce some normalization factors $\chi_{w(\varpi_i)}\ (i\in I,\ w\in W)$ indexed by the extremal weights $w(\varpi_i)$ of the fundamental $\g$-modules. The fact that these normalization factors are well-defined relies on \cite[Lemma 5.3]{FH3}.

\begin{Def}
There is a unique family $\chi_{w(\varpi_i)}$ of elements of $\Pi'$ defined by induction by the following conditions:
\begin{enumerate}
 \item[(i)] for every $i\in I$, $\chi_{\varpi_i} = 1$;
 \item[(ii)] for every $i\in I$ and $w\in W$ such that $ws_i > w$,
 \[
   \chi_{ws_i(\varpi_i)}\chi_{w(\varpi_i)} = \frac{\prod_{j\not = i} \chi_{w(\varpi_j)}^{|c_{ij}|}}{1 - [-w(\alpha_i)]}.
 \]
\end{enumerate}
\end{Def}

\begin{example}
{\rm
In type $A_2$ we have:
\[
 \chi_{\varpi_1} = 1, \quad \chi_{s_1(\varpi_1)} = \frac{1}{1-[-\alpha_1]},\quad
 \chi_{s_2s_1(\varpi_1)} = \frac{1}{(1-[-\alpha_1-\alpha_2])(1-[-\alpha_2])},
\]
\[
 \chi_{\varpi_2} = 1, \quad \chi_{s_2(\varpi_2)} = \frac{1}{1-[-\alpha_2]},\quad
 \chi_{s_1s_2(\varpi_2)} = \frac{1}{(1-[-\alpha_1-\alpha_2])(1-[-\alpha_1])}.
\]
\cqfd
}
\end{example}

\subsection{$\QQ$-variables}

Using the automorphisms $\tTheta_w$ of $\Pi'$ introduced in Eq.~(\ref{eq-9}), we can now define:
\begin{Def}
For $i\in I$, $w\in W$ and $a\in \C^*$, let
\[
 \QQ_{w(\varpi_i),\,a} := \chi_{w(\varpi_i)}\tTheta_w(\Psi_{i,a}).
\]
\end{Def}

It follows from the definition of $\tTheta_i$ and from Proposition~\ref{Prop-braid-rel} that 
$\tTheta_w(\Psi_{i,a})$ depends only on the weight $w(\varpi_i)$, hence the notation $\QQ_{w(\varpi_i),\,a}$.

\begin{example}\label{Ex4-4}
{\rm
In type $A_2$ we have:
\begin{eqnarray*}
\QQ_{\varpi_1,\,a} &=& \Psi_{1,a}, \\
\qquad \QQ_{s_1(\varpi_1),\,a} &=& \chi_{s_1(\varpi_1)}\tTheta_1(\Psi_{1,a}) =
\widetilde{\Psi}_{1,aq^{-2}}\Si_{1,aq^{-2}}
= \Psi_{1,aq^{-2}}^{-1}\Psi_{2,aq^{-1}} \Si_{1,aq^{-2}},\\
\QQ_{s_2s_1(\varpi_1),\,a} &=& \chi_{s_2s_1(\varpi_1)}\tTheta_2\tTheta_1(\Psi_{1,a}) 
= \Psi_{2,aq^{-3}}^{-1}\Si_{2,aq^{-3}}\Theta_2(\Si_{1,aq^{-2}})
= \Psi_{2,aq^{-3}}^{-1}\Si_{21,aq^{-2}},
\end{eqnarray*}
where $\Si_{21,aq^{-2}} \in \Pi$ is described in \cite[\S4.4]{FH2}. Its projection on $\widetilde{\YY}_e$ is given by 
\[
E_e(\Si_{21,aq^{-2}}) = \sum_{0\le \ell \le k} \left(\prod_{i=0}^{k-1} A_{2,aq^{-3-2i}}^{-1} \prod_{j=0}^{\ell-1} A_{1,aq^{-2-2j}}^{-1}\right) 
= 1 + A_{2,aq^{-3}}^{-1} + A_{2,aq^{-3}}^{-1}A_{1,aq^{-2}}^{-1} + \cdots.
\]
Moreover,
\[
 \QQ_{s_2(\varpi_1),\,a} = \QQ_{\varpi_1,\,a}, \quad \QQ_{s_1s_2(\varpi_1),\,a} = \QQ_{s_1(\varpi_1),\,a}, \quad
 \QQ_{s_2s_1s_2(\varpi_1),\,a} = \QQ_{s_1s_2s_1(\varpi_1),\,a} = \QQ_{s_2s_1(\varpi_1),\,a}.
\]
Finally, because of the symmetry $1 \leftrightarrow 2$ of the root system of type $A_2$, the expressions for $\QQ_{w(\varpi_2),\,a}$ are obtained by switching $1$ and $2$ in the above formulas.
\cqfd
}
\end{example}

Recall the operators $\Tp_i$ introduced in Remark~\ref{Rem3-12}. Since they satisfy the braid relations,  using an arbitrary reduced decomposition of $w\in W$ we can define a ring automorphism $\Tp_w$ of $\Pi'$. In fact we have

\begin{Lem}\label{QQTp} For $w\in W$, $i\in I$, $a\in\C^*$ we have
\[
\QQ_{w(\varpi_i),\,a} = \Tp_w(\Psi_{i,a}).
\]
\end{Lem}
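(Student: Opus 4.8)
The plan is to compare the two families of operators $\tTheta_i$ and $\Tp_i$ directly on the generators $\Psi_{i,a}$ and $[\la]$, and to track the normalization factors $\chi_{w(\varpi_i)}$ along a reduced word for $w$. Recall from the definitions that the only difference between $\tT_i$ and $\Tp_i$ is the scalar factor $(1-[-\a_i])$ appearing in $\tT_i(\Psi_{i,a}) = (1-[-\a_i])\widetilde{\Psi}_{i,aq^{-2}}\Si_{i,aq^{-2}}^{ws_i}$ versus $\Tp_i(\Psi_{i,a}) = \widetilde{\Psi}_{i,aq^{-2}}\Si_{i,aq^{-2}}$, while both operators act identically by $[\la]\mapsto[s_i(\la)]$ and fix $\Psi_{j,a}$ for $j\ne i$. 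Since $\QQ_{w(\varpi_i),a} = \chi_{w(\varpi_i)}\tTheta_w(\Psi_{i,a})$ by definition, it suffices to prove that $\chi_{w(\varpi_i)}\tTheta_w(\Psi_{i,a}) = \Tp_w(\Psi_{i,a})$, i.e.\ that the accumulated scalar discrepancies between $\tTheta_w$ and $\Tp_w$ applied to $\Psi_{i,a}$ are exactly $\chi_{w(\varpi_i)}^{-1}$.

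First I would set up the induction on $\ell(w)$. For $\ell(w)=0$ we have $\QQ_{\varpi_i,a} = \Psi_{i,a} = \Tp_e(\Psi_{i,a})$ since $\chi_{\varpi_i}=1$. For the inductive step, choose a reduced decomposition $w = s_{i_1}\cdots s_{i_r}$. The key observation is that $\tT_{i_1} = m_{i_1}\cdot \Tp_{i_1}$ in the following precise sense: on any element of $\Pi'$, $\tT_{i_1}$ and $\Tp_{i_1}$ agree except that each occurrence of a "created" generator $\Psi_{i_1,\bullet}$ (one produced by the operator acting on a $\Psi_{i_1,\bullet'}$) carries an extra factor $(1-[-\a_{i_1}])$ under $\tT_{i_1}$; applying $\tT_{i_1}$ then moves this scalar via $[\la]\mapsto[s_{i_1}(\la)]$ through the remaining operators. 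Tracking this carefully along the word, and using that $\Psi_{i,a}$ is a single generator of the relevant index, one finds that $\tTheta_w(\Psi_{i,a})$ and $\Tp_w(\Psi_{i,a})$ differ by a scalar in $\mathcal{R}$ which, by the recursion in the definition of $\chi$, equals $\chi_{w(\varpi_i)}^{-1}$. I would verify the scalar bookkeeping by inducting with $w' = s_{i_2}\cdots s_{i_r}$ and $i_1$ a left descent, using the identity $\Tp_{i_1}(\Psi_{j,b}) = \Psi_{j,b}$ for $j\ne i_1$ together with the defining relation (ii) for $\chi$, which has exactly the shape needed: the factor $1/(1-[-w'(\a_{i_1})])$ matches the single extra $(1-[-\a_{i_1}])$ that $\tT_{i_1}$ introduces, transported by $\Tp_{w'}^{-1}$ (equivalently by $s_{i_1}\cdots$) to weight $-w'(\a_{i_1})$, while the factors $\chi_{w'(\varpi_j)}^{|c_{i_1 j}|}$ account for the normalizations already accumulated on the generators $\Psi_{j,\bullet}$ that feed into the computation.

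The main obstacle will be the bookkeeping of the scalars $(1-[-\alpha])$: one must check that the weight at which each such factor appears after transporting it through the operators $\Tp_{i_2},\dots,\Tp_{i_r}$ (which act on the $[\la]$ by successive simple reflections) is precisely $w'(\a_{i_1})$ and not some other root, and that the multiplicities with which the lower-$\chi$'s enter exactly match the $|c_{ij}|$ in the recursion. This is where the argument could go wrong if the word is not reduced or if $s_{i_1}w < w$ fails, so it is important to use a reduced word with $i_1$ a \emph{left} descent of $w$ (so that $s_{i_1}w = w'$ with $\ell(w') = \ell(w)-1$ and $w'^{-1}(\a_{i_1}) > 0$), matching hypothesis (ii) in the definition of $\chi$ after reindexing. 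Alternatively, and perhaps more cleanly, one can bypass the combinatorics by applying the map $\varpi'$: by Lemma~\ref{intsi} we have $\varpi'\circ\tTheta_w = w\circ\varpi'$, and one computes $\varpi'(\QQ_{w(\varpi_i),a}) = \chi_{w(\varpi_i)}\cdot w(\varpi'(\Psi_{i,a})) = \chi_{w(\varpi_i)}$; checking by the analogous (easier) computation for $\Tp$ — where the scalar prefactors are absent — that $\varpi'(\Tp_w(\Psi_{i,a})) = \chi_{w(\varpi_i)}$ as well reduces the problem to showing that $\QQ_{w(\varpi_i),a}\,\Tp_w(\Psi_{i,a})^{-1}$ lies in the kernel direction of $\varpi'$ and is a unit of the expected form, after which one concludes by an induction that only needs the \emph{leading} behaviour. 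I would present the direct inductive argument as the main proof and mention the $\varpi'$-computation as a consistency check.
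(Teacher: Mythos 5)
Your main approach (direct induction on $\ell(w)$ with bookkeeping of the scalar factors $(1-[-\alpha])$) is different from the paper's proof, but it does work, and it can be made cleaner than your sketch suggests. The awkwardness you anticipate with "transporting" the factor $(1-[-\a_{i_1}])$ and with $\Psi_{i_k}$-degree counting disappears if you peel off a \emph{right} descent rather than a left one. Write $w = w's_j$ with $\ell(w)=\ell(w')+1$. If $j\ne i$, both sides of the claimed identity are unchanged and $w(\varpi_i)=w'(\varpi_i)$. If $j=i$, expand $\tTheta_{w}(\Psi_{i,a}) = \tTheta_{w'}\bigl((1-[-\a_i])\,\widetilde{\Psi}_{i,aq^{-2}}\,\Si_{i,aq^{-2}}\bigr)$ using that $\tTheta_{w'}$ is a ring homomorphism, apply the inductive hypothesis to each of the generators $\Psi_{i,aq^{-2}}^{-1}$, $\Psi_{k,aq^{-1}}$ ($c_{ik}=-1$) appearing in $\widetilde{\Psi}_{i,aq^{-2}}$, and use that $\tTheta_{w'}$ and $\Tp_{w'}$ agree on $\Si_{i,aq^{-2}}\in\Pi$ (both restrict to $\Theta_{w'}$). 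The resulting ratio is exactly $(1-[-w'(\a_i)])\,\chi_{w'(\varpi_i)}\prod_{k:c_{ik}=-1}\chi_{w'(\varpi_k)}^{-1}$, which the recursion (ii) defining $\chi$ identifies as $\chi_{w's_i(\varpi_i)}^{-1}$ in the simply-laced case. With a left descent, $\tTheta_{i_1}$ is the outermost operator and you cannot apply the inductive hypothesis inside it; you are then forced into the degree bookkeeping you worry about.

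The paper's own proof is close to your "consistency check," but with two substantive differences that you should be aware of. First, the fact $\varpi'\bigl(\Tp_w(\Psi_{i,a})\bigr)=\chi_{w(\varpi_i)}$ is \emph{not} an "analogous (easier) computation" to Lemma~\ref{intsi}: the operators $\Tp_i$ do \emph{not} intertwine with the $W$-action via $\varpi'$ (they are not involutions — see Remark~\ref{Rem3-12} — and for instance $\varpi'(\Tp_i(\Psi_{i,a})) = (1-[-\a_i])^{-1}\ne 1 = s_i(\varpi'(\Psi_{i,a}))$). That equality is a genuine lemma, which the paper cites from \cite[Lemma 5.3]{FH3}. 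Second, the paper does not conclude "by an induction on leading behaviour": instead, it observes that $\Theta_w(Y_{i,a}) = [w(\varpi_i)]\,\Tp_w(\Psi_{i,aq^{-1}})/\Tp_w(\Psi_{i,aq}) = [w(\varpi_i)]\,\QQ_{w(\varpi_i),aq^{-1}}/\QQ_{w(\varpi_i),aq}$ and invokes a uniqueness theorem for solutions of this $q$-difference quotient equation \cite[Theorem 4.5]{FH3} to conclude that $\Tp_w(\Psi_{i,a})$ and $\QQ_{w(\varpi_i),a}$ agree once both are normalized by their images under $\varpi'$. So your proposed primary route (direct induction), once repaired by inducting on right descents, is a self-contained alternative to the paper's argument, which buys independence from the cited uniqueness machinery at the cost of slightly more elementary bookkeeping; the paper's route is shorter but leans on two external facts from \cite{FH3}.
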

 
\begin{proof}
We have 
$$\Theta_w(Y_{i,a}) 
= [w(\omega_i)]\Theta_{w}'(\Psib_{i,aq^{-1}})/\Theta_{w}'(\Psib_{i,aq}) 
= [w(\omega_i)]\QQ_{w(\varpi_i),aq^{-1}}/\QQ_{w(\varpi_i),aq}.$$
Hence, by uniqueness, as in \cite[Theorem 4.5]{FH3}, we have 
$$\Theta_{w}'(\Psib_{i,a})/ \varpi'(\Theta_{w}'(\Psib_{i,a}))
= \QQ_{w(\varpi_i),a}/ \varpi'(\QQ_{w(\varpi_i),a}).$$
By construction  
$$ \varpi'(\QQ_{w(\varpi_i),a}) = \chi_{w(\varpi)}.$$ 
In addition, it is established in the proof of \cite[Lemma 5.3]{FH3} that
$$ \varpi'(\Theta_w'(\Psib_{i,a})) = \chi_{w(\varpi_i)}.$$
This implies the result. \cqfd
\end{proof}

\subsection{$QQ$-systems}

It is proved in \cite{FH3} that the elements $\QQ_{w(\varpi_i),\,a}$ satisfy the following functional relations.

\begin{Thm}[\cite{FH3}]\label{Thm-QtQ-system}
For $i\in I$ and $w\in W$ such that $ws_i > w$, 
we have:
\[
\QQ_{ws_i(\varpi_i),\,aq} \QQ_{w(\varpi_i),\,aq^{-1}} - [-w(\alpha_i)] \QQ_{ws_i(\varpi_i),\,aq^{-1}} \QQ_{w(\varpi_i),\,aq}
= \prod_{j:\ c_{ij}=-1} \QQ_{w(\varpi_j),\,a}. 
\]
\end{Thm}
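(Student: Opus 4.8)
The plan is to reduce the general identity to the special case $w=e$ using the Weyl-group-type action on $\Pi'$, and then to settle that base case by a short direct manipulation of the series $\Si_{i,a}$ and the Laurent monomials $\widetilde{\Psi}_{i,a}$, $A_{i,a}$.

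For the reduction I would use Lemma~\ref{QQTp}, which gives $\QQ_{w(\varpi_i),\,a} = \Tp_w(\Psi_{i,a})$ for the operators $\Tp_i$ of Remark~\ref{Rem3-12}; these satisfy the braid relations, so $\Tp_w$ is a well-defined ring automorphism of $\Pi'$ for each $w\in W$, and $\Tp_i([\la]) = [s_i(\la)]$. When $ws_i>w$, i.e. $\ell(ws_i) = \ell(w)+1$, a reduced word for $w$ followed by $i$ is reduced for $ws_i$, hence $\Tp_{ws_i} = \Tp_w\Tp_{s_i}$. Applying the automorphism $\Tp_w$ to the $w=e$ instance
\[
\Tp_{s_i}(\Psi_{i,aq})\,\Psi_{i,aq^{-1}} - [-\alpha_i]\,\Tp_{s_i}(\Psi_{i,aq^{-1}})\,\Psi_{i,aq} = \prod_{j:\ c_{ij}=-1}\Psi_{j,a},
\]
and using $\Tp_w(\Psi_{k,b}) = \QQ_{w(\varpi_k),b}$, $\ \Tp_w\Tp_{s_i}(\Psi_{i,b}) = \QQ_{ws_i(\varpi_i),b}$ and $\Tp_w([-\alpha_i]) = [-w(\alpha_i)]$, one obtains the asserted relation verbatim. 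So it remains to treat $w=e$.

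For the base case I would substitute $\QQ_{\varpi_i,a} = \Psi_{i,a}$ and, from the defining formula of $\Tp_i$ in Remark~\ref{Rem3-12}, $\QQ_{s_i(\varpi_i),a} = \widetilde{\Psi}_{i,aq^{-2}}\Si_{i,aq^{-2}}$. The monomial identity $\widetilde{\Psi}_{i,b}\,\Psi_{i,b} = \prod_{j:\ c_{ij}=-1}\Psi_{j,bq}$, read off from (\ref{def-tilde-Psi}), rewrites the first summand as $\Si_{i,aq^{-1}}\prod_{j:\ c_{ij}=-1}\Psi_{j,a}$, so the claim becomes
\[
(\Si_{i,aq^{-1}} - 1)\prod_{j:\ c_{ij}=-1}\Psi_{j,a} = [-\alpha_i]\,\widetilde{\Psi}_{i,aq^{-3}}\,\Si_{i,aq^{-3}}\,\Psi_{i,aq}.
\]
Now I would invoke the recursion $\Si_{i,b}-1 = A_{i,b}^{-1}\Si_{i,bq^{-2}}$, which is the middle equality of (\ref{form-7}) (and also follows from the explicit series of Example~\ref{Exa3-2}), to turn the left side into $A_{i,aq^{-1}}^{-1}\Si_{i,aq^{-3}}\prod_j\Psi_{j,a}$; since each $\Si_{i,b}$ is invertible in $\Pi'$, the common factor $\Si_{i,aq^{-3}}$ may be cancelled. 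What remains is the purely monomial identity
\[
A_{i,aq^{-1}}^{-1}\prod_{j:\ c_{ij}=-1}\Psi_{j,a} = [-\alpha_i]\,\widetilde{\Psi}_{i,aq^{-3}}\,\Psi_{i,aq},
\]
which is immediate from Lemma~\ref{Lem31} after using once more $\widetilde{\Psi}_{i,aq^{-1}}\Psi_{i,aq^{-1}} = \prod_j\Psi_{j,a}$ together with $Y_{i,a} = [\varpi_i]\Psi_{i,aq^{-1}}/\Psi_{i,aq}$.

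The point requiring the most care is the bookkeeping in the completed ring $\Pi'$: one must check that all four terms of the identity actually lie in the same components $\widetilde{\YY_w}'$, so that the equality can be tested there, and that the facts about $\Si_{i,b}$ used above (the recursion and the invertibility), formulated in the excerpt one component at a time, hold simultaneously in every component. This is handled exactly as for the operators $\Theta_i$ in \cite{FH2}, using $\omega'(\Psi_{i,a}) = 0$ and $\omega(A_{i,a}^{-1}) = -\alpha_i$ to keep the relevant power series supported inside admissible finite unions of cones $C_\la$.
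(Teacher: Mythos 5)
Your proof is correct, and it reconstructs an argument the paper itself does not give (Theorem~\ref{Thm-QtQ-system} is cited from \cite{FH3}, with only the rank-one instance verified in the example immediately following it). Your strategy matches the style the authors use elsewhere: the proof of Proposition~\ref{prop-5.5} likewise establishes the $w=e$ case first and then pushes it out by applying a Weyl-type automorphism, the difference being that there they use $\tTheta_w$ and have to keep track of the $\chi_{w(\varpi_i)}$ normalization factors, whereas you apply $\Tp_w$, which by Lemma~\ref{QQTp} identifies $\Tp_w(\Psi_{i,a})$ with $\QQ_{w(\varpi_i),a}$ directly and so avoids the bookkeeping of the $\chi$'s entirely. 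The base case manipulation is the natural generalization of the $A_1$ example in the paper, resting on exactly the ingredients the paper supplies: the monomial identity $\widetilde{\Psi}_{i,b}\Psi_{i,b}=\prod_{j:c_{ij}=-1}\Psi_{j,bq}$ from (\ref{def-tilde-Psi}), the $q$-difference recursion $\Si_{i,a}-1=A_{i,a}^{-1}\Si_{i,aq^{-2}}$ from (\ref{form-7}), the invertibility of $\Si_{i,a}$ in $\Pi\subset\Pi'$, and Lemma~\ref{Lem31}. The reduction step is also sound: when $ws_i>w$, a reduced word for $w$ followed by $i$ is reduced for $ws_i$, so $\Tp_{ws_i}=\Tp_w\Tp_{s_i}$ by the braid relations; applying the ring homomorphism $\Tp_w$ to the $w=e$ identity and using $\Tp_w([\la])=[w(\la)]$ and Lemma~\ref{QQTp} produces the claim verbatim. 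Note that for this last step one only needs $\Tp_w$ to be a ring endomorphism, not an automorphism, so even if one were cautious about the invertibility of $\Tp_i$ this would not affect the argument. The closing paragraph on $\Pi'$-bookkeeping is brief but correctly identifies the point to check; since all quantities are built from $\Psi_{j,b}$, $[\la]$ and the $\Si_{j,b}\in\Pi\subset\Pi'$, and the operators $\Tp_i$ are defined component-wise from $\widetilde{\YY_{w'}}'$ to $\widetilde{\YY_{w's_i}}'$, the identity does hold in every component.
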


\begin{example}
{\rm
In type $A_1$, the only relation is for $i=1$ and $w=e$, and we have: 
\[
\QQ_{\varpi_1,\,a} = \Psi_{1,a},\qquad 
\QQ_{s_1(\varpi_1),\,a} = \Psi_{1,aq^{-2}}^{-1} \Si_{1,aq^{-2}}. 
\]
Then, using the relation $A_{1,aq^{-1}}^{-1} = [-\alpha_1]\Psi_{1,aq^{-3}}^{-1}\Psi_{1,aq}$, we get:
\begin{eqnarray*}
{\QQ}_{s_1(\varpi_1),\,aq} \QQ_{\varpi_1,\,aq^{-1}} - [-\alpha_i] {\QQ}_{s_1(\varpi_1),\,aq^{-1}} \QQ_{\varpi_1,\,aq}
&=& \Psi_{1,aq^{-1}}^{-1} \Si_{1,aq^{-1}} \Psi_{1,aq^{-1}} 
- [-\alpha_i]\Psi_{1,aq^{-3}}^{-1} \Si_{1,aq^{-3}} \Psi_{1,aq}\\
&=& \Si_{1,aq^{-1}} - A_{1,aq^{-1}}^{-1}\Si_{1,aq^{-3}}\\
&=& \Si_{1,aq^{-1}} - (\Si_{1,aq^{-1}} -1) \\
&=& 1.
\end{eqnarray*}
\cqfd
}
\end{example}

\section{Cluster algebra structures on formal power series rings} \label{sect-cluster-FPS}

\subsection{The ring $K_\Z$} \label{subsection-KZ}

Let $K:= E'_e(\Pi')$ denote the projection of $\Pi'$ on its first factor. In other words,
\[
 K = \widetilde{\YY_e}'.
\]
The ring $K$ contains in particular the elements:
\[
Q_{w(\varpi_i),\,a} := E'_e\left(\QQ_{w(\varpi_i),\,a}\right),\qquad (i\in I,\ w\in W,\ a\in \C^*).
\]
Each of these elements has a unique factorization of the form
\[
Q_{w(\varpi_i),\,a} = \Psi_{w(\varpi_i),\,a}\, \Si_{w(\varpi_i),\,a}, 
\]
where $\Psi_{w(\varpi_i),\,a}$ is a Laurent monomial in the variables $\Psi_{j,b}$, and $\Si_{w(\varpi_i),\,a}$ is a formal power series in the variables $A_{j,b}^{-1}$ with constant term equal to 1. We call $\Psi_{w(\varpi_i),\,a}$ the highest weight monomial of $Q_{w(\varpi_i),\,a}$.

It is explained in \cite{FH2} that $\widetilde{\YY_e}$ is a complete topological ring. For the same reason, $K$ has the structure of a complete topological ring. 
Moreover, $K$ is an algebra over the group ring $[P]$ generated by the elements $[\lambda]\ (\lambda \in P)$.

In the sequel, we will only consider elements $Q_{w(\varpi_i),\,a}$ in which the spectral parameter $a$ belongs to a discrete subset of $\C^*$ (depending on $i\in I$). Also, for reasons which will appear in \S\ref{ssec-renorm}, we will have to use weights in $\uP := \frac{1}{2}\Z\otimes_\Z P$. More precisely,   
recall the vertex set $V \subset I \times \Z$ of the basic quiver $\Gamma_e$. We define $K_\Z$ as the 
 $[\uP]$-subalgebra of $[\uP]\otimes_{[P]} K$ topologically generated by the elements:
\[
Q_{w(\varpi_i),\,q^r},\quad ((i,r)\in V,\ w\in W). 
\]

\begin{Prop} \label{prop-invertible}
For every $(i,r)\in V$ and $w\in W$, the element $Q_{w(\varpi_i),\,q^r}$ 
is invertible in $K_{\Z}$.
\end{Prop}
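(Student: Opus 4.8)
The plan is to show that each $Q_{w(\varpi_i),\,q^r}$ has an explicit inverse inside $K_\Z$, by exhibiting its factorization into a Laurent monomial times a unit power series. Recall from the discussion just before the proposition that
\[
Q_{w(\varpi_i),\,q^r} = \Psi_{w(\varpi_i),\,q^r}\,\Si_{w(\varpi_i),\,q^r},
\]
where $\Psi_{w(\varpi_i),\,q^r}$ is a Laurent monomial in the variables $\Psi_{j,b}$ (together, after passing to $[\uP]\otimes_{[P]}K$, with a factor $[\lambda]$ for a suitable $\lambda\in\uP$), and $\Si_{w(\varpi_i),\,q^r}$ is a formal power series in the $A_{j,b}^{-1}$ with constant term $1$. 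First I would argue that the monomial factor is invertible: the variables $\Psi_{j,b}$ are invertible in $\YY'$ by construction (they were adjoined with their inverses), and the elements $[\lambda]$ are invertible in $[\uP]$; so $\Psi_{w(\varpi_i),\,q^r}^{-1}\in K_\Z$ provided one checks that all the $\Psi_{j,b}$ and $[\lambda]$ occurring actually lie in $K_\Z$. For this one uses that $K_\Z$ is topologically generated by the $Q_{w(\varpi_i),\,q^r}$ and that, taking $w=e$, one has $Q_{\varpi_i,\,q^r}=\Psi_{i,q^r}$ outright (see Example~\ref{Ex4-4}); combining these generators and their products one recovers the relevant monomials. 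Here the half-integer weights in $\uP$ enter exactly as in \S\ref{ssec-renorm}, and one must track that the spectral parameters stay in the discrete set indexed by $(i,r)\in V$.

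Next I would treat the power series factor $\Si_{w(\varpi_i),\,q^r}$. Since it has constant term $1$ and lies in a complete topological ring — $K=\widetilde{\YY_e}'$ is complete, and the closure defining $K_\Z$ inherits this — its inverse is given by the geometric series $\sum_{k\ge 0}(1-\Si_{w(\varpi_i),\,q^r})^k$, which converges in the topology of $K_\Z$ because $1-\Si_{w(\varpi_i),\,q^r}$ lies in the ideal generated by the $A_{j,b}^{-1}$, and powers of this ideal shrink to zero in the relevant filtration (this is precisely the sense in which $\widetilde{\YY_e}$, and hence $K_\Z$, is complete, as recalled from \cite{FH2}). The one point needing care is that this inverse actually belongs to the \emph{subalgebra} $K_\Z$ and not merely to the ambient $K$; this follows because $\Si_{w(\varpi_i),\,q^r}=\Psi_{w(\varpi_i),\,q^r}^{-1}Q_{w(\varpi_i),\,q^r}$ is a product of elements already shown to lie in $K_\Z$, and $K_\Z$ is by definition topologically closed, so the limit of the partial sums of the geometric series stays in $K_\Z$.

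Putting the two factors together, $Q_{w(\varpi_i),\,q^r}^{-1}=\Si_{w(\varpi_i),\,q^r}^{-1}\,\Psi_{w(\varpi_i),\,q^r}^{-1}\in K_\Z$, which is the claim. The main obstacle I anticipate is the bookkeeping in the first step: one must verify that the highest weight monomial $\Psi_{w(\varpi_i),\,q^r}$, together with its $[\lambda]$-part, genuinely lies in the topologically generated subalgebra $K_\Z$ rather than only in the larger ring $[\uP]\otimes_{[P]}K$. The cleanest route is probably to observe that all the generators $Q_{w(\varpi_i),\,q^r}$ are, up to the power-series factor, explicit Laurent monomials in the $\Psi_{j,b}$ (as in the type $A_2$ computations of Example~\ref{Ex4-4}), and that these monomials span the full monomial group needed; alternatively one invokes the $QQ$-system relations of Theorem~\ref{Thm-QtQ-system} to express the required inverse monomials as ratios of generators. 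Once that membership is secured, the rest is the standard completeness-plus-geometric-series argument.
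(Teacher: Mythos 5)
Your proposal correctly identifies the key factorization $Q_{w(\varpi_i),\,q^r} = \Psi_{w(\varpi_i),\,q^r}\,\Si_{w(\varpi_i),\,q^r}$ and the geometric-series mechanism for inverting the power-series factor in a complete topological ring, both of which are also present (implicitly or explicitly) in the paper's argument. However, as written there is a circularity at the critical step. You argue $\Si_{w(\varpi_i),\,q^r}^{-1}\in K_\Z$ by first establishing $\Si_{w(\varpi_i),\,q^r}=\Psi_{w(\varpi_i),\,q^r}^{-1}Q_{w(\varpi_i),\,q^r}\in K_\Z$ and then taking the geometric series; but this already presupposes $\Psi_{w(\varpi_i),\,q^r}^{-1}\in K_\Z$, which is exactly the point you flag as ``the main obstacle'' and never actually resolve. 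The remark that the $\Psi_{j,b}$ are invertible in $\YY'$ is beside the point: an element of the subalgebra $K_\Z$ may fail to have its inverse in $K_\Z$, and the generators $Q_{w(\varpi_i),\,q^r}$ carry non-trivial power-series tails that cannot simply be ``combined'' to produce bare monomials. Neither of your two suggested repairs is carried out.

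The paper avoids this circularity by introducing an auxiliary ring $K_\Z'$, namely the topological $[\uP]$-subalgebra generated by the Laurent variables $\Psi_{i,q^r}^{\pm 1}$ with $(i,r)\in V$, in which invertibility of both the monomial and the power-series factor is immediate. The real content is then the equality $K_\Z=K_\Z'$, proved in two steps: an induction on $\ell(w)$ (the properties $(A_l)$, $(B_l)$) showing $K_\Z\subset K_\Z'$, and a leading-term argument using $\Psi_{w_0(\varpi_i),\,q^r}=\Psi_{\nu(i),\,q^{r-h}}^{-1}$ to show $K_\Z'\subset K_\Z$. Your gap could in fact be closed along similar lines without introducing $K_\Z'$: apply your scheme first to $w=w_0$, observing that $Q_{\varpi_{\nu(i)},\,q^{r-h}}\,Q_{w_0(\varpi_i),\,q^r}=\Si_{w_0(\varpi_i),\,q^r}$ is a product of generators with constant term $1$, invert it by the geometric series inside the closed subalgebra $K_\Z$, and thereby obtain $\Psi_{\nu(i),\,q^{r-h}}^{-1}\in K_\Z$; once all $\Psi_{j,q^s}^{\pm 1}$ are known to lie in $K_\Z$, the remaining steps go through for arbitrary $w$. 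But this rearrangement must be made explicit, and you would still need to invoke the description of $\Psi_{w(\varpi_i),\,q^r}$ via Chari's braid group action (from \cite{FH3}) to know it is a monomial in the $\Psi_{j,q^s}^{\pm 1}$ with $(j,s)\in V$ — a point the paper handles but your write-up only gestures at through Example~\ref{Ex4-4}.
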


\begin{proof} Let $K_{\Z}'$ be the topological $[\uP]$-subalgebra of $[\uP]\otimes_{[P]} K$ generated by the $\Psi_{i,q^r}^{\pm 1}$ with $(i,r)\in V$.
Let us prove that $K_{\Z}$ equals $K_{\Z}'$.

First, we establish that the $Q_{w(\varpi_i),\,q^r}$ are in $K_{\Z}'$ if $(i,r)\in V$ and $w\in W$.
The point here is the restriction on spectral parameters. We need to show that all terms of $Q_{w(\varpi_i),\,q^r}$
depend only on the $\Psib_{i,q^r}$ with $(i,r)\in V$.
Recall the factorization
\[
Q_{w(\varpi_i),\,q^r} = \Psi_{w(\varpi_i),\,q^r}\, \Si_{w(\varpi_i),\,q^r}.
\]
By \cite{FH3}, the Laurent monomial $\Psi_{w(\varpi_i),\,q^r}$ can be calculated from $\Psi_{\varpi_i,q^r}$ using Chari's braid group action, hence it is a Laurent monomial in the
$\Psi_{j,q^s}^{\pm 1}$ with $(j,s)\in V$. 
Therefore it suffices to prove that the factor $\Si_{w(\varpi_i),q^r}$
belongs to $K_{\Z}'$.
For $w = e$ or $w = s_j$ with $j\neq i$, $\Si_{w(\varpi_i),q^r} = 1$
and for $w = s_i$,
\[
\Si_{w(\varpi_i),q^r} = E'_e(\Sigma_{i,q^{r-2}}) = 1 + A_{i,q^{r-2}}^{-1}(1 + A_{i,q^{r-4}}^{-1}(\cdots))\in K_{\Z}'.
\]
Consider the two following properties depending on $l = 0, 1, \ldots , \ell(w_0)$:
\begin{itemize}
 \item[($A_l$)]   For any $w\in W$ of length $\leq l$ and any $(i,r)\in V$, 
 we have $E'_e(\tT_w(\Psi_{i,r}^{\pm 1}))\in K_{\Z}'$.
 \item[($B_l$)]   For any $w\in W$ of length $\leq l$ and any $(i,r)\in V$, 
 we have $E'_e(\Theta_w(\Sigma_{i,r}^{\pm 1}))\in K_{\Z}'$.
\end{itemize}
We have already checked $(A_1)$ and $(B_0)$. 
Now the defining relations of the actions $\Theta$ and $\tT$ give that $(B_l, A_l)\Rightarrow (A_{l+1})$.
Also, it follows from the $q$-difference equation
\[
\Theta_w(\Sigma_{i,a}) = 1 + \Theta_w(A_{i,a}^{-1})\Theta_w(\Sigma_{i,aq^{-2}})
\]
that $(A_l)\Rightarrow (B_l)$. Thus we obtain by induction that $A_l$ and $B_l$ hold for any $l\le \ell(w_0)$, and we have established that $K_{\Z}\subset K_{\Z}'$.

Conversely, we can show using the braid group action that, for every $(i,r)$ in $V$,  $\Psi_{w_0(\varpi_i),\,q^r} = \Psi_{\nu(i),\,q^{r - h}}^{-1}$, where $\nu$ is the Nakayama involution defined in Proposition~\ref{prop:slices}, and $h$ is the Coxeter number. 
It follows that for any Laurent monomial $M$ in
the variables $\Psi_{i,q^r}$, $(i,r)\in V$, there exists a product of elements of $K_{\Z}$ of the form 
$Q_{\varpi_j,\,q^s}$ $((j,s)\in V)$ and $Q_{w_0(\varpi_k),\,q^t}$ $((k,t)\in V)$
 whose leading term is $M$. Hence the generators of $K_{\Z}$ generate also $K_{\Z}'$ as a topological $[P]$-module. Thus we have proved that $K_{\Z} = K_{\Z}'$.

Now, recall that $\Si_{w(\varpi_i),\,q^r}$ is invertible, and $\Si_{w(\varpi_i),\,q^r}^{-1}$ belongs to $K_{\Z}'$. Clearly, we also have $\Psi_{w(\varpi_i),\,q^r}^{-1}\in K_{\Z}'$. It follows that 
$Q_{w(\varpi_i),\,q^r}$ is invertible in $K_{\Z}' = K_\Z$ for every $(i,r)\in V$. \cqfd
\end{proof}

\subsection{Renormalized $Q$-variables}\label{ssec-renorm}
\quad
Let $\Omega$ be the group homomorphism from the multiplicative group of Laurent monomials in the variables 
\[ 
\Psi_{i,q^r},\ [\la],\qquad ((i,r)\in V,\ \la\in P), 
\]
to the additive group $\uP$ defined by 
\[
 \Omega(\Psi_{i,q^r}) := \frac{r}{2}\, \varpi_i,\quad \Omega([\la]) := \la, \qquad ((i,r)\in V,\ \la\in P).
\]
Note that 
\begin{equation}\label{Eq-Omega-1}
\Omega(Y_{i,q^{r+1}}) = \Omega\left([\varpi_i]\frac{\Psi_{i,q^{r}}}{\Psi_{i,q^{r+2}}}\right) = 0, \qquad ((i,r)\in V). 
\end{equation}
We set: 
\[
 \bQ_{w(\varpi_i),\,q^r} := \left[-\Omega(\Psi_{w(\varpi_i),\,q^r})\right]\,Q_{w(\varpi_i),\,q^r},
 \qquad ((i,r)\in V,\ w\in W). 
\]
In terms of these renormalized elements $\bQ_{w(\varpi_i),\,q^r}$, the $QQ$-system simplifies slightly and becomes:
\begin{Prop}\label{Prop-renorm-QQ}
For $(i,r)\in V$ and $w\in W$ such that $ws_i > w$, 
we have:
\[
{\bQ}_{ws_i(\varpi_i),\,q^{r}} \bQ_{w(\varpi_i),\,q^{r-2}} -\ {\bQ}_{ws_i(\varpi_i),\,q^{r-2}} \bQ_{w(\varpi_i),\,q^{r}}
=\prod_{j:\ c_{ij}=-1} \bQ_{w(\varpi_j),\,q^{r-1}}. 
\]
\end{Prop}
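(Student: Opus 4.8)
The plan is to derive this from Theorem~\ref{Thm-QtQ-system} in two moves: first project the full $QQ$-system to the first component $K = \widetilde{\YY_e}'$, then renormalize. Concretely, I would apply the ring homomorphism $E'_e$ to the identity of Theorem~\ref{Thm-QtQ-system} with $a = q^{r-1}$. Since $[-w(\a_i)]\in[P]$, this produces the identity
\[
Q_{ws_i(\varpi_i),\,q^r}\, Q_{w(\varpi_i),\,q^{r-2}} - [-w(\a_i)]\, Q_{ws_i(\varpi_i),\,q^{r-2}}\, Q_{w(\varpi_i),\,q^r} = \prod_{j:\ c_{ij}=-1} Q_{w(\varpi_j),\,q^{r-1}}
\]
in $K_\Z$; here the hypothesis $(i,r)\in V$ guarantees $(i,r-2)\in V$ and $(j,r-1)\in V$ whenever $c_{ij}=-1$, so all six elements lie in $K_\Z$. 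It then remains only to compare the $[\la]$-prefactors relating the $Q$-variables to the $\bQ$-variables, and the clean way to organize this is via the grading defined by $\Omega$.

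The heart of the matter --- and the step I expect to demand the most care --- is to prove that each generator $Q_{w(\varpi_i),\,q^r}$, $(i,r)\in V$, is \emph{homogeneous} for $\Omega$. For this I would argue as follows. The map $\Omega$ extends uniquely to a homomorphism from the group of Laurent monomials in the $\Psi_{j,q^s}$ ($(j,s)\in V$) and the $[\la]$ ($\la\in\uP$) to $\uP$, and the computation underlying Eq.~(\ref{Eq-Omega-1}) shows $\Omega(Y_{j,q^s}) = 0$ for every admissible $s$, hence $\Omega(A_{j,q^s}^{\pm1}) = 0$ as well. Moreover, the proof of Proposition~\ref{prop-invertible} --- precisely the identification of $K_\Z$ with the algebra generated by the $\Psi_{i,q^r}^{\pm1}$, $(i,r)\in V$ --- shows that every monomial occurring in any element of $K_\Z$, in particular in any $Q_{w(\varpi_i),\,q^r}$, is a Laurent monomial in the $\Psi_{j,q^s}$ ($(j,s)\in V$) and the $[\la]$, so that $\Omega$ is defined on it. Combining this with the factorization $Q_{w(\varpi_i),\,q^r} = \Psi_{w(\varpi_i),\,q^r}\,\Si_{w(\varpi_i),\,q^r}$, in which the second factor is a power series in variables $A_{j,q^s}^{-1}$, I conclude that every monomial of $Q_{w(\varpi_i),\,q^r}$ has $\Omega$-image equal to $\Omega(\Psi_{w(\varpi_i),\,q^r})$; in other words $Q_{w(\varpi_i),\,q^r}$ is $\Omega$-homogeneous, and therefore $\bQ_{w(\varpi_i),\,q^r}$ is $\Omega$-homogeneous of degree $0$. (An alternative would be to compute $\Omega(\Psi_{w(\varpi_i),\,q^r})$ outright --- using that the $\varpi$-weighted exponent sum of $\Psi_{w(\varpi_i),\,a}$ equals $w(\varpi_i)$, proved by induction on $\ell(w)$ from the defining formula for $\tTheta_i$, together with the spectral-shift equivariance of the whole construction --- and then verify the needed weight identities by hand; but the homogeneity argument makes this unnecessary.)

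With homogeneity in hand I would conclude as follows. Let $d_1, d_2, d_3$ be the $\Omega$-degrees of the three terms of the projected identity (so $d_2$ also absorbs the contribution $-w(\a_i)$ of the coefficient $[-w(\a_i)]$). The right-hand side is nonzero --- its leading (highest weight) monomial is the product of the leading monomials of the factors --- and it is $\Omega$-homogeneous of degree $d_3$. Since the monomials of a homogeneous element all carry its degree, there can be no cancellation between the two terms on the left when $d_1\neq d_2$; as neither of them vanishes, the left-hand side would then contain monomials of both degrees $d_1$ and $d_2$, contradicting the homogeneity of the right-hand side. Hence $d_1 = d_2$, and then $d_1 = d_2 = d_3$. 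Finally, multiplying the projected identity by the central invertible element $[-d_3]\in[\uP]$ and invoking the definition $\bQ_{v(\varpi_k),\,q^s} = [-\Omega(\Psi_{v(\varpi_k),\,q^s})]\,Q_{v(\varpi_k),\,q^s}$ --- together with the cancellation $[w(\a_i)][-w(\a_i)] = 1$ that clears the coefficient $[-w(\a_i)]$ of the middle term --- converts each $Q$ into the corresponding $\bQ$ and yields exactly the asserted coefficient-free $QQ$-relation.
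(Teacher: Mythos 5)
Your proof is correct, and it takes a genuinely different route from the paper's. Both arguments begin by applying $E'_e$ to Theorem~\ref{Thm-QtQ-system} and both exploit the factorization $Q_{w(\varpi_i),q^r} = \Psi_{w(\varpi_i),q^r}\,\Si_{w(\varpi_i),q^r}$ together with the fact that $\Omega$ kills the $A^{-1}$-variables, so that only highest-weight monomials matter. But from there the strategies diverge. The paper computes the relevant $\Omega$-values directly: for $w=e$ it works out the highest weight monomials by hand, and for general $w$ it invokes the explicit formula $\Lambda(\Theta_w(Y_{i,a})) = [w(\varpi_i)]\,\Psi_{w(\varpi_i),aq^{-1}}\Psi_{w(\varpi_i),aq}^{-1}$ from \cite[FH3]{FH3} to obtain the spectral-shift identity $\bigl[\Omega(\Psi_{w(\varpi_i),q^{r}})\bigr] = [w(\varpi_i)]\bigl[\Omega(\Psi_{w(\varpi_i),q^{r-2}})\bigr]$, and then propagates this through the two sides of the exchange relation. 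You instead establish that each $Q$-variable, and hence each side of the projected identity, is $\Omega$-homogeneous, and you deduce the equality of the three degrees $d_1=d_2=d_3$ purely from the nonvanishing of the three terms and the impossibility of a homogeneous sum mixing degrees; the final step is then a one-line multiplication by the central unit $[-d_3]$. Your route buys uniformity (no case split between $w=e$ and general $w$, no need to match the highest-weight monomial of the right-hand side with that of the first term) and avoids quoting the explicit $\Lambda$-formula, at the modest cost of having to spell out why the $A^{-1}$-monomials appearing in $\Si_{w(\varpi_i),q^r}$ have well-defined $\Omega$-image zero, which you correctly tie back to Proposition~\ref{prop-invertible}. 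The paper's computation, on the other hand, yields the spectral-shift identity as a byproduct, which it reuses elsewhere (it reappears as Equation~(\ref{shiftq}) in the proof of Proposition~\ref{prop-5.5}).
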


\begin{proof}
We have to check that, using this new normalization, the factor $[-w(\alpha_i)]$ in the second term of the left hand-side of Theorem~\ref{Thm-QtQ-system} cancels out.

Let us first check it for $w = e$. The highest weight monomial of ${\bQ}_{s_i(\varpi_i),\,q^{r}} \bQ_{\varpi_i,\,q^{r-2}}$ is equal to 
\[
\widetilde{\Psi}_{i,q^{r-2}}\Psi_{i,q^{r-2}} =
\prod_{j:\ c_{ij}=-1} \Psi_{\varpi_j,\,q^{r-1}} 
\]
which is nothing else than the highest weight-monomial of the right-hand side. Therefore  
applying $\Omega$ to the highest weight-monomial of the right-hand side, we get exactly 
$\Omega(\widetilde{\Psi}_{i,q^{r-2}}\Psi_{i,q^{r-2}})$.
Now the quotient of the highest weight monomial of ${\bQ}_{s_i(\varpi_i),\,q^{r-2}} \bQ_{\varpi_i,\,q^{r}}$ by the highest weight monomial of ${\bQ}_{s_i(\varpi_i),\,q^{r}} \bQ_{\varpi_i,\,q^{r-2}}$ is equal to
\[
\frac{\widetilde{\Psi}_{i,q^{r-4}}\Psi_{i,q^{r}}}{\widetilde{\Psi}_{i,q^{r-2}}\Psi_{i,q^{r-2}}} 
=
[\alpha_i-\varpi_i]Y_{i,q^{r-1}} A_{i,q^{r-2}}^{-1} [\varpi_i]Y_{i,q^{r-1}}^{-1}
=
[\alpha_i] A_{i,q^{r-2}}^{-1},
\]
by Lemma~\ref{Lem31}. Hence, since by Eq.~(\ref{Eq-Omega-1}) we have $\left[\Omega\left(A_{i,q^{r-2}}^{-1}\right)\right] = 1$, we get:  
\[
 \left[-\Omega\left(\widetilde{\Psi}_{i,q^{r-4}}\Psi_{i,q^{r}}\right)\right] =
   \left[-\Omega\left(\widetilde{\Psi}_{i,q^{r-2}}\Psi_{i,q^{r-2}}\right)\right]
   [-\alpha_i] 
\]
and the Proposition is proved in the case $w=e$.

The case of an arbitrary $w$ can be checked in a similar way using results from \cite{FH3}. First, one  can see that for every $w$, the highest weight monomial of the right-hand side coincides with the highest weight monomial of the first term of the left-hand side. Therefore we only have to compare the result of applying $\Omega$ to the highest weight monomials of the two terms of the left hand-side. Now, the highest weight monomial $\Lambda(\Theta_w(Y_{i,a}))$ of $\Theta_w(Y_{i,a})$, where $\Lambda$ is defined in \cite[§6.4]{FH2}, is known to be equal to
\[
\Lambda(\Theta_w(Y_{i,a})) = [w(\varpi_i)] \Psi_{w(\varpi_i),aq^{-1}}\Psi_{w(\varpi_i),aq}^{-1},
\quad (w\in W,\ a\in\C^*).
\]
Since $\Lambda(\Theta_w(Y_{i,a}))$ is a Laurent monomial in the variables $Y_{j,b}\ (j\in I,\ b\in\C^*)$, it follows from Eq.~(\ref{Eq-Omega-1}), that:
\[
\left[\Omega\left(\Lambda(\Theta_w(Y_{i,q^{r-1}}))\right)\right] = [w(\varpi_i)]\, \left[\Omega\left(\Psi_{w(\varpi_i),q^{r-2}}\Psi_{w(\varpi_i),q^r}^{-1}\right)\right] = 1,
\]
and similarly
\[
\left[\Omega\left(\Lambda(\Theta_{ws_i}(Y_{i,q^{r-3}}))\right)\right] = [ws_i(\varpi_i)]\, \left[\Omega\left(\Psi_{ws_i(\varpi_i),q^{r-4}}\Psi_{ws_i(\varpi_i),q^{r-2}}^{-1}\right)\right] = 1.
\]
Therefore,
\[
\left[\Omega\left(\frac{\Psi_{ws_i(\varpi_i),q^{r-4}}\Psi_{w(\varpi_i),q^{r}}}{\Psi_{ws_i(\varpi_i),q^{r-2}}\Psi_{w(\varpi_i),q^{r-2}}} 
\right)\right] 
= [w(\varpi_i) - ws_i(\varpi_i)] = [w(\alpha_i)], 
\]
hence
\[
 \left[-\Omega\left(\Psi_{ws_i(\varpi_i),q^{r-4}}\Psi_{w(\varpi_i),q^{r}}\right)\right] =
   \left[-\Omega\left(\Psi_{ws_i(\varpi_i),q^{r-2}}\Psi_{w(\varpi_i),q^{r-2}}\right)\right]
   [-w(\alpha_i)]. 
\]
\cqfd
\end{proof}

\subsection{Action of a finite covering of $W$}
\label{subsec-finite-cov}

Consider the subring $\Pi'_\Z\subset  [\uP]\otimes_{[P]} \Pi'$ constructed as $\Pi'$ but using only variables $\Psi_{i,q^r}$ with $(i,r)\in V$ and $[\la]\ (\lambda \in \uP)$ (the completion procedure is the same as in \S\ref{sect3-2}). 

We define an operator $\overline{s}_i$ on $\Pi'_\Z$ by $\overline{s}_i([\la]) := [s_i(\la)] \ (\la \in \uP)$
and : 
\[
\overline{s}_i([-\Omega(\Psi_{j,q^r})]\,\Psi_{j,q^r}) = 
\left\{ 
\begin{array}{lc}
[-\Omega(\Psi_{j,q^r})]\,\Psi_{j,q^r} & \mbox{ if }j \not = i, \\[2mm]
\left[-\Omega(\widetilde{\Psi}_{i,q^{r-2}})\right]\,\widetilde{\Psi}_{i,q^{r-2}}\Si_{i,q^{r-2}} & \mbox{ if }j = i.
\end{array}
\right.
\]
Then the analogue of Proposition \ref{Prop-restriction} is true with a similar proof, because 
\[
\overline{s}_i(Y_{i,q^r}) = \bar{s}_i([-\Omega(\Psi_{i,q^{r-1}})]\Psi_{i,q^{r-1}} [\Omega(\Psi_{i,q^{r+1}})]\Psi_{i,q^{r+1}}^{-1})
=\Theta_i(Y_{i,q^r})
\]
and $\overline{s}_i(Y_{j,q^r}) = Y_{j,q^r}$ for $i\neq j$.

Although $\overline{s}_i$ is not an involution, it is an involution up to sign (and so it is an automorphism). 
Indeed, by a calculation similar to the proof of Lemma~\ref{Lem-38}, we have:
\[
\begin{array}{l}
\overline{s}_i^2([-\Omega(\Psi_{i,q^r})]\Psi_{i,q^r})
\\[2mm]
\qquad = \ \bar{s}_i\left([\Omega(\Psi_{i,q^{r-2}})]\Psi_{i,q^{r-2}}^{-1}\right)
\left(\widetilde{\Psi}_{i,q^{r-2}}\Psi_{i,q^{r-2}}\right)
\left[-\Omega(\widetilde{\Psi}_{i,q^{r-2}}\Psi_{i,q^{r-2}})\right] (-A_{i,q^{r - 2}}^{-1})\Sigma_{i,q^{r - 4}}
\\[2mm]
\qquad= \ -[-\Omega(\Psi_{i,q^r})]\Psi_{i,q^r},
\end{array}
\]
and $\overline{s}_i^2(\Psi_{j,q^r}) = \Psi_{j,q^r}$ for $i\neq j$. 
Hence we have: 
\[
\overline{s}_i^4 = \id,\qquad (i\in I).
\]

Finally, the proof of Proposition \ref{Prop-braid-rel} explained in Remark \ref{sproof} works also in this case, 
and we obtain that the $\overline{s}_i$ satisfy the braid relations. 
For example, in type $A_2$ with $i\not = j$,
$$(\overline{s}_j\overline{s}_i)(\Psi_{i,q^{r}}[-\Omega(\Psi_{i,q^{r}})]) = [\Omega(\Psi_{j,q^{r-3}})] \Psi_{j,q^{r-3}}^{-1}\Sigma_{ji,q^{r-2}},$$
which is invariant by $\overline{s}_i$.

\begin{remark}\label{rem-7-3}
{\rm
Since the $\overline{s}_i$ satisfy the braid relations,  using an arbitrary reduced decomposition of $w\in W$ we can define a ring automorphism $\overline{s}_w$ of $\Pi'_\Z$. Let 
$\bQQ_{w(\varpi_i),q^r} := \left[-\Omega(\Psi_{w(\varpi_i),\,q^r})\right]\QQ_{w(\varpi_i),q^r}$.
Then we have 
$$\overline{s}_w(\bQQ_{\varpi_i,q^r}) = \bQQ_{w(\varpi_i),\,q^r}.$$
The proof is analog to the proof of Lemma \ref{QQTp}. We have
$$\overline{s}_w(Y_{i,q^{r-1}}) = \overline{s}_w(\bQQ_{\varpi_i,q^{r-2}})/\overline{s}_w(\bQQ_{\varpi_i,q^r})
= \bQQ_{w(\varpi_i),q^{r-2}}/\bQQ_{w(\varpi_i),q^r}.$$
Hence, by uniqueness as in \cite[Theorem 4.5]{FH3}, we have that $\overline{s}_w(\bQQ_{\varpi_i,q^r})$ 
and $\bQQ_{w(\varpi_i),q^r}$ are equal up to their image by $\varpi'$. 
(Here we keep denoting by $\varpi'$ the map extended to $\Pi'_\Z$ by setting $\varpi'([\lambda]) = [\lambda]$ for $\lambda\in\uP$.)
We know that  
$$\left[\Omega(\Psi_{w(\varpi_i),\,q^r})\right] \varpi'(\bQQ_{w(\varpi_i),q^r}) = \chi_{w(\varpi_i)}$$
which is the solution of $QQ$-system without spectral parameters. 
As above, one can check that 
$\left[\Omega(\Psi_{w(\varpi_i),\,q^r})\right] \varpi'(\overline{s}_w(\bQQ_{\varpi_i,q^r}))$ 
satisfies the same equation, and conclude in the same way.

}
\end{remark}

\subsection{Cluster algebra structure on $K_\Z$}\label{ssubsec-main-th}

In this section we will use freely the notation and results of \S\ref{subsec4-2} and \S\ref{subsec4-3}.
In particular, we fix a Coxeter element $c$ and we denote by $\Gamma$ the quiver of the corresponding initial seed of the cluster algebra $\AA_{w_0}$. The cluster variables $z_{(i,a)} \ ((i,a) \in V)$ of this initial seed are parametrized by their stabilized $g$-vectors, which by \S\ref{subsec4-3}, Equation~(\ref{form-theta-g-vect}), can be written in the form 
\[
 \bg^{(\infty)}_{(i,a)} = \theta_{i_1}\cdots\theta_{i_{t}}(\be_{(i,m_{i})})[s],
\]
for some well-defined $s\in \Z$ and $t\in\{0,1,\ldots, N\}$ such that $i_t = i$. 
Let $L$ denote the Laurent polynomial ring in these initial cluster variables. By the Laurent phenomenon,
the cluster algebra $\AA_{w_0}$ embeds into $L$.

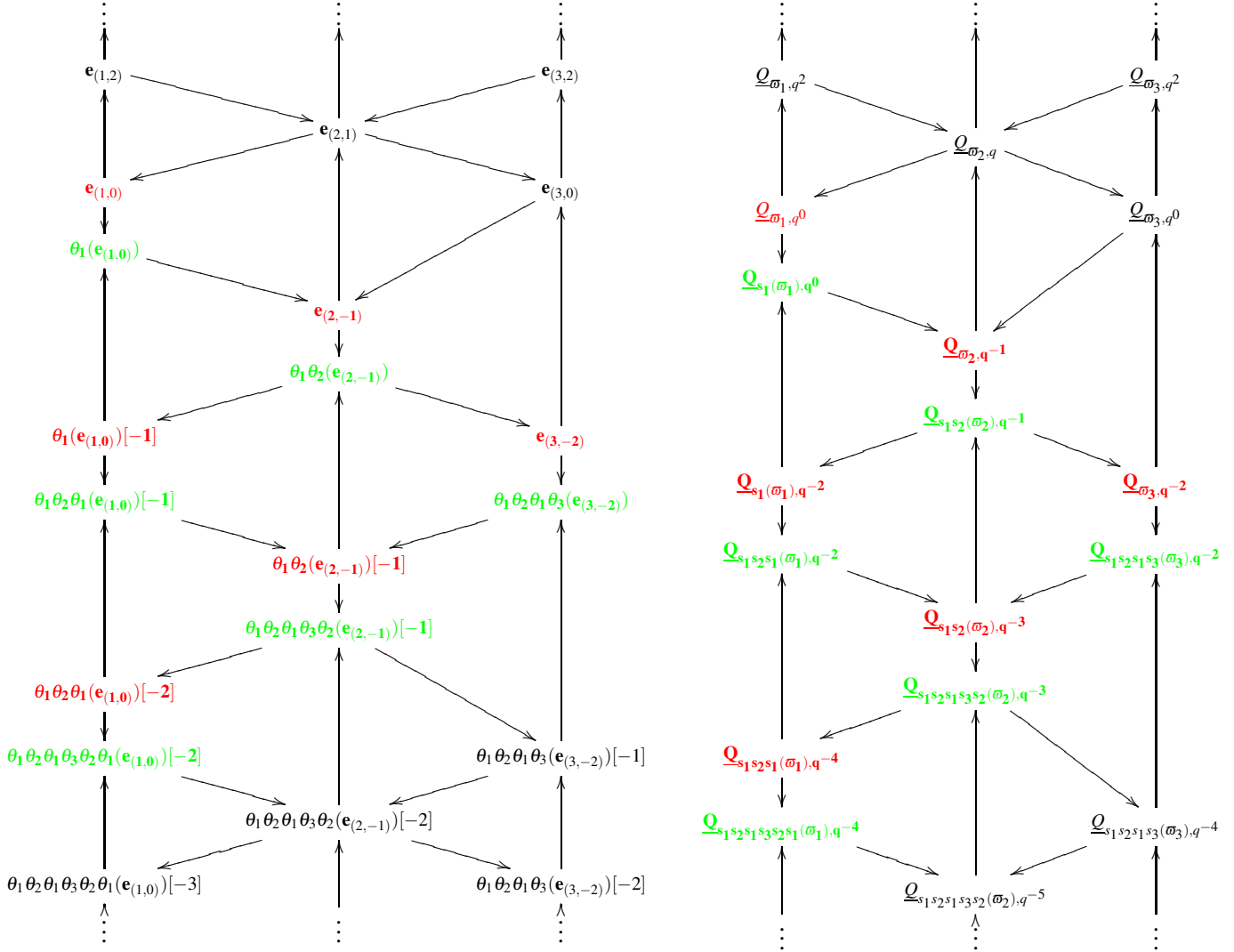
\begin{figure}[t]
\[
\def\objectstyle{\scriptstyle}
\def\lablestyle{\scriptstyle}
\xymatrix@-1.0pc{
{}\save[]+<0cm,1ex>*{\vdots}\restore&{}\save[]+<0cm,1ex>*{\vdots}\restore  
&{}\save[]+<0cm,1ex>*{\vdots}\restore
\\
{\be_{(1,2)}}\ar[rd]\ar[u]&
&\ar[ld] \be_{(3,2)} \ar[u]
\\
&\ar[ld] \be_{(2,1)} \ar[rd]\ar[uu]&&
\\
{\ar[uu]}{\red\be_{(1,0)}}\ar[d]&&\ar[ldd] \be_{(3,0)} \ar[uu]
\\
{\green\mathbf{\theta_1(\be_{(1,0)})}}\ar[rd]
\\
&\ar[uuu] \mathbf{\red \be_{(2,-1)}}\ar[d]&&
\\
&\ar[ld] \mathbf{\green \theta_1\theta_2(\be_{(2,-1)})} \ar[rd]&&
\\
\ar[uuu]\mathbf{\red \theta_1(\be_{(1,0)})[-1]} \ar[d] && \ar[d] \mathbf{\red e_{(3,-2)}}\ar[uuuu]
\\
\mathbf{\green \theta_1\theta_2\theta_1(\be_{(1,0)})[-1]} \ar[rd] &&\ar[ld] 
\mathbf{\green \theta_1\theta_2\theta_1\theta_3(\be_{(3,-2)})} 
\\
& \ar[uuu]\mathbf{\red \theta_1\theta_2(\be_{(2,-1)})[-1]} \ar[d] &&
\\
&\ar[ld]\mathbf{\green \theta_1\theta_2\theta_1\theta_3\theta_2(\be_{(2,-1)})[-1]} \ar[rdd]&&
\\
\ar[uuu]\mathbf{\red \theta_1\theta_2\theta_1(\be_{(1,0)})[-2]} \ar[d]  
\\
\mathbf{\green \theta_1\theta_2\theta_1\theta_3\theta_2\theta_1(\be_{(1,0)})[-2]} \ar[rd]&&
\ar[ld] {\theta_1\theta_2\theta_1\theta_3(\be_{(3,-2)})[-1]}\ar[uuuu]
\\
& \ar[uuu]{\theta_1\theta_2\theta_1\theta_3\theta_2(\be_{(2,-1)})[-2]} \ar[ld]\ar[rd]&&
\\
\ar[uu] {\theta_1\theta_2\theta_1\theta_3\theta_2\theta_1(\be_{(1,0)})[-3]} &&
\ar[uu] {\theta_1\theta_2\theta_1\theta_3(\be_{(3,-2)})[-2]} 
\\
{}\save[]+<0cm,1ex>*{\vdots}\ar[u]\restore&{}\save[]+<0cm,1ex>*{\vdots}\ar[uu]\restore  
&{}\save[]+<0cm,1ex>*{\vdots}\ar[u]\restore
\\
}
\xymatrix@-1.0pc{
{}\save[]+<0cm,1ex>*{\vdots}\restore&{}\save[]+<0cm,1ex>*{\vdots}\restore  
&{}\save[]+<0cm,1ex>*{\vdots}\restore
\\
{\bQ_{\varpi_1,q^2}}\ar[rd]\ar[u]&
&\ar[ld] \bQ_{\varpi_3,q^2} \ar[u]
\\
&\ar[ld] \bQ_{\varpi_2,q} \ar[rd]\ar[uu]&&
\\
{\ar[uu]}{\red\bQ_{\varpi_1,q^0}}\ar[d]&&\ar[ldd] \bQ_{\varpi_3,q^0} \ar[uu]
\\
{\green\mathbf{\bQ_{s_1(\varpi_1),q^0}}}\ar[rd]
\\
&\ar[uuu] \mathbf{\red \bQ_{\varpi_2,q^{-1}}}\ar[d]&&
\\
&\ar[ld] \mathbf{\green \bQ_{s_1s_2(\varpi_2),q^{-1}}} \ar[rd]&&
\\
\ar[uuu]\mathbf{\red \bQ_{s_1(\varpi_1),q^{-2}}} \ar[d] && \ar[d] \mathbf{\red \bQ_{\varpi_3,q^{-2}}}\ar[uuuu]
\\
\mathbf{\green \bQ_{s_1s_2s_1(\varpi_1),q^{-2}}} \ar[rd] &&\ar[ld] 
\mathbf{\green \bQ_{s_1s_2s_1s_3(\varpi_3),q^{-2}}} 
\\
& \ar[uuu]\mathbf{\red \bQ_{s_1s_2(\varpi_2),q^{-3}}} \ar[d] &&
\\
&\ar[ld]\mathbf{\green \bQ_{s_1s_2s_1s_3s_2(\varpi_2),q^{-3}}} \ar[rdd]&&
\\
\ar[uuu]\mathbf{\red \bQ_{s_1s_2s_1(\varpi_1),q^{-4}}} \ar[d]  
\\
\mathbf{\green \bQ_{s_1s_2s_1s_3s_2s_1(\varpi_1),q^{-4}}} \ar[rd]&&
\ar[ld] {\bQ_{s_1s_2s_1s_3(\varpi_3),q^{-4}}}\ar[uuuu]
\\
& \ar[uuu]{\bQ_{s_1s_2s_1s_3s_2(\varpi_2),q^{-5}}} 
\\
{}\save[]+<0cm,1ex>*{\vdots}\ar[uu]\restore&{}\save[]+<0cm,1ex>*{\vdots}\ar[u]\restore  
&{}\save[]+<0cm,1ex>*{\vdots}\ar[uu]\restore
\\
}
\]
\caption{\label{Fig9} {\it An initial seed of $\AA_{w_0}$ and its image by $F$ in type $A_3$.}}
\end{figure}

\begin{Thm}\label{main-Thm} 
For $(i,a)\in V$, let $z_{(i,a)}$ denote the initial cluster variable with stabilized $g$-vector 
\[
 \bg^{(\infty)}_{(i,a)} = \theta_{i_1}\cdots\theta_{i_{t}}(\be_{(i,m_{i})})[s].
\]
The assignment 
\[
z_{(i,a)} \mapsto 
\bQ_{s_{i_1}\cdots s_{i_t}(\varpi_i),\,q^{ m_i+2s}},\qquad ((i,a)\in V),
\]
extends to an injective ring homomorphism $F : L \to K_{\Z}$. 
Moreover the topological closure of $ [\uP]\otimes_\Z F(\AA_{w_0})$ is equal to $K_{\Z}$.
\end{Thm}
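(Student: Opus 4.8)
The plan is to establish the three parts of the statement in turn: that $F$ is a well-defined ring homomorphism $L\to K_\Z$, that it is injective, and that the topological closure of $[\uP]\otimes_\Z F(\AA_{w_0})$ equals $K_\Z$. For the first part, since $L$ is the Laurent polynomial ring in the initial cluster variables $z_{(i,a)}$, it suffices to check that each target $\bQ_{s_{i_1}\cdots s_{i_t}(\varpi_i),\,q^{m_i+2s}}$ is invertible in $K_\Z$; this is exactly Proposition~\ref{prop-invertible}, as $\bQ_{w(\varpi_i),q^r}$ differs from $Q_{w(\varpi_i),q^r}$ only by the invertible monomial $[-\Omega(\Psi_{w(\varpi_i),q^r})]\in[\uP]$. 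Hence $F$ extends uniquely to a ring homomorphism, and automatically $F(L)\subseteq K_\Z$.

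\textbf{Injectivity.} I would first pin down the highest-weight (leading) monomial of $F(z_{(i,a)})$. By Lemma~\ref{QQTp}, $\QQ_{w(\varpi_i),a}=\Tp_w(\Psi_{i,a})$; an easy induction on $\ell(w)$ shows that the leading $\Psi$-monomial of $\Tp_{i_1}\cdots\Tp_{i_t}(\Psi_{i,a})$ is obtained from $\Psi_{i,a}$ by applying, in the same order, the operators $\theta_{i_1},\dots,\theta_{i_t}$ of \S\ref{subsec434} under the identification $\Psi_{j,q^c}\leftrightarrow\be_{(j,c)}$, because the leading term of $\Tp_i(\Psi_{i,b})=\widetilde{\Psi}_{i,bq^{-2}}\Si_{i,bq^{-2}}$ is $\widetilde{\Psi}_{i,bq^{-2}}$, whose $\Psi$-exponent vector is exactly $\theta_i$ applied to that of $\Psi_{i,b}$, while $\Tp_i$ fixes all $\Psi_{j,\cdot}$ with $j\neq i$. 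Since $Q_{w(\varpi_i),q^r}=E'_e(\QQ_{w(\varpi_i),q^r})$ and renormalization only multiplies by an element of $[\uP]$, the leading $\Psi$-monomial of $F(z_{(i,a)})$ has exponent vector $\theta_{i_1}\cdots\theta_{i_t}(\be_{(i,m_i)})[s]=\bg^{(\infty)}_{(i,a)}$. By Theorem~\ref{thm:g-vect1} the stabilized $G$-matrix is block-diagonal with each block a product of matrices $\bt_i\in\GL_n(\Z)$, so $G^{(\infty)}\in\GL(\Z^V)$ and the $\bg^{(\infty)}_{(i,a)}$ form a $\Z$-basis of $\Z^V$. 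Now let $x=\sum_\alpha c_\alpha z^\alpha$ be nonzero, with $z^\alpha=\prod z_{(i,a)}^{\alpha_{(i,a)}}$ running over finitely many monomials. Each $F(z^\alpha)$ is a unit monomial in the $\Psi_{j,q^c}$ and $[\uP]$ times a power series in the $A_{j,b}^{-1}$ with constant term $1$; hence its support lies in $\mu_\alpha-Q_+$, where $\mu_\alpha\in\uP$ is the $\omega'$-weight of its leading term, and its $\omega'$-weight-$\mu_\alpha$ component is the monomial whose $\Psi$-part has exponent $\sum_\beta\alpha_\beta\,\bg^{(\infty)}_\beta$. Picking $\mu^\ast$ maximal in $\{\mu_\alpha:c_\alpha\neq0\}$, only the $\alpha$ with $\mu_\alpha=\mu^\ast$ contribute to the weight-$\mu^\ast$ component of $F(x)$, and by linear independence of the $g$-vectors these contributions are pairwise distinct monomials with nonzero coefficients; so $F(x)\neq0$.

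\textbf{Density.} The decisive input is that, under $F$, the exchange relation at \emph{any} red or green vertex of a Coxeter-type seed is mapped to an instance of the coefficient-free $QQ$-system of Proposition~\ref{Prop-renorm-QQ}. This I would verify by computing, via the $g$-vector recursion~(\ref{recursion2}) together with the sign test~(\ref{Eq4}) and the knitting rules of Theorem~\ref{thm:g-vect2}, the stabilized $g$-vector of the mutated cluster variable, reading off the corresponding $\bQ$, and checking that the exchange relation becomes the displayed identity of Proposition~\ref{Prop-renorm-QQ}. Granting this, $F$ carries every cluster variable of $\AA_{w_0}$ to the element $\bQ_{w(\varpi_i),q^r}$, $(i,r)\in V$, prescribed by its stabilized $g$-vector through the theorem's formula. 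Since by Propositions~\ref{Prop24} and~\ref{Prop25} the cluster algebra $\AA_{w_0}$ depends neither on the Coxeter element $c$ nor on the vertical translation of $\G_c$, for any $(j,s)\in V$ there is a Coxeter-type initial seed all of whose red vertices lie strictly below $(j,s)$; by Remark~\ref{rem4-13} the cluster variable at $(j,s)$ then has stabilized $g$-vector $\be_{(j,s)}$, so $F$ sends it to $\bQ_{\varpi_j,q^s}$. Dually, a seed with all red vertices strictly above $(k,t)$ realizes $\bQ_{w_0(\varpi_k),q^t}$ as a cluster variable. Thus $[\uP]\otimes_\Z F(\AA_{w_0})$ contains all $\bQ_{\varpi_j,q^s}$ and all $\bQ_{w_0(\varpi_k),q^t}$, $(j,s),(k,t)\in V$. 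Exactly as in the last paragraph of the proof of Proposition~\ref{prop-invertible} (using $Q_{\varpi_j,q^s}=\Psi_{j,q^s}$ and $\Psi_{w_0(\varpi_k),q^t}=\Psi_{\nu(k),q^{t-h}}^{-1}$, with $h$ the Coxeter number and $\nu$ the Nakayama permutation), every Laurent monomial in the variables $\Psi_{j,q^s}$ ($(j,s)\in V$) arises as the leading term of a product of these elements; recall that $K_\Z$ is topologically generated over $[\uP]$ by the $\Psi_{j,q^s}^{\pm1}$. A standard successive-approximation argument — subtract off the leading term, recurse on the $\omega'$-weight, and use that every element of $K_\Z$ has support in a finite union of cones $\lambda-Q_+$ — then shows $[\uP]\otimes_\Z F(\AA_{w_0})$ is dense in $K_\Z$.

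\textbf{Main obstacle.} The only step requiring genuine computation is the $QQ$-compatibility of the initial exchange relations invoked in the density argument: one must perform the $g$-vector mutation~(\ref{recursion2}) at each red and each green vertex of $\G_c$, determine the relevant $c$-vector sign via~(\ref{Eq4}), and recognize the outcome as an instance of Proposition~\ref{Prop-renorm-QQ}. The remaining parts are either immediate (well-definedness), pure leading-term bookkeeping (injectivity), or the routine approximation argument above.
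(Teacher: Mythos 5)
Your proposal is correct and follows the same overall architecture as the paper's proof (well-definedness from Proposition~\ref{prop-invertible}, injectivity from the fact that the leading $\Psi$-monomials of $F(z_{(i,a)})$ have exponent vectors equal to the stabilized $g$-vectors which form a $\Z$-basis of $\Z^V$, density from identifying enough $\bQ$-variables as images of cluster variables). Two points where you genuinely depart from the paper's route are worth flagging. For injectivity, the paper simply cites \cite[Theorem 4.5]{FH3} for the fact that the $\Psi$-exponent vector of $\Psi_{w(\varpi_i),q^r}$ is $\theta_{i_1}\cdots\theta_{i_k}(\be_{(i,r)})$, whereas you rederive this via Lemma~\ref{QQTp} and induction; your induction needs a touch of care because $\Tp_j$ does not preserve the $\omega'$-grading on the $A^{-1}$-power series part, but since one factors off the $\Psi$-monomial at each step and $\Si_{j,\cdot}$ has constant term $1$ in every component, the accumulated leading $\Psi$-monomial does evolve exactly by the $\theta_j$ rule, so this is fine. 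For density, the paper establishes the stronger Proposition~\ref{prop-polynomial} --- that \emph{every} $\bQ_{w(\varpi_i),q^r}$ is the image of a cluster variable --- by varying both the reduced word $\bi$ and the shift $\br$, which is where the 3-move identity of Proposition~\ref{prop-5.5} is needed. Your route avoids $3$-moves entirely: you shift a single Coxeter seed to realize only the two extremal families $\bQ_{\varpi_j,q^s}$ and $\bQ_{w_0(\varpi_k),q^t}$ in $F(\AA_{w_0})$, then invoke the leading-term/approximation mechanism already present in the proof of Proposition~\ref{prop-invertible} (where it shows $K_\Z = K_\Z'$) to conclude density. This is a legitimate and slightly more economical variant, though it trades the clean ``all generators are in the image'' conclusion for the recursion-on-weight argument.

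One small imprecision: when you write ``by Remark~\ref{rem4-13} the cluster variable at $(j,s)$ then has stabilized $g$-vector $\be_{(j,s)}$'' after passing to a shifted Coxeter seed $\Si^{(m)}$, note that Remark~\ref{rem4-13} computes the stabilized $g$-vectors of the \emph{initial} cluster variables $z_{(i,l)}$, not of cluster variables of the mutated seed $\Si^{(m)}$. To see that the cluster variable sitting at $(j,s)$ in $\Si^{(m)}$ maps under $F$ to $\bQ_{\varpi_j,q^s}$, you must instead track the mutations explicitly (as you already do in identifying your ``main obstacle''): starting from the known images $F(z_{(i,a)})$, each green mutation is, under $F$, an instance of Proposition~\ref{Prop-renorm-QQ}, so one computes by induction on $m$ the image of every cluster variable of $\Si^{(m)}$. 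This is exactly the verification you have isolated; with it in hand the density step closes, and without it the density step of the paper's proof would equally be incomplete.
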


\begin{example}\label{exam-5.3}
{\rm
Figure~\ref{Fig9} illustrates Theorem~\ref{main-Thm} in type $A_3$. 
The left quiver is the quiver of an initial seed of $\AA_{w_0}$ associated with the Coxeter 
element $c=s_1s_2s_3$. At each vertex we have written the stabilized $g$-vector of the corresponding cluster variable. The right quiver shows the $Q$-variables images under $F$ of these cluster variables.

Mutating the initial cluster variable $\bQ_{\varpi_1,q^0}$ we get the new cluster variable $\bQ_{s_1(\varpi_1),q^2}$, and the corresponding exchange relation is
\[
\bQ_{\varpi_1,q^0} \bQ_{s_1(\varpi_1),q^2} = \bQ_{\varpi_1,q^2} \bQ_{s_1(\varpi_1),q^0}\, 
+ \, \bQ_{\varpi_2,q}, 
\]
an example of relation of the $QQ$-system.

Similarly, mutating $\bQ_{s_1s_2(\varpi_2),q^{-3}}$ we get the new  variable $\bQ_{s_1s_2s_1s_3s_2(\varpi_2),q^{-1}}$, and the corresponding exchange relation is
\[
\bQ_{s_1s_2(\varpi_2),q^{-3}} \bQ_{s_1s_2s_3s_1s_2(\varpi_2),q^{-1}} = 
\bQ_{s_1s_2(\varpi_2),q^{-1}} \bQ_{s_1s_2s_3s_1s_2(\varpi_2),q^{-3}}
+ \,\bQ_{s_1s_2s_1(\varpi_1),q^{-2}} \bQ_{s_1s_2s_1s_3(\varpi_3),q^{-2}}, 
\]
which is also an example of relation of the $QQ$-system (for $i=2$, $w=s_1s_2s_3s_1$, 
and $r=-1$).

On the other hand, mutating $\bQ_{\varpi_2,q}$, we get a new cluster variable, say $x$, given by the exchange relation
\[
x\, \bQ_{\varpi_2,q} = \bQ_{\varpi_1,q^{0}} \bQ_{\varpi_2,q^3}\bQ_{\varpi_3,q^0} +\, 
\bQ_{\varpi_1,q^{2}} \bQ_{\varpi_2,q^{-1}}\bQ_{\varpi_3,q^2}
\]
which is \emph{not} a relation of the $QQ$-system. Explicitly,
\[
 x = \left[-\varpi_1 + \varpi_2-\varpi_3\right]
 \Psi_{2,q}^{-1}\Psi_{2,q^{-1}}\Psi_{1,q^2}\Psi_{3,q^2}\left(1 + A_{2,q}^{-1}\right).
\]
This other type of relation has been studied in \cite{HL2} and \cite{FH}, where it was given the name of $QQ^*$-system.
\cqfd
}
\end{example}

\begin{figure}[t]
\[
\def\objectstyle{\scriptscriptstyle}
\def\lablestyle{\scriptscriptstyle}
\xymatrix@-1.0pc{
{}\save[]+<0cm,1ex>*{\vdots}\restore&{}\save[]+<0cm,1ex>*{\vdots}\restore  
\\
{\bQ_{\varpi_1,q^2}}\ar[rd]\ar[u]&
\\
&\ar[ld] \bQ_{\varpi_2,q} \ar[uu]
\\
\ar[uu]{\mathbf{\red \bQ_{\varpi_1,q^0}}}\ar[d]&
\\
\fbox{$\scriptstyle\mathbf{\green \bQ_{s_1(\varpi_1),q^0}}$}\ar[rd]
\\
&\ar[uuu] \mathbf{\red\bQ_{s_1(\varpi_2),q^{-1}}}\ar[d] 
\\
& \ar[ld]\mathbf{\green\bQ_{s_1s_2(\varpi_2),q^{-1}}}
\\
\ar[uuu]\mathbf{\red\bQ_{s_1s_2(\varpi_1),q^{-2}}} \ar[d] 
\\
{\mathbf{\green\bQ_{s_1s_2s_1(\varpi_1),q^{-2}}}}\ar[rd]
\\
& \ar[uuu]\bQ_{s_1s_2s_1(\varpi_2),q^{-3}}\ar[ld] 
\\
\ar[uu]\bQ_{s_1s_2s_1(\varpi_1),q^{-4}} \ar[rd] 
\\
{}\save[]+<0cm,0ex>*{\vdots}\ar[u]\restore&{}\save[]+<0cm,0ex>*{\vdots}\ar[uu]\restore  
\\
}
\xymatrix@-1.0pc{
{}\save[]+<0cm,1.5ex>*{\vdots}\restore&{}\save[]+<0cm,1.5ex>*{\vdots}\restore  
\\
{\bQ_{\varpi_1,q^2}}\ar[rd]\ar[u]&
\\
&\ar[ld] \bQ_{\varpi_2,q} \ar[uu]
\\
\ar[uu]{\mathbf{\bQ_{\varpi_1,q^0}}}\ar[rdd]&
\\
\mathbf{\bQ_{\varpi_1,q^{-2}}}\ar[u]\ar[ddd]
\\
&\ar[uuu]\ar[lu] \mathbf{\bQ_{s_1(\varpi_2),q^{-1}}}\ar[d] 
\\
& \ar[ld]\mathbf{\bQ_{s_1s_2(\varpi_2),q^{-1}}}
\\
\fbox{$\scriptstyle\mathbf{\bQ_{s_1s_2(\varpi_1),q^{-2}}}$} \ar[d]\ar[ruu] 
\\
{\mathbf{\bQ_{s_1s_2s_1(\varpi_1),q^{-2}}}}\ar[rd]
\\
& \ar[uuu]\bQ_{s_1s_2s_1(\varpi_2),q^{-3}}\ar[ld] 
\\
\ar[uu]\bQ_{s_1s_2s_1(\varpi_1),q^{-4}}  \ar[rd] 
\\
{}\save[]+<0cm,0ex>*{\vdots}\ar[u]\restore&{}\save[]+<0cm,0ex>*{\vdots}\ar[uu]\restore  
\\
}
\xymatrix@-1.0pc{
&{}\save[]+<0cm,1.5ex>*{\vdots}\restore&{}\save[]+<0cm,1.5ex>*{\vdots}\restore  
\\
&{\bQ_{\varpi_1,q^2}}\ar[rd]\ar[u]&
\\
&&\ar[ld] \bQ_{\varpi_2,q} \ar[uu]
\\
&\ar[uu]{\mathbf{\bQ_{\varpi_1,q^0}}}\ar[rdd]&
\\
&\mathbf{\bQ_{\varpi_1,q^{-2}}}\ar[u]\ar@/_{2pc}/[dddd]
\\
&&\ar[uuu]\ar[ldd] \mathbf{\bQ_{s_1(\varpi_2),q^{-1}}} 
\\
&& \fbox{$\scriptstyle\mathbf{\bQ_{s_1s_2(\varpi_2),q^{-1}}}$}\ar[ldd]
\\
&\mathbf{\bQ_{s_2(\varpi_2),q^{-1}}} \ar[uuu]\ar[ru] 
\\
&{\mathbf{\bQ_{s_1s_2s_1(\varpi_1),q^{-2}}}}\ar[rd]\ar[u]
\\
&& \ar[uuu]\bQ_{s_1s_2s_1(\varpi_2),q^{-3}}\ar[ld] 
\\
&\ar[uu]\bQ_{s_1s_2s_1(\varpi_1),q^{-4}} \ar[rd] 
\\
&{}\save[]+<0cm,0ex>*{\vdots}\ar[u]\restore&{}\save[]+<0cm,0ex>*{\vdots}\ar[uu]\restore  
\\
}
\xymatrix@-1.0pc{
{}\save[]+<0cm,1.5ex>*{\vdots}\restore&{}\save[]+<0cm,1.5ex>*{\vdots}\restore  
\\
{\bQ_{\varpi_1,q^2}}\ar[rd]\ar[u]&
\\
&\ar[ld] \bQ_{\varpi_2,q} \ar[uu]
\\
\ar[uu]\bQ_{\varpi_1,q^0}\ar[rd]&
\\
&\ar[uu] \mathbf{\red\bQ_{\varpi_2,q^{-1}}}\ar[d] 
\\
& \ar[ld]\mathbf{\green\bQ_{s_2(\varpi_2),q^{-1}}} 
\\
 \mathbf{\red\bQ_{s_2(\varpi_1),q^{-2}}} \ar[uuu]\ar[d]
\\
\mathbf{\green\bQ_{s_2s_1(\varpi_1),q^{-2}}} \ar[dr]
\\
& \ar[uuu]\ar[d]\mathbf{\red\bQ_{s_2s_1(\varpi_2),q^{-3}}}
\\
& \mathbf{\green\bQ_{s_2s_1s_2(\varpi_2),q^{-3}}}\ar[ld] 
\\
\ar[uuu]\bQ_{s_2s_1s_2(\varpi_1),q^{-4}} \ar[rd] 
\\
{}\save[]+<0cm,0ex>*{\vdots}\ar[u]\restore&{}\save[]+<0cm,0ex>*{\vdots}\ar[uu]\restore  
\\
}
\]
\caption{\label{Fig10} {\it A sequence of mutations relating two seeds of $K_\Z$ in type $A_2$.}}
\end{figure}
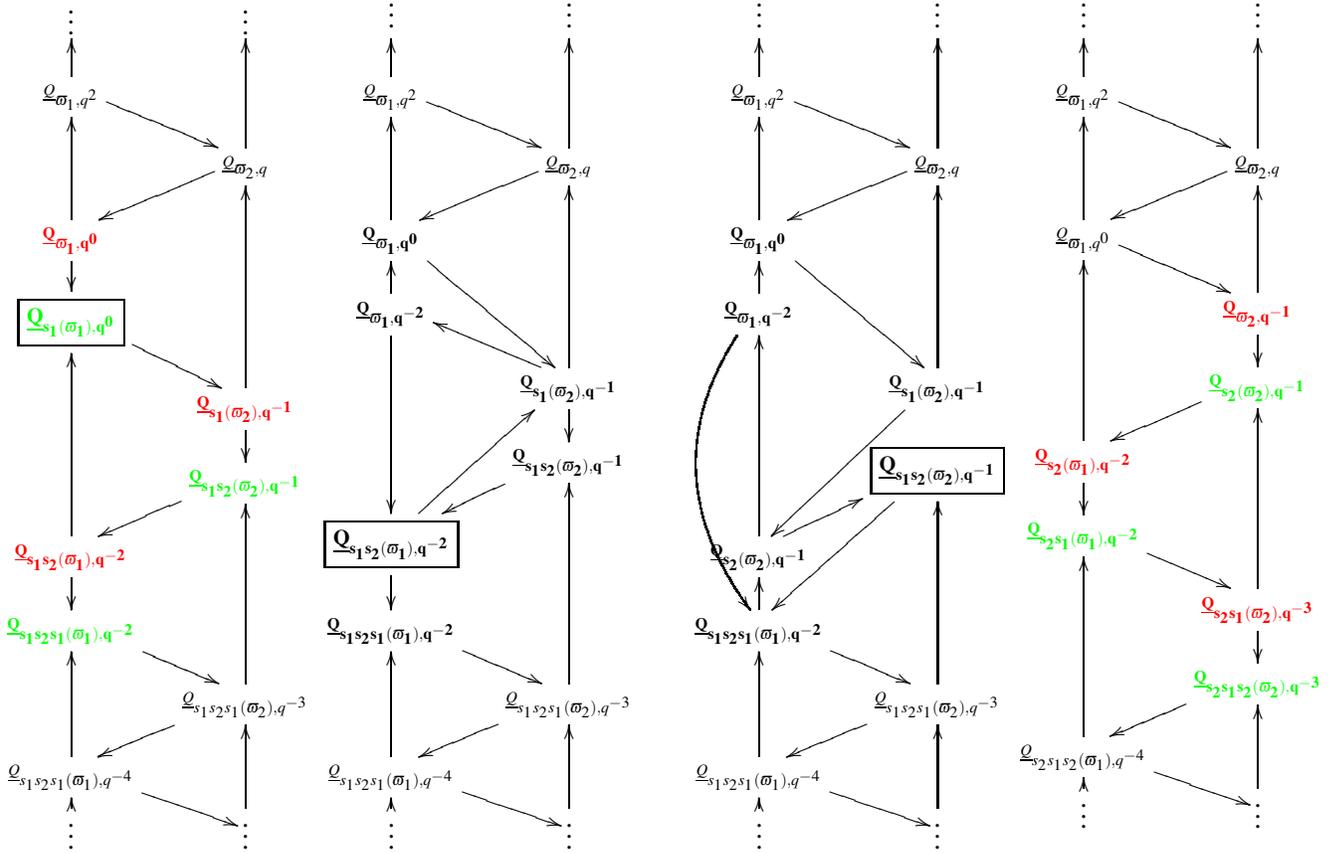

\begin{remark}\label{Rem54}
{\rm
The homomorphism $F$ of Theorem~\ref{main-Thm} is defined by means of an initial seed of the cluster algebra $\AA_{w_0}$, corresponding to the datum of a Coxeter element $c$. 
If we use a different Coxeter element, we will define a priori another homomorphism $F'$, giving rise to another cluster structure on $K_\Z$, which is possibly different but is isomorphic. 

In fact, we could start from a more general initial seed defined by the datum of a reduced decomposition $\bi = (i_1,\ldots, i_N)$ of $w_0$ and a sequence of integers $\br = (r_1\ldots, r_n)$, as in Definition~\ref{Def3-6}. Then we could define as in Theorem~\ref{main-Thm} a homomorphism $F_{\bi,\br}$ from $\AA_{w_0}$ to $K_\Z$ corresponding to this initial seed.   

It turns out that all these homomorphisms coincide.
To check it, one needs to show that if $\mathcal{S}_{\bi,\br}$ and $\mathcal{S}_{\bj,\bs}$ are the two initial seeds of
$K_\Z$ coming from $F_{\bi,\br}$ and $F_{\bj,\bs}$, then $\mathcal{S}_{\bi,\br}$ and $\mathcal{S}_{\bj,\bs}$ are connected by a finite sequence of mutations. 

If $\mathbf{i} = \mathbf{j}$, and only $\mathbf{r}$ and $\bs$ differ, then by induction it is enough to consider the elementary mutations described in Lemma~\ref{Lem22}. But the exchange relations corresponding to these mutations of the seeds $\mathcal{S}_{\bi,\br}$ and $\mathcal{S}_{\bj,\bs}$ are instances of $QQ$-system relations.
This is illustrated by the first two mutations of Example~\ref{exam-5.3}. So in this case we are done: if the two seeds $\mathcal{S}_{\bi,\br}$ and $\mathcal{S}_{\bj,\bs}$ differ only by their $\ell$-tuples $\mathbf{r}$ and $\mathbf{s}$, they are connected by a finite sequence of mutations, all coming from the $QQ$-system.     

Now if $\mathbf{i} \not = \mathbf{j}$, we can reduce to the case when these two reduced words differ 
only by a 2-move or a 3-move. The case of a 2-move is similar to the previous case, and can be dealt with using only $QQ$-system relations.  
For a 3-move, Figure~\ref{Fig10} displays this verification in type $A_2$, using the mutation sequence of Proposition~\ref{Prop25}. 
Note that in order to match the third and fourth seeds, we have to use obvious equalities like
\[
\bQ_{s_1(\varpi_2),q^{-1}} = \bQ_{\varpi_2,q^{-1}},
\quad
\bQ_{s_2(\varpi_1),q^{-2}} = \bQ_{\varpi_1,q^{-2}},
\quad
\bQ_{s_1s_2s_1(\varpi_1),q^{-2}} = \bQ_{s_2s_1(\varpi_1),q^{-2}}.
\]
Note also that the exchange relation corresponding to the second mutation is
\[
\bQ_{s_1(\varpi_1),\,q^{-2}} \bQ_{s_2(\varpi_2),\,q^{-1}}
=
\bQ_{\varpi_1,\,q^{-2}} \bQ_{s_1s_2(\varpi_2),\,q^{-1}}
+
\bQ_{s_2s_1(\varpi_1),\,q^{-2}} \bQ_{\varpi_2,\,q^{-1}},
\]
which is \emph{not} a relation coming from the $QQ$-system.

Proposition~\ref{prop-5.5} below proves a more general relation of this type, valid for all $A$, $D$, $E$ types. Using it, one can check that all homomorphisms $F_{\bi,\br}$ coincide.
\cqfd
}
\end{remark}

\begin{Prop}\label{prop-5.5}
Suppose that $c_{ij} = -1$, and let $w \in W$ be such that
$\ell(ws_is_js_i) = \ell(w) + 3.$
For every $r\in\Z$ we have:
\[
 \bQ_{ws_i(\varpi_i),\,q^r} \bQ_{ws_j(\varpi_j),\,q^{r+1}}
=
\ \bQ_{w(\varpi_i),\,q^{r}} \bQ_{ws_is_j(\varpi_j),\,q^{r+1}}
+\ 
\bQ_{ws_js_i(\varpi_i),\,q^{r}} \bQ_{w(\varpi_j),\,q^{r+1}}.
\]
\end{Prop}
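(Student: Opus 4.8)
The plan is to reduce the identity to a single instance inside a rank-two root subsystem, exactly as in the proof of Theorem~\ref{Thm-QtQ-system} in \cite{FH3}. First I would observe that by Lemma~\ref{QQTp} (or, in the renormalized setting, by Remark~\ref{rem-7-3}) every $\bQ_{v(\varpi_k),\,q^s}$ appearing in the statement can be written as $\overline{s}_v(\bQQ_{\varpi_k,q^s})$, up to the explicit weight-prefactor $[-\Omega(\Psi_{v(\varpi_k),\,q^s})]$. Since $\ell(ws_is_js_i) = \ell(w)+3$, the six weights $w(\varpi_i), w(\varpi_j), ws_i(\varpi_i), ws_j(\varpi_j), ws_is_j(\varpi_j), ws_js_i(\varpi_i)$ are precisely the $W$-images under $w$ of the six extremal weights of the rank-two subsystem $\langle\alpha_i,\alpha_j\rangle$ of type $A_2$ that occur in the $A_2$ $QQ$-system. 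Applying the automorphism $\overline{s}_w$ (equivalently $\tTheta_w$, using Lemma~\ref{QQTp}) to the relation for $w = e$, the claim for general $w$ will follow from the case $w = e$, once one checks that the weight-prefactors $[-\Omega(\cdots)]$ transform consistently — this is the same bookkeeping already carried out in the proof of Proposition~\ref{Prop-renorm-QQ}, using that $\Lambda(\Theta_w(Y_{k,a}))$ is a Laurent monomial in the $Y$-variables and hence killed by $[\Omega(\cdot)]$ by Eq.~(\ref{Eq-Omega-1}).

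So the substance is the case $w = e$, $c_{ij} = -1$, which is an identity in the $A_2$ subring. Here the six variables are, by Example~\ref{Ex4-4} and the symmetry $i \leftrightarrow j$:
\[
\bQ_{\varpi_i,\,q^r},\quad \bQ_{\varpi_j,\,q^{r+1}},\quad \bQ_{s_i(\varpi_i),\,q^r},\quad \bQ_{s_j(\varpi_j),\,q^{r+1}},\quad \bQ_{s_is_j(\varpi_j),\,q^{r+1}},\quad \bQ_{s_js_i(\varpi_i),\,q^r}.
\]
I would expand each $\bQ$ via the formulas of Example~\ref{Ex4-4}: $\bQ_{\varpi_i,\,q^r} = \Psi_{i,q^r}$ up to the normalization, $\bQ_{s_i(\varpi_i),\,q^r} \sim \Psi_{i,q^{r-2}}^{-1}\Psi_{j,q^{r-1}}\Si_{i,q^{r-2}}$, and $\bQ_{s_js_i(\varpi_i),\,q^r} \sim \Psi_{j,q^{r-3}}^{-1}\Si_{ji,q^{r-2}}$, where $\Si_{ji,a}$ is the element of $\Pi$ defined by $\Theta_j(\Si_{i,a}) = \Si_{ji,a}\Si_{j,aq^{-1}}^{-1}$ and studied in \cite[\S4.4]{FH2}. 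After clearing the common Laurent-monomial factor, the identity reduces to a relation purely among the series $\Si_{i,\bullet}$, $\Si_{j,\bullet}$, $\Si_{ij,\bullet}$, $\Si_{ji,\bullet}$. The relevant input is the $q$-difference recursions $\Si_{k,a} = 1 + A_{k,a}^{-1}\Si_{k,aq^{-2}}$ together with the two known factorization identities $\Theta_i(\Si_j) = \Si_{ij}\Si_{i,\bullet}^{-1}$, $\Theta_j(\Si_i) = \Si_{ji}\Si_{j,\bullet}^{-1}$ and the symmetry $\Si_{ij,a} = \Si_{ji,a}$ proved in \cite[(4.35)]{FH2}; equivalently, one can invoke Theorem~\ref{Thm-QtQ-system} twice (for $(w,k) = (e,i)$ and $(w,k) = (s_i,j)$) and eliminate.

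Concretely, the cleanest route is: apply $\tTheta_i$ (i.e. the $A_2$ operator $\overline{s}_i$) to the already-established $QQ$-relation of Proposition~\ref{Prop-renorm-QQ} for the pair $(w,k) = (e,j)$, namely
\[
\bQ_{s_j(\varpi_j),\,q^{r+1}}\bQ_{\varpi_j,\,q^{r-1}} - \bQ_{s_j(\varpi_j),\,q^{r-1}}\bQ_{\varpi_j,\,q^{r+1}} = \bQ_{\varpi_i,\,q^r},
\]
and then combine with the relation for $(w,k) = (e,i)$; the braid relation $\tTheta_i\tTheta_j\tTheta_i = \tTheta_j\tTheta_i\tTheta_j$ (Proposition~\ref{Prop-braid-rel}) guarantees that the images land on the correct six $\bQ$-variables and that the two ways of grouping agree, producing the three-term identity. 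I expect the main obstacle to be purely organizational: keeping track of the shifts of the spectral parameter $q^r$ and of the normalization prefactors $[-\Omega(\Psi_{\bullet})]$ under the non-involutive operators $\overline{s}_i$ so that the weight factors cancel exactly — the underlying algebraic identity among the $\Si$-series is forced by the $A_2$ $QQ$-system and carries no real difficulty once the normalization is pinned down.
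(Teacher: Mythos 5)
Your overall architecture --- reduce to $w=e$ by acting with the Weyl group operators, recognize the $w=e$ case as a rank-two identity, then track the $\Omega$-prefactors with the bookkeeping of Proposition~\ref{Prop-renorm-QQ} --- matches the paper's proof. The gap is at the $A_2$ crux.

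After dividing both sides of the $w=e$ identity by the common highest-weight monomial (using the formulas of Example~\ref{Ex4-4}), the required relation in $\Pi$ is
\[
\Si_{i,a}\,\Si_{j,aq} \;=\; \Si_{ij,aq} \;+\; A_{j,aq}^{-1}\,\Si_{ji,a},
\]
which is Eq.~(4.36) of [FH2]; the paper takes this as its starting point and extends it to general simply-laced type by the same method as [FH2, \S 4.5]. None of the ingredients you list produce it: the $q$-difference recursions only concern the single-index $\Si_k$; the formulas $\Theta_i(\Si_j)=\Si_{ij}\Si_{i,\bullet}^{-1}$ and $\Theta_j(\Si_i)=\Si_{ji}\Si_{j,\bullet}^{-1}$ merely \emph{define} $\Si_{ij}$ and $\Si_{ji}$, so they carry no independent content; and [FH2, (4.35)] does not assert $\Si_{ij,a}=\Si_{ji,a}$ but rather the $\Theta_i$-invariance $\Theta_i(\Si_{ji,a})=\Si_{ji,a}$. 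Thus the step you describe as ``carrying no real difficulty once the normalization is pinned down'' is exactly where the substance lies, and your proposal does not supply it.

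Your alternative route --- apply $\tTheta_i$ to the $QQ$-relation for $(e,j)$ and ``combine and eliminate'', invoking the braid relation --- is also unsubstantiated. The relations of Theorem~\ref{Thm-QtQ-system} couple the spectral parameters $aq$ and $aq^{-1}$ (a spread of $q^{2}$), while the target identity pairs $q^r$ with $q^{r+1}$ and involves no such coupling; the braid relation guarantees consistency of the $\QQ$-variables, but it does not, by itself, produce the recombination needed to pass from the two-shift $QQ$-relations to the one-shift three-term identity. Carrying out that elimination would in effect re-derive (4.36) of [FH2], so either way the cited product identity is the missing ingredient that must be invoked (or re-proved) to close the proof.
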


\begin{proof}
Following \cite{FH2}, for $a\in\mathbb{C}^*$, we define $\Sigma_{ij,a}$ and $\Sigma_{ji,a}$ by:
\[
\Theta_i(\Sigma_{j,a}) = \Sigma_{ij,a}\Sigma_{i,aq^{-1}}^{-1},\qquad
\Theta_j(\Sigma_{i,a}) = \Sigma_{ji,a}\Sigma_{j,aq^{-1}}^{-1}.
\]
Then the following relation holds in $\Pi$:
$$\Sigma_{i,a}\Sigma_{j,aq} = \Sigma_{ij,aq} +
A_{j,aq}^{-1}\Sigma_{ji,a}.$$
In type $A_2$, this is Eq.~(4.36) in \cite{FH2}. The same proof
as in  \cite[Section 4.5]{FH2} gives this
relation for general simply-laced type. Now this relation can be
rewritten in the following form:
\[
\begin{array}{lll}
\tT_i(\Psi_{i,aq^2})\widetilde{\Psi}_{i,a}^{-1}(1 - [-\alpha_i])^{-1}
\tTheta_j(\Psi_{j,aq^3})\widetilde{\Psi}_{j,aq}^{-1}(1 - [-\alpha_j])^{-1}
\\[2mm]
\qquad=\quad
\tTheta_i(\tTheta_j(\Psi_{j,aq^3})\widetilde{\Psi}_{j,aq}^{-1}(1 -
[-\alpha_j])^{-1})\Sigma_{i,a}
\ +\ 
A_{j,aq}^{-1}\tTheta_j(\tTheta_i(\Psi_{i,aq^2})\widetilde{\Psi}_{i,a}^{-1}(1
- [-\alpha_i])^{-1})\Sigma_{j,aq^{-1}}
\\[2mm]
\qquad=\quad
\tTheta_{s_is_j}(\Psi_{j,aq^3}) (1 - [-\alpha_j-\alpha_i])^{-1})(1 -
[-\alpha_i])^{-1})
\widetilde{\Psi}_{j,aq}^{-1}\Psi_{i,aq^2}\widetilde{\Psi}_{i,a}^{-1}
\\[1mm]
\qquad\qquad
+\quad
A_{j,aq}^{-1}
\tTheta_{s_js_i}(\Psi_{i,aq^2}) (1 - [-\alpha_j-\alpha_i])^{-1})(1 -
[-\alpha_j])^{-1})
\widetilde{\Psi}_{i,a}^{-1}\Psi_{j,aq}\widetilde{\Psi}_{j,aq^{-1}}^{-1}.
\end{array}
\]
This implies:
\[
\begin{array}{lll}
\tTheta_i(\Psi_{i,a})(1 - [-\alpha_i])^{-1}
\tTheta_j(\Psi_{j,aq})(1 - [-\alpha_j])^{-1}
\\[2mm]
\qquad\qquad=\quad
\tTheta_{s_is_j}(\Psi_{j,aq}) (1 - [-\alpha_j-\alpha_i])^{-1})(1 -
[-\alpha_i])^{-1})\Psi_{i,a}
\\[1mm]
\qquad\qquad\qquad+\quad
A_{j,aq^{-1}}^{-1}
\tTheta_{s_js_i}(\Psi_{i,a}) (1 - [-\alpha_j-\alpha_i])^{-1})(1 -
[-\alpha_j])^{-1})
\widetilde{\Psi}_{j,aq^{-1}}\widetilde{\Psi}_{j,aq^{-3}}^{-1}\Psi_{j,aq^{-1}}
\end{array}
\]
that is, after applying the projection $E_e'$,
\[
Q_{s_i(\varpi_i),a}Q_{s_j(\varpi_j),aq}
= Q_{\varpi_i,a}Q_{s_is_j(\varpi_j),aq} +
[-\alpha_j]Q_{\varpi_j,aq}Q_{s_js_i(\varpi_i),a}.
\]
Now for $w\in W$, applying $\tTheta_w$ to this relation gives:
\[
\begin{array}{lll}
(1 - [-w(\alpha_i)])^{-1}(1 -
[-w(\alpha_j)])^{-1}\tTheta_{ws_i}(\Psi_{i,a})\tTheta_{ws_j}(\Psi_{j,aq})
\\[2mm]
\qquad\qquad = \quad(1 - [-w(\alpha_i+\alpha_j)])^{-1}(1 -
[-w(\alpha_i)])^{-1}\tTheta_{w}(\Psi_{i,a})\tTheta_{ws_is_j}(\Psi_{j,aq})
\\[1mm]
\qquad\qquad\qquad
+ \quad [-w(\alpha_j)]
(1 - [-w(\alpha_i+\alpha_j)])^{-1}(1 -
[-w(\alpha_j)])^{-1}\tTheta_{w}(\Psi_{j,aq})\tTheta_{ws_js_i}(\Psi_{i,a})
.
\end{array}
\]
Since we assume that $l(ws_is_js_i) = l(w) + 3$, using the defining formulas for the elements
$\chi_{w(\varpi_i)}$ we have:

\[
\begin{array}{lll}
(1 - [-w(\alpha_i)])(1 -
[-w(\alpha_j)])\chi_{ws_i(\varpi_i)}\chi_{ws_j(\varpi_j)}
\\[2mm]
\qquad\qquad=\qquad (1 - [-w(\alpha_i+\alpha_j)])(1 -
[-w(\alpha_i)])\chi_{w(\varpi_i)}\chi_{ws_is_j(\varpi_j)}
\\[2mm]
\qquad\qquad=\qquad (1 - [-w(\alpha_i+\alpha_j)])(1 -
[-w(\alpha_j)])\chi_{w(\varpi_j)}\chi_{ws_js_i(\varpi_i)}
\\[2mm]
\qquad\qquad=\qquad \displaystyle\prod_{k\notin\{i, j\},\, c_{ik} + c_{jk} < 0}\chi_{w(\varpi_k)}.
\end{array}
\]
Hence, we obtain:
\begin{equation}\label{eq11}
Q_{ws_i(\varpi_i),a}Q_{ws_j(\varpi_j),aq}
= Q_{w(\varpi_i),a}Q_{ws_is_j(\varpi_j),aq} +
[-w(\alpha_j)]Q_{w(\varpi_j),aq}Q_{ws_js_i(\varpi_i),a}.
\end{equation}

Finally, we handle the renormalization by $\Omega$. Comparing the highest
$\ell$-weights in the expansions of both sides of Eq.~(\ref{eq11}) in $\widetilde{\mathcal{Y}}_e$, we obtain
$$\Psi_{ws_i(\varpi_i),a}\Psi_{ws_j(\varpi_j),aq} =
\Psi_{w(\varpi_i),a}\Psi_{ws_is_j(\varpi_j),aq}.$$
Inverting the role of $i$ and $j$, we have also :
$$\Psi_{ws_i(\varpi_i),a}\Psi_{ws_j(\varpi_j),aq^{-1}} =
\Psi_{w(\varpi_j),aq^{-1}}\Psi_{ws_js_i(\varpi_i),a}.$$
Assume that $a = q^r$. Recall that
$[w(\varpi_i)]\Psi_{w(\varpi_i),q^{r -
1}}\Psi_{w(\varpi_i),q^{r+1}}^{-1}$ is a Laurent monomial in the
variables $Y_{j,b}$.
Hence
\begin{equation}\label{shiftq}\left[\Omega(\Psi_{w(\varpi_i),q^{r+1}})\right] =
[w(\omega_i)]\left[\Omega(\Psi_{w(\varpi_i),q^{r-1}})\right].\end{equation}
Now
$$\Omega(\Psi_{ws_i(\varpi_i),q^r}\Psi_{ws_j(\varpi_j),q^{r-1}}) =
\Omega(\Psi_{w(\varpi_j),q^{r-1}}\Psi_{ws_js_i(\varpi_i),q^r})$$
and applying (\ref{shiftq}) to $\Psi_{ws_j(\varpi_j),q^{r-1}}$ and $\Psi_{w(\varpi_j),q^{r-1}}$, this implies that
$$\left[\Omega(\Psi_{ws_i(\varpi_i),q^r}\Psi_{ws_j(\varpi_j),q^{r+1}})\right] =
[ w(\varpi_j) - w(s_j(\varpi_j))]\left[\Omega(\Psi_{w(\varpi_j),q^{r+1}}\Psi_{ws_js_i(\varpi_i),q^r})\right].$$
Since $w(\varpi_j) - w(s_j(\varpi_j)) = w(\alpha_j)$, we obtain that:
$$
\left[\Omega(\Psi_{w(\varpi_i),q^r}\Psi_{ws_is_j(\varpi_j),q^{r+1}})\right]
= \left[\Omega(\Psi_{ws_i(\varpi_i),q^r}\Psi_{ws_j(\varpi_j),q^{r+1}})\right]
=
[w(\alpha_j)]\left[\Omega(\Psi_{w(\varpi_j),q^{r+1}}\Psi_{ws_js_i(\varpi_i),q^r})\right].$$
So we obtain:
$$\bQ_{ws_i(\omega_i),q^r}\bQ_{ws_j(\omega_j),q^{r+1}}
= \ \bQ_{w(\omega_i),q^r}\bQ_{ws_is_j(\omega_j),q^{r+1}} + \
\bQ_{w(\omega_j),q^{r+1}}\bQ_{ws_js_i(\omega_i),q^r}.$$
\cqfd

\end{proof}

\section{Proof of Theorem~\ref{main-Thm}}

Recall the notation of \S\ref{ssubsec-main-th}. 
Let $\mathcal{Z} := \{z_{(i,r)} \mid (i,r)\in V \}$
denote the set of cluster variables of the initial seed of $\AA_{w_0}$. Let $F: \AA_{w_0} \to K_\Z$ be the map defined in Theorem~\ref{main-Thm}.  

\begin{Prop}\label{prop-alg-ind}
The elements of $F(\mathcal{Z})$ are algebraically independent in $K_\Z$. 
\end{Prop}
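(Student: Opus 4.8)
The plan is to show algebraic independence by exhibiting a suitable ``leading term'' valuation on $K_\Z$ under which the elements $\bQ_{s_{i_1}\cdots s_{i_t}(\varpi_i),\,q^{m_i+2s}} = F(z_{(i,a)})$ have algebraically independent (indeed, linearly independent) leading exponents. Recall that each $Q_{w(\varpi_i),q^r}$ factors uniquely as $\Psi_{w(\varpi_i),q^r}\,\Si_{w(\varpi_i),q^r}$ with $\Si_{w(\varpi_i),q^r}$ a power series in the $A_{j,b}^{-1}$ with constant term $1$; after renormalization, $\bQ_{w(\varpi_i),q^r} = [-\Omega(\Psi_{w(\varpi_i),q^r})]\,\Psi_{w(\varpi_i),q^r}\,\Si_{w(\varpi_i),q^r}$, so its ``highest $\ell$-weight monomial'' is the Laurent monomial $[-\Omega(\Psi_{w(\varpi_i),q^r})]\,\Psi_{w(\varpi_i),q^r}$ in the variables $\Psi_{j,q^s}$ and $[\la]$. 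The key structural input (from \cite{FH3}, via Chari's braid group action, as used already in the proof of Proposition~\ref{prop-invertible}) is that the exponent of $\Psi_{j,q^s}$ in this highest weight monomial is governed by the $g$-vector: more precisely, reading off the $\Psi$-part of $\bQ_{s_{i_1}\cdots s_{i_t}(\varpi_i),\,q^{m_i+2s}}$ recovers exactly the vector $\theta_{i_1}\cdots\theta_{i_t}(\be_{(i,m_i)})[s] = \bg^{(\infty)}_{(i,a)}\in\Z^V$, where $\be_{(j,q^u)}$ is identified with the exponent of $\Psi_{j,q^u}$. This is essentially the content of the $QQ$-system combinatorics together with the knitting algorithm of \S\ref{ssec:dshift}, which computes the highest weight monomials and the stabilized $g$-vectors by the same recursion (Theorem~\ref{thm:g-vect2}).

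Granting this, the argument runs as follows. First I would fix a monomial order on the group of Laurent monomials in the $\Psi_{j,q^s}$ (for instance, a term order refining the partial order by $\Omega$, so that the ``highest weight monomial'' is genuinely the leading term with respect to the $A^{-1}$-adic filtration on $K_\Z$). With respect to this, the leading term of $F(z_{(i,a)})$ is (up to the grouplike factor $[-\Omega(\cdots)]$, which lives in the coefficient ring $[\uP]$ and does not interfere) the Laurent monomial $\Psi^{\bg^{(\infty)}_{(i,a)}} := \prod_{(j,u)\in V}\Psi_{j,q^u}^{\,(\bg^{(\infty)}_{(i,a)})_{(j,u)}}$. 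Now suppose $P(\,(z_{(i,a)})\,)$ is a nonzero polynomial relation among the $F(z_{(i,a)})$. Picking out the term of $P$ with the distinguished leading exponent, algebraic independence of the $F(z_{(i,a)})$ reduces to: the map $(i,a)\mapsto \bg^{(\infty)}_{(i,a)}\in\Z^V$ has the property that the monomials $\Psi^{\bg^{(\infty)}_{(i,a)}}$ are multiplicatively independent, equivalently that the vectors $\bg^{(\infty)}_{(i,a)}$, $(i,a)\in V$, are $\Z$-linearly independent in $\Z^V$. But the stabilized $g$-vectors of the cluster variables of any seed of any cluster algebra form a $\Z$-basis of the ambient lattice (this is the standard fact that $g$-vectors of a cluster are a basis, applied to the limit seed with quiver $\G_C$ and lattice $\Z^V$; cf.\ \S\ref{sect-stable-g} and Theorem~\ref{thm:g-vect1}, where $G^{(\infty)}$ is block-diagonal with each block a product of the invertible matrices $\bT_m$). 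Hence the leading terms are distinct and no cancellation can occur among them, so $P=0$, a contradiction.

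I would organize the write-up in two steps: (1) identify $\mathrm{lt}\big(F(z_{(i,a)})\big)$ with $[-\Omega(\Psi_{\cdots})]\,\Psi^{\bg^{(\infty)}_{(i,a)}}$, citing Theorem~\ref{thm:g-vect2}/\S\ref{subsec434} for the $g$-vector side and \cite{FH3} (highest weight monomials of $\tTheta_w(\Psi_{i,a})$, already invoked in the proof of Proposition~\ref{prop-invertible}) for the $\Psi$-monomial side; (2) invoke invertibility of $G^{(\infty)}$ (Theorem~\ref{thm:g-vect1}) to conclude linear independence of exponents, then finish by the leading-term argument. The main obstacle is purely bookkeeping in step~(1): one must check that the highest weight monomial of $\bQ_{s_{i_1}\cdots s_{i_t}(\varpi_i),q^{m_i+2s}}$, expressed in the $\Psi_{j,q^u}$ with $(j,u)\in V$, has exponent vector \emph{exactly} $\theta_{i_1}\cdots\theta_{i_t}(\be_{(i,m_i)})[s]$. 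This follows because the operators $\theta_i$ on $\Z^V$ of \S\ref{subsec434} are, by inspection of their defining formulas, precisely the transformations induced on $\Psi$-exponents by the $\tTheta_i$ (compare the formula $\tTheta_i^w(\Psi_{i,a}) = (1-[-\a_i])\widetilde{\Psi}_{i,aq^{-2}}\Si_{i,aq^{-2}}^{ws_i}$, whose $\Psi$-part has exponent $-\be_{(i,\cdot)} + \sum_{c_{ik}=-1}\be_{(k,\cdot)}$ with the correct spectral shifts, against the definition of $\theta_i$), and the matching of spectral parameters with the knitting recursion is exactly Theorem~\ref{thm:g-vect2}. Once this identification is in hand, the proof is immediate.
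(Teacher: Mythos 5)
Your proposal is correct and follows essentially the same route as the paper: identify the highest $\ell$-weight monomial of $F(z_{(i,a)})$ with $\Psi^{\bg^{(\infty)}_{(i,a)}}$ via Chari's braid group action and Proposition~\ref{prop-braid}, then conclude from the fact that the stabilized $g$-vectors form a $\Z$-basis of $\Z^V$ (Theorem~\ref{thm:g-vect1}). The only difference is that you spell out explicitly the leading-term argument passing from linear independence of exponent vectors to algebraic independence of the series, which the paper leaves implicit.
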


\begin{proof}
Recall the notation $\Psi_{w(\varpi_i),a}$ for the highest weight monomial of $Q_{w(\varpi_i),a}$.
Suppose that $w = s_{i_1}\cdots s_{i_k}$ is a reduced expression, and that $a = q^r$, where $(i,r)\in V$. Then it follows from \cite[Theorem 4.5]{FH3} that $\Psi_{w(\varpi_i),q^r}$ can be calculated using a multiplicative analogue of the braid group action introduced in \S\ref{subsec434}. More precisely, we have 
\[
 \Psi_{w(\varpi_i),q^r} = \prod_{(j,s)\in V} \Psi_{j,q^s}^{m_{j,s}},
\]
where the exponents $m_{j,s}$ are given by
\[
 \theta_{i_1}\cdots\theta_{i_k}(\be_{(i,r)}) = \sum_{(j,s)\in V} m_{j,s}\be_{(j,s)}.
\]
Therefore, by Proposition~\ref{prop-braid}, the exponents of the highest weight monomial of $F(z_{(i,r)})$ coincide with the coordinates of the stabilized $g$-vector $\bg_{(i,r)}^{(\infty)}$. 
Since the stabilized $g$-vectors form a $\Z$-basis of $\Z^V$, the 
elements of $F(\mathcal{Z})$ are algebraically independent in $K_\Z$. \cqfd
\end{proof}

\begin{Prop}\label{prop-polynomial}
For every $(i,r)\in V$ and $w\in W$, the element $\bQ_{w(\varpi_i),\,q^r}$ is the image by $F$ of a cluster variable of $\AA_{w_0}$.
\end{Prop}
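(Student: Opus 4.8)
The plan is to show that every element $\bQ_{w(\varpi_i),\,q^r}$ with $(i,r)\in V$ and $w\in W$ arises as a cluster variable of $\AA_{w_0}$ under $F$, by an induction on the length $\ell(w)$. The base case $\ell(w)=0$, i.e. $w=e$, is immediate: by Theorem~\ref{main-Thm} the initial cluster variables $z_{(i,a)}$ with stabilized $g$-vector in the upper part of $\G$ (those with $t=0$) are sent to elements $\bQ_{\varpi_i,\,q^{m_i+2s}}$, and as $s$ ranges over $\Z_{\ge 0}$ these cover all $\bQ_{\varpi_i,\,q^r}$ with $r\ge m_i$; the remaining $\bQ_{\varpi_i,\,q^r}$ with $r<m_i$ appear as images of other initial cluster variables, namely those whose stabilized $g$-vector has the form $\theta_{i_1}\cdots\theta_{i_t}(\be_{(i,m_i)})[s]$ with $s_{i_1}\cdots s_{i_t}$ trivial on $\varpi_i$ (for instance the ``repeated'' expressions seen in Example~\ref{Ex4-4}, where $\bQ_{s_2(\varpi_1),\,a}=\bQ_{\varpi_1,\,a}$ etc.). So one first checks that $F(\mathcal{Z})$ already contains every $\bQ_{\varpi_i,\,q^r}$, $(i,r)\in V$.

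For the inductive step, suppose the claim holds for all $w'$ with $\ell(w')<\ell(w)$, and write $w=w's_i$ with $\ell(w)=\ell(w')+1$. I would then invoke the renormalized $QQ$-system of Proposition~\ref{Prop-renorm-QQ}:
\[
\bQ_{w's_i(\varpi_i),\,q^{r}}\,\bQ_{w'(\varpi_i),\,q^{r-2}} - \bQ_{w's_i(\varpi_i),\,q^{r-2}}\,\bQ_{w'(\varpi_i),\,q^{r}}
= \prod_{j:\,c_{ij}=-1}\bQ_{w'(\varpi_j),\,q^{r-1}}.
\]
All the factors $\bQ_{w'(\varpi_i),\,q^{*}}$ and $\bQ_{w'(\varpi_j),\,q^{*}}$ on the right-hand side and the ``old'' $\bQ_{w's_i(\varpi_i),\,q^{r-2}}$ involve Weyl elements of length $\le\ell(w')<\ell(w)$ (using $\ell(w's_j)<\ell(w')+2$ when needed, or else the observation that $w'(\varpi_j)$ only depends on the coset and can be represented by a shorter element since $s_j$ fixes $\varpi_j$ unless $j$ is ``active''), so by the induction hypothesis they are all images under $F$ of cluster variables. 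The key point is that this relation is \emph{exactly} the exchange relation attached to mutating at a red (resp. green) vertex of the quiver $\G_{w_0}$: by Theorem~\ref{main-Thm} and the discussion in \S\ref{ssubsec-main-th}, the exchange relations at red and green vertices of an initial Coxeter seed are mapped by $F$ to instances of the $QQ$-system. More precisely, I would produce, using Lemma~\ref{Lem22} and Proposition~\ref{Prop24}, a seed of $\AA_{w_0}$ in which the cluster variables occurring in the above $QQ$-relation all appear as cluster variables mapped by $F$ to the $\bQ$'s with the smaller Weyl elements, and whose quiver has a red or green vertex carrying $\bQ_{w's_i(\varpi_i),\,q^{r-2}}$ such that mutation there produces $\bQ_{w's_i(\varpi_i),\,q^{r}}$; then the mutation relation in $\AA_{w_0}$ is carried by $F$ precisely onto the $QQ$-relation above, forcing $\bQ_{w's_i(\varpi_i),\,q^{r}}\in F(\text{cluster variables})$.

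The main obstacle — and the part that needs genuine care rather than a routine application of Theorem~\ref{main-Thm} — is the \emph{bookkeeping of seeds and spectral parameters}: one must arrange a single seed of $\AA_{w_0}$ containing simultaneously all the cluster variables feeding into the chosen $QQ$-relation, with the correct values $q^{r}, q^{r-1}, q^{r-2}$ of the spectral parameters, and verify that mutating it does not disturb the identification of $F$ on the relevant vertices. This requires: (i) knowing that an arbitrary $\bQ_{w(\varpi_i),\,q^r}$ with $(i,r)\in V$ really does occur as a cluster variable of \emph{some} seed (not just the initial one), which follows from the Laurent phenomenon together with the fact that all the $F_{\bi,\br}$ coincide (Remark~\ref{Rem54}, Proposition~\ref{prop-5.5}); and (ii) a careful induction on $\ell(w)$ compatible with the shift structure, using that $w\mapsto w(\varpi_i)$ and the recursive definition of $\tTheta_w(\Psi_{i,a})$ only ever increases length through applications of reflections $s_j$ that genuinely move $\varpi_i$, so that the ``new'' variable in each $QQ$-relation is the one of strictly larger length. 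Once this indexing is set up, the algebraic content is entirely supplied by Proposition~\ref{Prop-renorm-QQ} and Theorem~\ref{main-Thm}, so no further computation is needed; the proof is essentially a combinatorial induction threading the $QQ$-system through the mutation graph of $\AA_{w_0}$.
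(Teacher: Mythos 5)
Your proposed induction on $\ell(w)$ has two genuine gaps, one in the base case and one in the inductive step, and when you try to patch them you are driven back to the same argument the paper gives.

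The base case claim that $F(\mathcal{Z})$ already contains \emph{every} $\bQ_{\varpi_i,\,q^r}$ with $(i,r)\in V$ is false. The initial Coxeter seed $\uSS_c$ is a fixed seed and its image under $F$ is completely determined by Theorem~\ref{main-Thm}: the cluster variable at vertex $(i,a)$ goes to $\bQ_{s_{i_1}\cdots s_{i_t}(\varpi_i),\,q^{m_i+2s}}$ where $(i_1,\ldots,i_t)$ is the prefix of the reduced word determined by the stabilized $g$-vector, and $i_t=i$ whenever $t>0$. Since $s_i$ never fixes $\varpi_i$, a prefix ending in $i$ cannot be trivial on $\varpi_i$, so the only vertices whose image is of the form $\bQ_{\varpi_i,\,q^r}$ are those with $t=0$, i.e.\ the ones in the upper part of the quiver. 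Already in type $A_1$ (Example~\ref{exa-A1} and Figure~\ref{Fig7c}) the initial seed carries $\bQ_{\varpi_1,q^{2k}}$ only for $k\ge 0$; for $k<0$ the corresponding vertex carries $\bQ_{s_1(\varpi_1),q^{2k+2}}$. So $\bQ_{\varpi_1,q^{-2}}$, say, is \emph{not} in $F(\mathcal Z)$ and already requires a mutation.

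In the inductive step you assert that $\bQ_{w's_i(\varpi_i),\,q^{r-2}}$ involves a Weyl element of length $\le\ell(w')$. It does not: it involves $w=w's_i$, of length $\ell(w')+1=\ell(w)$, the \emph{same} length as the target $\bQ_{w's_i(\varpi_i),\,q^{r}}$. The $QQ$-relation of Proposition~\ref{Prop-renorm-QQ} therefore contains two terms of the maximal length $\ell(w)$ and one cannot close the induction on $\ell(w)$ alone: you need a subsidiary recursion in the spectral parameter $r$, and then a base case in $r$ for each fixed $w$. That base case must be a cluster variable of \emph{some} seed of $\AA_{w_0}$, and producing it for a general $w$ forces you to also track reduced words for $w_0$ related by $3$-moves — i.e.\ it needs Proposition~\ref{prop-5.5} in addition to the $QQ$-relations. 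Once you bring in Remark~\ref{Rem54} together with Proposition~\ref{prop-5.5}, which already say that every $F_{\bi,\br}$ coincides with $F$ and that the cluster variables of every initial seed $\G_{(\bi,\br)}$ map to elements $\bQ_{w(\varpi_i),q^r}$, the induction on $\ell(w)$ is no longer doing any work: one concludes directly, as the paper does, by observing that varying $(\bi,\br)$ realizes every pair $(w(\varpi_i),\,r)$ with $(i,r)\in V$.
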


\begin{proof}
By Remark~\ref{Rem54} and Proposition~\ref{prop-5.5}, for every datum $(\bi,\br)$ the images of all the cluster variables of the seed $\G_{(\bi,\br)}$ of $\AA_{w_0}$ by $F$ are of the form $\bQ_{w(\varpi_i),q^r}$. By varying $\br$ and $\bi$, we can obtain all possible $w(\varpi_i)$ and $r$ such that $(i,r)\in V$.
Therefore all elements $\bQ_{w(\varpi_i),\,q^r}$ are images under $F$ of a cluster variable of $\AA_{w_0}$.
\cqfd
\end{proof}

\begin{example}
{\rm
In type $A_3$, choosing $\bi = (1,2,1,3,2,1)$, as in Example~\ref{exam-5.3}, we will get all variables
$\bQ_{w(\varpi_i),\,q^r}$ for the following weights $w(\varpi_i)$:
\[
 \varpi_1,\ s_1(\varpi_1),\ s_2s_1(\varpi_1),\ s_3s_2s_1(\varpi_1),\ \varpi_2,\ s_1s_2(\varpi_2),\
 s_2s_1s_3s_2(\varpi_2),\ \varpi_3,\ s_1s_2s_3(\varpi_3). 
\]
With $\bi = (2,1,3,2,1,3)$ we will get in addition the weights:
\[
 s_2(\varpi_2),\ s_2s_3(\varpi_3).
\]
Finally, with $\bi = (3,1,2,3,1,2)$ we will get the weight $s_1s_3s_2(\varpi_2)$, and with $\bi = (3,2,3,1,2,3)$ we will get the last missing weight $s_3s_2(\varpi_2)$.
\cqfd
}
\end{example}

We can now complete the proof of Theorem~\ref{main-Thm}. First, the homomorphism $F$ is well-defined because by Proposition~\ref{prop-invertible} every element $\bQ_{w_0(\varpi_k),\,q^t}$  is invertible in $K_\Z$. The injectivity of $F$ follows from Proposition~\ref{prop-alg-ind}. The remaining statement then follows from the definition of $K_\Z$ and Proposition~\ref{prop-polynomial}. \cqfd


\section{Shifted quantum affine algebras}\label{sect-shift-quantum}
 
In this section, we give a rapid overview of shifted quantum affine algebras and their
category~$\O^{\rm sh}$. The reader is referred to \cite{FT} and \cite{H} for more details and references.
The main motivations for the study of these algebras and their representations come from 
quantized $K$-theoretic Coulomb branches of certain quiver gauge theories, 
and from representation theory of Borel subalgebras of quantum affine algebras.

Our main point here is to explain that the ring $K_\Z$ coincides with the $q$-character ring of a subcategory $\O_\Z$ of $\O^{\rm sh}$, specified by certain integrality conditions on the highest loop-weights. 
This allows us to translate Theorem~\ref{main-Thm} into Theorem~\ref{isomgv}, and give an interpretation of the cluster 
algebra $\AA_{w_0}$ in terms of the category $\O_\Z$.

\subsection{Representations of shifted quantum affine algebras}\label{sub-sec-shift-quantum}
 
As above, we assume that $\g$ is of simply-laced type.
Because of this, we can identify coweight lattice and weight lattice, and regard the shifting parameters $\mu$ arising in the definition below as integral weights rather than coweights.
We fix a non-zero complex number $q$, not a root of unity. 
 
\begin{Def}[\cite{FT}] Let $\mu\in P$.
The shifted quantum affine algebra $U_q^\mu(\hat{\mathfrak{g}})$
  is the $\C$-algebra with the same Drinfeld generators $\phi_{i,r}^\pm$, $x_{i,r}^\pm$ ($i\in I$, $r\in\mathbb{Z}$) as the
  ordinary quantum affine algebra $U_q(\hat{\mathfrak{g}})$ and the same relations, except that
  the definition of the series $\phi_i^-(z)$ is
  modified as follows:
\begin{equation}\label{phrelshm}\phi_i^-(z) =
  \sum_{m\in\mathbb{Z}}\phi_{i, m}^- z^{ m} =
  \phi_{i,\alpha_i(\mu)}^-z^{\alpha_i(\mu)} \exp\left(- (q -
  q^{-1})\sum_{r > 0} h_{i,-r } z^{- r} \right)\end{equation}
(the definition of the series $\phi_i^+(z)$ stays the same). In
addition, we impose the condition that $\phi_{i,0}^+$ and
$\phi_{i,\alpha_i(\mu)}^-$ are invertible (but they are not
necessarily inverse to each other).
\end{Def}
The complete list of relations of $U_q^\mu(\hat{\mathfrak{g}})$ can be found in \cite[Section 5]{FT}, see also \cite[Section 3.1]{H}.

Recall the basis $(h_i)_{i\in I}$ of the Cartan subalgebra $\mathfrak{h}$. For a $U_q^\mu(\hat{\mathfrak{g}})$-module $V$ and $\lambda\in P$, we define
\[
V_{\lambda}:=\left\{v\in V \mid \phi_{i,0}^+\, v = q^{\lambda(h_i)} v,\ \mbox{ for every } i\in I\right\},
\]
and call it the weight space of $V$ of weight $\lambda$.
We say that $V$ is Cartan-diagonalizable
if
\[
V = \bigoplus_{\lambda\in P}V_{\lambda}.
\]

For $\lambda\in P$, recall the cone $C_\lambda = \lambda - Q_+\subset P$ defined in Section \ref{sect3-2}.

\begin{Def}[\cite{H}] A $U_q^\mu(\hat{\mathfrak{g}})$-module $V$ is said to be in category $\mathcal{O}^\mu$ if
\begin{itemize} 
\item[i)] $V$ is Cartan-diagonalizable;
\item[ii)] for all $\lambda\in P$ we have 
$\dim (V_{\lambda})<\infty$;
\item[iii)] there exist finitely many weights 
$\lambda_1,\ldots,\lambda_s\in P$ 
such that all weights of $V$ are contained in 
${\bigcup}_{j=1,\ldots, s}\,C_{\lambda_j}$.
\end{itemize}
\end{Def}

\begin{remark}{\rm
This category is slightly different from the category introduced in \cite{H}, in that here we only allow integral weights $\lambda$.
}
\end{remark}

\begin{Def} A collection $\Psib=(\Psi_{i, m})_{i\in I, m\geq 0}$
of complex numbers $\Psi_{i,m}$ such that $\Psi_{i,0} \in q^\Z$ for all $i\in I$ is called an $\ell$-weight. We denote by $\tb^\times_\ell$ the set of $\ell$-weights.
\end{Def}
 
We will identify the collection $(\Psi_{i, m})_{m\geq 0}$ with its
generating series and use this to represent every $\ell$-weight as an
$I$-tuple of formal power series in $z$:
\[
\Psib = (\Psi_i(z))_{i\in I},
\qquad
\Psi_i(z) := \sum_{m\geq 0} \Psi_{i,m} z^m.
\]
Since each $\Psi_i(z)$ is an invertible formal power series,
$\tb^\times_\ell$ has a natural group structure by pointwise multiplication. 

We have a surjective group homomorphism $\varpi :
\tb^\times_\ell\rightarrow P$ given by
\begin{equation}
\label{spep}
 \varpi(\Psib) := \sum_i a_i\varpi_i, \qquad (\Psib\in \tb^\times_\ell),
\end{equation}
where for $i\in I$ we have written $\Psib_i(0) = q^{a_i}$.
This is well-defined because $q$ is not a root of unity.

\begin{Def}
We say that an $\ell$-weight $\Psib\in \tb^\times_\ell$ is rational if 
$\Psi_i(z)$ is the power series expansion of a rational function in $z$ for every $i\in I$.
We denote by $\mfr$ the subgroup of $\tb^\times_\ell$ consisting of all
rational $\ell$-weights.
\end{Def}

\begin{Thm}[\cite{H}] \label{Th-classification}
The simple representations in $\mathcal{O}^\mu$ are parameterized by the subset $\mathfrak{r}_\mu$ of rational $\ell$-weights of degree $\mu$, that is, the rational $\ell$-weights $\Psib = (\Psi_i(z))_{i\in I}$  such that: 
$$ \sum_{i\in I} \deg(\Psi_i(z)) \varpi_i = \mu.$$ 
\end{Thm}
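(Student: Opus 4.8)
The statement is Theorem~\ref{Th-classification}, a parametrization of simple objects of $\O^\mu$ by rational $\ell$-weights of degree $\mu$. This is quoted from \cite{H}, so the cleanest route is to transport the proof from the unshifted case, tracking how the shift parameter $\mu$ enters. The plan is to follow the Drinfeld/Chari-Pressley philosophy: (1) attach to every simple object $L$ in $\O^\mu$ a \emph{highest $\ell$-weight}; (2) show this $\ell$-weight is rational and has degree $\mu$; (3) show that $L$ is determined up to isomorphism by its highest $\ell$-weight; (4) show that every rational $\ell$-weight of degree $\mu$ arises.

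\textbf{Steps (1)--(2): existence and shape of the highest $\ell$-weight.} Given a simple $V\in\O^\mu$, condition (iii) of the definition of $\O^\mu$ guarantees that the set of weights of $V$ has maximal elements for the ordering $\le$ on $P$; pick a weight vector $v$ of a maximal weight $\lambda$. The subalgebra generated by the $x_{i,r}^+$ raises weights by $\alpha_i$, so $x_{i,r}^+ v = 0$ for all $i,r$; hence $v$ is a highest-$\ell$-weight vector. Because $V$ is Cartan-diagonalizable with finite-dimensional weight spaces, the commuting operators $\phi_{i,m}^\pm$ act on the (finite-dimensional) space $\C v$-isotypic component and can be simultaneously put in a form producing a well-defined generalized eigenvalue: the series $\phi_i^+(z)$ acts on $v$ by a scalar power series $\Psi_i(z)$ with $\Psi_{i,0}=q^{\lambda(h_i)}\in q^\Z$, and (here is where the shift matters) the modified series $\phi_i^-(z)$ of Eq.~(\ref{phrelshm}) acts on $v$ by a Laurent series whose lowest term sits in degree $\alpha_i(\mu)$. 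The relation in $U_q^\mu(\hg)$ tying $\phi_i^+(z)$ and $\phi_i^-(z)$ (the same as in $U_q(\hg)$ except for the shift) forces $\Psi_i(z)$ to be the expansion of a rational function, and comparison of the top degree of $\phi_i^+$ with the bottom degree of $\phi_i^-$ gives $\deg \Psi_i(z)=\mu(h_i)=\alpha_i$-coefficient, i.e. $\sum_i\deg(\Psi_i(z))\varpi_i=\mu$. So $\Psib\in\mathfrak{r}_\mu$.

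\textbf{Steps (3)--(4): uniqueness and surjectivity.} For uniqueness: a simple object is generated by its highest-$\ell$-weight vector $v$ (otherwise the submodule $U_q^\mu(\hg)v$ would be proper, contradicting simplicity, once one checks $U_q^\mu(\hg)v\in\O^\mu$ so that it has a highest weight and the quotient by the radical of its contravariant form is simple); two simple objects with the same highest $\ell$-weight are then quotients of the same universal (Weyl/Verma-type) highest-$\ell$-weight module $M(\Psib)$, whose unique simple quotient $L(\Psib)$ they both equal. One must verify $M(\Psib)\in\O^\mu$: the weights lie in $C_\lambda$ by the triangular decomposition, weight spaces are finite-dimensional by a PBW-type argument for $U_q^\mu(\hg)$ (available from \cite{FT}), and Cartan-diagonalizability is built in. For surjectivity: given $\Psib\in\mathfrak{r}_\mu$, construct $M(\Psib)$ directly as the module induced from a one-dimensional representation of the ``Borel-type'' subalgebra of $U_q^\mu(\hg)$ on which $\phi_{i}^\pm(z)$ act by $\Psi_i(z)$ (its expansions at $z=0$ and $z=\infty$), $x_{i,r}^+$ act by $0$; rationality of $\Psib$ is exactly what makes this assignment compatible with the defining relations, and its unique simple quotient $L(\Psib)$ lies in $\O^\mu$ with highest $\ell$-weight $\Psib$ of degree $\mu$.

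\textbf{Main obstacle.} The genuinely delicate point is Step (3): establishing that a simple object of $\O^\mu$ is generated by a single highest-$\ell$-weight vector and that the universal module $M(\Psib)$ really belongs to $\O^\mu$ (in particular finiteness of all weight spaces). In the unshifted setting this rests on the PBW theorem and the triangular decomposition of $U_q(\hg)$; for $U_q^\mu(\hg)$ one needs the analogous structural results from \cite{FT}, and one must be careful that the shift in $\phi_i^-(z)$ does not destroy the triangular decomposition — it does not, because the shift only reindexes the Cartan-type generators, but making this rigorous (and checking the invertibility hypotheses on $\phi_{i,0}^+$, $\phi_{i,\alpha_i(\mu)}^-$ are preserved on $M(\Psib)$) is where the real work lies. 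Everything else is a routine transcription of the Chari--Pressley argument with the degree bookkeeping dictated by $\mu$.
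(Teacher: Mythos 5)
The paper does not prove this result; it is quoted verbatim from \cite{H} (where it appears as the main classification theorem for the categories $\O^\mu$). So there is no ``paper's own proof'' to compare against, and your task was essentially to reconstruct the argument of \cite{H}.

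Your outline has the right overall shape (highest $\ell$-weight, Verma-type module, uniqueness, surjectivity), but it contains a genuine gap at the two places where the real content of the theorem lives.

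First, in Step (2), you assert that ``the relation in $U_q^\mu(\hg)$ tying $\phi_i^+(z)$ and $\phi_i^-(z)$ \dots forces $\Psi_i(z)$ to be the expansion of a rational function.'' This is not correct: the algebra relations alone do not force rationality of the highest $\ell$-weight. Indeed, the universal highest $\ell$-weight (Verma-type) module $M(\Psib)$ exists for an \emph{arbitrary} $\ell$-weight $\Psib$, with $\phi_i^\pm(z)$ acting on the highest weight line by freely prescribed power/Laurent series compatible with the degree shift, and nothing in the relations constrains $\Psi_i(z)$ to be rational. Rationality of the highest $\ell$-weight of a \emph{simple} object of $\O^\mu$ is a nontrivial representation-theoretic statement, and its proof in \cite{H} relies on realizing such simples as subquotients of fusion products of prefundamental representations, combined with the earlier asymptotic results for the Borel subalgebra \cite{HJ} (limits of Kirillov--Reshetikhin modules). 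This is the deepest part of the theorem, and you have collapsed it into an algebraic hand-wave that would not hold up. The degree computation $\deg \Psi_i = \alpha_i(\mu)$ is, by contrast, an easy consequence of Eq.~(\ref{phrelshm}) \emph{once rationality is known}, by comparing the expansions of $\Psi_i(z)$ at $z=0$ (acting as $\phi_i^+$) and at $z=\infty$ (acting as $\phi_i^-$).

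Second, and symmetrically, in Step (4) you write that ``rationality of $\Psib$ is exactly what makes this assignment compatible with the defining relations.'' Again, the induced module exists for any $\ell$-weight; rationality is not a compatibility condition for the construction but the condition ensuring that the unique simple quotient $L(\Psib)$ has finite-dimensional weight spaces bounded above, i.e.\ actually lies in $\O^\mu$. Establishing this finiteness (surjectivity onto $\mathfrak{r}_\mu$) is the second nontrivial input, which \cite{H} obtains by explicit construction of the prefundamental modules $L_{i,a}^\pm$ and the one-dimensional $L(\Psib_\la)$ together with the fusion product and Proposition~\ref{fact}. You have correctly identified in your ``Main obstacle'' paragraph that finiteness of weight spaces of $M(\Psib)$ and its simple quotient is where the work lies, but your Steps (2) and (4) as written bypass that difficulty rather than meet it.
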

The simple representation associated with $\Psib\in\mathfrak{r}_\mu$ will be denoted by $L(\Psib)$. The $\ell$-weight $\Psib$ encodes the eigenvalues of the generators $\phi_{i,m}^+$ ($i\in I$, $m\geq 0$) on a highest weight vector $v\in L(\Psib)$ :
$$\phi_{i,m}^+\cdot v=\Psi_{i, m}v,\quad (i\in I,\ m\ge 0).$$

The fundamental examples of simple representations come from the simplest rational $\ell$-weights of degree $\pm\varpi_i$ and are defined as follows \cite{HJ, H}. For $i\in I$ and $a\in\C^\times$, let 
\begin{equation}
L_{i,a}^\pm := L\left(\Psib_{i,a}^{\pm 1}\right)
\quad \mbox{where}\quad 
(\Psib_{i,a})_j(z) := 
\left\{ 
\begin{array}{cc}
1 - za & (j=i)\,,\\
1 & (j\neq i)\,.\\
\end{array}
\right.
\label{fund-rep}
\end{equation}
We call $L_{i,a}^+$ (resp. $L_{i,a}^-$) a {\em positive (resp. negative) prefundamental representation}.

For $\lambda \in P$, we also have a one-dimensional
invertible representation $L(\Psib_\lambda)$ in $\mathcal{O}^0$, where
\begin{equation}\label{clw}
\Psib_\lambda = (q^{\lambda(h_i)})_{i\in I}\in \tb_\ell^\times
\end{equation}
is the unique constant $\ell$-weight such that
$\varpi(\Psib_\lambda) = \lambda$.

Note that $\mathfrak{r}$ is the free (multiplicative) abelian group generated by the $\ell$-weights $\Psi_{i,a}\,((i,a)\in I\times\C^*)$ 
and $\Psi_{\la}\,(\la\in P)$.

\begin{Thm}[\cite{H}] \label{thmH}
Let $\mu\in P$.
\begin{itemize}
\item The algebra $U_q^\mu(\hat{\mathfrak{g}})$ has a non-zero finite-dimensional representation if and only if $\mu$ is a dominant weight.
\item If $\mu$ is antidominant, then there is a natural embedding of the ordinary quantum affine Borel algebra $U_q(\widehat{\mathfrak{b}})$ into $U_q^\mu(\hat{\mathfrak{g}})$.
\end{itemize}
\end{Thm}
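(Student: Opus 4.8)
This is Theorem~\ref{thmH}, quoted from \cite{H}; I would recall the two arguments in turn.

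\textbf{Necessity of dominance in the first assertion.} Let $V$ be a nonzero finite-dimensional $U_q^\mu(\hat{\mathfrak{g}})$-module; replacing it by a simple submodule we may assume $V$ is simple and lies in $\mathcal{O}^\mu$. First I would produce a highest $\ell$-weight vector: picking a weight $\lambda$ of $V$ maximal for the order on $P$ and $0\neq v\in V_\lambda$, every $x_{i,r}^+$ strictly raises the weight and hence kills $v$, so $v$ is a highest $\ell$-weight vector, $V=L(\Psib)$, and by Theorem~\ref{Th-classification} the $\ell$-weight $\Psib$ is rational of degree $\mu$, i.e. $\sum_{i}\deg\Psi_i(z)\,\varpi_i=\mu$. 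It then remains to see $\deg\Psi_i(z)\ge 0$ for every $i$, which I would get by restricting the action to the Drinfeld $\mathfrak{sl}_2$-subalgebra generated by the $x_{i,r}^\pm$ and $\phi_{i,m}^\pm$: this is a homomorphic image of a shifted quantum affine $\mathfrak{sl}_2$ with shift exponent $\alpha_i(\mu)$, and on a finite-dimensional module over such an algebra the highest $\ell$-weight is forced to be of polynomial type with non-negative degree (equivalently, $\phi_i^+(z)$ and $\phi_i^-(z)$ are expansions of a single rational function, and finite-dimensionality makes its numerator and denominator genuine polynomials with the numerator of at least the same degree). This rank-one fact is precisely the $\mathfrak{sl}_2$-case of the theorem; combined with $\sum_i \deg\Psi_i(z)\,\varpi_i=\mu$ it gives $\mu\in P^+$.

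\textbf{Sufficiency in the first assertion.} For $\mu$ dominant I would construct a nonzero finite-dimensional module. Using the coproduct-type shift homomorphisms of Finkelberg--Tsymbaliuk, $U_q^{\mu+\mu'}(\hat{\mathfrak{g}})\to U_q^{\mu}(\hat{\mathfrak{g}})\,\widehat{\otimes}\,U_q^{\mu'}(\hat{\mathfrak{g}})$, it suffices to treat $\mu=\varpi_i$, and there I would exhibit an explicit finite-dimensional module, either by pulling back an ordinary finite-dimensional $U_q^0(\hat{\mathfrak{g}})$-module along a shift map $U_q^{\varpi_i}(\hat{\mathfrak{g}})\to U_q^0(\hat{\mathfrak{g}})$, or by writing the Drinfeld generators directly on a finite-dimensional space and checking the relations of \cite[Section 5]{FT}; positivity of the shift exponents $\alpha_j(\varpi_i)$ is exactly what makes the modified series $\phi_j^-(z)$ of \eqref{phrelshm} truncate so that the space stays finite-dimensional. (More structurally, one can instead show that for dominant $\mu$ the algebra $U_q^\mu(\hat{\mathfrak{g}})$ admits a ``truncation'' quotient that is finite over the image of a finite-dimensional $U_q(\hat{\mathfrak{g}})$-module.) I expect this construction to be the main obstacle: verifying simultaneously non-vanishing and closure under all the shifted Drinfeld relations is the technical core of \cite{H}.

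\textbf{The Borel embedding.} For $\mu$ antidominant one has $\alpha_i(\mu)\le 0$, and I would define $U_q(\widehat{\mathfrak{b}})\to U_q^\mu(\hat{\mathfrak{g}})$ on the topological Drinfeld-type generators of the Borel (the $\phi_{i,m}^+$ with $m\ge 0$ and the relevant half of the $x_{i,r}^\pm$) by sending each to the generator of the same name in $U_q^\mu(\hat{\mathfrak{g}})$. All defining relations of $U_q(\widehat{\mathfrak{b}})$ other than those involving $\phi_i^-(z)$ are already relations of $U_q^\mu(\hat{\mathfrak{g}})$; and since $\mu$ is antidominant the modified series $\phi_i^-(z)$ of \eqref{phrelshm} has lowest degree $\alpha_i(\mu)\le 0$, compatible with the Borel presentation, so well-definedness reduces to a finite check. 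Injectivity I would obtain by comparing triangular PBW-type decompositions of the two algebras, or by testing on $U_q^\mu(\hat{\mathfrak{g}})$-modules on which $U_q(\widehat{\mathfrak{b}})$ already acts faithfully --- for instance the prefundamental modules $L_{i,a}^\pm$ of \eqref{fund-rep} and their tensor products, which lie in $\mathcal{O}^\mu$ and restrict to the corresponding Hernandez--Jimbo prefundamental modules of $U_q(\widehat{\mathfrak{b}})$ from \cite{HJ}. The careful matching of the Borel's presentation with the Drinfeld generators of $U_q^\mu(\hat{\mathfrak{g}})$ is the delicate bookkeeping point here.
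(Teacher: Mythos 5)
This statement is quoted verbatim from \cite{H}; the paper gives \emph{no proof} of it, only the citation. There is therefore nothing internal to compare your sketch against. As a reconstruction of the arguments in \cite{H}, your outline is broadly reasonable in structure (highest-$\ell$-weight reduction, rank-one analysis for necessity of dominance; shift homomorphisms and truncations for sufficiency; generator-by-generator matching for the Borel embedding), and you correctly flag where the real technical work lies. Two small cautions: for sufficiency, the statement ``it suffices to treat $\mu=\varpi_i$'' via the shift coproducts requires justifying that nonzero finite-dimensional modules are preserved under pullback and that the tensor product remains nonzero, which is not automatic and is where \cite{H} in fact works with truncated shifted algebras; and for the Borel embedding, injectivity cannot be checked merely on prefundamental representations of $U_q(\hat{\mathfrak{b}})$ but needs either a PBW-type argument or faithfulness of the category of representations considered, so that point should not be treated as a mere ``bookkeeping'' remark. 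But since this paper treats Theorem~\ref{thmH} as a black box imported from \cite{H}, your proposal is answering a question the paper does not pose.
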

To illustrate, the prefundamental modules have the following properties :
\begin{itemize}
\item the $U_q^{\varpi_i}(\hat{\mathfrak{g}})$-module $L_{i,a}^+$ is one-dimensional.

\item the $U_q^{-\varpi_i}(\hat{\mathfrak{g}})$-module $L_{i,a}^-$ is infinite-dimensional. It remains simple when restricted to the Borel subalgebra $U_q(\widehat{\mathfrak{b}})$ naturally embedded in $U_q^{-\varpi_i}(\hat{\mathfrak{g}})$. The corresponding $U_q(\widehat{\mathfrak{b}})$-module can be obtained as a limit of Kirillov-Reshetikhin modules \cite{HJ}.
\end{itemize}

Let $\mathcal{E}$
be the additive group of maps $c : P \rightarrow \mathbb{Z}$
whose support
is contained in
a finite union of cones $C_\mu$.
For $\lambda\in P$, we define $[\lambda] = \delta_{\lambda,-}\in\mathcal{E}$. We will regard elements of $\mathcal{E}$
as formal sums
\[
c = \sum_{\omega\in \mathrm{Supp}(c)} c(\omega)[\omega].
\]
 
\begin{Def}
For $V$ in the category $\mathcal{O}^\mu$ we define the character of
$V$ as:
$$\chi(V) := \sum_{\lambda\in P}
\dim(V_\lambda) [\lambda] \in \mathcal{E}.$$
\end{Def} 
 
\begin{remark} 
{\rm
We have a ring structure on $\mathcal{E}$ so that $[\lambda + \lambda'] = [\lambda][\lambda']$ for any $\lambda,\lambda'$. This multiplicative notation is compatible with the notation $[\lambda]$ introduced in \S\ref{subsection-Pi-prime}.
}
\end{remark}
 
Let $K_0(\mathcal{O}^\mu)$ be the Grothendieck group of the category $\mathcal{O}^\mu$. 
Let $\mathcal{O}^{\,\mathrm{sh}}$ denote the direct sum of the abelian categories $\mathcal{O}^\mu\ (\mu\in P)$.
It is shown in \cite{H} that the direct sum of Grothendieck groups
$$K_0(\mathcal{O}^{\,\mathrm{sh}}) := \bigoplus_{\mu\in P}K_0(\mathcal{O}^\mu)$$
has a ring structure, coming from a fusion product constructed from the deformed Drinfeld coproduct, and we have 
$$K_0(\mathcal{O}^\mu)\cdot K_0(\mathcal{O}^{\mu'})\subset K_0(\mathcal{O}^{\mu + \mu'}).$$
In particular, $K_0(\mathcal{O}^0)$ is a subring of $K_0(\mathcal{O}^{\,\mathrm{sh}})$. 

We naturally identify $\mathcal{E}$ with the Grothendieck ring of the category of representations in
$\mathcal{O}^0$ with constant $\ell$-weights,
the simple objects of which are the $[L(\Psib_\lambda)]$, $\lambda\in P$, that are identified with their character $[\lambda]$. The character map $\chi$ induces a ring homomorphism $\chi: K_0(\mathcal{O}^{\,\mathrm{sh}})\rightarrow \mathcal{E}$, which is not injective.

Let $\mathcal{E}_\ell$ be the additive group of maps $\gamma : \mathfrak{r} \rightarrow \Z$
so that $\varpi(\mathrm{supp}(\gamma))$ is contained in
a finite union of sets of the form $C_\mu$, where
$$\mathrm{supp}(\gamma) := \{\Psib\in \mathfrak{r}\mid \gamma(\Psib) \neq 0\}.$$
Then $\mathcal{E}_\ell$ has a natural ring structure.
For $\Psib\in \tb^\times_\ell$, we define $[\Psib] = \delta_{\Psib,-}\in\mathcal{E}_\ell$.

\begin{remark}\label{rc}
{\rm 
\begin{itemize}
\item[(i)] 
The group homomorphism $\varpi :
\tb^\times_\ell\rightarrow P$ given by Equation~(\ref{spep})
is naturally extended to a surjective homomorphism
$$\varpi : \mathcal{E}_\ell\rightarrow \mathcal{E}.$$

\item[(ii)] Recall the ring $\YY'$ of \S\ref{subsection-Pi-prime}, generated by variables $\Psi_{i,a}^{\pm 1}$, $[\la]$ with $i\in I$, $a\in\mathbb{C}^*$ and $\la\in P$. 
We have a natural identification of this ring $\YY'$ with the group algebra of $\mfr$, by identifying the variables $\Psi_{i,a}$ with the $\ell$-weights $\Psib_{i,a}\in \mfr$ defined in Equation (\ref{fund-rep}), and the variables  
$[\la]$ with the $\ell$-weights $\Psib_\lambda\in\mfr$ defined in Equation (\ref{clw}).

\item[(iii)] It follows that the completion $K =  \widetilde{\YY_e}'$ of \S\ref{subsection-KZ} identifies with $\mathcal{E}_\ell$. 
\end{itemize}
}
\end{remark}

\begin{Thm}[\cite{H}] 
The $\ell$-weights of a representation in $\mathcal{O}^\mu$ belong to $\mathfrak{r}_\mu$.
\end{Thm}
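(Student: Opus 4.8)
The statement to be proved is that every $\ell$-weight $\Psib$ occurring in a module $V$ of $\mathcal{O}^\mu$ lies in $\mfr_\mu$, i.e.\ it is rational and has degree $\mu$. The plan is to reduce to the already-quoted classification Theorem~\ref{Th-classification} (which handles the \emph{highest} $\ell$-weight of a simple module) by a two-step argument: first control the highest $\ell$-weights of the simple subquotients of a general $V$, then propagate rationality to all $\ell$-weights of those simple subquotients.

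First I would set up the $\ell$-weight space decomposition. For $V$ in $\mathcal{O}^\mu$ and $\Psib\in\tb^\times_\ell$, let $V_{\Psib}$ denote the generalized eigenspace for the commuting operators $\phi_{i,m}^+$ acting with generalized eigenvalue $\Psi_{i,m}$; since $V$ is Cartan-diagonalizable and each weight space $V_\lambda$ is finite-dimensional, each $V_\lambda$ decomposes into finitely many such $\ell$-weight spaces and $V=\bigoplus_{\Psib}V_{\Psib}$. The $\ell$-weights of $V$ are exactly the $\Psib$ with $V_{\Psib}\neq 0$, and $\varpi(\Psib)$ is the ordinary weight of $V_{\Psib}$, so condition (iii) of category $\mathcal{O}^\mu$ forces $\varpi(\mathrm{supp})$ into a finite union of cones $C_{\lambda_j}$. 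Next I would observe that $V$ has a (possibly infinite, but locally finite) composition series whose factors are simple modules $L(\Psib^{(k)})$ in $\mathcal{O}^\mu$; this is a standard consequence of the finiteness of weight multiplicities together with condition (iii), exactly as in the highest-weight category formalism of \cite{H}. Each highest $\ell$-weight $\Psib^{(k)}$ is then rational of degree $\mu$ by Theorem~\ref{Th-classification}, and every $\ell$-weight of $V$ is an $\ell$-weight of one of these $L(\Psib^{(k)})$.

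It therefore remains to show: if $L(\Psib)$ is a simple module with $\Psib\in\mfr_\mu$, then \emph{all} its $\ell$-weights are rational. This is the technical heart, and I expect it to be the main obstacle. The strategy is the familiar one used for ordinary quantum affine algebras and their Borel subalgebras: any $\ell$-weight $\Psib'$ of $L(\Psib)$ satisfies $\varpi(\Psib')\le\varpi(\Psib)$, and one argues by induction on $\varpi(\Psib)-\varpi(\Psib')\in Q_+$. The generating series $\phi_i^\pm(z)$ of a lower $\ell$-weight space are obtained from those of the highest one by applying products of Drinfeld root vectors $x_{j,r}^-$; using the Drinfeld relations between $\phi_i^\pm(z)$ and $x_{j}^-(w)$ one shows that the eigenvalue series transform by multiplication by expressions built from the rational functions $A_{j,a}^{\mp 1}$ (the monomials of \S\ref{subsec-Pi}), hence stay rational. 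Concretely, I would show the eigenvalue of $\phi_i^+(z)$ on $V_{\Psib'}$ is $\Psi_i(z)$ times a product of factors $\dfrac{(1-a q^{\cdots} z)}{(1-a q^{\cdots} z)}$, i.e.\ a rational function — this is the content of the ``$q$-character is a Laurent polynomial in the $A_{i,a}^{-1}$'' phenomenon, transported to the shifted setting as in \cite{FT,H}. The degree statement $\deg(\Psi'_i(z))$ summing to $\mu$ follows because multiplication by an $A_{j,a}^{\pm1}$ changes the total degree by $\mp\alpha_j$, while $\varpi(A_{j,a})=\alpha_j$; so the degree is $\mu-\eta$ for $\eta=\varpi(\Psib)-\varpi(\Psib')$, which is still the correct bookkeeping once one notes that ``degree'' here means the multidegree $\sum_i\deg(\Psi_i(z))\varpi_i$ and the claim in the theorem is only that $\Psib'\in\mfr$, not that it has degree $\mu$ — so in fact only rationality needs to be checked, and the degree-$\mu$ part is automatic from $\Psib'$ lying in the subgroup $\mfr$ once rationality is established.

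\textbf{Main obstacle.} The delicate point is making rigorous the claim that lowering an $\ell$-weight by the action of $x_{j,r}^-$ multiplies the $\phi_i^+(z)$-eigenvalue by an \emph{explicit rational function}, uniformly over the infinite-dimensional module; in the finite-dimensional case this is Frenkel--Reshetikhin's theorem, and for category $\mathcal{O}$ of $U_q(\widehat{\mathfrak b})$ it is in \cite{HJ}, but for shifted algebras one must check that the modified series $\phi_i^-(z)$ of \eqref{phrelshm} does not spoil the argument. I would handle this by working only with $\phi_i^+(z)$ (whose defining relations are unchanged from the unshifted case) and invoking the relevant Drinfeld-relation computation from \cite{H}, which already establishes precisely this for $\mathcal{O}^\mu$; the theorem as stated is then essentially a corollary of that analysis combined with Theorem~\ref{Th-classification} and the composition-series argument above. \cqfd
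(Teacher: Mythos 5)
The theorem you are proving is cited to \cite{H} in the paper, which gives no proof of its own, so there is no internal proof to compare against; I will evaluate your argument on its merits.

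Your reduction to simple subquotients (Zorn's lemma or a filtration suffices here, since the generalized $\ell$-weight space functor is exact and each $\ell$-weight space is finite-dimensional) and the appeal to Theorem~\ref{Th-classification} for the highest $\ell$-weight are reasonable, and the idea that lower $\ell$-weights of $L(\Psib)$ differ from $\Psib$ by products of $A_{j,a}^{-1}$ is indeed the relevant structural fact from \cite{H}. However, there is a concrete error in your degree bookkeeping which then leads you to misread the statement. You assert that multiplication by $A_{j,a}^{\pm 1}$ changes the total degree $\sum_i \deg(\Psi_i(z))\varpi_i$ by $\mp\alpha_j$, and from this you conclude that the degree drifts away from $\mu$, so that ``the claim in the theorem is only that $\Psib'\in\mfr$, not that it has degree $\mu$.'' Both statements are wrong. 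The theorem asserts $\Psib' \in \mfr_\mu$, and the degree condition is part of $\mfr_\mu$ by definition. Moreover the degree \emph{is} preserved: unwinding the definitions, $A_{j,a} = [\alpha_j]\,\dfrac{\Psi_{j,aq^{-2}}}{\Psi_{j,aq^{2}}}\prod_{k:\,c_{kj}=-1}\dfrac{\Psi_{k,aq}}{\Psi_{k,aq^{-1}}}$, and each ratio has degree $1-1=0$ in every slot, so $\deg A_{j,a}=0$, while $\varpi(A_{j,a})=\alpha_j$. You have conflated the homomorphism $\Psib\mapsto\sum_i\deg(\Psi_i)\varpi_i$ with $\varpi$; they are different, and it is precisely the vanishing of the former on all $A_{j,a}$ that makes the degree-$\mu$ part of the theorem true for all $\ell$-weights of a simple module, not something to be waved away.

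A secondary remark: the step you flag as the ``technical heart'' — that the $\ell$-weights of $L(\Psib)$ are obtained from $\Psib$ by $A^{-1}$-multiplication — is essentially equivalent to the theorem for simple modules, and your proposal handles it by citing the Drinfeld-relation computation in \cite{H}. That is not circular as a proof sketch, but it is worth noting that the argument in \cite{H} is more direct and does not route through simple subquotients at all: using finite-dimensionality of weight spaces together with the commutation relation $[x^+_{i,r},x^-_{i,s}]=\frac{\phi^+_{i,r+s}-\phi^-_{i,r+s}}{q-q^{-1}}$, one obtains a linear recurrence forcing $\Psi^+_i(z)$ and $\Psi^-_i(z)$ to be expansions (at $0$ and $\infty$) of the same rational function, whose degree $\alpha_i(\mu)$ is pinned down by the shift in the defining relation \eqref{phrelshm}. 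This applies directly to any module in $\mathcal{O}^\mu$ and gives both rationality and the degree in one stroke.
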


It follows that every representation $V$ in the category $\mathcal{O}^\mu$ has a $q$-character
$$\chi_q(V) = \sum_{\Psib\in
\mathfrak{r_\mu}}\dim(V_{\Psib})[\Psib]\in\mathcal{E}_\ell,$$
where $V_{\Psib}$ is the $\ell$-weight space of $\ell$-weight $\Psib$ (common generalized eigenspace).

Since the $q$-character map is additive on short exact sequences, it gives rise to an injective ring homomorphism \cite[Corollary 5.1]{H}:
$$\chi_q : K_0(\mathcal{O}^{\,\mathrm{sh}})\rightarrow \mathcal{E}_\ell.$$
In fact, $\chi_q$ is a ring isomorphism 
(the argument is the same as in \cite[Theorem 4.19]{W}).

Since $\mathcal{E}_\ell$ has the structure of a topological ring, $K_0(\mathcal{O}^{\,\mathrm{sh}})$ itself inherits the structure of a topological ring. The classes $[L(\Psib)]\ (\Psib \in \mathfrak{r})$ of simple modules form a topological basis of $K_0(\mathcal{O}^\mathrm{sh})$. 

The construction in \cite{H} implies also immediately the following:
\begin{Prop}\label{fact} For $\Psib, \Psib'\in \tb^\times_\ell$, $[L(\Psib\Psib')]$ occurs in $[L(\Psib)][L(\Psib')]$ with multiplicity $1$ and
$$[L(\Psib)][L(\Psib')] = [L(\Psib\Psib')] + \sum_{\Psib''\in\tb^\times_\ell,\varpi(\Psib'')< \varpi(\Psib') + \varpi(\Psib)} m_{\Psib''}[L(\Psib'')]$$
for some coefficients $m_{\Psib''}\in \Z_{\ge 0}$.
\end{Prop}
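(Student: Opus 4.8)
\emph{Proof proposal.} The plan is to deduce the statement from the triangular structure of $q$-characters by the standard triangularity argument in the Grothendieck ring. First I would use that, by the very definition of the ring structure on $K_0(\mathcal{O}^{\mathrm{sh}})$ recalled above, the product $[L(\Psib)][L(\Psib')]$ is the class of a fusion product, hence expands in the topological basis of simple classes as $\sum_{\Psib''} m_{\Psib''}[L(\Psib'')]$ with $m_{\Psib''}\in\Z_{\ge 0}$ (these non-negative integers exist because every object of $\mathcal{O}^{\mathrm{sh}}$ can be resolved into simple subquotients weight space by weight space, all $\ell$-weight spaces being finite-dimensional and the weights being supported on a finite union of cones $C_\mu$). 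Since $\chi_q$ is an injective continuous ring homomorphism into $\mathcal{E}_\ell$, it is enough to analyze the identity
\[
\chi_q(L(\Psib))\,\chi_q(L(\Psib')) \;=\; \sum_{\Psib''} m_{\Psib''}\,\chi_q(L(\Psib'')).
\]

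Next I would record the triangularity of $\chi_q(L(\Phi))$ for a single simple: by Theorem~\ref{Th-classification} and the theory of highest-$\ell$-weight modules, the $\ell$-weight $\Phi$ occurs in $\chi_q(L(\Phi))$ with multiplicity $1$ (it is carried by a highest weight vector spanning the top weight space $L(\Phi)_{\varpi(\Phi)}$), and every other $\ell$-weight $\Phi_0$ of $L(\Phi)$ satisfies $\varpi(\Phi_0) < \varpi(\Phi)$, since all weights of a highest weight module lie in $\varpi(\Phi)-Q_+$ and the top weight is attained only by the cyclic vector. Multiplying, every $\ell$-weight of $\chi_q(L(\Psib))\chi_q(L(\Psib'))$ is a product $\Phi_1\Phi_2$ with $\Phi_1$ an $\ell$-weight of $L(\Psib)$ and $\Phi_2$ one of $L(\Psib')$; hence it has $P$-weight $\varpi(\Phi_1)+\varpi(\Phi_2)\le\varpi(\Psib)+\varpi(\Psib')$, and equality forces $\varpi(\Phi_1)=\varpi(\Psib)$, $\varpi(\Phi_2)=\varpi(\Psib')$, so $\Phi_1=\Psib$, $\Phi_2=\Psib'$. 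Thus $\Psib\Psib'$ is the unique $\ell$-weight of maximal $P$-weight in the product, occurring with coefficient $1\cdot 1=1$.

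Finally I would run the peeling-off argument on the right-hand side. Because $\{\varpi(\Psib'') : m_{\Psib''}\neq 0\}$ lies in a finite union of cones $C_{\mu_j}$ and $Q_+$ is well-founded, this set has maximal elements, and for each maximal $\Psib^\sharp$ the $\ell$-weight $[\Psib^\sharp]$ occurs in $\chi_q(L(\Psib''))$ (for $\Psib''$ in the support) only when $\Psib''=\Psib^\sharp$; comparing the coefficient of $[\Psib^\sharp]$ in the displayed identity gives $0\neq m_{\Psib^\sharp}$ equal to the coefficient of $[\Psib^\sharp]$ in the product $q$-character, which by the previous paragraph forces $\varpi(\Psib^\sharp)\le\varpi(\Psib)+\varpi(\Psib')$. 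Iterating this observation downwards along the partial order shows every $\Psib''$ with $m_{\Psib''}\neq 0$ has $\varpi(\Psib'')\le\varpi(\Psib)+\varpi(\Psib')$, with equality only for $\Psib''=\Psib\Psib'$ (again by the uniqueness of the top $\ell$-weight of the product); comparing coefficients of $[\Psib\Psib']$ then yields $m_{\Psib\Psib'}=1$. Hence all remaining $\Psib''$ with $m_{\Psib''}\neq 0$ satisfy $\varpi(\Psib'')<\varpi(\Psib)+\varpi(\Psib')$, which is the claim.

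The only points requiring care, and where I expect the bulk of the (routine) work to lie, are the topological bookkeeping — checking that for each fixed $\ell$-weight only finitely many terms $\chi_q(L(\Psib''))$ contribute to its coefficient, so that coefficient comparison in $\mathcal{E}_\ell$ is legitimate — and the well-foundedness argument guaranteeing existence of maximal weights; the genuinely representation-theoretic input, namely that the class of the fusion product expands with non-negative multiplicities, is already built into the framework of \cite{H} and is part of the statement.
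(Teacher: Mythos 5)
Your argument is correct, and since the paper offers no proof of its own (it simply declares that the statement ``follows immediately'' from the construction in \cite{H}), your write-up supplies exactly the standard triangularity argument that the authors are implicitly invoking: the top $\ell$-weight of $\chi_q(L(\Psib))\chi_q(L(\Psib'))$ is $\Psib\Psib'$ with coefficient $1$, and every other $\ell$-weight has strictly smaller $P$-weight. One remark: the final ``peeling-off'' paragraph is more than is needed. If $m_{\Psib''}\neq 0$, then $L(\Psib'')$ is a simple subquotient of the fusion product, so its highest $\ell$-weight $\Psib''$ is already an $\ell$-weight of the product; the bound $\varpi(\Psib'')\le\varpi(\Psib)+\varpi(\Psib')$ (with equality only for $\Psib''=\Psib\Psib'$) then follows directly from your second paragraph, with no induction over maximal elements required. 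Comparing the coefficient of $[\Psib\Psib']$ alone then gives $m_{\Psib\Psib'}=1$, since no $L(\Psib'')$ with $\varpi(\Psib'')<\varpi(\Psib)+\varpi(\Psib')$ can contain the $\ell$-weight $\Psib\Psib'$.
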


\begin{Def}
Let $\O_\Z$ denote the full subcategory of $\mathcal{O}^{\,\mathrm{sh}}$ consisting of the objects $M$ which satisfy the following property:   
\begin{quote}
every simple subquotient of $M$ is of the form $L(\Psib)$ where $\Psib$ is a Laurent monomial in the variables $[\la]\ (\la\in P)$ and $\Psi_{i,q^r}\ (i,r)\in V$.
\end{quote}
\end{Def}

The Grothendieck group $K_0(\mathcal{O}_{\mathbb{Z}})$ is the $[P]$-topological subspace of $K_0(\mathcal{O}^{\,\mathrm{sh}})$
generated by the simple classes $[L(\Psib)]$ such that
$\Psib\in \mathbb{Z}[\Psib_{i,q^r}^{\pm 1}]_{(i,r)\in V}$.

\begin{Prop} $K_0(\mathcal{O}_{\mathbb{Z}})$ is a subring of $K_0(\mathcal{O}^{\,\mathrm{sh}})$. The homomorphism $\chi_q$ restricts to an isomorphism
$$\chi_q : K_0(\mathcal{O}_{\mathbb{Z}}) \rightarrow \mathcal{E}_{\ell, \mathbb{Z}}$$
where $\mathcal{E}_{\ell,\mathbb{Z}}$ is the $[P]$-topological subring of $\mathcal{E}_\ell$ generated by the $[\Psib_{i,q^r}^{\pm 1}]$, $(i,r)\in V$.
\end{Prop}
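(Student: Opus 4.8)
The statement to prove is that $K_0(\mathcal{O}_{\mathbb{Z}})$ is a subring of $K_0(\mathcal{O}^{\,\mathrm{sh}})$ and that $\chi_q$ restricts to an isomorphism $K_0(\mathcal{O}_{\mathbb{Z}}) \xrightarrow{\sim} \mathcal{E}_{\ell,\mathbb{Z}}$. The plan is to reduce everything to the properties of the fusion product already recorded in Proposition~\ref{fact}. First, to see that $K_0(\mathcal{O}_{\mathbb{Z}})$ is a subring, it suffices to check it is closed under multiplication, since it is by definition a $[P]$-submodule (hence closed under addition, and it contains the classes $[\lambda]=[L(\Psib_\lambda)]$). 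Take two simple classes $[L(\Psib)]$, $[L(\Psib')]$ with $\Psib, \Psib' \in \mathbb{Z}[\Psib_{i,q^r}^{\pm 1}]_{(i,r)\in V}$. By Proposition~\ref{fact} the product $[L(\Psib)][L(\Psib')]$ equals $[L(\Psib\Psib')]$ plus a (possibly infinite, but topologically convergent) sum of simple classes $[L(\Psib'')]$ with $\varpi(\Psib'') < \varpi(\Psib)+\varpi(\Psib')$. The leading term $\Psib\Psib'$ again lies in $\mathbb{Z}[\Psib_{i,q^r}^{\pm 1}]_{(i,r)\in V}$, so one is reduced to showing that every $\Psib''$ occurring also lies in this subgroup. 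Here one uses the $q$-character computation: the $\ell$-weights of $L(\Psib)$ are all of the form $\Psib \cdot M$ where $M$ is a monomial in the $A_{i,a}^{-1}$ (this is the standard highest-$\ell$-weight structure, encoded by the fact that $\chi_q(L(\Psib)) \in \Psib\cdot \mathbb{Z}[[A_{i,a}^{-1}]]$), and the $A_{i,q^s}$ with $s$ ranging over the appropriate congruence class only involve the variables $\Psi_{j,q^r}$ with $(j,r)\in V$, because $V$ is stable under the shifts $r \mapsto r \pm 1$ that occur in the definition of $A_{i,a}$. Since every $\Psib''$ in the product is such an $\ell$-weight-monomial of the fusion product, it lies in $\mathbb{Z}[\Psib_{i,q^r}^{\pm 1}]_{(i,r)\in V}$. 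This gives closure under multiplication.

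For the second assertion, recall from Remark~\ref{rc}(ii)--(iii) that $\chi_q : K_0(\mathcal{O}^{\,\mathrm{sh}}) \to \mathcal{E}_\ell$ is an isomorphism of topological rings, and that $\mathcal{E}_\ell$ is identified with $K = \widetilde{\YY_e}'$. Under this identification $\mathcal{E}_{\ell,\mathbb{Z}}$ is precisely the $[P]$-topological subring generated by the $[\Psib_{i,q^r}^{\pm 1}]$, i.e.\ by the variables $\Psi_{i,q^r}^{\pm 1}$, $(i,r)\in V$. We must show $\chi_q$ maps $K_0(\mathcal{O}_{\mathbb{Z}})$ onto this subring. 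Injectivity is automatic since $\chi_q$ is injective on all of $K_0(\mathcal{O}^{\,\mathrm{sh}})$. For the image: on one hand, by the previous paragraph, $\chi_q(L(\Psib))$ for $\Psib \in \mathbb{Z}[\Psib_{i,q^r}^{\pm 1}]_{(i,r)\in V}$ is a convergent sum of monomials all lying in $\mathbb{Z}[\Psi_{i,q^r}^{\pm 1}]_{(i,r)\in V}$, hence $\chi_q(K_0(\mathcal{O}_{\mathbb{Z}}))\subseteq \mathcal{E}_{\ell,\mathbb{Z}}$. On the other hand, the classes $[L(\Psib)]$ with $\Psib \in \mathbb{Z}[\Psib_{i,q^r}^{\pm 1}]_{(i,r)\in V}$ form a topological basis of $K_0(\mathcal{O}_{\mathbb{Z}})$, and their $q$-characters are "unitriangular" with respect to the dominance-type order on $\ell$-weight monomials (leading term $[\Psib]$, lower terms of strictly smaller $\varpi$); since the monomials $[\Psib]$ run over a topological basis of $\mathcal{E}_{\ell,\mathbb{Z}}$, a standard triangularity/successive-approximation argument shows that every element of $\mathcal{E}_{\ell,\mathbb{Z}}$ is hit. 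This gives surjectivity, and hence the claimed isomorphism. Finally, one checks $\chi_q$ is continuous for the induced topologies and is a ring map on the nose — both are immediate from the corresponding facts for $\chi_q$ on $K_0(\mathcal{O}^{\,\mathrm{sh}})$.

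The step I expect to be the main obstacle is making the triangularity/convergence argument fully rigorous in the topological (completed) setting: one must argue that the inverse of the unitriangular change of basis from $\{[\Psib]\}$ to $\{\chi_q(L(\Psib))\}$ converges in $\mathcal{E}_{\ell,\mathbb{Z}}$, which requires that for each target weight the relevant infinite sums are well-defined, using precisely the finiteness built into the definitions of $\mathcal{E}_\ell$ (support in a finite union of cones $C_\mu$, finitely many monomials of each weight) together with the fact that $\varpi$ is bounded above along $\mathrm{supp}$. This is the same kind of completeness bookkeeping as in \cite{FH2} and in the proof that $\chi_q$ is surjective on $K_0(\mathcal{O}^{\,\mathrm{sh}})$ (cf.\ \cite[Theorem 4.19]{W}); the new point is only to track that everything stays inside the $\mathbb{Z}$-restricted variables $\Psi_{i,q^r}$, $(i,r)\in V$, which follows from stability of $V$ under the elementary spectral shifts. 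Everything else is a routine transport of structure along the already-established isomorphism $\chi_q$.
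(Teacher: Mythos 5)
Your overall strategy is sound and the end conclusion is the same, but you take a genuinely different route for the key containment step, and there is a step you assert rather than justify. The paper's proof reduces everything to the case of (negative) prefundamental representations: it first shows that $\chi_q(L_{i,q^r}^-)\in\mathcal{E}_{\ell,\mathbb{Z}}$ by writing this $q$-character as a limit of $q$-characters of finite-dimensional Kirillov--Reshetikhin modules and invoking the Frenkel--Mukhin algorithm (or the main result of \cite{HL1}) to control the spectral parameters; then it uses Proposition~\ref{fact} to deduce the containment for an arbitrary generator $[L(\Psib)]$. You instead assert directly that for any $\Psib\in\mathbb{Z}[\Psib_{i,q^r}^{\pm 1}]_{(i,r)\in V}$ the $q$-character $\chi_q(L(\Psib))$ lies in $\Psib\cdot\mathbb{Z}[[A_{j,b}^{-1}]]$ with all the $A_{j,b}^{-1}$ that occur being the ``compatible'' ones. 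This is exactly the structural fact the paper avoids citing in general by going through the prefundamental case, and it is nontrivial: knowing that $\ell$-weights of $L(\Psib)$ are of the form $\Psib\cdot M$ with $M$ a monomial in $A_{j,b}^{-1}$ does \emph{not} automatically tell you that $b$ stays in $q^{\mathbb{Z}}$ of the correct parity. You need an argument (in effect, a Frenkel--Mukhin-type control of the spectral parameter support for simple modules over shifted quantum affine algebras), and you should either supply one or reduce to the prefundamental case as the paper does. Relatedly, the phrasing that ``$V$ is stable under the shifts $r\mapsto r\pm 1$'' is imprecise: $(i,r)\in V$ does not imply $(i,r\pm 1)\in V$; what is true (and what you actually need) is the bipartite stability $(i,r)\in V\Rightarrow(j,r\pm 1)\in V$ for $j$ adjacent to $i$ and $(i,r\pm 2)\in V$, which is precisely what makes $A_{i,q^{s}}$ of the correct parity a Laurent monomial in the $\Psi_{j,q^t}$ with $(j,t)\in V$.

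Two smaller points of comparison. First, the paper dispatches the subring assertion in one line (``the first point follows from the second point''), whereas you prove it directly from Proposition~\ref{fact}; both are fine, and your route has the advantage of being self-contained once the $\ell$-weight control is in place, but you are doing essentially the same work twice, since the subring closure and the containment $\chi_q(K_0(\mathcal{O}_{\mathbb{Z}}))\subseteq\mathcal{E}_{\ell,\mathbb{Z}}$ both reduce to the same structural claim on $\ell$-weights. Second, for surjectivity the paper's argument is terse: it simply notes that $\mathcal{E}_{\ell,\mathbb{Z}}$ is topologically generated by $[\Psib_{i,q^r}]=\chi_q(L(\Psib_{i,q^r}))$ and $\chi_q(L(\Psib_{i,q^r}^{-1}))$. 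Your unitriangularity / successive-approximation argument is a more explicit version of the same thing and is correct in spirit; the convergence concern you flag is exactly the kind of bookkeeping the completeness of $\mathcal{E}_\ell$ and the finiteness conditions built into its definition are designed to handle, so that part is not a gap. The genuine gap is the unjustified spectral-parameter control on $\chi_q(L(\Psib))$ for general $\Psib$ in the restricted monoid.
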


\begin{proof} 
The first point follows from the second point.

For the second point, let us prove that for $(i,r)\in V$, the
$q$-character of the negative prefundamental representation $L_{i,q^{r}}^-$ has a $q$-character
which belongs to $\mathcal{E}_{\ell,\mathbb{Z}}$.
By \cite{HJ, H},
this $q$-character can be obtained as a limit of
$q$-characters of finite-dimensional Kirillov-Reshetikhin modules. The $q$-character of a KR-module 
can be obtained by an algorithm of Frenkel-Mukhin, and this implies the result (this also follows from the main result of \cite{HL1}).

As in addition $[\Psib_{i,q^r}] = \chi_q(L(\Psib_{i,q^r}))$ for any $(i,r)\in V$, we obtain from Proposition \ref{fact}
$\chi_q(L(\Psib))\in \mathcal{E}_{\ell,\mathbb{Z}}$ for any simple class $[L(\Psib)]$ among the generators of $K_0(\mathcal{O}_{\mathbb{Z}})$.

Now $\mathcal{E}_{\ell,\mathbb{Z}}$ is generated, as a $[P]$-topological ring, by the $[\Psib_{i,q^r}] = \chi_q(L(\Psib_{i,q^r}))$ and by the $\chi_q(L(\Psib_{i,q^r}^{-1}))$ for $(i,r)\in V$. This concludes the proof.
\cqfd
\end{proof}

Recall the subring $K_\mathbb{Z}\subset [\uP]\otimes_{[P]} K$ introduced in \S\ref{subsection-KZ}.
As each generator of $K_\mathbb{Z}$ belongs to $\mathcal{E}_{\ell,\mathbb{Z}}$, we can identify $K_\mathbb{Z}$ with $ [\uP]\otimes_{[P]}\mathcal{E}_{\ell,\mathbb{Z}}$.

Combining the previous results, Theorem~\ref{main-Thm} now implies immediately the following:

\begin{Thm}\label{isomgv} 
We have an injective ring homomorphism
$$I : \mathcal{A}_{w_0}\rightarrow [\uP]\otimes_{[P]}K_0(\mathcal{O}_{\mathbb{Z}})$$
and the topological closure of $ [\uP]\otimes_{[P]} I(\mathcal{A}_{w_0})$ is the entire topological ring $ [\uP]\otimes_{[P]} K_0(\mathcal{O}_{\mathbb{Z}})$.
\end{Thm}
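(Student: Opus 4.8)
The plan is to obtain the theorem as a formal corollary of Theorem~\ref{main-Thm}, by transporting the homomorphism $F$ along the $q$-character isomorphism. First I would assemble the identifications recorded just above the statement. By Remark~\ref{rc}(ii)--(iii) the completion $K = \widetilde{\YY_e}'$ is identified with $\mathcal{E}_\ell$, and under this identification the topological subring $K_\Z \subset [\uP]\otimes_{[P]}K$ of \S\ref{subsection-KZ} is identified with $[\uP]\otimes_{[P]}\mathcal{E}_{\ell,\mathbb{Z}}$, as noted in the paragraph preceding the statement. Moreover the Proposition immediately preceding the statement provides an isomorphism of topological rings $\chi_q \colon K_0(\mathcal{O}_{\mathbb{Z}}) \to \mathcal{E}_{\ell,\mathbb{Z}}$.

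Tensoring $\chi_q$ by $[\uP]$ over $[P]$ yields an isomorphism of topological rings
\[
 \mathrm{id}_{[\uP]}\otimes\chi_q \colon [\uP]\otimes_{[P]}K_0(\mathcal{O}_{\mathbb{Z}}) \longrightarrow [\uP]\otimes_{[P]}\mathcal{E}_{\ell,\mathbb{Z}} = K_\Z .
\]
I would then define $I := (\mathrm{id}_{[\uP]}\otimes\chi_q)^{-1}\circ F \colon \AA_{w_0} \to [\uP]\otimes_{[P]}K_0(\mathcal{O}_{\mathbb{Z}})$, where $F$ is the ring homomorphism of Theorem~\ref{main-Thm}, restricted to $\AA_{w_0}\subset L$ and extended $[\uP]$-linearly. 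Injectivity of $I$ follows at once: $F$ is injective by Theorem~\ref{main-Thm} (resting on the algebraic independence of Proposition~\ref{prop-alg-ind}) and $\mathrm{id}\otimes\chi_q$ is bijective. For the density statement, Theorem~\ref{main-Thm} says the topological closure of $[\uP]\otimes_{[P]}F(\AA_{w_0})$ is the whole of $K_\Z$; applying the homeomorphism $(\mathrm{id}\otimes\chi_q)^{-1}$ carries this over to the assertion that the closure of $[\uP]\otimes_{[P]}I(\AA_{w_0})$ is the whole of $[\uP]\otimes_{[P]}K_0(\mathcal{O}_{\mathbb{Z}})$.

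Almost all of the substance has therefore already been done: the hard parts are Theorem~\ref{main-Thm} (established via Propositions~\ref{prop-alg-ind}, \ref{prop-polynomial} and~\ref{prop-invertible}) and, on the representation-theoretic side, the identification $\chi_q(K_0(\mathcal{O}_{\mathbb{Z}})) = \mathcal{E}_{\ell,\mathbb{Z}}$, which in turn hinges on showing that the $q$-characters of the negative prefundamental modules $L_{i,q^r}^-$ lie in $\mathcal{E}_{\ell,\mathbb{Z}}$ --- proved via the Frenkel-Mukhin algorithm and the realization of $L_{i,q^r}^-$ as a limit of Kirillov-Reshetikhin modules. Granting these, the remaining work for the theorem is purely formal bookkeeping about the chain of identifications and their compatibility with the topologies, and I expect no genuine obstacle there; the only point to verify with care is that $\mathrm{id}\otimes\chi_q$ is a homeomorphism for the topological ring structures in play, which holds because $\chi_q$ matches the defining topologies of $K_0(\mathcal{O}_{\mathbb{Z}})$ and $\mathcal{E}_{\ell,\mathbb{Z}}$.
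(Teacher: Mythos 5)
Your proposal is correct and takes exactly the same route as the paper, which dispatches the theorem with the single remark ``Combining the previous results, Theorem~\ref{main-Thm} now implies immediately the following.'' You have merely made explicit the identifications ($K\cong\mathcal{E}_\ell$, $K_\Z\cong[\uP]\otimes_{[P]}\mathcal{E}_{\ell,\Z}$, and the $q$-character isomorphism $\chi_q$) that the paper leaves implicit, and the definition $I := (\mathrm{id}\otimes\chi_q)^{-1}\circ F$ together with the transport of injectivity and density along the homeomorphism is precisely what the paper intends.
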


We say that a simple class $[L(\Psib)]$ in $K_0(\mathcal{O}^\mathrm{sh})$ is real if $[L(\Psib)]^2 = [L(\Psib^2)]$ is a simple class.

\begin{Conj}\label{main-conj} 
The image of a cluster monomial of $\mathcal{A}_{w_0}$ is a real simple class in $ [\uP]\otimes_{[P]}K_0(\mathcal{O}_{\mathbb{Z}})$.
\end{Conj}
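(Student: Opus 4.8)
The plan is to prove Conjecture~\ref{main-conj} by exhibiting the category $\O_\Z$ as a \emph{monoidal categorification} of the cluster algebra $\AA_{w_0}$, following the strategy developed in the finite-dimensional setting in \cite{HL2, KKOP, Q}. By Theorem~\ref{isomgv} it suffices to show that, under the ring embedding $I\colon\AA_{w_0}\to[\uP]\otimes_{[P]}K_0(\O_\Z)$, the image of every cluster monomial is the class of a \emph{real} simple object of $\O_\Z$ (after the $[\uP]$-twist). The argument would proceed in three parts: (a) show that the cluster variables of a Coxeter-type initial seed are real simple classes; (b) show that the corresponding initial exchange relations are categorified by short exact sequences in $\O^{\mathrm{sh}}$; and (c) prove a propagation statement ensuring that ``real simple'' is preserved under an arbitrary sequence of mutations, so that one reaches all cluster monomials.

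For (a): by \cite{FH3} together with \cite{H}, each renormalized power series $\bQ_{w(\varpi_i),\,q^r}$ is, up to the $[\uP]$-twist, the $q$-character of a simple object $L(\Psib)\in\O_\Z$ whose highest $\ell$-weight $\Psib=\Psi_{w(\varpi_i),\,q^r}$ is the Laurent monomial produced by the braid group action of Lemma~\ref{QQTp}; for $w=e$ (\resp $w=w_0$) these are, again up to twist, positive (\resp negative) prefundamental representations \cite{HJ}, and for general $w$ they are obtained from those by the categorical $W$-action. Reality, namely $[L(\Psib)]^2=[L(\Psib^2)]$, would then be verified using Proposition~\ref{fact}: it amounts to checking that the square of $\bQ_{w(\varpi_i),\,q^r}$ has no lower-order correction beyond the value of the same family at the doubled $\ell$-weight, which one can read off from the explicit, essentially thin, shape of these $q$-characters, as obtained by the Frenkel--Mukhin-type algorithm used to identify $K_0(\O_\Z)$ with $\mathcal{E}_{\ell,\mathbb{Z}}$.

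For (b): by Theorem~\ref{main-Thm}, the exchange relations attached to the red and green vertices of the initial seed are the renormalized $QQ$-system relations of Proposition~\ref{Prop-renorm-QQ}, and these are exactly the relations known to arise from short exact sequences of $U_q^\mu(\hg)$-modules (the categorical meaning of the $QQ$-system, generalizing the Baxter-type relations of \cite{FH, HJ}). Part (c) then follows the usual template of monoidal categorification: starting from a seed whose cluster variables are pairwise commuting real simple classes $[L_v]$ — so that every cluster monomial of that seed is the class of a fusion product, hence simple by Proposition~\ref{fact} — a mutation at a vertex $v$ with exchange relation $x_v x_v' = M^+ + M^-$ is realized by a short exact sequence $0\to\widetilde M^+\to L_v\ast L_v'\to\widetilde M^-\to 0$ of objects of $\O_\Z$, where $L_v\ast L_v'$ is the fusion product and $\widetilde M^\pm$ are the simple fusion products corresponding to $M^\pm$; the standard reality-propagation lemma then shows that the new cluster variable $x_v'$ is again a real simple class. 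The short exact sequences needed for the non-initial mutations are supplied inductively: by Remark~\ref{Rem54} and Proposition~\ref{prop-5.5} the $QQ^\ast$-type relations are also categorified, and by Proposition~\ref{prop-polynomial} a cofinal family of seeds has all its cluster variables among the $\bQ_{w(\varpi_i),\,q^r}$ already handled in (a).

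The main obstacle will be step (c), specifically making the reality-propagation argument rigorous in the present non-quantum setting: in the finite-dimensional case the analogous step rests on a quantum Grothendieck ring carrying a bar-involution compatible with a quantum cluster algebra structure \cite{KKOP, Q}, whereas $K_0(\O_\Z)$ here is a commutative completed ring with no such rigidity \emph{a priori}. I see two plausible routes around this. The first is to construct a $t$-deformation of $K_0(\O^{\mathrm{sh}})$ — a quantum Grothendieck ring extending Hernandez--Leclerc's $(q,t)$-characters to category $\O$ — together with a compatible quantum cluster algebra structure on $K_\Z$, and then to invoke a quantum monoidal categorification theorem in the style of \cite{KKOP}. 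The second is to bootstrap from the cases already settled, namely the subalgebra $\AA_e\subset\AA_{w_0}$ of finite-dimensional modules, where the conjecture holds by \cite{HL2, KKOP, H}, and the $\mathfrak{sl}_2$ case, by realizing the relevant simple objects of $\O_\Z$ and their fusion products as limits of finite-dimensional simples, following \cite{HJ, FH2}, and transporting simplicity and reality through these limits; here the delicate point is precisely the control of simplicity and of fusion factorizations under the limiting procedure, which I expect to be the crux of the whole argument.
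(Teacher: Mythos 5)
The statement you were asked to prove is labelled a conjecture in the paper, not a theorem: the authors offer supporting evidence (the $\mathfrak{sl}_2$ case via Theorem~\ref{main-Thm-sl2}, the subalgebra $\AA_e$ via Theorem~\ref{fdrep}, and the $Q$-variables for $w\in\{e,s_i,w_0\}$ via Propositions~\ref{prop-s_i} and~\ref{prop-w_0}) but give no general proof. Your proposal is a reasonable programme, and you correctly identify the central obstacle in your step (c): in the finite-dimensional setting the reality-propagation step rests on a quantum Grothendieck ring with a bar involution compatible with a quantum cluster structure, and no analogue of this rigidity is currently available for $K_0(\O_\Z)$. That difficulty is precisely why the statement remains a conjecture.

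However, your steps (a) and (b) already contain gaps of the same kind. In (a) you assert that the initial cluster variables $\bQ_{w(\varpi_i),q^r}$ of a Coxeter seed are known to be real simple classes, with simplicity ``obtained'' from the braid group action of \cite{FH3} and reality ``read off from the explicit, essentially thin, shape of these $q$-characters.'' But the simplicity statement is exactly Conjecture~\ref{cjq}, which the paper proves only for $w\in\{e,s_i,w_0\}$ and leaves open for the intermediate $w$ that necessarily appear in a Coxeter seed (see Figure~\ref{Fig9}). Moreover the relevant $q$-characters are infinite power series, not thin, and the Frenkel--Mukhin-type algorithm you invoke is used in the paper only for Kirillov--Reshetikhin modules (in the proof that $K_0(\O_\Z)\cong\mathcal{E}_{\ell,\Z}$), not to compute the $Q$-variables. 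Similarly, step (b) treats the categorification of the $QQ$-system relations by short exact sequences in $\O^{\mathrm{sh}}$ as established, whereas the paper explicitly presents these as ``conjectural extended $QQ$-systems between simple classes'' (the paragraph following Conjecture~\ref{cjq}). So the proposal is an outline rather than a proof, and the genuine gaps are not confined to (c): they already occur in (a) and (b), where known results are silently replaced by conjectures.
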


In the rest of this section we will provide some evidence supporting Conjecture~\ref{main-conj}.

\subsection{Finite-dimensional representations}

Let $\mathcal{C}_{\mathbb{Z}}$ denote the full subcategory of
$\mathcal{O}_{\mathbb{Z}}$ whose objects are the
finite-dimensional modules.

\begin{Thm}[\cite{H}] The simple representations in $\mathcal{C}_{\mathbb{Z}}$ are the
simple modules $L(\Psib)$ in $\mathcal{O}_{\mathbb{Z}}$ such that
$\Psib$ is a monomial in the $\ell$-weights:
\[
\Psib_{\lambda},\quad \Psib_{i,q^r},\quad Y_{i,q^{r+1}} = [\omega_i]\frac{\Psib_{i,q^r}}{\Psib_{i,q^{r+2}}},
\qquad (\lambda\in P,\ (i,r)\in V).
\]
\end{Thm}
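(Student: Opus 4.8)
The statement identifies the simple objects of the finite-dimensional subcategory $\mathcal{C}_{\mathbb{Z}}\subset\mathcal{O}_{\mathbb{Z}}$ as those $L(\Psib)$ whose highest $\ell$-weight $\Psib$ is a monomial in the $\ell$-weights $\Psib_\lambda$, $\Psib_{i,q^r}$ and $Y_{i,q^{r+1}}$ with $\lambda\in P$ and $(i,r)\in V$. The plan is to deduce this from the already-quoted classification of finite-dimensional simple modules over shifted quantum affine algebras in \cite{H}, together with the integrality constraint defining $\mathcal{O}_{\mathbb{Z}}$. First I would recall the general criterion from \cite{H}: a simple module $L(\Psib)$ in $\mathcal{O}^{\mathrm{sh}}$ is finite-dimensional if and only if $\Psib$ is, up to the invertible constant $\ell$-weights $\Psib_\lambda$, a monomial in the ``$Y$-type'' $\ell$-weights $Y_{i,a}=[\varpi_i]\Psib_{i,aq^{-1}}\Psib_{i,aq}^{-1}$ with \emph{arbitrary} $a\in\C^*$ (this is the shifted analogue of the Chari–Pressley Drinfeld-polynomial classification, the $Y_{i,a}$ being the shifted counterpart of the monomials parametrising Kirillov–Reshetikhin and more general finite-dimensional modules). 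Here the role of the prefundamental factors $\Psib_{i,a}^{\pm1}$ is to allow the shift parameter $\mu$ to be any weight, not just a dominant one; a genuinely finite-dimensional module forces $\mu$ dominant, by Theorem~\ref{thmH}.

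The second step is to intersect this description with the condition defining $\mathcal{O}_{\mathbb{Z}}$, namely that $\Psib$ be a Laurent monomial in the $[\lambda]$ ($\lambda\in P$) and the $\Psib_{i,q^r}$ with $(i,r)\in V$. On the one hand, each of the three listed families clearly satisfies this: $\Psib_\lambda=[\lambda]$, $\Psib_{i,q^r}$ is a generator by definition (using $(i,r)\in V$), and $Y_{i,q^{r+1}}=[\varpi_i]\Psib_{i,q^r}\Psib_{i,q^{r+2}}^{-1}$ is a monomial in $[\varpi_i]$ and the two generators $\Psib_{i,q^r},\Psib_{i,q^{r+2}}$, both of whose second coordinates lie in the same column of $V$ (recall $V\subset I\times\Z$ is a single connected component of $I\times\Z$ for the arrow relation, so $r\in l_c(i)+2\Z$ forces $r+2$ in the same coset). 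Hence the right-hand side is contained in the set of finite-dimensional simple $\ell$-weights lying in $\mathcal{O}_{\mathbb{Z}}$. Conversely, given a finite-dimensional simple $L(\Psib)$ with $\Psib\in\mathbb{Z}[\Psib_{i,q^r}^{\pm1},[\lambda]]_{(i,r)\in V}$, write $\Psib$ (using the \cite{H} criterion) as $[\mu]\prod Y_{i,a_i}^{\pm}$ for some dominant $\mu$ and some $a_i\in\C^*$; then I would argue that the integrality constraint on the $\Psib$-exponents, combined with the explicit formula $Y_{i,a}=[\varpi_i]\Psib_{i,aq^{-1}}\Psib_{i,aq}^{-1}$, forces every $a_i$ to lie in $q^{\Z}$ with the correct parity, i.e. $a_i=q^{r+1}$ with $(i,r)\in V$. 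This is a matching-of-exponents argument: the free abelian group $\mfr$ has basis $\{\Psib_{i,a}\}\cup\{\Psib_\lambda\}$, so expressing $\Psib$ in that basis and demanding all $\Psib_{i,a}$-exponents vanish unless $a\in q^{l_c(i)+1+2\Z}$ pins down the allowed $Y$-factors.

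The third step is a small compatibility check: the listed $\ell$-weights are not independent (there is the relation $Y_{i,q^{r+1}}=[\varpi_i]\Psib_{i,q^r}\Psib_{i,q^{r+2}}^{-1}$, already displayed in the statement), so I should make explicit that ``monomial in the listed $\ell$-weights'' means an element of the multiplicative submonoid they generate inside $\mfr$, and verify this submonoid is exactly $\mfr\cap\mathbb{Z}[\Psib_{i,q^r}^{\pm1},[\lambda]]$ intersected with the dominance-of-shift condition — i.e. that nothing is lost or gained by the redundancy. This is routine once the basis picture of the previous paragraph is in place.

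\textbf{Main obstacle.} The delicate point is not any of the bookkeeping above but the precise form of the finite-dimensionality criterion for \emph{shifted} quantum affine algebras that I am invoking from \cite{H}: one must be careful that ``finite-dimensional $\Leftrightarrow$ highest $\ell$-weight is ($[\mu]$ times) a monomial in the $Y_{i,a}$ with $\mu$ dominant'' is stated there in exactly this generality (including the interplay with the prefundamental directions and the freedom in $a\in\C^*$), rather than only in the $\mu=0$ case which reduces to ordinary $U_q(\hat{\mathfrak g})$. Granting that input, the rest is the elementary exponent-matching argument sketched in the second paragraph, and the proof is short. I would therefore spend most of the write-up carefully citing and, if necessary, re-deriving the shifted classification statement, and then dispatch the intersection with the $\mathbb{Z}$-condition in a couple of lines.
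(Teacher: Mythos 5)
The paper itself gives no proof of this theorem; it is cited verbatim from \cite{H}. So what must be checked is whether your sketch works, and it does not: the gap is at the input you yourself flag as the ``main obstacle,'' but it is a genuine error rather than a citation subtlety. The finite-dimensionality criterion you attribute to \cite{H} — that a simple $L(\Psib)$ in $\mathcal{O}^{\mathrm{sh}}$ is finite-dimensional if and only if $\Psib$ is a constant $\ell$-weight $\Psib_\lambda$ times a monomial in the $Y_{i,a}$ — is false, and the counterexample is in the paper. The positive prefundamental module $L^+_{i,q^r}=L(\Psib_{i,q^r})$ is one-dimensional (see the discussion after Theorem~\ref{thmH}), hence lies in $\CC_\Z$ for every $(i,r)\in V$, as the paper notes explicitly immediately after this very theorem; yet its highest $\ell$-weight $\Psib_{i,q^r}$ is \emph{not} a monomial in $\Psib_\lambda$ and the $Y_{j,b}$: in the free abelian group $\mfr$, any finite product $[\lambda]\prod_b Y_{j,b}^{n_{j,b}}$ has, for each fixed $j$, the sum of its $\Psib_{j,\cdot}$-exponents equal to zero (each $Y_{j,b}=[\varpi_j]\Psib_{j,bq^{-1}}\Psib_{j,bq}^{-1}$ contributes a $+1$ and a $-1$), while $\Psib_{i,q^r}$ has that sum equal to $1$ for $j=i$. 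Your justification — that the prefundamental factors only serve to allow non-dominant $\mu$, which finite-dimensionality then rules out — mishandles the case of $\mu$ dominant and nonzero: each factor $\Psib_{i,a}$ with positive exponent contributes $+\varpi_i$ to $\mu$, so a strictly dominant shift \emph{requires} such factors. Consequently even the ``easy'' inclusion in your second paragraph already fails at the generator $\Psib_{i,q^r}$.

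Once the criterion is stated correctly — the highest $\ell$-weight of a finite-dimensional simple in $\mathcal{O}^{\mathrm{sh}}$ is $[\lambda]$ times a monomial with \emph{nonnegative} powers of both $\Psib_{i,a}$ and $Y_{i,a}$, $a\in\C^*$ — your exponent-matching argument against the lattice $\Z[\Psib_{i,q^r}^{\pm1},[\lambda]]_{(i,r)\in V}$ does correctly pin the spectral parameters to $q^{l_c(i)+2\Z}$ and $q^{l_c(i)+1+2\Z}$ respectively, so that part of the sketch is sound. But by that point the cited classification essentially \emph{is} the theorem, specialized from $a\in\C^*$ to $a\in q^\Z$ with the correct parity, so the ``reduction'' carries little content; the real work is the careful statement you postpone. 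To turn this into a proof you would need to locate and quote the exact result in \cite{H} that includes the positive prefundamental factors, or re-derive it, before the exponent-matching step can do its job.
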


For example, the positive prefundamental representations $L_{i,q^r}^+ = L(\Psi_{i,q^r})\ ((i,r)\in V)$ are contained in 
$\mathcal{C}_{\mathbb{Z}}$, but the negative prefundamental representations $L_{i,q^r}^- = L(\Psib_{i,q^r}^{-1})$ are not. The category $\mathcal{C}_{\mathbb{Z}}$ also contains the fundamental representations $V_{i,q^{r+1}} = L(Y_{i,q^{r+1}})$ of the ordinary quantum affine algebra $U_q(\hg)$.

Consider the corresponding subring $K_0(\mathcal{C}_{\mathbb{Z}})$ of $K_0(\mathcal{O}_{\mathbb{Z}})$. Note that
the simple classes in $K_0(\mathcal{C}_{\mathbb{Z}})$
form a basis of $K_0(\mathcal{C}_{\mathbb{Z}})$
(not only a topological basis) and that each product
$[L(\Psib)][L(\Psib')]$ in $K_0(\mathcal{C}_{\mathbb{Z}})$
is a finite sum of simple classes.

Recall the cluster algebra $\AA_\G = \mathcal{A}_e$ associated with the quiver $\G=\Gamma_e$. 
Building on results of \cite{HJ} and \cite{KKOP}, the following theorem was established in \cite{H}.

\begin{Thm}\label{fdrep} There is an isomorphism of algebras 
\[
[\uP]\otimes_{[P]} \mathcal{A}_e \to  [\uP]\otimes_{[P]} K_0(\mathcal{C}_{\mathbb{Z}}) 
\]
mapping the initial cluster variable of $\mathcal{A}_e$ attached to vertex $(i,r)$ of $\G_e$ to the renormalized positive prefundamental representation $ [-\Omega(\Psi_{i,q^r})]L(\Psi_{i,q^r})$.
Moreover, all cluster monomials in $\mathcal{A}_e$ are mapped to simple real classes in 
$[\uP]\otimes_{[P]}K_0(\mathcal{C}_{\mathbb{Z}})$.
\end{Thm}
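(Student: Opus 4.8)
The plan is to obtain Theorem~\ref{fdrep} by restricting the isomorphism of Theorem~\ref{isomgv} (equivalently the homomorphism $F$ of Theorem~\ref{main-Thm}) to the subalgebra $\AA_e \subseteq \AA_{w_0}$ and identifying its image as $K_0(\mathcal{C}_\Z)$ without passing to any topological completion. The deep representation-theoretic input — that the Grothendieck ring of finite-dimensional modules is a monoidal categorification whose cluster monomials are classes of real simple modules — is the theorem of \cite{KKOP} (in the simply-laced case), extended from the category $\mathcal{C}^0_\Z$ of ordinary finite-dimensional $U_q(\widehat{\g})$-modules to $\mathcal{C}_\Z$ in \cite{H} using \cite{HJ}; I would quote it rather than reprove it.

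First I would pin down the map on the distinguished generators. By Remark~\ref{Remark3-7}, pushing the red--green part of a Coxeter seed $\Gamma_c$ downwards to infinity produces the seed with quiver $\Gamma_e = \Gamma_C$, the defining quiver of $\AA_e = \AA_\G$ of \cite{HL2}. Under the stabilized-$g$-vector formalism of Section~\ref{sect-stable-g} (Theorem~\ref{thm:g-vect1}, Remark~\ref{rem4-13}), the cluster variable $z_{(i,r)}$ of this seed has stabilized $g$-vector $\be_{(i,r)}$, which is the $t=0$ case of Equation~(\ref{form-theta-g-vect}); hence by Theorem~\ref{main-Thm}, $F(z_{(i,r)}) = \bQ_{\varpi_i,\,q^r} = [-\Omega(\Psi_{i,q^r})]\,Q_{\varpi_i,q^r} = [-\Omega(\Psi_{i,q^r})]\,\Psi_{i,q^r}$. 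Under the identification $K \cong \mathcal{E}_\ell$ of Remark~\ref{rc}, the monomial $\Psi_{i,q^r}$ is the class $[L(\Psi_{i,q^r})]=[L_{i,q^r}^+]$ of the one-dimensional positive prefundamental module, so this is exactly the image asserted in the statement. It also makes transparent why the factor $[-\Omega(\Psi_{i,q^r})]$ and the base change to $[\uP]$ (needed since $\Omega(\Psi_{i,q^r}) = \frac{r}{2}\varpi_i$) appear: this is the same bookkeeping as in Proposition~\ref{Prop-renorm-QQ}.

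Next I would verify that $[\uP]\otimes_{[P]} F(\AA_e)$ equals $[\uP]\otimes_{[P]}K_0(\mathcal{C}_\Z)$ exactly. For the inclusion into $K_0(\mathcal{C}_\Z)$: by \cite{KKOP} every cluster variable of $\AA_e = \AA_\G$ is the class of a simple \emph{finite-dimensional} module, so its image lies in the genuine (non-completed) subring $K_0(\mathcal{C}_\Z)$ of $K_0(\O_\Z)$, using the observation preceding the statement that products of simple classes in $\mathcal{C}_\Z$ remain finite sums of simple classes. For surjectivity after base change: by the classification of simple objects of $\mathcal{C}_\Z$ recalled just above, together with Proposition~\ref{fact} and the classical fact (Frenkel--Reshetikhin) that $K_0(\mathcal{C}^0_\Z)$ is a polynomial ring in the classes of fundamental modules, the ring $K_0(\mathcal{C}_\Z)$ is generated by the $[L_{i,q^r}^+]$, the classes $[V_{i,q^{r+1}}]$ of fundamental modules, and the group-likes $[\lambda]$; the first are images of initial cluster variables by the previous paragraph, the fundamental modules occur among the cluster variables obtained from the $\Gamma$-seed by single mutations (the corresponding exchange relations being the Baxter / $QQ^*$-type identities of \cite{HL2,HJ} relating prefundamental and fundamental modules), and the $[\lambda]$ become scalars over $[\uP]$. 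Injectivity of $F|_{\AA_e}$ follows as in Proposition~\ref{prop-alg-ind} applied to the $\Gamma_e$-seed, since the stabilized $g$-vectors $\be_{(i,r)}$ form a $\Z$-basis of $\Z^V$. This yields the stated isomorphism.

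Finally, for the ``moreover'': the initial cluster variables map to renormalized one-dimensional modules, which are real simple since $[L(\Psib_{i,q^r})]^2 = [L(\Psib_{i,q^r}^2)]$, and multiplying a simple class $[L(\Psib)]$ by a group-like $[\lambda]$ produces the simple class $[L(\Psib\,\Psib_\lambda)]$; for a general cluster monomial one invokes the monoidal categorification of $\mathcal{C}_\Z$ (the classes lying in a common cluster are real simple and pairwise commute, hence their products are again real simple), the renormalizing group-likes once more not affecting simplicity or realness. The main obstacle is exactly this last point together with the inclusion $F(\AA_e)\subseteq K_0(\mathcal{C}_\Z)$: both rely on the full force of the monoidal categorification theorem for finite-dimensional modules (cluster variables $=$ classes of real prime simples, clusters $=$ compatible families), which is imported from \cite{KKOP} and extended in \cite{H, HJ}. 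The remaining work internal to this paper — matching the $\Gamma_e$-seed with the limit seed of $\AA_{w_0}$ and tracking the $[-\Omega(\cdot)]$ renormalization and the $[\uP]$-base change — is routine given Section~\ref{sect-cluster-FPS}.
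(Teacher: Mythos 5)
The paper does not actually prove Theorem~\ref{fdrep}: it is explicitly presented as a result ``established in \cite{H}'' (building on \cite{HJ} and \cite{KKOP}) and is simply quoted. So there is no proof in the paper to compare against, and what you have written is more of a re-derivation that threads the statement through the paper's later machinery.

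As such, your proposal is not wrong, but it is important to be clear about what it does and does not accomplish. You propose to obtain the theorem by restricting $F:\AA_{w_0}\to K_\Z$ to the subalgebra $\AA_e$, identifying the initial cluster variables, and appealing to \cite{KKOP}/\cite{H} for the inclusion $F(\AA_e)\subseteq K_0(\mathcal{C}_\Z)$ and for cluster monomials being real simple. But you correctly observe in your last paragraph that the entire weight of that inclusion and the ``moreover'' rests on the monoidal categorification theorem of \cite{KKOP}, transported to $\mathcal{C}_\Z$ via \cite{H}; that is essentially the content of the cited theorem, so the non-trivial parts of your argument are the same citation the paper makes, not a new proof. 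Your chain is also anachronistic in its logical ordering: \cite{H} established Theorem~\ref{fdrep} before the present paper (and without Theorem~\ref{main-Thm}), and the commutative diagram in the text following Theorem~\ref{fdrep} is stated by the authors as a \emph{compatibility} between two separately established maps, not as a mechanism for deriving one from the other. You are reading that diagram backwards: the paper first has Theorem~\ref{fdrep} as given, then notes the embedding $\AA_e\hookrightarrow\AA_{w_0}$ from Remark~\ref{Remark3-7}, and finally observes that the two isomorphisms agree; none of this is intended to yield a proof of Theorem~\ref{fdrep}.

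One localized gap in your surjectivity step: you claim $K_0(\mathcal{C}_\Z)$ is generated (after base change) by positive prefundamental classes, fundamental module classes, and the group-likes $[\lambda]$, and that the fundamentals appear after single mutations of the $\Gamma_e$-seed. The latter point is an assertion imported from \cite{HL1}/\cite{HL2}, and the generation claim itself uses the classification of simples in $\mathcal{C}_\Z$ plus Proposition~\ref{fact} in a way that would need to be spelled out (weight induction on highest $\ell$-weights). These details are not fatal, but they are again exactly the kind of thing settled in \cite{H} rather than by the cluster algebra structure of the present paper, so the ``without topological completion'' assertion is carrying more of the cited input than it may appear.

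In summary: your account is a reasonable explanation of why the quoted result is consistent with Theorem~\ref{main-Thm} and the commutative diagram, but it should be presented as an illustration of that compatibility rather than as a proof of Theorem~\ref{fdrep}. The paper itself takes the theorem as a black box from \cite{H}, and that is the honest way to treat it here too.
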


We have a natural embedding
$\mathcal{A}_e\rightarrow \mathcal{A}_{w_0}$ fitting into the commutative diagram:
\[\xymatrix{\mathcal{A}_{e} \ar[r] \ar[d]& [\uP]\otimes_{[P]}K_0(\mathcal{C}_{\mathbb{Z}}) \ar[d]
\\\mathcal{A}_{w_0} \ar[r]&  [\uP]\otimes_{[P]}K_0(\mathcal{O}_{\mathbb{Z}})}
\]
This embedding comes from Remark~\ref{Remark3-7}, which shows that the initial seed of $\mathcal{A}_e$ with quiver $\Gamma = \Gamma_e$ can be regarded as a limit of a sequence of mutations applied to an initial seed of $\mathcal{A}_{w_0}$. In particular, since by construction any cluster variable of $\mathcal{A}_e$ is obtained from its initial seed via a finite sequence of mutations, it can also be obtained by performing the same sequence of mutations starting from one of the initial seeds of $\mathcal{A}_{w_0}$. Therefore every cluster variable (\resp cluster monomial) of $\AA_e$ is a cluster variable (\resp cluster monomial) of $\mathcal{A}_{w_0}$. Hence, Theorem \ref{fdrep} implies Conjecture \ref{main-conj} for all cluster monomials of $\AA_{w_0}$
contained in $\mathcal{A}_e$.
However, there are infinitely many isoclasses in $\mathcal{O}_{\Z}$ which do not belong to $\mathcal{C}_{\mathbb{Z}}$, 
and infinitely many cluster monomials of $\mathcal{A}_{w_0}$ which do not belong to $\mathcal{A}_e$.

\subsection{$Q$-variables}

Recall the $Q$-variables introduced in Section 7.1. For $i\in I$, $w\in W$, and $a\in \C^*$, we have the $Q$-variable 
\[
Q_{w(\varpi_i),\,a} \in K,
\]
whose leading term $\Psi_{w(\varpi_i),\,a}$ is a Laurent monomial in the variables $\Psi_{j,b}$, explicitly computed in \cite{FH3} in terms of Chari's braid group action.

By Remark \ref{rc}(iii), $K$ identifies with $\mathcal{E}_\ell$. Hence it makes sense to ask whether $Q$-variables are $q$-characters of simple representations. The following conjecture was formulated in \cite{FH3}.

\begin{Conj}\label{cjq} We have the $q$-character formula :
$$\chi_q(L(\Psi_{w(\varpi_i),a})) = Q_{w(\varpi_i),a}.$$
\end{Conj}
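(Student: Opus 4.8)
\medskip
The plan is to identify the simple class $[L(\Psi_{w(\varpi_i),a})]$ with $Q_{w(\varpi_i),a}$ inside $\mathcal{E}_\ell$, using that $\chi_q\colon K_0(\mathcal{O}^{\,\mathrm{sh}})\to\mathcal{E}_\ell$ is an injective ring homomorphism and that, by Remark~\ref{rc}(iii), $\mathcal{E}_\ell$ is identified with $K=\widetilde{\YY_e}'$. I would argue by induction on $\ell(w)$. The base case $w=e$ is immediate: $L(\Psi_{\varpi_i,a})=L_{i,a}^+$ is one-dimensional with $q$-character $[\Psi_{i,a}]$, while $Q_{\varpi_i,a}=\Psi_{i,a}$. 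For the inductive step with $ws_i>w$, the crucial input would be a relation in $K_0(\mathcal{O}^{\,\mathrm{sh}})$ among the simple classes $[L(\Psi_{v(\varpi_j),b})]$ which is the preimage under $\chi_q$ of the $QQ$-system of Theorem~\ref{Thm-QtQ-system}; that is, a short exact sequence of $U_q^\mu(\hg)$-modules categorifying
\[
\QQ_{ws_i(\varpi_i),\,aq}\,\QQ_{w(\varpi_i),\,aq^{-1}}-[-w(\alpha_i)]\,\QQ_{ws_i(\varpi_i),\,aq^{-1}}\,\QQ_{w(\varpi_i),\,aq}=\prod_{j:\ c_{ij}=-1}\QQ_{w(\varpi_j),\,a}.
\]
Since the prefundamental-type classes are invertible in $\mathcal{E}_\ell$, such a relation expresses $[L(\Psi_{ws_i(\varpi_i),aq})]$ in terms of classes attached to Weyl group elements of length $\le\ell(w)$, and the induction hypothesis together with Theorem~\ref{Thm-QtQ-system} then forces $[L(\Psi_{ws_i(\varpi_i),aq})]=Q_{ws_i(\varpi_i),aq}$; here one uses that the highest $\ell$-weight of $L(\Psi_{w(\varpi_i),a})$ is $\Psi_{w(\varpi_i),a}$, which is exactly the highest-weight monomial of $Q_{w(\varpi_i),a}$, so the division leaves no ambiguity.

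The representation-theoretic heart is therefore the construction of these exact sequences, equivalently a categorification of the $\Pi'$-automorphisms $\tTheta_i$ (resp.\ $\Tp_i$) acting on $K_0(\mathcal{O}^{\,\mathrm{sh}})$. I see two plausible routes. The first is to build a braid-group action on $\mathcal{O}^{\,\mathrm{sh}}$, combining the Drinfeld-shift identifications of the algebras $U_q^\mu(\hg)$ with Chari/Lusztig-type symmetries, sending the positive prefundamental $L_{i,a}^+$ to $L(\Psi_{w(\varpi_i),a})$ up to a constant $\ell$-weight and inducing $\Tp_w$ on $q$-characters; granting this, Lemma~\ref{QQTp} (which states $\QQ_{w(\varpi_i),a}=\Tp_w(\Psi_{i,a})$) finishes the identification at once, bypassing the induction. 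The second route is to prove the relation one simple reflection at a time, as in the treatment of Baxter $TQ$- and $QQ$-relations for Borel subalgebras $U_q(\widehat{\mathfrak{b}})$ in \cite{FH3}: for $w=w_0$ the $\ell$-weight $\Psi_{w_0(\varpi_i),a}$ is, up to a constant $\ell$-weight, the inverse of a prefundamental $\ell$-weight, so $L(\Psi_{w_0(\varpi_i),a})$ is a twist of a negative prefundamental module whose $q$-character is a known limit of Kirillov--Reshetikhin $q$-characters, computable by the Frenkel--Mukhin algorithm (cf.\ \cite{HJ} and \cite{HL1}); one would then try to interpolate between $w=e$ and $w=w_0$ using the $QQ$-relations established for each $s_i$.

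The main obstacle is precisely this input for \emph{intermediate} $w$. For $w\in\{e,s_i\}$ and for $w=w_0$ the modules involved are prefundamental or limits of finite-dimensional modules, accessible by existing tools; but a uniform analysis of $L(\Psi_{w(\varpi_i),a})$ for arbitrary $w$ --- either a genuine braid-group action on $\mathcal{O}^{\,\mathrm{sh}}$, or a Frenkel--Mukhin-type algorithm adapted to these ``partially negative'' highest $\ell$-weights --- does not seem to be available, which is why the statement is only conjectural. As a consistency check, note that Conjecture~\ref{cjq} restricted to $a\in q^{\mathbb{Z}}$ would follow from the special case of Conjecture~\ref{main-conj} applied to the cluster variables of the Coxeter-type initial seeds of $\AA_{w_0}$: by Theorem~\ref{main-Thm}/Theorem~\ref{isomgv} these map to the renormalized variables $\bQ_{w(\varpi_i),q^r}$, so if those are real simple classes then comparing highest $\ell$-weights identifies $\bQ_{w(\varpi_i),q^r}$ with $[L(\Psi_{w(\varpi_i),q^r})]$, i.e.\ $Q_{w(\varpi_i),q^r}=\chi_q(L(\Psi_{w(\varpi_i),q^r}))$. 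Conversely, a proof of Conjecture~\ref{cjq} would yield Conjecture~\ref{main-conj} for all cluster variables of the initial seeds, and hence --- propagating through the exchange relations, which are instances of the $QQ$-system or of Proposition~\ref{prop-5.5} --- a major step toward the full Conjecture~\ref{main-conj}.
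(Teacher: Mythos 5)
The statement you were asked to ``prove'' is explicitly labelled a \emph{conjecture} in the paper (it is attributed to \cite{FH3}), and the paper itself does not contain a proof --- only partial verifications and evidence. You recognize this clearly, which is the right call: your ``proposal'' is an accurate assessment of what is known and what is open, not a complete argument, and no complete argument is on offer.

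On the parts you do treat as provable, your discussion lines up well with what the paper actually establishes. For $w=e$ your observation that $L_{i,a}^+$ is one-dimensional with $q$-character $\Psi_{i,a}=Q_{\varpi_i,a}$ is exactly the paper's Proposition~\ref{prop-s_i}. Your remark that for $w=s_i$, $j=i$ one reduces to a single-column $A_1$-type computation also matches that proposition (the paper invokes \cite[Example 5.2]{H}). For $w=w_0$ you propose identifying $L(\Psi_{w_0(\varpi_i),a})$ with a twisted negative prefundamental and then computing its $q$-character as a limit of Kirillov--Reshetikhin $q$-characters (Frenkel--Mukhin). The paper's Proposition~\ref{prop-w_0} arrives at the same identification by a somewhat different mechanism: it writes the Baxter $TQ$-relation in the ``dual'' form using negative prefundamentals, argues uniqueness of solutions by weight induction, and shows $E'_e(\Theta'_{w_0}(\Psi_{i,a}))$ is also a solution; the KR-limit / Frenkel--Mukhin route you describe is used elsewhere in the paper (to show negative prefundamental $q$-characters lie in $\mathcal{E}_{\ell,\Z}$) but is not the engine of Proposition~\ref{prop-w_0}. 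Both routes are reasonable for $w_0$; neither is claimed to extend to intermediate $w$, and you correctly name the absence of a Frenkel--Mukhin-type algorithm for ``partially negative'' highest $\ell$-weights, or of a categorified braid action on $\O^{\mathrm{sh}}$, as the genuine gap.

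Your consistency check relating Conjectures~\ref{cjq} and~\ref{main-conj} is also consonant with the paper's own discussion right after Conjecture~\ref{cjq}: assuming \ref{main-conj} for the initial Coxeter-seed cluster variables, comparing highest $\ell$-weights forces $\bQ_{w(\varpi_i),q^r}=[L(\Psi_{w(\varpi_i),q^r})]$ up to normalization, i.e.\ Conjecture~\ref{cjq} restricted to $q^\Z$. One small caution in the converse direction: Conjecture~\ref{cjq} alone says nothing about \emph{reality} of the classes, which is part of Conjecture~\ref{main-conj}, so it would not by itself yield \ref{main-conj} even for the initial cluster variables; your hedged phrasing (``a major step toward'') is the correct level of claim.
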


By our construction, for $(i,r)\in V$, the $Q$-variable $\bQ_{w(\varpi_i),q^r}$ (which is a rescaling of $Q_{w(\varpi_i),q^r}$ by a constant $\ell$-weight) is the image in $K_{\mathbb{Z}}$ of a cluster variable of $\mathcal{A}_{w_0}$. So Conjecture \ref{main-conj} for these cluster variables is compatible with Conjecture \ref{cjq}. In addition, Conjecture \ref{main-conj} states that the corresponding simple module should be real, and Conjecture \ref{cjq} gives the precise highest $\ell$-weight of the corresponding simple class. If Conjecture \ref{main-conj} is true for these cluster variables, the $QQ$-systems, which correspond to exchange relations in $\mathcal{A}_{w_0}$, are the conjectural extended $QQ$-systems between simple classes conjectured in \cite{FH3}.

\subsubsection{Simple reflections}

\begin{Prop} \label{prop-s_i}
Conjectures \ref{main-conj} and \ref{cjq} hold for the $Q$-variables associated with $w=e$ and $w=s_i\ (i\in I)$.
\end{Prop}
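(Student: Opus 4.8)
The statement splits into two parts. For $w=e$ both conjectures are immediate. Here $Q_{\varpi_i,a}=\Psi_{i,a}$, and under the identification of $K$ with $\mathcal{E}_\ell$ of Remark~\ref{rc} this is the class $[L(\Psi_{i,a})]=[L_{i,a}^+]$ of the one-dimensional positive prefundamental representation, so $\chi_q(L_{i,a}^+)=[\Psi_{i,a}]$ is identified with $Q_{\varpi_i,a}=\Psi_{i,a}$, which is Conjecture~\ref{cjq}. Being one-dimensional, $L_{i,a}^+$ is invertible in $K_0(\O^{\mathrm{sh}})$ and a fortiori real; and by Theorem~\ref{fdrep} (via the embedding $\AA_e\hookrightarrow\AA_{w_0}$) the cluster variable of $\AA_{w_0}$ sent by $F$ to $\bQ_{\varpi_i,q^r}$ corresponds to $[-\Omega(\Psi_{i,q^r})]\,[L_{i,q^r}^+]$, a real simple class, which gives Conjecture~\ref{main-conj} for the $w=e$ variables.

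For $w=s_i$, unwinding the definition of $\QQ_{s_i(\varpi_i),a}$, the value $\chi_{s_i(\varpi_i)}=(1-[-\alpha_i])^{-1}$, and the defining formula for $\tTheta_i$, one finds
\[
Q_{s_i(\varpi_i),a}=\widetilde{\Psi}_{i,aq^{-2}}\,\Si_{i,aq^{-2}}^{+}
=\widetilde{\Psi}_{i,aq^{-2}}\Bigl(1+A_{i,aq^{-2}}^{-1}+A_{i,aq^{-2}}^{-1}A_{i,aq^{-4}}^{-1}+\cdots\Bigr),
\]
whose highest $\ell$-weight is $\Psi_{s_i(\varpi_i),a}=\widetilde{\Psi}_{i,aq^{-2}}=\Psi_{i,aq^{-2}}^{-1}\prod_{j:\,c_{ij}=-1}\Psi_{j,aq^{-1}}$, a rational $\ell$-weight of degree $s_i(\varpi_i)$. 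Thus $L(\widetilde{\Psi}_{i,aq^{-2}})$ is an infinite-dimensional object of $\O_\Z$ lying outside $\mathcal{C}_\Z$, so Theorem~\ref{fdrep} does not apply, and what has to be proved is that $\chi_q(L(\widetilde{\Psi}_{i,aq^{-2}}))$ equals this geometric series and that $[L(\widetilde{\Psi}_{i,aq^{-2}})]$ is real. Note that $\bQ_{s_i(\varpi_i),q^r}$ is itself the image under $F$ of a cluster variable of $\AA_{w_0}$ by Proposition~\ref{prop-polynomial} (e.g. the top green vertex of column $i$ in a Coxeter seed for which $i$ is a source), so once the $q$-character identity and reality hold, Conjecture~\ref{main-conj} for these variables follows because then $\bQ_{s_i(\varpi_i),q^r}=[-\Omega(\widetilde{\Psi}_{i,q^{r-2}})]\,[L(\widetilde{\Psi}_{i,q^{r-2}})]$.

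To obtain the $q$-character identity I would control $L(\widetilde{\Psi}_{i,aq^{-2}})$ by the limiting technique used for negative prefundamental modules in \cite{HJ,H}: realize it as a limit of a suitable family of finite-dimensional (Kirillov--Reshetikhin-type) modules whose $q$-characters are computed by the Frenkel--Mukhin algorithm, and verify that the limit is exactly the single $A_i$-string $\widetilde{\Psi}_{i,aq^{-2}}\,\Si_{i,aq^{-2}}^{+}$. Equivalently, one shows that $L(\widetilde{\Psi}_{i,aq^{-2}})$ is \emph{special} and that the only admissible lowerings starting from $\widetilde{\Psi}_{i,aq^{-2}}$ are by $A_{i,aq^{-2}}^{-1},A_{i,aq^{-4}}^{-1},\dots$, each with multiplicity one; as for the operators $\Theta_i$ in \cite{FH2}, this verification reduces to the rank-two root subsystems containing $\alpha_i$, where it is an explicit check. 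Granting this, the exchange relation at the green vertex carrying $\bQ_{s_i(\varpi_i),q^r}$ is an instance of the $QQ$-system of Theorem~\ref{Thm-QtQ-system} with $w=e$ (Proposition~\ref{Prop-renorm-QQ}), consistently with $\bQ_{\varpi_i,q^s}=[-\Omega(\Psi_{i,q^s})]\,[L_{i,q^s}^+]$; and reality, $[L(\widetilde{\Psi}_{i,aq^{-2}})]^2=[L(\widetilde{\Psi}_{i,aq^{-2}}^{\,2})]$, follows from cyclicity of the fusion square on its highest $\ell$-weight vector, exactly as for prefundamental modules in \cite{HJ}. The main obstacle is precisely this $q$-character identity: establishing that $L(\widetilde{\Psi}_{i,aq^{-2}})$ is special and that no lowering in a direction $j\neq i$ ever occurs, so that its $q$-character is exactly the predicted $A_i$-string; this is the one step requiring genuine representation theory rather than formal manipulation, and the reduction to rank two together with the known structure of limits of Kirillov--Reshetikhin modules is the route I would take to carry it out.
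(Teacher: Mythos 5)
Your treatment of the $w=e$ case is the same as the paper's, and your identification of what must be proved for $w=s_i$ (the $q$-character identity $\chi_q(L(\widetilde{\Psi}_{i,aq^{-2}}))=\widetilde{\Psi}_{i,aq^{-2}}\Si^+_{i,aq^{-2}}$ and reality of $L(\widetilde{\Psi}_{i,aq^{-2}})$) is correct. A minor omission: the proposition also covers $Q_{s_i(\varpi_j),a}$ with $j\ne i$, which the paper disposes of by noting $s_i(\varpi_j)=\varpi_j$, so it reduces to the $w=e$ case.

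For the $q$-character identity you propose to re-derive it from scratch via limits of Kirillov--Reshetikhin modules and/or a specialness argument reduced to rank two. The paper does not do this; it simply cites \cite[Example 5.2]{H}, where this $q$-character is already established. Your proposed route is plausible in spirit but unverified, and you yourself flag it as ``the one step requiring genuine representation theory.'' You should be aware that this is already available as a citation.

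The genuine gap is in the reality argument. You assert that reality ``follows from cyclicity of the fusion square on its highest $\ell$-weight vector, exactly as for prefundamental modules in \cite{HJ}.'' Cyclicity of $M\ast M$ on the highest $\ell$-weight vector shows only that $M\ast M$ has a unique simple quotient $L(\Psi^2)$; it does not show $M\ast M\cong L(\Psi^2)$, which is what $[M]^2=[L(\Psi^2)]$ requires. One would also need cocyclicity (or some other input) to conclude simplicity, and you do not supply it. The paper's argument is different and concrete: for $\g=\mathfrak{sl}_2$, $L(\widetilde{\Psi}_{1,q^{r-2}})$ is a negative prefundamental module and hence real by \cite[Theorem 5]{H}, with character of its square equal to $(1-[-\alpha_1])^{-2}$. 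For general $\g$, one restricts to the $\mathfrak{sl}_2$-subalgebra of index $i$ in $U_q^{-2\varpi_i}(\widehat{\g})$; the submodule of $L(\widetilde{\Psi}_{i,q^{r-2}}^{\,2})$ generated by the highest weight vector under this subalgebra is the $\mathfrak{sl}_2$-module $L(\Psi_{1,q^{r-2}}^{-2})$, giving the lower bound $\chi(L(\widetilde{\Psi}_{i,q^{r-2}}^{\,2}))\ge (1-[-\alpha_i])^{-2}$ on weight multiplicities. On the other hand $[L(\widetilde{\Psi}_{i,q^{r-2}})]^2$ has precisely this character, and $L(\widetilde{\Psi}_{i,q^{r-2}}^{\,2})$ is a subquotient of the fusion square, so the multiplicities agree and $[L(\widetilde{\Psi}_{i,q^{r-2}})]^2=[L(\widetilde{\Psi}_{i,q^{r-2}}^{\,2})]$. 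This character-comparison bypasses the cyclicity/cocyclicity analysis entirely and is what you are missing.
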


\begin{proof} Conjecture \ref{cjq} is proved in \cite{FH3} for
$w = e$ or $w = s_i$. We reproduce the short proof here for the convenience of the reader.

For $w = e$, we have
$$\chi_q(L(\Psi_{i,a})) = Q_{\varpi_i,a} = \Psib_{i,a}.$$
Moreover, $L(\Psib_{i,a})$ is real as it is one-dimensional.

For $w = s_i$ and $j\not = i$, we have $\Psib_{w(\varpi_j),\,q^r} = \Psib_{j,q^r}$, and the result follows as above. 
For $j = i$, we have $\Psib_{s_i(\omega_i),\,q^r} = \widetilde{\Psib}_{i,q^{r-2}}$, and
$$Q_{w(\varpi_i),q^r} = \widetilde{\Psib}_{i,q^{r-2}}
(1 + A_{i,q^{r-2}}^{-1}(1 + A_{i,q^{r-4}}^{-1}(1 + \cdots))\cdots)= \chi_q(L(\widetilde{\Psib}_{i,q^{r-2}})), $$
where the second equality comes from \cite[Example 5.2]{H}.
This proves Conjecture \ref{cjq} in this case. 
To finish the proof of Conjecture \ref{main-conj}, it suffices to prove that $L(\widetilde{\Psib}_{i,q^{r-2}})$ is real.

Recall the second point of Theorem~\ref{thmH}. 
For $\g = \Sl_2$, we have $i = 1$, and $\widetilde{\Psib}_{1,q^{r-2}} = \Psi_{1,q^{r-2}}^{-1}$. Hence  
$L(\widetilde{\Psib}_{1,q^{r-2}})$ is a negative prefundamental module, and by \cite[Theorem 5]{H} we know that 
negative prefundamental modules are real. In particular, the character of $L(\widetilde{\Psib}_{1,q^{r-2}})$ is 
$(1 - [-\alpha_1])^{-2}$.

For a general $\g$, consider the subalgebra of $U_q^{-2\varpi_i}(\widehat{\mathfrak{g}})$ generated by the Drinfeld generators of index $i$, which is isomorphic to $U_q^{-2\varpi_i}(\widehat{\mathfrak{sl}}_2)$. For this subalgebra, the submodule generated by the highest weight vector of $L(\widetilde{\Psib}_{i,q^{r-2}}^2)$ is isomorphic to $L(\Psib_{1,q^{r-2}}^{-2})$ studied above. By the above discussion,
this implies that the weight multiplicities of $\chi(L(\widetilde{\Psib}^{2}_{i,q^{r-2}}))$ are at least as large as those of $(1 - [-\alpha_i])^{-2}$. But this is precisely the character associated with $[L(\widetilde{\Psib}_{i,q^{r-2}})]^2$. Hence
$[L(\widetilde{\Psib}_{i,q^{r-2}})]^2 = [L(\widetilde{\Psib}_{i,q^{r-2}}^2)]$ and $L(\widetilde{\Psib}_{i,q^{r-2}})$ is real. \cqfd
\end{proof}

\subsubsection{Longest Weyl group element}

\begin{Prop} \label{prop-w_0}
Conjectures \ref{main-conj} and \ref{cjq} hold for the $Q$-variables associated with $w=w_0$.
\end{Prop}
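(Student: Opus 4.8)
The idea is to identify the $Q$-variable $Q_{w_0(\varpi_i),\,q^r}$ with the $q$-character of a \emph{negative} prefundamental representation, and to deduce both conjectures from properties of those modules already established in \cite{H}. The starting point is the computation of the highest weight monomial $\Psi_{w_0(\varpi_i),\,q^r}$. As recalled in the proof of Proposition~\ref{prop-invertible}, Chari's braid group action gives $\Psi_{w_0(\varpi_i),\,q^r} = \Psi_{\nu(i),\,q^{r-h}}^{-1}$, where $\nu$ is the Nakayama involution and $h$ the Coxeter number. Hence the leading $\ell$-weight of $Q_{w_0(\varpi_i),\,q^r}$ is exactly $\widetilde{\Psi}$-free: it is the inverse of a single variable $\Psi_{\nu(i),\,q^{r-h}}$, i.e. the highest $\ell$-weight of the negative prefundamental module $L_{\nu(i),\,q^{r-h}}^- = L(\Psi_{\nu(i),\,q^{r-h}}^{-1})$.

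\textbf{First step: identify $Q_{w_0(\varpi_i),\,q^r}$ with $\chi_q(L_{\nu(i),\,q^{r-h}}^-)$.} By Conjecture~\ref{cjq}'s statement in the form already proved in \cite{FH3} for $w=w_0$ (which we may invoke as an established result cited in the excerpt), we have $\chi_q(L(\Psi_{w_0(\varpi_i),\,q^r})) = Q_{w_0(\varpi_i),\,q^r}$; combined with $\Psi_{w_0(\varpi_i),\,q^r} = \Psi_{\nu(i),\,q^{r-h}}^{-1}$ this reads $\chi_q(L_{\nu(i),\,q^{r-h}}^-) = Q_{w_0(\varpi_i),\,q^r}$. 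Alternatively, if one wants a self-contained argument, one uses that the $q$-character of $L_{i,q^s}^-$ is obtained as a limit of $q$-characters of Kirillov--Reshetikhin modules (as in the proof of the previous Proposition on $K_0(\mathcal{O}_\Z)$), and that the Frenkel--Mukhin algorithm output, after passing to the limit, matches the explicit formula for $Q_{w_0(\varpi_i),\,q^r}$ derived in \cite{FH3}; this establishes Conjecture~\ref{cjq} for $w=w_0$ directly and proves the first step.

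\textbf{Second step: reality.} Since $\bQ_{w_0(\varpi_i),\,q^r}$ is (by the proof of Theorem~\ref{main-Thm}, via Proposition~\ref{prop-polynomial}) the image under $F$ of an initial cluster variable of $\AA_{w_0}$, Conjecture~\ref{main-conj} for these variables amounts to showing that $L_{\nu(i),\,q^{r-h}}^-$ is a real simple object of $\O_\Z$. This is precisely the content of \cite[Theorem 5]{H}, which asserts that negative prefundamental representations are real: $[L_{j,a}^-]^2 = [L((\Psi_{j,a}^{-1})^2)]$ is again simple. (In the rank-one reduction argument, as in the proof of Proposition~\ref{prop-s_i}, one restricts to the $U_q^{-2\varpi_j}(\widehat{\mathfrak{sl}}_2)$-subalgebra generated by the index-$j$ Drinfeld generators, where $L_{j,a}^-$ restricts to a negative $\mathfrak{sl}_2$-prefundamental module whose square is known to be simple with character $(1-[-\alpha_j])^{-2}$; lifting back forces equality of the relevant weight multiplicities, hence reality in general.) Putting the two steps together: $\bQ_{w_0(\varpi_i),\,q^r} = [-\Omega(\Psi_{w_0(\varpi_i),\,q^r})]\,\chi_q(L_{\nu(i),\,q^{r-h}}^-)$ is a real simple class, which is exactly Conjecture~\ref{main-conj} for the $w=w_0$ cluster variables, and Conjecture~\ref{cjq} for $w=w_0$ was established in the first step.

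\textbf{Main obstacle.} The delicate point is the first step: pinning down $\chi_q(L_{\nu(i),\,q^{r-h}}^-)$ and matching it with the combinatorially-defined power series $Q_{w_0(\varpi_i),\,q^r}$. This relies on controlling the limit of Kirillov--Reshetikhin $q$-characters (or equivalently on quoting the $w=w_0$ case of Conjecture~\ref{cjq} from \cite{FH3}) and on the identification $\Psi_{w_0(\varpi_i),\,q^r} = \Psi_{\nu(i),\,q^{r-h}}^{-1}$ through the braid group action; once these are in place, reality is a clean citation of \cite[Theorem 5]{H} together with the $\mathfrak{sl}_2$-reduction already used for $w=s_i$.
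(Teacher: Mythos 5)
Your second step (reality) is exactly what the paper does: once one knows $Q_{w_0(\varpi_i),a}$ is the $q$-character of a negative prefundamental module, one cites \cite[Theorem 5]{H} and is done; your added $\mathfrak{sl}_2$-reduction is harmless but unnecessary here, since the module in question \emph{is} a prefundamental, not merely a subquotient of one.

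The problem is in your first step, and you yourself flag it as ``the delicate point.'' Your route (a) mis-cites the record: the paper states explicitly, just before Proposition~\ref{prop-s_i}, that Conjecture~\ref{cjq} ``is proved in \cite{FH3} for $w=e$ or $w=s_i$'' --- not for $w=w_0$. Proposition~\ref{prop-w_0} is precisely where the $w=w_0$ case gets established, so invoking it as already known is circular. Your route (b) --- ``the limit of Kirillov--Reshetikhin $q$-characters \ldots\ matches the explicit formula for $Q_{w_0(\varpi_i),q^r}$'' --- identifies the right two objects to compare but gives no mechanism for the comparison; asserting the match is the whole content of the step, not a proof of it. What is missing is the paper's actual argument: negative prefundamental $q$-characters give a solution of the dual Baxter $TQ$-relation recalled in \cite[Remark 4.10]{FH0} (which is the $w=w_0$ instance of the extended $TQ$-relations of \cite[Conjecture 3.8]{FH3}); a solution with prescribed highest $\ell$-weight $\Psi_{\nu(i),aq^{-h}}^{-1}$ is \emph{unique}, by induction on weight through the $TQ$-relations for the fundamental representations; and $E'_e(\Theta'_{w_0}(\Psi_{i,a}))$ is, via Lemma~\ref{QQTp}, another such solution. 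Uniqueness forces $Q_{w_0(\varpi_i),a}=\chi_q(L^-_{\nu(i),aq^{-h}})$. Without this uniqueness argument there is a genuine gap; with it, your second step then closes the proof as in the paper.
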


\begin{proof}
It is noted in \cite[Remark 4.10]{FH0} that a Baxter $TQ$-relation can be written not only in 
terms of positive prefundamental representations, but also, using a dual construction, in terms of 
negative prefundamental representations. This is a particular case of the extended 
$TQ$-relations of \cite[Conjecture 3.8]{FH3}, corresponding to the longest Weyl group element $w_0$ 
(see \cite[Remark 3.3]{FH3}). The substitution goes as follows. Variable $Y_{i,a}$ in the 
$q$-character of a representation $V$ is replaced by 
\[
[w_0(\varpi_i)]\,\frac{[L_{\nu(i),aq^{-h-1}}^-]}{[L_{\nu(i),aq^{-h+1}}^-]},
\]
where $h$ is the Coxeter number.
Then we obtain a valid relation in the Grothendieck ring, that is, a solution to the Baxter $TQ$-relation with $w = w_0$.  
In general, a solution is a substitution 
\[
Y_{i,a}\mapsto [w_0(\varpi_i)]\, \frac{X_{i,aq^{-1}}}{X_{i,aq}}
\]
which 
fixes $q$-characters of finite-dimensional representations. Our solution satisfies
$$X_{i,a}\in \Psib_{\nu(i),aq^{-h}}^{-1}\,\mathbb{Z}[[A_{j,b}^{-1}]]_{j\in I,b\in\mathbb{C}^*}\in K,$$
with highest weight term $\Psib_{\nu(i),aq^{-h}}^{-1}$. Such a solution is unique.

Indeed, consider another similar substitution $X$. Then $X(Y_{i,a}) \in Y_{\nu(i),aq^{-h}}^{-1}\mathbb{Z}[[A_{j,b}^{-1}]]_{j\in I,b\in\mathbb{C}^*}$
has highest weight term $Y_{\nu(i),aq^{-h}}^{-1}$. Then the Baxter $TQ$-relations for the various fundamental representations $V_{j,b}$ 
determine the other terms of $X(Y_{i,a})$ by induction on the weight. So $X(Y_{i,a})$ is unique. Then it determines again the terms of 
$X_{i,a} \Psib_{\nu(i),aq^{-h}} = 1 + \cdots$ by weight induction.

Now $E'_e(\Theta_{w_0}'(\Psib_{i,a}))$ is also a solution. Hence we obtain 
$$\chi_q\left(L_{\nu(i),aq^{-h}}^-\right) = E'_e\left(\Theta_{w_0}'(\Psib_{i,a})\right).$$
It follows from Lemma \ref{QQTp} that 
$$\Theta_{w_0}'(\Psib_{i,a}) = \QQ_{w_0(\varpi_i),a}.$$
Hence, applying $E'_e$, we obtain  
$$Q_{w_0(\varpi_i),a} = \chi_q\left(L_{\nu(i),aq^{-h}}^-\right).$$
This proves Conjecture~\ref{cjq} in this case. Moreover, by \cite[Theorem 5]{H} we know that $L_{i,a}^-$ is real, so Conjecture~\ref{main-conj} also holds.
\cqfd
\end{proof}

\begin{remark} 
{\rm 
 As a consequence, $\chi_{-\varpi_{\nu(i)}} = \chi\left(L_{\nu(i),aq^{-h}}^-\right)$. 
}
\end{remark}

\begin{example}
{\rm
Putting together Proposition~\ref{prop-s_i} and Proposition~\ref{prop-w_0}, we get a proof of 
Conjectures \ref{main-conj} and \ref{cjq} for all $Q$-variables in type $A_2$.
In this case, explicit expansions of the $Q$-variables are given in Example~\ref{Ex4-4}, and we have
\[
Q_{\omega_1,q^r} = \chi_q\left(L^+_{1,q^r}\right), \quad 
Q_{\omega_2 - \omega_1,q^r} = \chi_q\left(L(\Psib_{1,q^{r-2}}^{-1}\Psib_{2,q^{r-1}})\right),\quad 
Q_{-\omega_2,q^r} = \chi_q\left(L^-_{2,q^{r-3}}\right),
\]
\[
Q_{\omega_2,q^r} = \chi_q\left(L^+_{2,q^r}\right), \quad 
Q_{\omega_1 - \omega_2,q^r} = \chi_q\left(L(\Psib_{2,q^{r-2}}^{-1}\Psib_{1,q^{r-1}})\right),\quad 
Q_{-\omega_1,q^r} = \chi_q\left(L^-_{1,q^{r-3}}\right). 
\]
}
\end{example}

\subsection{Type $A_1$}

In this subsection we assume that $\mathfrak{g} = \mathfrak{sl}_2$ and we establish Conjecture \ref{main-conj} in this case.
Here $V = \{(1,2r)\mid r\in\mathbb{Z}\}.$
For readability, we will drop the unique index $i = 1$ of the Dynkin diagram in this section, and 
we will identify $V$ with $2\Z$.

The $Q$-variable
$Q_{\varpi,q^{2r}}$ (\resp $Q_{-\varpi,q^{2r}}$) coincides with the $q$-character of the positive (\resp negative) prefundamental representation $L^+_{q^{2r}}$ (\resp $L^-_{q^{2r-2}}$). Indeed, we have explicit $q$-character formulas:
\begin{eqnarray*}
&&\chi_q(L^+_{q^{2r}})=\chi_q(L(\Psib_{q^{2r}})) = \Psib_{q^{2r}} = Q_{\varpi,q^{2r}},\\[2mm]
&&\chi_q(L^-_{q^{2r-2}})=\chi_q(L(\Psib_{q^{2r-2}}^{-1})) = \Psib_{q^{2r-2}}^{-1}
(1 + A_{q^{2r-2}}^{-1}(1 + A_{q^{2r-4}}^{-1}(1 + \cdots))\cdots) = Q_{-\varpi,q^{2r}},
\end{eqnarray*}
(recall that, in type $A_1$, we have
$A_{q^{2r}} = Y_{q^{2r+1}}Y_{q^{2r - 1}}$,  where $Y_{q^s} = [\varpi]\Psib_{q^{s - 1}}\Psib_{q^{s + 1}}^{-1}$.)

By \cite[Corollary 5.6]{H}, any simple module in $\mathcal{O}_{\mathbb{Z}}$ is a quotient of a fusion product of positive and negative prefundamental representations and of an invertible representation $[\Psib_{\lambda}]$, $\lambda\in P$.
The category $\mathcal{O}_\mathbb{Z}$ contains also the finite-dimensional representations
$L(\Psib_{q^{2r}}\Psib_{q^{2s}}^{-1})$ for $r\leq s\in \Z$. Their $q$-character is equal to:
$$\chi_q(L(\Psib_{q^{2r}}\Psib_{q^{2s}}^{-1})) =
\Psib_{q^{2r}}\Psib_{q^{2s}}^{-1}
(1 + A_{q^{2s}}^{-1}(1 + A_{q^{2s-2}}^{-1}(1 + \cdots + A_{q^{2r+4}}^{-1}(1+ A_{q^{2r+2}}^{-1}))\cdots)$$
$$= \sum_{t = s, s-1 , \cdots, r} [(t-s)\alpha]\,\Psib_{q^{2(s+1)}}\Psib_{q^{2(1 + t)}}^{-1}\Psib_{q^{2t}}^{-1}\Psib_{q^{2r}}.$$
Indeed,
$[(s - r)\omega_1]\otimes L(\Psib_{q^{2r}}\Psib_{q^{2s}}^{-1})\simeq L(Y_{q^{2r+1}}Y_{q^{2r+3}}\cdots Y_{q^{2s - 1}})$
is a representation of the ordinary quantum affine algebra $U_q(\hatsl_2)$. Its $q$-character is known as it is obtained as an evaluation of a simple representation of $U_q(\mathfrak{sl}_2)$.

Let us recall that a simple representation $V$ is said to be prime if it can not be factorized as a
tensor product of two non-invertible representations.

\begin{Prop}\label{prep} The simple representations $L^+_{1,q^{2r}}$, $L_{1,q^{2r}}^-$, $L(\Psib_{q^{2r}}\Psib_{q^{2s}}^{-1})\ (r\le s \in \Z)$ are prime objects of $\O_\Z$. 
\end{Prop}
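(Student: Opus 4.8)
The statement to prove is that the three families of simple modules $L^+_{1,q^{2r}}$, $L^-_{1,q^{2r}}$, and $L(\Psib_{q^{2r}}\Psib_{q^{2s}}^{-1})$ (for $r\le s$) are prime in $\O_\Z$. The plan is to treat the three cases in increasing order of difficulty, using the factorization information in Proposition~\ref{fact} together with the explicit $q$-character formulas recalled just above the statement.

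First I would dispose of the prefundamental modules. The module $L^+_{1,q^{2r}}=L(\Psib_{q^{2r}})$ is one-dimensional, hence invertible, hence trivially prime (it cannot be written as a tensor product of two \emph{non-invertible} factors simply because it is invertible — one should check the convention, but primeness is immediate). For $L^-_{1,q^{2r}}=L(\Psib_{q^{2r-2}}^{-1})$ the point is to look at the highest $\ell$-weight: if $L(\Psib_{q^{2r-2}}^{-1})\simeq L(\Psib)\otimes L(\Psib')$ (fusion product) with both factors non-invertible, then $\Psib\Psib'=\Psib_{q^{2r-2}}^{-1}$ by Proposition~\ref{fact}, and both $\Psib,\Psib'$ must be Laurent monomials in the $\Psib_{q^{2s}}$ lying in $\O_\Z$. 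Since $\deg$ is additive and $\deg(\Psib_{q^{2r-2}}^{-1})=-1$, one of the degrees is $\le -1$ and corresponds (by Theorem~\ref{Th-classification}) to an infinite-dimensional factor with a negative prefundamental-type leading term; the other factor would then have to be degree $\ge 0$. A dimension/character count using $\chi(L^-)=(1-[-\alpha])^{-1}$ rules out a nontrivial such splitting: the character of $L(\Psib_{q^{2r-2}}^{-1})$ as a $U_q(\mathfrak{sl}_2)$-module is $(1-[-\alpha])^{-1}$, which is "character-irreducible" in the sense that it is not a product of two nonconstant elements of $\mathcal E$ both of whose supports are cones opening downwards with top weight a genuine weight. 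I would make this precise by comparing leading terms and the next term: the coefficient of the subleading weight in $\chi(L^-)$ is $1$, forcing one factor to contribute trivially.

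The main work is the finite-dimensional family $L(\Psib_{q^{2r}}\Psib_{q^{2s}}^{-1})$, $r\le s$. Here I would argue inside the ordinary finite-dimensional representation theory of $U_q(\widehat{\mathfrak{sl}}_2)$ via the identification $[(s-r)\varpi]\otimes L(\Psib_{q^{2r}}\Psib_{q^{2s}}^{-1})\simeq L(Y_{q^{2r+1}}Y_{q^{2r+3}}\cdots Y_{q^{2s-1}})$, which is the evaluation module attached to a single string of spectral parameters $q^{2r+1},q^{2r+3},\dots,q^{2s-1}$ forming a $q$-segment. By the classical Chari–Pressley classification (the reference \cite{CP} cited in the introduction), the evaluation module associated with a single $q$-string is prime: its Drinfeld polynomial is the product over a single unbroken string, and a tensor-product factorization would correspond to splitting that string into two sub-multisets each of which is again a union of strings — but an unbroken $q$-segment of length $\ge 1$ admits no such nontrivial decomposition into two non-empty unions of strings without creating a "gap", contradicting the segment condition. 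Tensoring by the invertible module $[(s-r)\varpi]$ does not affect primeness, since invertibles are units in $K_0(\O^{\mathrm{sh}})$; so $L(\Psib_{q^{2r}}\Psib_{q^{2s}}^{-1})$ is prime in $\mathcal C_\Z$, and since any factorization in $\O_\Z$ of a finite-dimensional module has finite-dimensional factors (weights bounded above and below forces each factor finite-dimensional, using $\varpi$ and Theorem~\ref{Th-classification}), it is prime in $\O_\Z$ as well.

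\textbf{Main obstacle.} The delicate point is making the primeness of the single-$q$-segment evaluation module airtight in the present language of $\ell$-weights and fusion products rather than ordinary tensor products: one must know that a fusion-product factorization $L(\Psib)\otimes L(\Psib')$ in $\O^{\mathrm{sh}}$ with $\Psib,\Psib'$ monomials of the allowed shape forces, after multiplying by a suitable invertible, an honest $U_q(\widehat{\mathfrak{sl}}_2)$ tensor-product factorization of the evaluation module, at which stage the classical Chari–Pressley result applies. I expect this reduction — controlling that the fusion product of two objects of $\mathcal C_\Z$ is the ordinary tensor product and that highest-$\ell$-weight multiplicativity (Proposition~\ref{fact}) pins down the two Drinfeld polynomials — to be the step requiring the most care, whereas the combinatorics of strings and the prefundamental cases are routine.
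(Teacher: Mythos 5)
Your opening step is wrong: you assert that $L^+_{1,q^{2r}}$ is invertible because it is one-dimensional, but for shifted quantum affine algebras this implication fails, and the paper's own proof makes exactly this point. If $L^+_{q^{2r}}$ had an inverse, by looking at highest $\ell$-weights it would have to be $L^-_{q^{2r}}$, which is infinite-dimensional; moreover $[L^+_{q^{2a}}][L^-_{q^{2a}}]$ is not even a simple class. So $L^+$ is \emph{not} invertible, and its primeness has to be proved: the paper does this by showing that any one-dimensional simple $L(\Psib)$ has polynomial $\Psib$ (since $\phi^+(z)-\phi^-(z)$ acts by zero), and a product of two non-constant polynomials cannot have degree one, so $L^+$ cannot factor into two non-invertible one-dimensional modules. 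Your shortcut replaces this genuine argument with a false premise; and even granting the premise, an invertible object is ordinarily not called prime.

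For the other two families the paper uses a single uniform argument based on $\dim V_{\lambda - \alpha} = 1$: two non-one-dimensional non-invertible factors would each contribute at least $1$ to that weight multiplicity. Your treatment of $L^-$ is in this spirit, but observe that the coefficient count only shows one fusion factor is \emph{one-dimensional}, not that it is invertible or trivial; the one-dimensional non-invertible case (a nontrivial polynomial $\Psib_1$) has to be excluded separately, e.g.\ via a degree/character-growth argument. For $L(\Psib_{q^{2r}}\Psib_{q^{2s}}^{-1})$ you depart from the paper and reduce to Chari--Pressley via evaluation modules. The idea is defensible, but positive prefundamentals $L^+_{q^{2a}}$ are one-dimensional yet non-invertible objects of $\mathcal{C}_\Z$ that the $U_q(\widehat{\mathfrak{sl}}_2)$ tensor picture simply does not see; a fusion factorization with such a factor is therefore not a classical non-trivial tensor factorization and Chari--Pressley does not apply to it. To close that gap one can argue by degree: a non-constant polynomial factor of $\Psib_{q^{2r}}\Psib_{q^{2s}}^{-1}$ forces the complementary factor into $\mathcal{O}^\mu$ with $\mu$ strictly anti-dominant, hence infinite-dimensional, contradicting finite-dimensionality. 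The paper avoids all of this by applying the same $\dim V_{\lambda-\alpha}=1$ argument to both $L^-$ and the finite-dimensional modules.
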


\begin{proof} The representation $L^+_{1,q^{2r}}$ is prime. Indeed, it is simple since it is one-dimensional. Also, it is not invertible because the only simple representation $V$ such that the highest $\ell$-weight of $L_{1,q^{2r}}^+ * V$ is $1$ is $V = L_{1,q^{2r}}^-$ which is infinite-dimensional. Now, assume that $L_{1,q^{2r}}^+\simeq V * V'$. Then $V$ and $V'$ are one-dimensional and simple : $V = L(\Psib)$ and $V'  = L(\Psib')$. If we assume in addition that $V$ and $V'$ are not invertible, then their highest $\ell$-weights are not constant. Now if a simple representation $L(\Psib)$ is one-dimensional, then $\Psib = \Psib(z)$ is a polynomial. Indeed, it follows from the relations of the shifted quantum affine algebras that  the action of $\phi^+(z) - \phi^-(z)$ is zero on the representation, and so the coefficient of $z^s$ in $\Psib(z)$ is zero for $s$ large enough. Then $\Psib\Psib'$ is a product of non-constant polynomials which is of degree $1$, and we get a contradiction.

Consider now a simple representation in $\mathcal{O}_{\mathbb{Z}}$. It is a highest weight module with a highest weight $\lambda\in P$. If this representation is not one-dimensional, then the dimension of its weight space of weight $\lambda - \alpha$ is at least $1$. This implies that if the dimension of this weight space is actually equal to $1$, then this representation is prime. Consequently, from the explicit $q$-character formulas above, the simple representations $L_{1,q^{2r}}^-$, $L(\Psib_{q^{2r}}\Psib_{q^{2s}}^{-1})$ are also prime. 
\cqfd
\end{proof}

Mimicking the notations for prime finite-dimensional representations of the ordinary quantum affine algebra $U_q(\hatsl_2)$, we will denote these prime representations by segments:
\[
[r ,\, s-1] = L(\Psib_{q^{2r}}\Psib_{q^{2s}}^{-1}),\quad 
[-\infty,\,s-1] = L(\Psib_{q^{2s}}^{-1}),\quad 
[r,\,+\infty] = L(\Psib_{q^{2r}}).
\]
We also set $[-\infty,+\infty] = 1$. Then $[r,s]$ is defined for any $r\leq s \in \Z\cup \{\pm\infty\}$.

The following identities are easy consequences of the above $q$-character formulas.
(Here we identify a simple representation with its class in $K_0(\mathcal{O}_{\mathbb{Z}})$.)

\begin{Prop}\label{prop-rel-K}
Let $r\leq s$ and $r'\leq s'$. Suppose $r < r'$ and $s < s'$ and $r'\leq s + 1$ (in particular, $r'$ and $s$ are finite). 
In $K_0(\mathcal{O}_{\mathbb{Z}})$ we have:
\begin{equation}\label{dco}[r,\,s] [r',\,s'] = [r,\,s'] [r',\,s] + [2(r'-s-2)\varpi] [r,\,r'-2][s+2,s'],
\end{equation}
where if $a>b$, we understand $[a,b]= 1$.
\cqfd
\end{Prop}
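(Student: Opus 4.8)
The statement (\ref{dco}) is an identity in $K_0(\mathcal{O}_\Z)$ between products of classes of the prime simple modules $[r,s] = L(\Psib_{q^{2r}}\Psib_{q^{2s}}^{-1})$ (with the obvious conventions when an endpoint is $\pm\infty$ or when the segment is empty). Since $\chi_q$ is an injective ring homomorphism from $K_0(\mathcal{O}_\Z)$ into $\mathcal{E}_{\ell,\Z}$ (Remark~\ref{rc} and the preceding proposition), it suffices to verify the corresponding identity between $q$-characters. The plan is therefore to expand both sides using the explicit $q$-character formula
\[
\chi_q(L(\Psib_{q^{2r}}\Psib_{q^{2s}}^{-1})) = \sum_{t=r}^{s} [(t-s)\alpha]\,\Psib_{q^{2(s+1)}}\Psib_{q^{2(1+t)}}^{-1}\Psib_{q^{2t}}^{-1}\Psib_{q^{2r}}
\]
given just above the statement (valid for finite $r\le s$, with the two one-sided limits $\Psib_{q^{2s}}^{-1}$ and $\Psib_{q^{2r}}$ obtained by letting $r\to-\infty$ or $s\to+\infty$, and the degenerate case $[a,b]=1$ for $a>b$). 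Concretely, I would index the monomials of $\chi_q([r,s])$ by the integer $t\in\{r,\dots,s\}$ (thinking of $t$ as the position of the last $\Psib_{q^{2\bullet}}^{-1}$ factor ``switched on''), so that a monomial of the product $\chi_q([r,s])\chi_q([r',s'])$ is indexed by a pair $(t,t')$ with $r\le t\le s$, $r'\le t'\le s'$.

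The combinatorial heart of the argument is a bijection on these index sets. Under the hypotheses $r<r'\le s+1$ and $s<s'$, I would split the pairs $(t,t')$ appearing on the left-hand side $[r,s][r',s']$ into two classes: those with $t'\le s$ (equivalently, the two ``active intervals'' overlap or abut in the appropriate way) should be matched with the monomials of $[r,s'][r',s]$, while the remaining pairs with $t'>s$ should be matched — after extracting the scalar shift $[2(r'-s-2)\varpi]$ — with the monomials of $[r,r'-2][s+2,s']$. One checks that in each case the highest-weight monomials and all lower monomials correspond under the map $(t,t')\mapsto$ (an explicit pair of indices for the target segments), and that the weight shift $[(t-s)\alpha]+[(t'-s')\alpha]$ matches up after accounting for the prefactor $[2(r'-s-2)\varpi]$ and the identity $[\varpi]\Psib_{q^{r-1}}\Psib_{q^{r+1}}^{-1} = Y_{q^r}$, $A_{q^{2t}} = Y_{q^{2t-1}}Y_{q^{2t+1}}$. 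The bookkeeping is entirely elementary but must be done carefully; I expect this monomial-matching to be the only real obstacle, and it is the kind of verification that the paper signals as ``easy consequences of the above $q$-character formulas''.

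An alternative, possibly cleaner route — which I would mention but not necessarily carry out — is to observe that after tensoring with a suitable one-dimensional invertible module $[k\varpi]$, each segment $[r,s]$ with $r,s$ finite becomes the class of an evaluation module of the ordinary $U_q(\widehat{\mathfrak{sl}}_2)$ (as used in the proof of Proposition~\ref{prep}), the classes $[r,+\infty]=L^+_{q^{2r}}$ and $[-\infty,s]=L^-_{q^{2s+2}}$ being prefundamental modules. Then (\ref{dco}) is a known three-term Plücker-type relation for finite-dimensional $U_q(\widehat{\mathfrak{sl}}_2)$-modules, extended to the semi-infinite and infinite cases by the continuity/limit arguments of \cite{HJ,H} relating prefundamental modules to limits of Kirillov--Reshetikhin modules. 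Either way, the conclusion follows by applying $\chi_q^{-1}$. I would present the direct $q$-character computation as the main proof and relegate the $U_q(\widehat{\mathfrak{sl}}_2)$ interpretation to a remark.
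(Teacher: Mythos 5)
Your plan — reduce to an identity of $q$-characters by injectivity of $\chi_q$, expand both sides via the explicit formula, and match monomials — is exactly what the paper means by ``easy consequences of the above $q$-character formulas''; the paper gives no further detail, so your sketch is a reasonable spelling-out of the intended argument, and the alternative route via evaluation modules and Kirillov--Reshetikhin limits is close to what the paper actually invokes for the neighbouring Propositions~\ref{simpcond} and~\ref{prop-facto}.

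However, two points need tightening. First, a notational slip: with the segment convention $[r,s]=L(\Psib_{q^{2r}}\Psib_{q^{2(s+1)}}^{-1})$ (note the shift in the second index), the $q$-character index $t$ runs over $\{r,\dots,s+1\}$, not $\{r,\dots,s\}$, so $\dim[r,s]=s-r+2$. Second, and more substantively, the bijection you sketched is incorrect as stated: it is not the case that all pairs $(t,t')$ with $t'$ in the high range contribute to the second right-hand term. Writing $\Psi_a:=\Psib_{q^{2a}}$ and $\alpha=2\varpi$, the monomial $M_{t,t'}$ of $[r,s][r',s']$ has the form $\big[(t+t'-s-s'-2)\alpha\big]\,\Psi_{s+2}\Psi_{s'+2}\Psi_r\Psi_{r'}\,\Psi_{t+1}^{-1}\Psi_t^{-1}\Psi_{t'+1}^{-1}\Psi_{t'}^{-1}$, which is manifestly symmetric in $(t,t')$ and coincides with the monomial $N_{t,t'}$ of $[r,s'][r',s]$ whenever both index pairs are admissible. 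The correct matching is therefore a two-level split: for $r'\le t'\le s+1$ one has $M_{t,t'}=N_{t,t'}$; for $s+2\le t'\le s'+1$ and $t\ge r'$ one has $M_{t,t'}=N_{t',t}$ (the swap lands in the first right-hand term, \emph{not} the second); only the remaining cells with $r\le t\le r'-1$ and $s+2\le t'\le s'+1$ match, after pulling out the scalar $[(r'-s-2)\alpha]=[2(r'-s-2)\varpi]$, the monomials of $[r,r'-2][s+2,s']$. Without the swap step, the counts on the two sides do not agree, and your proposed split would fail; with it, the $\Psi$-parts and the $[\alpha]$-prefactors are readily seen to coincide and the identity follows.
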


Proposition~\ref{prop-rel-K} recovers several classical families of relations :
\begin{itemize}
\item $T$-system relations : for $r' - r = s' - s = 1$,

\item Baxter $TQ$-relations : for $r = s = r' - 1 $, $s' = +\infty$,

\item $QQ$-relations : for $r = - \infty$, $s' = +\infty$, $r' = s +1$.
\end{itemize}
The next proposition generalizes a classical result of Chari and Pressley \cite{CP} for $U_q(\hatsl_2)$.

\begin{Prop}\label{simpcond} Let $r\leq s$ and $r'\leq s'$. The product $[r,\,s]\cdot [r',\,s']$ is a simple class in $K_0(\mathcal{O}_{\mathbb{Z}})$
 if and only if the union of the two intervals $[r,\,s] \cup [r',\,s']$ is not an interval containing properly both intervals $[r,\,s]$ and $[r',\,s']$.
\end{Prop}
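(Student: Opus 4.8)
The statement is a direct generalization of the Chari--Pressley criterion for tensor products of Kirillov--Reshetikhin modules of $U_q(\widehat{\mathfrak{sl}}_2)$, so the plan is to reduce to that case whenever both segments are finite, and to handle the cases involving $\pm\infty$ separately using the prefundamental factors. First I would set up notation: write $[r,s]$ and $[r',s']$ with $r\le s$, $r'\le s'$ in $\Z\cup\{\pm\infty\}$, and observe that, since multiplication in $K_0(\mathcal{O}_\Z)$ respects the filtration by highest $\ell$-weight (Proposition~\ref{fact}), the class $[r,s]\cdot[r',s']$ is simple if and only if it equals $[L(\Psib\Psib')]$, where $\Psib,\Psib'$ are the highest $\ell$-weights of the two segments. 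So the whole question is whether the product has any lower-order correction terms.

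\textbf{The non-simple direction.} Suppose the union $[r,s]\cup[r',s']$ is an interval properly containing both; up to exchanging the two factors we may assume $r<r'$, $s<s'$, and $r'\le s+1$ (so both $r'$ and $s$ are finite). Then Proposition~\ref{prop-rel-K}, Equation~(\ref{dco}), gives
\[
[r,\,s]\,[r',\,s'] = [r,\,s']\,[r',\,s] + [2(r'-s-2)\varpi]\,[r,\,r'-2]\,[s+2,s'],
\]
and both summands on the right are nonzero with distinct highest $\ell$-weights (the first has the highest $\ell$-weight of $L(\Psib\Psib')$, the second is strictly lower by Proposition~\ref{fact}), so the product cannot be a single simple class. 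This handles the ``only if'' part.

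\textbf{The simple direction.} Conversely, assume the union is not such an interval. There are two cases. If at least one of the two segments is infinite (a positive or negative prefundamental, or an infinite-ended segment), I would argue directly: the combinatorial condition then forces the two segments to be ``compatible'' (either nested, or disjoint, or overlapping only in a way that does not fill a gap), and in each subcase one checks from the explicit $q$-character formulas recalled before Proposition~\ref{prep} — together with the factorization results of \cite{H} — that the product of highest weight monomials already accounts for all monomials of $[r,s]\cdot[r',s']$, i.e. no correction term appears. If both segments are finite, I would use that, after tensoring with a suitable invertible one-dimensional representation $[\lambda]$, the segments $[r,s]$ and $[r',s']$ become honest Kirillov--Reshetikhin modules $L(Y_{q^{2r+1}}\cdots Y_{q^{2s-1}})$ and $L(Y_{q^{2r'+1}}\cdots Y_{q^{2s'-1}})$ over the ordinary quantum affine algebra $U_q(\widehat{\mathfrak{sl}}_2)$, whose $q$-characters are evaluations of $\mathfrak{sl}_2$-characters; invertible classes are central and do not affect simplicity of a product, so the question is identical to the classical one, and the Chari--Pressley theorem \cite{CP} gives exactly that the product is simple under the stated condition.

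\textbf{Main obstacle.} The routine part is the finite--finite case, which is genuinely classical. The point requiring care is the bookkeeping in the mixed case where one or both segments have an infinite end: one must make sure that the combinatorial criterion ``$[r,s]\cup[r',s']$ is not an interval properly containing both'' is phrased correctly when $\pm\infty$ occurs (e.g. $[-\infty,s]$ and $[r',+\infty]$ with $r'\le s+1$ do fill up to $[-\infty,+\infty]$ and hence should be non-simple — indeed this is the $QQ$-relation, the third bullet after Proposition~\ref{prop-rel-K}), and then to verify simplicity in the complementary subcases by a monomial count in the (now infinite) $q$-character series. I expect this verification — checking that the cross-terms in the product of two infinite series $\chi_q$ never produce a monomial below the highest one other than those already predicted — to be the most delicate step, though it is still elementary given the explicit formulas.
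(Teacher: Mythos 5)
The non-simple direction and the reduction of the finite--finite case to Chari--Pressley both match the paper. However, your treatment of the remaining cases has a genuine gap. First, the case where $r=-\infty$ and $s,r',s'$ are all finite (a negative prefundamental against a finite-dimensional segment) is lumped by you into the ``at least one segment infinite'' bucket with a proposed monomial count, whereas the paper handles it together with the finite--finite case by restricting to the Borel subalgebra $U_q(\widehat{\mathfrak{b}})\hookrightarrow U_q^\mu(\widehat{\mathfrak{sl}}_2)$ (for $\mu\le 0$) and invoking simplicity of the corresponding tensor product over $U_q(\widehat{\mathfrak{b}})$ from \cite{HL2}; this, not Chari--Pressley, is what one needs once an $\ell$-weight has a denominator. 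Second, for the case $s$ finite, $s'=+\infty$ (a finite-ended segment against a positive prefundamental), which you correctly flag as the delicate one, the paper does not compare $q$-characters at all: it realizes the tensor product directly via the Drinfeld coproduct, observes from the action of $x^+(z)$ that it is cocyclic, and then shows it is cyclic by a short nonvanishing computation involving the twisted action $(1-zq^{2r'})x^-(z)\otimes\mathrm{Id}$ on an evaluation module; a cyclic and cocyclic module is simple. Neither of these two structural arguments appears in your outline.

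The deeper issue is that the proposed ``monomial count'' is circular as stated. To conclude simplicity by checking that $\chi_q([r,s])\,\chi_q([r',s'])$ has ``no correction term,'' you would need to know $\chi_q(L(\Psib\Psib'))$ in advance and compare; but the explicit $q$-character formulas recalled in the paper before Proposition~\ref{prep} are only given for the \emph{prime} modules, and the assertion that $L(\Psib\Psib')$ has $q$-character equal to the product of those of the two primes is precisely what Proposition~\ref{simpcond} is meant to establish. Matching monomials cannot, on its own, rule out extra simple subquotients in the fusion product; some independent structural input --- Borel restriction, cyclicity/cocyclicity, or similar --- is unavoidable, and that is what the paper supplies.
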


\begin{proof} It follows from Equation (\ref{dco}) that if the conditions on $r,s,r',s'$ are not satisfied, then the product is not a simple class.

Now let us suppose that the conditions are satisfied. We will prove that the product $[r,\,s]\cdot[r',\,s']$ is a simple class.

For finite $s,s'$ the representations $[r,s]$ and $[r',s']$ are
representations of a shifted quantum affine algebra $U_q^\mu(\hatsl_2)$ with $\mu\le 0$.
Moreover, by \cite{H}, these representations are simple when restricted to the Borel algebra $U_q(\widehat{\mathfrak{b}})$. The
tensor product of these representations seen as $U_q(\widehat{\mathfrak{b}})$-module is simple 
(it follows from \cite{CP} for the finite-dimensional ones, and from \cite{HL2} in general).
At the level of representations of shifted quantum affine algebras, it implies that the simple quotient of the fusion product of $[r,s]$ by $[r',s']$ has weight spaces at least as large as in the fusion product. So this fusion product is simple and the product of the classes in $K_0(\mathcal{O}_{\mathbb{Z}})$ corresponds to a simple class.

For $s = s' = +\infty$, the representations are one-dimensional, as well as their fusion product, so the product is a simple class.

It remains to study the product $[r,\,s]\cdot[r',\,+\infty]$
with $s < + \infty$ and ($r'\leq r$ or $s \leq r' - 2$). Since $[r',+\infty]$ is one-dimensional, one can construct directly a structure of representation on the tensor product space $[r,s]\otimes [r',+\infty]$ by using the
Drinfeld coproduct (without using forms, specializations and fusion product). This is explained in \cite[Remark 5.8]{H}. For example, the action of
$$x^+(z) = \sum_{m\in\mathbb{Z}} x_m^+ z^m$$
on the tensor product coincides with the action of $x^+(z)\otimes \mathrm{Id}$.
So the tensor product is cocyclic (every non-zero submodule contains the highest weight vector). The action of
$$x^-(z) = \sum_{m\in\mathbb{Z}} x_m^- z^m$$
on the tensor product coincides with the action of
$$(1-z q^{2r'})x^-(z)\otimes \mathrm{Id}.$$
As explained above, $[r,\,s]$ can be obtained from an evaluation module of the ordinary quantum affine algebra and so the action of $x^-(z)$ is of the form 
$$\left(\sum_{m\in\mathbb{Z}} \alpha^m z^m\right) x_0$$
for an operator $x_0$ and $\alpha = q^{2A}$ with $r < A \leq s+1$. In particular $\alpha \neq q^{2r'}$. Then
$$(1-z q^{2r'})\cdot\left(\sum_{m\in\mathbb{Z}} \alpha^m z^m\right) = (1 - \alpha^{-1}q^{2r'})\left(\sum_{m\in\mathbb{Z}} \alpha^m z^m\right)\neq 0$$
is non zero. As moreover $[r,s]$ is generated by a highest weight vector
under the action of $x_0$, we obtain that the tensor product is also cyclic (generated by a highest weight vector). A cocyclic representation which is cyclic is simple, hence the result.
\cqfd 
\end{proof}

\begin{Prop} \label{prop-facto}
Consider a product $\prod_{1\le i\le N}[r_i,\,s_i]$ in $K_0(\mathcal{O}_{\mathbb{Z}})$ such that for every $i< j$, the classes $[r_i,\,s_i]$ and $[r_j,\,s_j]$ satisfy the condition of Proposition \ref{simpcond}. Then this product is a simple class.
\end{Prop}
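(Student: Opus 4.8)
The plan is to prove that the product $\prod_{1\le i\le N}[r_i,\,s_i]$ is a simple class by induction on $N$, reducing everything to the pairwise criterion of Proposition~\ref{simpcond} and the factorization property in Proposition~\ref{fact}. For $N=1$ there is nothing to prove, and $N=2$ is exactly Proposition~\ref{simpcond}. For the inductive step, I would first recall from Proposition~\ref{fact} that for any $\Psib,\Psib'\in\tb^\times_\ell$ the class $[L(\Psib\Psib')]$ occurs in $[L(\Psib)][L(\Psib')]$ with multiplicity one, and all other simple constituents $[L(\Psib'')]$ satisfy $\varpi(\Psib'')<\varpi(\Psib)+\varpi(\Psib')$. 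Write $\Psib^{(i)}$ for the highest $\ell$-weight of $[r_i,s_i]$, so that each $[r_i,s_i]=[L(\Psib^{(i)})]$, and set $\Psib:=\prod_i\Psib^{(i)}$. The claim is then that $\prod_i[L(\Psib^{(i)})]=[L(\Psib)]$.

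First I would establish that $[L(\Psib)]$ occurs with multiplicity one in the product: this follows by iterating the multiplicity-one statement of Proposition~\ref{fact}, since at each stage the top $\ell$-weight of the partial product is the product of the remaining $\Psib^{(i)}$, and multiplying by $[L(\Psib^{(i)})]$ contributes $[L(\cdot\,\Psib^{(i)})]$ with multiplicity one. So it suffices to show that no other simple class appears, i.e. that the product is \emph{equal} to $[L(\Psib)]$. Suppose for contradiction that some $[L(\Theta)]$ with $\Theta\ne\Psib$ appears. Expanding the product of the $q$-characters $\chi_q([r_i,s_i])$ in $\mathcal{E}_\ell$ (using the explicit $q$-character formulas for segments recalled before Proposition~\ref{prep}), every monomial of the product is a product of monomials occurring in the individual $\chi_q([r_i,s_i])$, and each such monomial of $\chi_q([r_i,s_i])$ is obtained from $\Psib^{(i)}$ by multiplying by a monomial in the $A_{j,b}^{-1}$. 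Hence $\Theta$ is obtained from $\Psib$ by multiplying by a nonempty product of the $A_{j,b}^{-1}$, so $\varpi(\Theta)<\varpi(\Psib)$; this is consistent with Proposition~\ref{fact} but does not yet give a contradiction, so the real content must come from a more refined comparison.

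The cleaner route, which I would actually carry out, is to peel off one factor at a time while \emph{preserving} the pairwise condition. Reorder the segments so that $[r_N,s_N]$ is one whose union with each of the others is still governed by the criterion; by the inductive hypothesis $P:=\prod_{i<N}[r_i,s_i]=[L(\Psib')]$ with $\Psib'=\prod_{i<N}\Psib^{(i)}$ a single simple class. It then remains to show $[L(\Psib')]\cdot[L(\Psib^{(N)})]=[L(\Psib'\Psib^{(N)})]$ is simple. This is a two-factor statement, but $[L(\Psib')]$ need not be a segment, so Proposition~\ref{simpcond} does not literally apply; instead I would argue as in the proof of Proposition~\ref{simpcond}, namely realize $L(\Psib')$ and $L(\Psib^{(N)})$ inside the category of $U_q(\widehat{\mathfrak b})$-modules (or, for the cases involving $s=+\infty$, use the explicit Drinfeld-coproduct construction on the tensor product as in \cite[Remark 5.8]{H}), and show the fusion product is simultaneously cyclic and cocyclic. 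Concretely: cocyclicity follows because the action of $x^+(z)$ on the tensor product agrees with $x^+(z)\otimes\mathrm{Id}$ up to the one-dimensional twists coming from the $[r_i,+\infty]$ factors, exactly as in the proof of Proposition~\ref{simpcond}; cyclicity follows from the nonvanishing of the scalar factors $(1-z q^{2r_i})$ evaluated against the eigenvalues $q^{2A}$ governing the $x^-(z)$-action on the already-simple module $L(\Psib')$, where the pairwise hypothesis guarantees that each $r_i$ avoids every relevant eigenvalue of $L(\Psib')$. A module that is both cyclic and cocyclic is simple, which closes the induction.

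The main obstacle is the last step: one must check that the pairwise condition in Proposition~\ref{simpcond} is strong enough to ensure that, after fusing the first $N-1$ segments into a single (possibly non-segment) simple module $L(\Psib')$, the remaining segment $[r_N,s_N]$ still ``interacts trivially'' with $L(\Psib')$ — i.e. that the poles $q^{2r_N}$ (and the corresponding data at $s_N$) avoid all the eigenvalues occurring in the $x^\pm(z)$-action on $L(\Psib')$. This requires knowing the set of such eigenvalues for $L(\Psib')$, which one can read off inductively from the $q$-character formula (the eigenvalues for a product of segments are the union of those of the factors), so the obstruction is really a bookkeeping argument on intervals rather than a deep representation-theoretic point; nonetheless it is where all the hypotheses get used and must be done carefully, paying attention to the boundary cases $r_i=-\infty$ or $s_i=+\infty$.
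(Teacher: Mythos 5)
Your induction-on-$N$ strategy differs structurally from the paper's proof, which does not proceed by peeling off one factor at a time. The paper instead splits the product once and for all into $P_1 P_2$, where $P_1$ collects the factors with $s_i < +\infty$ and $P_2$ collects the one-dimensional positive prefundamental factors with $s_i = +\infty$. For $P_1$, all the relevant modules are realized as $U_q(\widehat{\mathfrak b})$-modules and the simplicity of the full (multi-factor) tensor product is cited directly from \cite{HL2}; for $P_1 \cdot P_2$, since $P_2$ is one-dimensional, the Drinfeld-coproduct argument from the end of the proof of Proposition~\ref{simpcond} applies verbatim.

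The gap in your proposal is precisely the step you flag as the ``main obstacle,'' and it is more serious than a bookkeeping issue. In the inductive step you need to show that the fusion of a general simple module $L(\Psib')$ with a segment $[r_N,s_N]$ is simple, and you propose to do this via a cyclic/cocyclic argument with the Drinfeld coproduct. But in the proof of Proposition~\ref{simpcond}, the identity that $x^+(z)$ acts as $x^+(z)\otimes\mathrm{Id}$ on the tensor product holds \emph{only} because the second factor $[r',+\infty]$ is one-dimensional (so $x^\pm$ kill it). As soon as $[r_N,s_N]$ is not one-dimensional, the Drinfeld coproduct for $x^+(z)$ does not collapse this way, the tensor product is not obviously cocyclic, and moreover the Drinfeld coproduct does not even yield a well-defined action on the tensor product of two infinite-dimensional modules without the renormalization machinery of the fusion product. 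For those cases you would have to fall back on realizing everything in $U_q(\widehat{\mathfrak b})$-mod and invoking \cite{HL2} — but \cite{HL2} already handles arbitrarily many factors at once, so the induction would then be doing no work. In short, the only part of your argument that can be made to run directly (the Drinfeld coproduct trick) is exactly the part where $[r_N,s_N]$ is one-dimensional, which recovers the paper's treatment of $P_1\cdot P_2$; the rest must be offloaded to \cite{HL2}, and at that point one should simply apply it globally as the paper does rather than re-prove it one factor at a time.
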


\begin{proof} If $s_i < +\infty$ for all $i$, then we are reduced to consider representations of the Borel algebra $U_q(\hat{\mathfrak{b}})$ as in the previous proof. For such representations the result was established in \cite{HL2}. Otherwise, one can assume $s_i < +\infty$ for $1\leq i\leq M$ and
$s_i = +\infty$ for $M+1\leq i\leq N$. Then the product is of the form
$P_1P_2$ where $P_1$ is the simple product of the $M$ first factors, and $P_2$ is one dimensional, product of the
classes of $N-M$ positive prefundamental representations. Then an argument analog to the last argument in the proof of \ref{simpcond}
allows to conclude. \cqfd
\end{proof}

\begin{Cor} \label{cor-simpl-fact}
The prime representations of Proposition \ref{prep} are all the prime representations of the category $\mathcal{O}_{\mathbb{Z}}$
(up to invertible representations $L(\Psib_\lambda)$). A simple class in $K_0(\mathcal{O}_{\mathbb{Z}})$ admits a unique factorization
as a product of prime representations (up to ordering and to invertible representations). 
\end{Cor}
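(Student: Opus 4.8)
The plan is to reduce the statement to a combinatorial fact about highest $\ell$-weights, and then feed in the simplicity criterion of Proposition~\ref{simpcond}, the non-simplicity encoded in Proposition~\ref{prop-rel-K}, and the multiplicity-one statement of Proposition~\ref{fact}.

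First I would reduce to monomials. Since $\mathfrak{r}$ is the free abelian group on the $\Psib_{i,a}$ and the $\Psib_\lambda$, every $\Psib$ with $L(\Psib)$ in $\mathcal{O}_\Z$ factors uniquely as $\Psib=\Psib_\lambda\,M$ with $\lambda\in P$ and $M$ a Laurent monomial in the $\Psib_{q^{2r}}$, $(1,2r)\in V$; by Proposition~\ref{fact} applied to the one-dimensional factor $L(\Psib_\lambda)$ one has $[L(\Psib)]=[\Psib_\lambda]\,[L(M)]$ with $[\Psib_\lambda]$ invertible, so every question about prime factorisation modulo invertibles concerns $[L(M)]$. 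I would then attach to such an $M$ the integer ``coverage'' function $k\mapsto b_k:=c+\sum_{j\le k}m_j$ on $V\cong\Z$, where $m_j$ is the exponent of $\Psib_{q^{2j}}$ in $M$; as $M$ has finite support, $b$ is eventually constant in both directions, and I would normalise $c$ to be the least integer keeping $b\ge 0$. Reading off segments from $b$ (open $b_k-b_{k-1}$ new segments wherever $b$ rises, close the most recently opened ones wherever $b$ falls, segments that never close being semi-infinite) produces a multiset $\{[r_i,s_i]\}$, none of them the trivial segment $[-\infty,+\infty]$ by minimality of $c$.

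The combinatorial heart would be a lemma asserting that $M=\prod_i M_i$, where $M_i$ is the highest $\ell$-weight of the prime module $[r_i,s_i]$ from Proposition~\ref{prep}, that the $[r_i,s_i]$ are pairwise in general position in the sense of Proposition~\ref{simpcond}, and that this is, up to reordering, the unique factorisation of $M$ into nontrivial segments with these properties. Existence would be immediate from the construction: the ``last opened, first closed'' rule makes the family laminar, while minimality of $c$ forces a point with $b=0$ between any two disjoint blocks, which is exactly the ``gap'' demanded by Proposition~\ref{simpcond}. For uniqueness, two pairwise-general-position families realising the same $M$ and containing no trivial segment have coverage functions differing by a global constant, and one would check that this constant must vanish (here the bookkeeping of semi-infinite segments is the delicate point), after which laminarity recovers the family uniquely. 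I expect this uniqueness half of the lemma to be the main obstacle --- it is elementary but fiddly.

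Given the lemma the Corollary follows quickly. By Proposition~\ref{prop-facto}, $\prod_i[L(M_i)]$ is a simple class with highest $\ell$-weight $M$, so it equals $[L(M)]$, exhibiting every simple class of $\mathcal{O}_\Z$ as an invertible class times primes from Proposition~\ref{prep}. If $L(M)$ is prime and non-invertible, each $L(M_i)$ is non-invertible, so primeness forces a single $M_i$ and $L(M)$ lies in one of the three families of Proposition~\ref{prep}; with Proposition~\ref{prep} this identifies the non-invertible primes of $\mathcal{O}_\Z$ exactly. For uniqueness of the factorisation, suppose $[L(\Psib)]=[\Psib_\mu]\,\prod_j[P_j]$ with the $P_j$ non-invertible primes; by the previous step $P_j=L(M'_j)$ with $M'_j$ a nontrivial segment monomial. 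If two of the segments $[r'_j,s'_j]$ were not in general position, then by Proposition~\ref{simpcond} together with Proposition~\ref{prop-rel-K} the class $[P_j][P_{j'}]$ would be $[L(M'_jM'_{j'})]$ plus a nonzero strictly lower (in the $Q_+$-order) non-negative combination of simple classes; multiplying by the remaining $[P_\ell]$, whose classes are non-negative combinations of simple classes with leading term of multiplicity one by Proposition~\ref{fact}, still yields a non-negative combination containing a strictly lower term of positive coefficient, contradicting the simplicity of $[L(\Psib)]$. Hence the $[r'_j,s'_j]$ are pairwise in general position; comparing highest $\ell$-weights gives $\prod_j M'_j=M$, and by unique factorisation in the free group $\mathfrak{r}$ also $\Psib_\mu=\Psib_\lambda$; the lemma then forces $\{M'_j\}=\{M_i\}$, so the factorisation coincides with the canonical one constructed above.
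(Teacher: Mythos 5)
Your proof is correct and follows the same overall strategy as the paper: reduce to the combinatorial statement that the highest $\ell$-weight monomial $M$ factors uniquely (up to ordering and a constant) into segment highest $\ell$-weights in pairwise general position, and then invoke Proposition~\ref{prop-facto} for existence and the simplicity criterion of Proposition~\ref{simpcond} for uniqueness. Where you differ is in the degree of explicitness. The paper's proof simply asserts the combinatorial unique-factorization fact in one sentence, whereas you propose an actual construction via the ``coverage'' function $b_k = c + \sum_{j\le k} m_j$, with segments read off by the last-opened/first-closed rule, and with $c$ normalized so that $\min_k b_k = 0$. This is sound: minimality of $c$ ensures the union of the segments is a proper subset of $\Z$ (ruling out a trivial $[-\infty,+\infty]$ factor), and the last-opened/first-closed rule automatically produces a laminar family with no abutting pairs --- precisely the general-position condition of Proposition~\ref{simpcond}, not merely laminarity as you phrase it. You also spell out explicitly the step the paper leaves implicit: that in \emph{any} prime factorization of a simple class, the segments must be pairwise in general position, because otherwise Proposition~\ref{prop-rel-K} and non-negativity of the structure constants (Proposition~\ref{fact}) would force the product to decompose into $\ge 2$ simple summands. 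That step is needed to pass from uniqueness of the monomial factorization to uniqueness of the class factorization, so making it explicit is a genuine improvement in exposition. The one point to tighten in a written-up version is the bookkeeping you flag yourself: segments opening ``at $-\infty$'' must be initialized as $b_{-\infty}$ already-open segments, and the observation that a pairwise-general-position family with coverage everywhere positive would have to contain $[-\infty,+\infty]$ (hence the minimal $c$ is forced) should be stated cleanly via the fact that the union of such a family is a disjoint union of non-abutting intervals whose maximal members are exactly the maximal segments.
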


\begin{proof}
The highest $\ell$-weight of a simple representation of $\mathcal{O}_{\mathbb{Z}}$ can be factorized in a unique way (up to ordering and a constant $\ell$-weight factor)  as a product of highest $\ell$-weights of representations $[r,s]$ such that the conditions of Proposition~\ref{prop-facto} are satisfied. This implies that the class of this simple representation admits a factorization into a (unique) product of classes of prime representations listed in Proposition \ref{prep}. 
\cqfd
\end{proof}

\begin{remark}\label{Rem-sh-Borel} 
{\rm
(i)\  Corollary~\ref{cor-simpl-fact} implies that all simple representations in $\mathcal{O}_{\mathbb{Z}}$ are real.
Note that this is specific to $\mathfrak{g} = \mathfrak{sl}_2$.

\smallskip
(ii)\  In the Grothendieck ring of the category $\mathcal{O}$ 
of $U_q(\widehat{\mathfrak{b}})$, factorization into primes is not unique in general.
Following an example of \cite{MY}, the following factorization is discussed in \cite[Remark 3.10]{HL2} (we use the notation $L^{\mathfrak{b}}$ for representations of $U_q(\widehat{\mathfrak{b}})$):
$$L^{\mathfrak{b}}(\Psib_{q^4}\Psib_{q^{-6}}^{-1})\otimes L^{\mathfrak{b}}(\Psib_{q^8}\Psib_{q^{-10}}^{-1})
\simeq L^{\mathfrak{b}}(\Psib_{q^4}\Psib_{q^{-10}}^{-1}) \otimes L^{\mathfrak{b}}(\Psib_{q^8}\Psib_{q^{-6}}^{-1})
\simeq L^{\mathfrak{b}}(\Psib_{q^4}\Psib_{q^8}\Psib_{q^{-6}}^{-1}\Psib_{q^{-10}}^{-1}),$$
where the $4$ representations occurring in the $2$ factorizations are prime as representations of $U_q(\widehat{\mathfrak{b}})$.
But, in the Grothendieck ring of the category $\mathcal{O}^{\mathrm{sh}}$, 
for $r > s \in \Z$, the class
$$[L(\Psib_{q^{2r}}\Psib_{q^{2s}}^{-1})] = [L(\Psib_{q^{2r}})][ L(\Psib_{q^{2s}}^{- 1})]$$
is not prime. Hence 
the corresponding representations in the factorization above are not prime. The
unique factorization into prime representations is :
$$[L(\Psib_{q^4}\Psib_{q^8}\Psib_{q^{-6}}^{-1}\Psib_{q^{-10}}^{-1})] = [L(\Psib_{q^4})][ L(\Psib_{q^8})][ L(\Psib_{q^{-6}}^{-1})][L(\Psib_{q^{-10}}^{-1})].$$\cqfd
}
\end{remark}

The main result of this subsection is the following.

\begin{Thm} \label{main-Thm-sl2}
Conjecture \ref{main-conj} is true for $\mathfrak{g} = \mathfrak{sl}_2$.
\end{Thm}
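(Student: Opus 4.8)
The plan is to combine the explicit classification of simple objects and of clusters in the $A_1$ case. First I would recall from Corollary~\ref{cor-simpl-fact} that every simple class of $\mathcal{O}_\Z$ factors uniquely (up to invertible twists) as a product of the prime classes $[r,s]$ with $r\le s\in\Z\cup\{\pm\infty\}$, and that by Remark~\ref{Rem-sh-Borel}(i) all simple classes are real. Hence it suffices to show that the image under $F$ (equivalently $I$) of a cluster monomial of $\mathcal{A}_{w_0}$ is, up to an invertible factor $[\la]$ with $\la\in\uP$, a product $\prod_i[r_i,s_i]$ whose intervals pairwise satisfy the compatibility condition of Proposition~\ref{simpcond}; Proposition~\ref{prop-facto} will then do the rest. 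By Theorem~\ref{main-Thm} the images of the initial cluster variables are the renormalized $Q$-variables $\bQ_{w(\varpi),q^r}$, and by Proposition~\ref{prop-s_i} and Proposition~\ref{prop-w_0} (the only weights in type $A_1$ are $\varpi=w_0(\varpi)$ up to sign) these are exactly the prime classes $[r,+\infty]$ and $[-\infty,s]$ up to a constant $\ell$-weight. So the initial cluster variables correspond to half-infinite segments.

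Second, I would give an explicit combinatorial description of all clusters of $\mathcal{A}_{w_0}$ in the $A_1$ case. Recall from \S\ref{sect-3.1} that $\AA_{w_0}=\AA_{s_1}$ has initial quiver $\G_{s_1,r}$, which is of type $A_\infty$ with exactly one down arrow. A clean way to proceed is to identify the cluster algebra $\AA_{s_1}$ with (a completion of) the cluster algebra of a once-punctured or half-infinite triangulated surface — concretely, the arcs correspond to segments $[r,s]$ with $r\le s\in\Z\cup\{\pm\infty\}$ subject to no crossing, exactly matching the compatibility condition of Proposition~\ref{simpcond}, and the exchange relations match Equation~(\ref{dco}). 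I would verify that the cluster variables of $\AA_{s_1}$ are precisely the $[r,s]$ (via the Laurent expansion / $g$-vector computation already set up in Examples~\ref{exa-A1}), that two cluster variables lie in a common cluster if and only if their segments are compatible in the sense of Proposition~\ref{simpcond}, and that the exchange relation at any mutable vertex is an instance of (\ref{dco}). Since the $QQ$-type relations (\ref{dco}) literally \emph{are} the exchange relations, this identification is forced once one checks it on the initial seed and propagates by induction on mutation.

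Third, with this dictionary in place, a cluster monomial of $\AA_{w_0}$ is by definition a product of cluster variables lying in a single cluster, hence a product $\prod_i [r_i,s_i]^{a_i}$ with $a_i\ge 0$ and the $[r_i,s_i]$ pairwise compatible. By Proposition~\ref{prop-facto} this product is a simple class in $K_0(\mathcal{O}_\Z)$, and by Remark~\ref{Rem-sh-Borel}(i) it is real. Finally I would reconcile the normalization: the map $I$ carries cluster variables to $\bQ$-variables, which differ from the honest $q$-characters of the $[r,s]$ by invertible factors $[\la]$, $\la\in\uP$; multiplying such factors together stays invertible, so realness and simplicity are unaffected. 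This completes the proof.

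The main obstacle I anticipate is the second step: rigorously matching the \emph{entire} cluster combinatorics of the infinite-rank algebra $\AA_{w_0}=\AA_{s_1}$ with the segment model, including the behaviour at infinity (the half-infinite segments $[r,+\infty]$ and $[-\infty,s]$ play the role of "boundary arcs" and are never mutated past, which is exactly the irreversibility of the infinite mutation sequence noted in \S\ref{sect-3.1}). One must be careful that every cluster of $\AA_{s_1}$ really does arise this way and that no spurious cluster variables appear; here the explicit stabilized $g$-vector computation of Section~\ref{sect-stable-g} together with the injectivity of $F$ from Theorem~\ref{main-Thm} is the technical backbone. The rest — primeness, compatibility, realness — is already packaged in Propositions~\ref{prep}, \ref{simpcond}, \ref{prop-facto} and Corollary~\ref{cor-simpl-fact}.
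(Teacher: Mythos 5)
Your proposal is correct and follows essentially the same route as the paper: identify cluster variables of $\AA_{w_0}=\AA_{s_1}$ with diagonals of the $\infty$-gon (equivalently segments $[r,s]$, $r\le s\in\Z\cup\{\pm\infty\}$), match segment compatibility with non-crossing of diagonals (hence with cluster compatibility), and then invoke Propositions~\ref{prep}, \ref{simpcond}, \ref{prop-facto}, Corollary~\ref{cor-simpl-fact} and Remark~\ref{Rem-sh-Borel}(i). The one place where the paper is noticeably more efficient, and which you flag yourself as the ``main obstacle,'' is the determination of $F(x_{r,s})$ for a finite diagonal: rather than propagating by induction on mutations, the paper applies the Ptolemy relation in the quadrilateral $(-\infty,r,s+2,+\infty)$ using $(-\infty,+\infty)=1$ to obtain the closed form
\[
x_{r,s} \;=\; x^-_s\,x^+_r\; -\; x^-_{r-1}\,x^+_{s+1},
\]
so that $F(x_{r,s})$ is computed in one stroke from the initial cluster variables and Equation~(\ref{dco}), giving directly the class of the finite-dimensional prime module. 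This bypasses the need to prove an induction step ``exchange relation $=$ (dco)'' at each mutation and removes the bookkeeping you were concerned about. Everything else in your plan (identifying $F(x^+_r)$, $F(x^-_{r-1})$ with the prefundamental prime classes, the equivalence between non-crossing and the hypothesis of Proposition~\ref{simpcond}, the handling of the invertible normalization factors in $[\uP]$, and the final appeal to unique prime factorization and realness) coincides with the paper's argument.
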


Before proving Theorem~\ref{main-Thm-sl2}, we need to give a more detailed description of 
the cluster algebra $\AA_{w_0}$ in type $A_1$.
The quivers $\Gamma_{w_0,2r}$ $(r\in\mathbb{Z})$ of the standard initial seeds of $\AA_{w_0}$ 
were discussed in detail in Section~\ref{sect-3.1}. These seeds can be regarded as limits when $N \to \infty$ of certain acyclic seeds of cluster algebras $\AA_N$ of finite type $A_N$. It is well known that the clusters of $\AA_N$ are in natural bijection with triangulations of a convex $(N+3)$-gon. This leads us to a convenient geometric model 
for clusters and cluster variables of $\AA_{w_0}$ in terms of triangulations of an $\infty$-gon, which we shall now explain. 

\begin{figure}
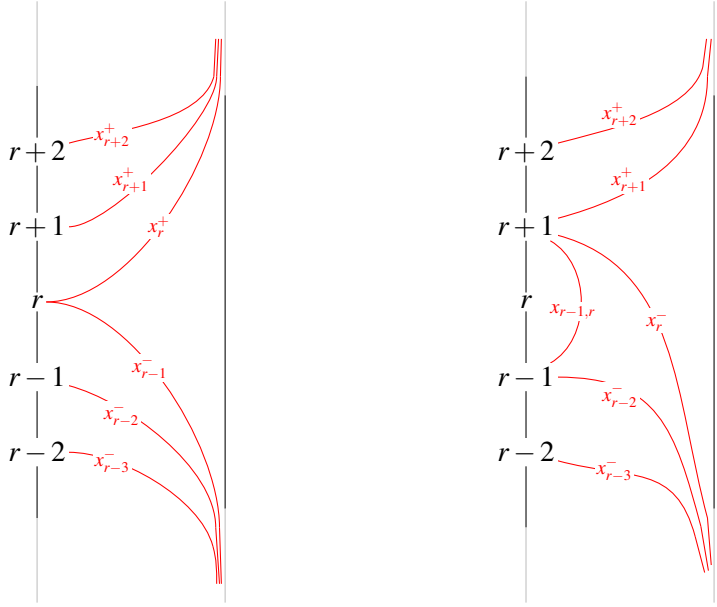

\begin{center}
\tikzfig{AinftyTriangs}
\end{center}
\caption{\label{Fig-infty-gon} {\it Two triangulations of the $\infty$-gon.}}
\end{figure}

Let $\P$ denote an $\infty$-gon, with vertex set $\VV$ and edge set $\E$ given by:
\[
\VV := \Z\cup\{-\infty, +\infty\},\qquad
\E := \{(r,r+1) \mid r\in \Z\}\cup \{(-\infty,+\infty)\}.
\]
Figure~\ref{Fig-infty-gon} represents two triangulations of $\P$, with edges in black and inner diagonals in red. In the left one, all finite vertices $v\ge r$
(\resp $v\le r$) are connected by a diagonal to vertex $+\infty$ (\resp $-\infty$). 
This triangulation corresponds to the standard initial seed of $\AA_{w_0}$ with quiver $\G_{w_0,2r}$.  
More precisely, the diagonal $(v,+\infty)$ encodes the cluster variable with stabilized $g$-vector $\be_{2v}$,
and the diagonal $(-\infty, v)$ encodes the cluster variable with stabilized $g$-vector $-\be_{2v-2}$.
There is an arrow between two vertices of $\G_{w_0,2r}$ if and only if the two corresponding diagonals form two sides of one triangle of the triangulation, and the direction of the arrow indicates if one passes from one diagonal to the other by a clockwise or an anti-clockwise rotation around their common vertex. Thus, choosing $r=0$ in the triangulation of Figure~\ref{Fig-infty-gon}, one gets the standard initial seed of $\AA_{w_0}$ whose quiver is the leftmost one in Figure~\ref{Fig7e}.

\begin{figure}
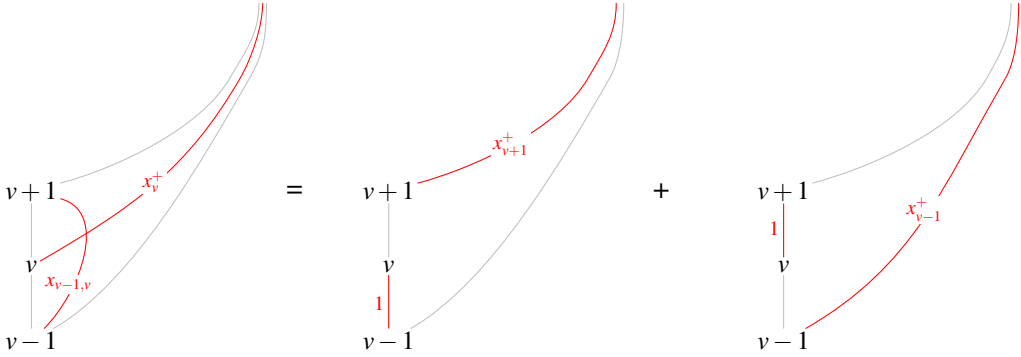

\begin{center}
\tikzfig{PtolomyAinf}
\end{center}
\caption{\label{Fig-Ptolemy} {\it An example of Ptolemy relation.}}
\end{figure}

Let us denote by $x^+_v\ (v\in \Z)$ (\resp $x^-_{v-1}\ (v\in\Z)$) the cluster variable corresponding to the diagonal $(v,+\infty)$ (\resp $(-\infty,v)$). The remaining cluster variables $x_{r,s}$ of $\AA_{w_0}$ correspond to diagonals $(r,s+1)$ joining two finite vertices $r,s+1\in\Z$ such that $r<s$. For example, 
the right triangulation of Figure~\ref{Fig-infty-gon} contains the diagonal $(r-1,r+1)$ corresponding to the cluster variable $x_{r-1,r}$.

Using this notation, the initial exchange relations of the seed with quiver $\G_{w_0,2r}$ are:
\begin{eqnarray}
x^+_v x_{v-1, v} & = & x^+_{v+1} + x^+_{v-1},\quad (v>r), \label{eq-28}\\
x^+_r x^-_{r} & = & x^+_{r+1} + x^-_{r-1}, \label{eq-29}\\
x^-_{r-1} x^+_{r-1} & = & x^+_{r} + x^-_{r-2}, \label{eq-30}\\
x^-_v x_{v,v+1} & = & x^-_{v+1} + x^-_{v-1},\quad (v<r-1). \label{eq-31}
\end{eqnarray}
They can be visualized as Ptolemy relations associated with a flip of diagonals in a suitable quadrilateral. For instance Figure~\ref{Fig-Ptolemy} represents the exchange relation of Equation~(\ref{eq-28}). Note that the cluster algebra $\AA_{w_0}$ has no frozen variables, so the edges $(v-1,v)$ and $(v,v+1)$ of the $\infty$-gon count for $1$.

The clusters of $\AA_{w_0}$ are represented by those triangulations of $\P$ which are reachable from the triangulation corresponding to a standard initial seed by means of a \emph{finite} number of flips of diagonals. Two cluster variables are compatible (that is, may belong to the same cluster) if and only if the corresponding diagonals do not intersect in their interior. If two cluster variables are not compatible, then 
the corresponding arcs form the two diagonals of a quadrilateral inscribed in $\P$, and 
their product is the left-hand side of an exchange relation represented by the Ptolemy relation in this quadrilateral. Let $r<r'< s+2<s'+2\in \VV$, where $r$ (\resp $s'$) is allowed to be $-\infty$ (\resp $+\infty$). 
Then replacing cluster variables by the corresponding diagonals, the Ptolemy relation in the quadrilateral $(r,r',s+2,s'+2)$ reads:
\begin{equation}\label{eq-Pt}
(r,s+2)(r',s'+2) = (r,s'+2)(r',s+2) + (r,r')(s+2, s'+2). 
\end{equation}

\begin{remark}\label{structclus} 
{\rm
The idea of using triangulations of an $\infty$-gon as a combinatorial model for cluster algebras of type $A$ in the infinite rank limit is very natural, and it has already been studied by many authors \cite{IT,HoJo,GG,BG,PY,CKP}. These
papers develop generalizations of the classical cluster categories of type $A_N$ and study the correspondence between their cluster-tilting subcategories and triangulations of an $\infty$-gon. Since in our case every triangulation contains infinitely many diagonals joining a finite vertex to $+\infty$ or $-\infty$, the papers \cite{BG,PY,CKP} which also deal with this type of configurations seem to be the most relevant for us. It would be interesting to establish connections between the abelian category $\O_\Z$ and the (ex-)triangulated categories studied in these papers.  \cqfd   
}
\end{remark}

We can now explain the proof of Theorem~\ref{main-Thm-sl2}.

\begin{proof}
Recall the injective ring morphism $F : \AA_{w_0}\rightarrow K_{\mathbb{Z}}$ of Theorem \ref{main-Thm}.
We want to prove that every cluster monomial of $\AA_{w_0}$ is mapped by $F$ to the class of a real simple
module in $K_\Z$. By Proposition~\ref{prop-polynomial}, we already know that the renormalized $Q$-variables 
\[
\underline{Q}_{\varpi,q^{2r}}
= [-r\varpi] Q_{\varpi,q^{2r}} =
\chi_q\left([-r\varpi]L_{q^{2r}}^+\right),
\quad
\underline{Q}_{-\varpi,q^{2r}}
= [(r-1)\varpi] Q_{-\varpi,q^{2r}} =
\chi_q\left([(r-1)\varpi]L_{q^{2r-2}}^-\right)
\]
are images by $F$ of cluster variables. More precisely 
$\underline{Q}_{\varpi,q^{2r}} = F(x^+_r)$ and 
$\underline{Q}_{-\varpi,q^{2r}} = F(x^-_{r-1})$.

Now, a special case of Equation~(\ref{eq-Pt}) gives, for $r<s\in\Z$ :
\[
 (-\infty, s+1)(r,+\infty) = (r, s+1) + (-\infty, r) (s+1, +\infty),
\]
where we have used that $(-\infty,+\infty) = 1$. In other words, we have: 
\[
 x_{r,s} = x^-_sx^+_r - x^-_{r-1}x^+_{s+1}.
\]
Comparing with Equation~(\ref{dco}), we obtain:
\begin{eqnarray*}
F(x_{r,s}) &=& \chi_q\left([s\varpi]L^-_{q^{2s}}\right)\chi_q\left([-r\varpi]L^+_{q^{2r}}\right)
- \chi_q\left([(r-1)\varpi]L^+_{q^{2r-2}}\right)\chi_q\left([-(s+1)\varpi]L^+_{q^{2s+2}}\right)
\\
&=&\chi_q\left([(s-r)\varpi]\,L\left(\Psi_{q^{2r}}\Psi^{-1}_{q^{2s}}\right)\right).
\end{eqnarray*}
Hence we see that the image by $F$ of the set of cluster variables of $\AA_{w_0}$ is precisely
the set of classes of prime simple modules of $\O_\Z$ (up to invertible elements of $[\uP]$) described
in Proposition~\ref{prep}.

In more geometric terms, we see that $F$ maps the cluster variable associated with the diagonal $(r,s+2)$ 
of $\P$ to the prime simple module labelled by the segment $[r,s]$ (up to invertible elements). 
Finally it is easy to check that the condition of Proposition~\ref{prop-facto} on two segments $[r,s]$ and $[r',s']$ is equivalent to the condition that the two diagonals $(r,s+2)$ and $(r',s'+2)$ do not cross in their interior, that is, the compatibility condition of the corresponding cluster variables. Therefore,
by Corollary~\ref{cor-simpl-fact},
$F$ maps the set of cluster monomials of $\AA_{w_0}$ to the set of classes of simple modules in $\O_\Z$.
\cqfd
\end{proof}

\begin{remark}
{\rm
Let $\mathcal{O}_{\mathbb{Z}}^f$ be the abelian subcategory of $\mathcal{O}_\mathbb{Z}$ of objects of finite length. Its Grothendieck group $K_0^f\subset K_0$ is the subgroup of $K_0$ generated by (finite) linear combinations of simple classes. It follows from our results that
$ [\uP]\otimes_{[P]} K_0^f \simeq [\uP]\otimes \mathcal{A}_{w_0}$.
This implies that $K_0^f$ is stable under multiplication. The representation-theoretical interpretation of this property is that $\mathcal{O}_{\mathbb{Z}}^f$ is stable by fusion product. Indeed, it is established in \cite{H} that a truncated shifted quantum affine algebra has a finite number of classes of simple representations.
Moreover, in the case of $\mathfrak{g} = \mathfrak{sl}_2$, a prefundamental representation, after a twist by an invertible representation, descends to a truncation. Hence, one obtains as in \cite{HZ} that a fusion product of various prefundamental representations has finite length. As a simple representation is a subquotient of
a fusion product of prefundamental representations, we obtain that the finite length property is stable by fusion product.
We expect this holds for a general Lie algebra $\mathfrak{g}$.
}
\end{remark}


\section{Double Bruhat cells and quantum Wronskians} \label{sect-Bruhat-Wronskian}

\subsection{The open double Bruhat cell}\label{sec-9-1}

Let $G$ be a simple simply-connected complex algebraic group with Lie algebra $\g$.
Let $B$ and $B_{-}$ be a pair of opposite Borel subgroups of $G$. The double Bruhat cells of $G$, introduced by Fomin and Zelevinsky in \cite{FZ-Bruhat}, are the intersections of the strata of the two Bruhat decompositions of $G$ with respect to $B$ and $B_{-}$: 
\[
 G^{v,w} :=  BvB\ \cap\ B_{-}wB_{-},\qquad (v,w\in W).
\]
The complex dimension of the stratum $G^{v,w}$ is equal to $\ell(v)+\ell(w)+n$. In particular, the double Bruhat cell $G^{w_0,w_0}$ is the unique stratum of maximal dimension $2\ell(w_0) + n = \dim G$. It can be described explicitly as the open dense subset of $G$ defined by the non-vanishing of certain generalized minors:
\[
 G^{w_0,w_0} = \{g\in G\mid \Delta_{w_0(\varpi_i),\varpi_i}(g) \not = 0,\ 
 \Delta_{\varpi_i,w_0(\varpi_i)}(g) \not = 0,\ \mbox{for all } i\in I\}.
\]
It was shown in \cite{BFZ} that the coordinate ring $\C[G^{v,w}]$ of $G^{v,w}$ has the structure of a cluster algebra, with explicit initial seeds consisting of certain generalized minors.

\begin{example}\label{example-9-1}
{\rm
In type $A_2$, we have $G = SL(3,\C)$, and we can represent elements of $G$ by $3\times 3$ complex
matrices of determinant 1. For $A, B$, two subsets of $\{1,2,3\}$ of the same cardinality, we denote by $\Delta_{A,B}(g)$ the minor of $g$ with row set $A$ and column set $B$.
We have
\[
 G^{w_0,w_0} =
\left\{
g \in G \mid \Delta_{1,3}(g) \not = 0,\ \Delta_{12,23}(g) \not = 0,\ 
\Delta_{3,1}(g) \not = 0,\ \Delta_{23,12}(g) \not = 0 
\right\}.
\]
An initial seed of the cluster structure of $\C[G^{w_0,w_0}]$ is displayed in Figure~\ref{fig-16}. 
The four minors painted in blue are frozen variables. These are the minors which do not vanish on 
$G^{w_0,w_0}$. 
In this small rank case, the cluster structure has finite type $D_4$. There are 20 cluster variables (including the 4 frozen ones) and 50 clusters (see \cite[Example 2.11 and Example 2.18]{BFZ}).

For instance, mutating at vertex $\De_{12,12}$, we get the new cluster variable $\De_{23,23}$, as shown
by the exchange relation:
\[
\De_{12,12} \De_{23,23} = \De_{23,12} \De_{12,23} + \De_{2,2}. 
\]
(This minor identity follows from the classical Lewis Caroll identity: 
\[
\De_{12,12} \De_{23,23} - \De_{23,12} \De_{12,23} = \De_{2,2}\De_{123,123}, 
\]
if we take into account that the determinant function $\De_{123,123}$ is equal to 1 on $G$.)
\cqfd

\begin{figure}[t]
\[
\def\lablestyle{\scriptstyle}
\quad
\xymatrix@-1.0pc{
&\blue{\De_{3,1}}&
\\
&&\ar[ld] \blue{\Delta_{23,12}} &&
\\
&\ar[uu] \Delta_{2,1}\ar[d]&
\\
& \Delta_{2,2}\ar[rd]
\\
&&\ar[uuu] \Delta_{12,12}\ar[d] &&
\\
&& \ar[ld]\blue{\Delta_{12,23}}
\\
&\ar[uuu]\Delta_{1,2} \ar[d] 
\\
&\blue{\Delta_{1,3}}
}
\]
\caption{\label{fig-16} {\it An initial seed for $\C[G^{w_0,w_0}]$ in type $A_2$}}
\end{figure}
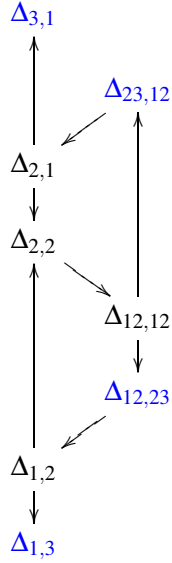

}
\end{example}

In \cite{BFZ} several initial seeds of the cluster algebra structure of $\C[G^{v,w}]$ were described, associated with pairs of reduced expressions of the Weyl group elements $v$ and $w$. In this paper we will only consider a subclass of these initial seeds parametrized by the Coxeter elements $c$, whose precise description we will now recall. 

Let $c$ be a fixed Coxeter element of $W$. For $i\in I$ we denote by $m_i$ the smallest integer $k$ such that
$c^k(\varpi_i) = w_0(\varpi_i) = -\varpi_{\nu(i)}$. In \S\ref{subsec434}, we have attached to $c$ a reduced expression $(i_1,i_2,\ldots, i_N)$ of $w_0$. The initial seed $\SS_c$ of \cite[\S2.2]{BFZ} corresponding to the reduced expression $(i_1,-i_1,i_2,-i_2,\ldots, i_N,-i_N)$ of $(w_0,w_0)\in W\times W$ can be described as follows. 

Recall the infinite quiver $\G_c$ defined in \S\ref{subsec-Coxeter}. The quiver of $\SS_c$ is the full 
subquiver $\gamma_c$ of $\G_c$ whose set of vertices consists of all green and red vertices together with the 
$n$ vertices immediately above the highest red vertices. These last $n$ vertices are in fact frozen vertices,
and we will paint them in blue. The lowest green vertex in each of the $n$ columns is also considered as frozen.
Hence the quiver $\gamma_c$ is an ice-quiver with $2\ell(w_0)+n = \dim G$ vertices, $2n$ of them being frozen.
Since the arrows connecting frozen vertices play no rôle, we will usually omit them.

The cluster variables of the seed $\SS_c$ are generalized minors, described as follows. For every $i\in I$, column $i$ of the quiver $\gamma_c$ contains the cluster variables:
\[
 \Delta_{\,c^k(\varpi_i),\ c^\ell(\varpi_i)}, \qquad (0\le k,\ell\le m_i,\ \ m_i -1 \le k+\ell\le m_i).
\]
More precisely, the top vertex corresponds to $\Delta_{\,c^{m_i}(\varpi_i),\ \varpi_i}$, and each vertical arrow
is of the form
\[
\def\lablestyle{\scriptstyle}
\xymatrix@-1.0pc{
\green{\Delta_{\,c^k(\varpi_i),\ c^{m_i-k}(\varpi_i)}} && \red{\Delta_{\,c^k(\varpi_i),\ c^{m_i-k-1}(\varpi_i)}}\ar[dd]
\\
&\qquad
\mbox{or}
\qquad
\\
\red{\Delta_{\,c^{k-1}(\varpi_i),\ c^{m_i-k}(\varpi_i)}}\ar[uu]&&\green{\Delta_{\,c^{k}(\varpi_i),\ c^{m_i-k}(\varpi_i)}}
}
\quad
\]

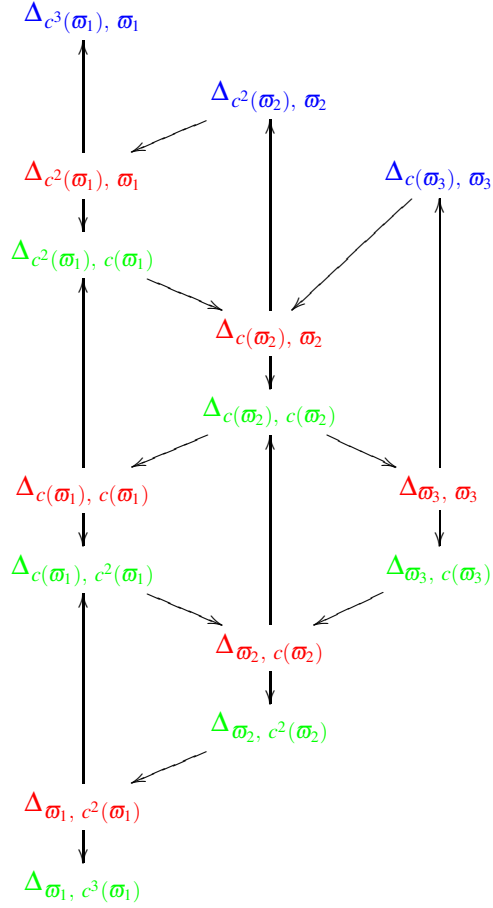
\begin{figure}[t]
\[
\def\lablestyle{\scriptstyle}
\xymatrix@-1.0pc{
\blue{\Delta_{\,c^3(\varpi_1),\ \varpi_1}}&
\\
&\blue{\Delta_{\,c^2(\varpi_2),\ \varpi_2}}\ar[ld] &&
\\
{\ar[uu]}\red{\Delta_{\,c^2(\varpi_1),\ \varpi_1}}\ar[d]&&\ar[ldd] \blue{\Delta_{\,c(\varpi_3),\ \varpi_3}} 
\\
{\green\Delta_{\,c^2(\varpi_1),\ c(\varpi_1)}}\ar[rd]
\\
&\ar[uuu] \red{\Delta_{\,c(\varpi_2),\ \varpi_2}}\ar[d]&&
\\
&\ar[ld] \green{\Delta_{\,c(\varpi_2),\ c(\varpi_2)} } \ar[rd]&&
\\
\ar[uuu]\red{\Delta_{\,c(\varpi_1),\ c(\varpi_1)} } \ar[d] && \ar[d] \red{\Delta_{\varpi_3,\ \varpi_3} }\ar[uuuu]
\\
\green{\Delta_{\,c(\varpi_1),\ c^2(\varpi_1)} } \ar[rd] &&\ar[ld] 
\green{\Delta_{\varpi_3,\ c(\varpi_3)} } 
\\
& \ar[uuu]\red{\Delta_{\,\varpi_2,\ c(\varpi_2)} } \ar[d] &&
\\
&\ar[ld]\green{\Delta_{\,\varpi_2,\ c^2(\varpi_2)}} &&
\\
\ar[uuu]\red{\Delta_{\,\varpi_1,\ c^2(\varpi_1)} } \ar[d]  
\\
\green{\Delta_{\,\varpi_1,\ c^3(\varpi_1)} } &&
}
\]
\caption{\label{fig-17} {\it An initial seed of $\C[G^{w_0,w_0}]$ in type $A_3$.}}
\end{figure}

\begin{example}\label{exa9-2}
{\rm
In type $A_3$, we choose $c= s_1s_2s_3$. Then $m_1 = 3$, $m_2 = 2$ and $m_1 = 1$. The initial seed $\SS_c$ of
$\C[G^{w_0,w_0}]$ is displayed in Figure~\ref{fig-17}. 
\cqfd
}
\end{example}

\subsection{Relations between $\C[G^{w_0,w_0}]$ and $K_\Z$}

The following proposition follows immediately from the above discussion.

\begin{Prop}\label{prop-9-3}
Let $c$ be a Coxeter element, and let $\BB_c$ be the cluster subalgebra of $\AA_{w_0}$ with initial seed 
given by the subquiver $\gamma_c$ (in which the $2n$ vertices lying at the top rim and the bottom rim are frozen). Then $\C\otimes_\Z\BB_c$ is isomorphic to the Berenstein-Fomin-Zelevinsky cluster algebra $\C[G^{w_0,w_0}]$. \cqfd
\end{Prop}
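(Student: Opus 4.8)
\textbf{Proof strategy for Proposition~\ref{prop-9-3}.} The plan is to compare the two cluster algebras seed by seed, starting from the observation that the underlying ice-quivers are literally the same combinatorial object. First I would make precise the identification of the quiver $\gamma_c$ with the Berenstein-Fomin-Zelevinsky quiver of the seed $\SS_c$ attached to the reduced word $(i_1,-i_1,\ldots,i_N,-i_N)$ of $(w_0,w_0)$: in \S\ref{subsec-Coxeter} we obtained $\G_c$ from $\G_C$ by doubling each vertex of $G_c$ into a red/green pair joined by a down arrow, and rearranging $3$-cycles into $4$-cycles; on the other hand the BFZ construction for a doubled word also doubles each ``column position'' into two exchangeable vertices joined by an arrow and produces exactly the same local $4$-cycle configuration, with the $2n$ extremal vertices (the $n$ vertices just above the top red vertices and the $n$ lowest green vertices) declared frozen. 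So the first step is to record that there is a bijection of vertex sets respecting colours/frozenness and arrows; this is essentially a bookkeeping verification already implicit in the labelling of Figures~\ref{fig-16} and~\ref{fig-17}.

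Next I would invoke Theorem~\ref{main-Thm}: the ring homomorphism $F\colon \AA_{w_0}\to K_\Z$ sends the initial cluster variable at vertex $(i,a)$ of the Coxeter seed $\G_c$ to $\bQ_{s_{i_1}\cdots s_{i_t}(\varpi_i),\,q^{m_i+2s}}$, and under this identification the green/red exchange relations become instances of the (renormalized) $QQ$-system of Proposition~\ref{Prop-renorm-QQ}. On the BFZ side, by \cite[Theorem~1.17]{FZ-Bruhat} the generalized minors $\Delta_{c^k(\varpi_i),\,c^\ell(\varpi_i)}$ satisfy exactly the same system of algebraic identities (the similarity emphasised throughout the introduction). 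The key step is therefore to match the two labellings: the cluster variable of $\BB_c$ at a given vertex of $\gamma_c$ corresponds to $\Delta_{c^k(\varpi_i),\,c^\ell(\varpi_i)}$ with $k,\ell$ as in the display after Example~\ref{exa9-2}, and its image under $F$ is $\bQ_{c^{\min(k,\ell)}\cdots(\varpi_i),\,q^r}$ for the appropriate $r$; one checks that the assignment $\Delta_{c^k(\varpi_i),\,c^\ell(\varpi_i)}\mapsto \bQ_{c^{?}(\varpi_i),\,q^{?}}$ is a bijection on the initial seeds carrying the $G$-minor exchange relations to $QQ$-relations. Concretely, I would set up the correspondence on the top vertex $\Delta_{c^{m_i}(\varpi_i),\varpi_i}\leftrightarrow\bQ_{\varpi_{\nu(i)},\,q^{s}}$ and propagate downward using the two families of vertical arrows, checking at each step that the $QQ$-relation of Proposition~\ref{Prop-renorm-QQ} at index $i$ coincides with the minor three-term identity; the oblique arrows contribute the product $\prod_{c_{ij}=-1}\bQ_{w(\varpi_j),q^{r-1}}$, matching $\prod_{j}\Delta_{w(\varpi_j),w(\varpi_j)}$ on the $G$ side.

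To upgrade ``the initial seeds match'' to ``the cluster algebras are isomorphic'' I would argue as follows. Since $\BB_c$ is by definition a \emph{finite rank} cluster subalgebra of $\AA_{w_0}$ (only the $\dim G$ vertices of $\gamma_c$ are mutable plus the $2n$ frozen ones), and $\C[G^{w_0,w_0}]$ is likewise a finite rank cluster algebra with isomorphic initial ice-quiver, the isomorphism of quivers induces an isomorphism of the ambient cluster algebras \emph{as abstract algebras defined by mutation data}: two cluster algebras with the same initial exchange matrix and the same number of frozen variables are canonically isomorphic via $x_v\mapsto x_v'$, provided one also matches the coefficient tuples. Here the only subtlety is the frozen variables: on the $G$ side they are the non-vanishing minors $\Delta_{w_0(\varpi_i),\varpi_i}$, $\Delta_{\varpi_i,w_0(\varpi_i)}$, whereas in $\BB_c$ they are the renormalized $Q$-variables at the top and bottom rim, which by Proposition~\ref{prop-invertible} are invertible in $K_\Z$. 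After tensoring with $\C$ and localizing the frozen variables — which is the standard convention for $\C[G^{v,w}]$ — the coefficient groups agree, and the quiver isomorphism delivers the desired algebra isomorphism $\C\otimes_\Z\BB_c\xrightarrow{\sim}\C[G^{w_0,w_0}]$.

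\textbf{Main obstacle.} The hard part will be the precise verification that the BFZ initial seed $\SS_c$ for the doubled reduced word $(i_1,-i_1,\ldots,i_N,-i_N)$ has \emph{exactly} the ice-quiver $\gamma_c$ — including the correct set of arrows between mutable and frozen vertices and the identification of which vertices are frozen — and that, under the labelling by generalized minors dictated by \cite{BFZ}, the $n$ column-wise chains of minors $\Delta_{c^k(\varpi_i),c^\ell(\varpi_i)}$ are precisely the ones listed after Example~\ref{exa9-2}. This is a careful but essentially mechanical translation between the ``$i$-bounded'' conventions of \cite{BFZ} and the Coxeter-adapted slice description of \S\ref{subsec-Coxeter}; once it is in place, the isomorphism is immediate from Theorem~\ref{main-Thm} and \cite[Theorem~1.17]{FZ-Bruhat}, since both sides are generated by an initial seed with identical combinatorics and the exchange relations out of that seed are matched pair by pair.
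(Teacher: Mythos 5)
Your proposal takes the same essential route as the paper: identify the ice-quiver $\gamma_c$ with the Berenstein-Fomin-Zelevinsky quiver of the seed $\SS_c$ attached to the doubled reduced word $(i_1,-i_1,\ldots,i_N,-i_N)$, then invoke the general principle that two cluster algebras built from isomorphic initial ice-quivers (with the same specification of frozen vertices and, implicitly, both localized at the frozen variables) are canonically isomorphic. Your step~4 and your stated ``main obstacle'' capture precisely what the paper leans on: the description of $\SS_c$ given in \S\ref{sec-9-1}, together with the theorem of \cite{BFZ} that $\C[G^{w_0,w_0}]$ \emph{is} the cluster algebra of $\SS_c$. The paper treats this as immediate from that description.

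However, steps~2 and~3 of your strategy are a detour for \emph{this} proposition. Theorem~\ref{main-Thm}, Proposition~\ref{Prop-renorm-QQ}, and the matching of generalized minors with $Q$-variables are not needed to establish the abstract isomorphism $\C\otimes_\Z\BB_c\cong\C[G^{w_0,w_0}]$; they are the content of the discussion that \emph{follows} Proposition~\ref{prop-9-3} in the paper, which combines it with Theorem~\ref{main-Thm} to embed $\C[G^{w_0,w_0}]$ into $\C\otimes_\Z K_\Z$ and identify minors with $Q$-variables. Relatedly, your closing sentence — that the isomorphism follows from Theorem~\ref{main-Thm} and \cite[Theorem~1.17]{FZ-Bruhat} — is misleading: neither is required. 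What is required is the bookkeeping check that $\gamma_c$ equals the BFZ ice-quiver for $(i_1,-i_1,\ldots,i_N,-i_N)$, plus the reference to \cite{BFZ}; the coincidence of the exchange relations with $QQ$-identities and with \cite[Theorem~1.17]{FZ-Bruhat} is a \emph{consequence} of the resulting identification, not an ingredient in the proof. Dropping steps~2 and~3 and keeping steps~1 and~4 would match the paper's argument exactly.
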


Using Theorem~\ref{main-Thm}, we deduce from Proposition~\ref{prop-9-3} that the ring $\C\otimes_\Z K_\Z$ contains 
infinitely many cluster subalgebras isomorphic to 
the cluster algebra $\C[G^{w_0,w_0}]$. Moreover, in these isomorphisms, the $Q$-variables of the initial seeds of $K_\Z$ get identified with certain generalized minors on $G$. It turns out that the $QQ$-system relations
of Proposition~\ref{Prop-renorm-QQ}, which can be regarded as certain initial exchange relations for the cluster algebra structure of Theorem~\ref{main-Thm}, correspond in this identification to some well-known algebraic identities between generalized minors discovered by Fomin and Zelevinsky \cite[Theorem 1.17]{FZ-Bruhat}.

\begin{example}
{\rm
We continue Example~\ref{exa9-2}. Comparing the seeds of Figure~\ref{Fig9} and Figure~\ref{fig-17}, we see 
that the $QQ$-system relation
\[
\bQ_{s_1s_2(\varpi_2), q^{-1}} \bQ_{\varpi_2, q^{-3}} =
\bQ_{s_1s_2(\varpi_2), q^{-3}} \bQ_{\varpi_2, q^{-1}}
+\ \bQ_{s_1(\varpi_1), q^{-2}} \bQ_{\varpi_3, q^{-2}},
\]
which is the exchange relation corresponding to the mutation of the cluster variable $\bQ_{s_1s_2(\varpi_2), q^{-1}}$, translates into the generalized minor identity
\[
\Delta_{c(\varpi_2), c(\varpi_2)}\Delta_{\varpi_2, \varpi_2} =
\Delta_{c(\varpi_2), \varpi_2}\Delta_{\varpi_2, c(\varpi_2)}
+\ \Delta_{c(\varpi_1), c(\varpi_1)}\Delta_{\varpi_3, \varpi_3},
\]
which is \cite[Theorem 1.17]{FZ-Bruhat} with $i=2$ and $u=v=s_1$. \cqfd
}
\end{example}

\begin{remark}
{\rm
(a)\ Similarly, it is easy to check that the algebraic relation between $Q$-variables proved in
Proposition~\ref{prop-5.5} corresponds precisely to the generalized minor identity of \cite[Theorem 1.16]{FZ-Bruhat}.

\smallskip
(b)\ By definition, the Weyl group $W$ is a quotient of the normalizer $N$ of a maximal torus
in~$G$. Lifting the elements of $W$ to $N$, and then letting them act by right or left translations
on $G$, one gets induced left and right actions of a finite covering $\overline{W}$ of $W$ on $G$, and therefore also on $\C[G]$. 
In these actions, the lifts of the generators $s_i$ of $W$ to $\overline{W}$ act on generalized minors by:
\[
 \Delta_{u(\varpi_i),\,v(\varpi_i)} \mapsto \Delta_{u(\varpi_i),\,s_iv(\varpi_i)},\qquad
 \Delta_{u(\varpi_i),\,v(\varpi_i)} \mapsto \Delta_{s_iu(\varpi_i),\,v(\varpi_i)},\qquad
 (u,v \in W),
\]
if $l(s_iu) = l(u)+1$ and $l(s_iv) = l(v)+1$. 
Now, as explained above, we can embed $\C[G]$ in $\C\otimes_\Z K_\Z$, in such a way that
the generalized minors $\Delta_{c^k(\varpi_i),\,c^\ell(\varpi_i)}$ of the initial seed $\SS_c$ get mapped to $Q$-variables of the form $\bQ_{c^\ell(\varpi_i),\, q^r}$, where $r$ is an integer depending linearly on $k$. This suggests that the left action of $\overline{W}$ 
on $\C[G]$ is related by this embedding with the action introduced in \S\ref{subsec-finite-cov} of a finite covering of $W$ on $\Pi'_\Z$ (see Remark~\ref{rem-7-3}). As to the right action of $\overline{W}$ 
on $\C[G]$, its counterpart in $K_\Z$ seems more hidden. In the next sections we will look at it from a different angle,
using the idea of quantum Wronskian borrowed from \cite{KSZ,KZ}. \cqfd
}
\end{remark}

\subsection{Quantum Wronskian matrices}

In the classical theory of ordinary linear differential equations, the Wronskian matrix of 
$n$ smooth functions $f_1(z), \ldots, f_n(z)$ is by definition
\[
 W(f_1,\ldots,f_n) := 
 \pmatrix{
 f_1(z)& \cdots & f_n(z)\cr
 f_1'(z)&\cdots & f_n'(z)\cr
 \vdots&     & \vdots\cr
 f_1^{(n-1)}(z)& \cdots & f_n^{(n-1)}(z)
 },
\]
where $f^{(k)}(z)$ denotes the $k$th derivative. Given a fixed number $q$, we can replace
derivatives by $q$-differences and define by analogy the \emph{quantum Wronskian matrix}
\[
 qW(f_1,\ldots,f_n) := 
 \pmatrix{
 f_1(z)& \cdots & f_n(z)\cr
 f_1(q^2z)&\cdots & f_n(q^2z)\cr
 \vdots&     & \vdots\cr
 f_1(q^{2(n-1)}z)& \cdots & f_n(q^{2(n-1)}z)
 }.
\]
It turns out that such matrices arise naturally in our topic.
Indeed, as explained in \cite{KSZ}, given a solution of the 
$QQ$-system relations in type $A_n$, one can form a $q$-Wronskian matrix of size $n+1$ such that every $Q$-variable indexed
by an element of the form $w(\varpi_i)$ occurs as a minor of size $i$ of this matrix. Moreover, this matrix has determinant 1. Let us illustrate this using the solution $\bQ_{w(\varpi_i),a}\in K_\Z$ of Proposition~\ref{Prop-renorm-QQ}.

\begin{example}\label{ex-qw-A1}
{\rm
Consider the commutative ring $K_\Z$ in type $A_1$. The following matrix in $M(2,K_\Z)$:
\[
 g(q^{r}) := 
 \pmatrix{\bQ_{\varpi_1,\, q^r}& \bQ_{s_1(\varpi_1),\, q^r}\cr
 \bQ_{\varpi_1,\, q^{r+2}}& \bQ_{s_1(\varpi_1),\, q^{r+2}}
 },
 \qquad (r\in 2\Z),
\]
can be regarded as the quantum Wronskian matrix of the two functions $z \mapsto \bQ_{\varpi_1,\,z}$ and
$z \mapsto \bQ_{s_1(\varpi_1),\,z}$, evaluated at $z = q^{r}$.
The determinant of this matrix is
\[
 \det(g(q^r)) = \bQ_{\varpi_1,\, q^r}\bQ_{s_1(\varpi_1),\, q^{r+2}} - 
 \bQ_{s_1(\varpi_1),\, q^r}\bQ_{\varpi_1,\, q^{r+2}}
 = 1
\]
because of the $QQ$-system relation of type $A_1$. Therefore $g(q^r) \in SL(2,K_\Z)$.
}
\end{example}

\begin{example}\label{ex-qw-A2}
{\rm
Consider the commutative ring $K_\Z$ in type $A_2$. The following matrix in $M(3,K_\Z)$:
\[
 g(q^{r}) := 
 \pmatrix{\bQ_{\varpi_1,\, q^r}& \bQ_{s_1(\varpi_1),\, q^r} & \bQ_{s_2s_1(\varpi_1),\, q^r}\cr
 \bQ_{\varpi_1,\, q^{r+2}}& \bQ_{s_1(\varpi_1),\, q^{r+2}} & \bQ_{s_2s_1(\varpi_1),\, q^{r+2}}\cr
 \bQ_{\varpi_1,\, q^{r+4}}& \bQ_{s_1(\varpi_1),\, q^{r+4}} & \bQ_{s_2s_1(\varpi_1),\, q^{r+4}}
 },
 \qquad (r\in 2\Z),
\]
can be regarded as the quantum Wronskian matrix of the three functions $z \mapsto \bQ_{\varpi_1,\,z}$, 
$z \mapsto \bQ_{s_1(\varpi_1),\,z}$ and $z \mapsto \bQ_{s_2s_1(\varpi_1),\,z}$
evaluated at $z = q^{r}$.
The $QQ$-system relations of type $A_2$ imply that
\[
\left|
\matrix{\bQ_{\varpi_1,\, q^r}& \bQ_{s_1(\varpi_1),\, q^r}\cr
\bQ_{\varpi_1,\, q^{r+2}}& \bQ_{s_1(\varpi_1),\, q^{r+2}} 
}
\right|
= \bQ_{\varpi_2,\, q^{r+1}},
\qquad
\left|
\matrix{\bQ_{s_1(\varpi_1),\, q^r}& \bQ_{s_2s_1(\varpi_1),\, q^r}\cr
\bQ_{s_1(\varpi_1),\, q^{r+2}}& \bQ_{s_2s_1(\varpi_1),\, q^{r+2}} 
}
\right|
= \bQ_{s_1s_2(\varpi_2),\, q^{r+1}}.
\]
Finally, the determinant of this matrix is also 1. 
Indeed, using the Lewis Caroll identity, we have
\[
\left|
\begin{array}{cc}
\bQ_{\varpi_2,\, q^{r+1}}
&
\bQ_{s_1s_2(\varpi_2),\, q^{r+1}}
\\
\bQ_{\varpi_2,\, q^{r+3}}
&
\bQ_{s_1s_2(\varpi_2),\, q^{r+3}}
\end{array}
\right|
=
\bQ_{s_1(\varpi_1),\, q^{r+2}}
\left|
\matrix{\bQ_{\varpi_1,\, q^r}& \bQ_{s_1(\varpi_1),\, q^r} & \bQ_{s_2s_1(\varpi_1),\, q^r}\cr
 \bQ_{\varpi_1,\, q^{r+2}}& \bQ_{s_1(\varpi_1),\, q^{r+2}} & \bQ_{s_2s_1(\varpi_1),\, q^{r+2}}\cr
 \bQ_{\varpi_1,\, q^{r+4}}& \bQ_{s_1(\varpi_1),\, q^{r+4}} & \bQ_{s_2s_1(\varpi_1),\, q^{r+4}}
 }
\right|.
\]
Now, the $QQ$-system relations also imply that 
\[
\left|
\begin{array}{cc}
\bQ_{\varpi_2,\, q^{r+1}}
&
\bQ_{s_1s_2(\varpi_2),\, q^{r+1}}
\\
\bQ_{\varpi_2,\, q^{r+3}}
&
\bQ_{s_1s_2(\varpi_2),\, q^{r+3}}
\end{array}
\right|
=
\bQ_{s_1(\varpi_1),\, q^{r+2}},
\]
thus by comparison, we get that $\det(g(q^r)) = 1$. Hence, $g(q^r) \in SL(3,K_\Z)$.
}
\end{example}

\subsection{Quantum Wronskian elements in ${G(K_\Z)}$} \label{subsec-qW}

We will now explain how Proposition~\ref{prop-9-3} allows to generalize the previous examples
and formulate a general statement valid for every group $G$ of type $A, D, E$. 
This discussion is a simple translation in our algebraic setting of ideas of Koroteev and Zeitlin 
formulated in the context of $q$-opers \cite{KZ}. What we would like to emphasize is that 
the cluster algebra structure of $K_\Z$ is the natural combinatorial tool for dealing with these formulas.

We first note the following consequence of the cluster algebra structure on the coordinate ring of the open double Bruhat cell.

\begin{Prop}\label{prop-9-7}
Let $k$ be a commutative field of characteristic 0. Let $G = G(k)$ denote the $k$-points of the algebraic group $G$. 
Let $\{\phi_1,\ldots, \phi_d\}$ be a fixed cluster of the cluster algebra structure on the coordinate ring $k[G^{w_0,w_0}]$.
For every $(a_1,\ldots,a_d)\in (k^\times)^d$, there is 
a unique $g\in G$ such that $\phi_1(g) = a_1, \ldots, \phi_d(g) = a_d$.
\end{Prop}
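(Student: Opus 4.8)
\textbf{Proof plan for Proposition~\ref{prop-9-7}.}

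The plan is to use the defining feature of a cluster algebra structure on a coordinate ring: for any cluster $\{\phi_1,\ldots,\phi_d\}$ of $k[G^{w_0,w_0}]$, the cluster variables in that cluster form a \emph{coordinate system} on a Zariski-open subset of $G^{w_0,w_0}$, so that the rational map $\Phi := (\phi_1,\ldots,\phi_d) : G^{w_0,w_0} \dashrightarrow (k^\times)^d$ restricts to an isomorphism of varieties from an open dense subset onto its image. The key quantitative input is the count of Fomin--Zelevinsky: $\dim G^{w_0,w_0} = 2\ell(w_0) + n = \dim G = d$, so $\Phi$ is a birational morphism between irreducible varieties of the same dimension $d$. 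I want to upgrade ``birational'' to ``bijective onto all of $(k^\times)^d$''.

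First I would recall why $\Phi$ is actually \emph{defined everywhere} on the subvariety where we want it and why its image lands in $(k^\times)^d$ rather than merely in $k^d$: every cluster variable of $k[G^{w_0,w_0}]$ is an invertible element of the localization considered, and in fact each cluster variable is a regular function on $G^{w_0,w_0}$ which is nowhere vanishing there (cluster variables are, up to frozen factors, among the generalized minors defining the cell, and the exchange relations express each one as a subtraction-free-after-clearing expression; more to the point, the cluster algebra $k[G^{w_0,w_0}]$ is a localization in which every cluster variable is a unit). Hence $\Phi(g) \in (k^\times)^d$ for every $g \in G^{w_0,w_0}$, so $\Phi$ is a genuine morphism $G^{w_0,w_0} \to (k^\times)^d$. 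Next, because $k[G^{w_0,w_0}]$ is, by the Laurent phenomenon together with the fact that the ambient ring is generated by the cluster and its inverses, equal to the Laurent polynomial ring $k[\phi_1^{\pm 1},\ldots,\phi_d^{\pm 1}]$ localized appropriately — more precisely, the comorphism $\Phi^*: k[x_1^{\pm1},\ldots,x_d^{\pm 1}] \to k[G^{w_0,w_0}]$, $x_i\mapsto \phi_i$, identifies the source with the subalgebra generated by the $\phi_i^{\pm1}$. The content I need is that this subalgebra is the \emph{whole} coordinate ring $k[G^{w_0,w_0}]$, i.e.\ that $\Phi^*$ is an isomorphism of $k$-algebras; equivalently that the $\phi_i$ together with their inverses generate $k[G^{w_0,w_0}]$. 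This is exactly the assertion that $\{\phi_1,\ldots,\phi_d\}$ is a cluster for the \emph{upper} cluster algebra structure, and by Berenstein--Fomin--Zelevinsky \cite{BFZ} the upper cluster algebra of $G^{w_0,w_0}$ coincides with $k[G^{w_0,w_0}]$; hence each such $\Phi^*$ is an isomorphism onto $k[G^{w_0,w_0}]$.

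Granting that $\Phi^*: k[x_1^{\pm 1},\ldots,x_d^{\pm 1}] \xrightarrow{\ \sim\ } k[G^{w_0,w_0}]$ is an isomorphism, the proposition is immediate: $\Phi$ is then an isomorphism of affine varieties $G^{w_0,w_0}\xrightarrow{\ \sim\ }(k^\times)^d$, so for any $(a_1,\ldots,a_d)\in(k^\times)^d$ there is a unique point $g\in G^{w_0,w_0}\subset G$ with $\Phi(g) = (a_1,\ldots,a_d)$, i.e.\ $\phi_i(g) = a_i$ for all $i$. Concretely, existence and uniqueness of $g$ correspond to existence and uniqueness of a $k$-algebra homomorphism $k[G^{w_0,w_0}]\to k$ sending $\phi_i\mapsto a_i$, which is visible on the Laurent-polynomial side. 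I expect the main obstacle to be pinning down the precise reference for the equality ``(upper) cluster algebra $=$ $k[G^{w_0,w_0}]$'' together with the fact that every cluster variable is a unit of this ring; both are in \cite{BFZ}, but one must be careful that the Proposition is stated for an \emph{arbitrary} cluster, not just the standard seeds $\SS_c$, so the argument genuinely needs the upper bound theorem and the mutation-invariance of the localized coordinate ring rather than a hands-on computation with minors. Once that structural statement is quoted, the rest is a one-line translation between $k$-points and $k$-algebra maps.
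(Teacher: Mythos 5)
Your proof rests on a claim that is false: that every cluster variable is a nowhere-vanishing regular function on $G^{w_0,w_0}$, so that $\Phi = (\phi_1,\ldots,\phi_d)$ maps $G^{w_0,w_0}$ into $(k^\times)^d$ and $\Phi^*\colon k[x_1^{\pm 1},\ldots,x_d^{\pm 1}]\to k[G^{w_0,w_0}]$ is an isomorphism. Only the \emph{frozen} variables are units in $k[G^{w_0,w_0}]$; the mutable ones are not, and indeed they vanish on a hypersurface in $G^{w_0,w_0}$. Concretely, for $G = SL_2$ the cell $G^{w_0,w_0}$ consists of matrices $\left(\begin{smallmatrix} a & b \\ c & d\end{smallmatrix}\right)$ with $ad-bc=1$ and $b,c\neq 0$, and the initial cluster is $\{a,b,c\}$ with $b,c$ frozen and $a$ mutable. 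The point $\left(\begin{smallmatrix} 0 & 1 \\ -1 & 1\end{smallmatrix}\right)$ lies in $G^{w_0,w_0}$ but $a$ vanishes there, so $\Phi$ does not land in $(k^\times)^3$, and $a^{-1}\notin k[G^{w_0,w_0}]$. Thus $k[G^{w_0,w_0}]$ is a \emph{proper} subalgebra of $k[\phi_1^{\pm 1},\ldots,\phi_d^{\pm 1}]$; it is the intersection of such Laurent rings over all clusters (the upper bound), not a single one. Your sentence ``the content I need is that this subalgebra is the whole coordinate ring'' asks for something that fails, and the phrase ``the cluster algebra $k[G^{w_0,w_0}]$ is a localization in which every cluster variable is a unit'' is not what BFZ say.

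What actually suffices — and this is essentially the paper's argument, written out — is only the \emph{inclusion} $k[G^{w_0,w_0}]\subseteq k[\phi_1^{\pm 1},\ldots,\phi_d^{\pm 1}]$, coming from the Laurent phenomenon together with the BFZ identification of $k[G^{w_0,w_0}]$ with the upper cluster algebra. Uniqueness is then immediate: every $f\in k[G^{w_0,w_0}]$ is a Laurent polynomial in the $\phi_i$, so $f(g)$ is determined by the $\phi_i(g)$. For existence, combine this inclusion with algebraic independence of the $\phi_i$ to get that the localization
\[
 k[G^{w_0,w_0}]\bigl[\phi_1^{-1},\ldots,\phi_d^{-1}\bigr]\;=\;k\bigl[\phi_1^{\pm1},\ldots,\phi_d^{\pm1}\bigr]
\]
is a genuine Laurent polynomial ring on the $\phi_i$ (the inclusion $\supseteq$ is clear; $\subseteq$ follows since $k[G^{w_0,w_0}]\subseteq k[\phi_i^{\pm1}]$ and the latter is already localized). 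Hence the open set $U=\{g\in G^{w_0,w_0}:\phi_i(g)\neq 0 \ \forall i\}$ has coordinate ring $k[\phi_i^{\pm1}]$, and $k$-algebra maps $k[\phi_i^{\pm1}]\to k$ correspond bijectively to $(a_1,\ldots,a_d)\in(k^\times)^d$. This gives a unique $g\in U$ with $\phi_i(g)=a_i$, and any $g$ with $\phi_i(g)=a_i\in k^\times$ automatically lies in $U$. So the fix is to replace your claimed isomorphism $k[x_i^{\pm1}]\cong k[G^{w_0,w_0}]$ by the isomorphism after localizing at the cluster, and to drop the assertion that cluster variables are nowhere vanishing.
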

\begin{proof}
Since $\{\phi_1,\ldots, \phi_d\}$ is a cluster, it is algebraically independent, so for every $(a_1,\ldots,a_d)\in (k^\times)^d$, there exists $g\in G^{w_0,w_0}$ such that $\phi_1(g) = a_1, \ldots, \phi_d(g) = a_d$.
Now, because of the Laurent phenomenon, $k[G^{w_0,w_0}]$ is contained in the ring of Laurent polynomials
in the variables $\{\phi_1,\ldots, \phi_d\}$. Hence for every $f\in k[G^{w_0,w_0}]$, the evaluation
$f(g)$ can be expressed as a Laurent polynomial in terms of the evaluations $\phi_1(g)=a_1, \ldots, \phi_d(g)=a_d$ only, which proves unicity.
\cqfd
\end{proof}

\begin{example}
{\rm
We continue Example~\ref{example-9-1} in type $A_2$. Let 
\[
g := \pmatrix{
  a&b&c\cr
  d&e&f\cr
  h&i&j
  }\in G^{w_0,w_0}.
\]
The cluster of the coordinate ring of $G^{w_0,w_0}$ considered in Example~\ref{example-9-1} is:
\[
\{\De_{3,1},\ \De_{2,1},\ \De_{2,2},\ \De_{1,2},\ \De_{1,3},\ \De_{23,12},\ \De_{12,12},\ \De_{12,23}\}.     
\]
First we have:
\[
 h = \De_{3,1}(g),\quad d = \De_{2,1}(g),\quad e = \De_{2,2}(g),\quad b = \De_{1,2}(g),\quad c = \De_{1,3}(g).  
\]
Then we calculate easily:
\[
 a = \left(\frac{\De_{12,12} + \De_{1,2}\De_{2,1}}{\De_{2,2}}\right)(g),\quad
 f = \left(\frac{\De_{12,23} + \De_{2,2}\De_{1,3}}{\De_{1,2}}\right)(g),
\]
\[
 i = \left(\frac{\De_{23,12} + \De_{3,1}\De_{2,2}}{\De_{2,1}}\right)(g),\quad 
 \left|
\matrix{
e&f\cr
i&j
 }
\right| 
 = \left(\frac{\De_{2,2} + \De_{12,23}\De_{23,12}}{\De_{12,12}}\right)(g),
\]
where the last equality uses the fact that $\det(g) = 1$. Since $e = \De_{2,2}(g)$ is invertible, the entry $j$ can also be calculated from these equations. Hence we see that $g$ is entirely
determined by the values 
\[
\{\De_{3,1}(g),\ \De_{2,1}(g),\ \De_{2,2}(g),\ \De_{1,2}(g),\ \De_{1,3}(g),\ \De_{23,12}(g),\ \De_{12,12}(g),\ \De_{12,23}(g)\},     
\]
which can be arbitrary invertible elements in $k$. 
\cqfd
}
\end{example}

We now introduce a generalization of the definition of a quantum Wronskian matrix.
Let $q\in\C^*$, not a root of unity. Write $q^{2\Z} := \{q^r\mid r\in 2\Z\}$.

\begin{Def}\label{def-qW}
Let $c$ be a fixed Coxeter element. Recall the integers $m_i$ attached to $c$ in §\ref{sec-9-1}.
Let $\gamma : z \mapsto g(z)$ be a map from $q^{2\Z}$ to $G$.
We say that $\gamma$ is a quantum $(G,c)$-Wronskian if the following system of
equations is fulfilled:
\[
\Delta_{c^k(\varpi_i),\,c^\ell(\varpi_i)}(g(z)) = \Delta_{c^{k-1}(\varpi_i),\,c^\ell(\varpi_i)}(g(q^2z)),
\quad (i\in I,\ 1\le k\le m_i,\ 0\le \ell \le m_i,\ z\in q^{2\Z}).
\]
\end{Def}

\begin{example}
{\rm
The map $q^r \mapsto g(q^r)$ of Example~\ref{ex-qw-A2} is a quantum $(G,c)$-Wronskian for $c = s_1s_2$ and $G=SL(3,K_\Z)$.
Indeed, by definition of the matrix $g(q^r)$, its entries $\Delta_{c^k(\varpi_1),\,c^\ell(\varpi_1)}(g(q^r))$ satisfy the relations :
\[
\Delta_{c^k(\varpi_1),\,c^\ell(\varpi_1)}(g(q^r)) = \bQ_{c^l(\varpi_1),\, q^{r+2k}}
= \Delta_{c^{k-1}(\varpi_1),\,c^\ell(\varpi_1)}(g(q^{r+2})),
\quad (0< k\le 1,\ 0\le\ell \le 2,\ r\in 2\Z).
\]
As to the fundamental weight $\varpi_2$, we have $m_2=1$, and for $0\le \ell\le 1$, the generalized minor $\Delta_{\varpi_2,c^\ell(\varpi_2)}(g(q^r))$ (\resp
$\Delta_{c(\varpi_2),c^\ell(\varpi_2)}(g(q^r))$) is a $2\times 2$ minor taken on the first and second rows (\resp second and third rows) of $g(q^r)$, so the desired identity follows immediately from the one for $\varpi_1$.
In fact the only nontrivial property in this example is that $g(q^r) \in G$,
that is $\det(g(q^r)) = 1$.

Note, that $q^r \mapsto g(q^r)$ is \emph{not} a quantum $(G,\tilde{c})$-Wronskian for the Coxeter element $\tilde{c} = s_2s_1$, since for instance
\[
\Delta_{\tilde{c}(\varpi_1),\,\varpi_1}(g(q^r)) = \bQ_{\varpi_1,\, q^{r+4}} \quad \mbox{  and  } \quad
\Delta_{\varpi_1,\,\varpi_1}(g(q^r)) = \bQ_{\varpi_1,\, q^r}. 
\]
\cqfd
}
\end{example}

Let $k$ denote the fraction field of $K_\Z$. Let $c$ be a fixed Coxeter element. 
Recall the height function $l_c$ defined in \S\ref{subsec4-3-1}. 
Using the description of the initial seed $\SS_c$ given in \S\ref{sec-9-1} and 
Proposition~\ref{prop-9-7} we can define the following elements of $G(k)$.
\begin{Def}\label{def-g}
For $r\in 2\Z$, we denote by $g_c(q^r)$ the unique element of $G(k)$ such that:
\[
\Delta_{c^k(\varpi_i),\,c^\ell(\varpi_i)}(g_c(q^r)) =
\bQ_{c^\ell(\varpi_i),\, q^{2+r+2(k-m_i)-l_c(i)}}, \quad
(i\in I,\ \ 0\le k,\ell\le m_i,\ \ m_i -1 \le k+\ell\le m_i).
\]
\end{Def}
Note that, by Proposition~\ref{prop-invertible}, the elements $\bQ_{w(\varpi_i),q^r}$ are 
invertible in $K_\Z$ for every $(i,r)\in V$. Therefore the elements $g_c(q^r)$ belong in fact
to $G(K_\Z)$.

We can now state the main result of this section.

\begin{Prop}\label{thm-qw}
The map $q^r \mapsto g_c(q^r)$ is a quantum $(G,c)$-Wronskian. 
\end{Prop}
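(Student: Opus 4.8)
The plan is to verify Definition~\ref{def-qW} directly using Definition~\ref{def-g} and the renormalized $QQ$-system of Proposition~\ref{Prop-renorm-QQ}. Fix a Coxeter element $c$, fix $i\in I$, and fix $k,\ell$ with $1\le k\le m_i$ and $0\le\ell\le m_i$. Write $r' := 2+r+2(k-m_i)-l_c(i)$, so that by Definition~\ref{def-g} one has $\Delta_{c^k(\varpi_i),\,c^\ell(\varpi_i)}(g_c(q^r)) = \bQ_{c^\ell(\varpi_i),\,q^{r'}}$ whenever $m_i-1\le k+\ell\le m_i$, and similarly $\Delta_{c^{k-1}(\varpi_i),\,c^\ell(\varpi_i)}(g_c(q^{r+2})) = \bQ_{c^\ell(\varpi_i),\,q^{r''}}$ with $r'' := 2+(r+2)+2((k-1)-m_i)-l_c(i) = r'$. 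Hence the two sides of the Wronskian equation are literally the same $Q$-variable \emph{as soon as} both minors in question are among the cluster variables of the seed $\SS_c$, i.e. as soon as the index pairs $(k,\ell)$ and $(k-1,\ell)$ both satisfy the constraint $m_i-1\le \cdot+\ell\le m_i$.

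The point, then, is to handle the indices $(k,\ell)$ for which one (or both) of $(k,\ell)$, $(k-1,\ell)$ falls outside the ``staircase'' range $\{m_i-1\le k+\ell\le m_i\}$, so that the corresponding minor is not one of the initial cluster variables of $\SS_c$ and Definition~\ref{def-g} does not assign it a value a priori. For such pairs, $\Delta_{c^k(\varpi_i),\,c^\ell(\varpi_i)}(g_c(q^r))$ is nonetheless a well-defined regular function of $g_c(q^r)$, and I will compute it inductively in terms of the staircase minors. The tool is the classical generalized minor identity \cite[Theorem 1.17]{FZ-Bruhat}: for $c_{ij}=-1$ (summing over neighbours $j$ of $i$ in the Dynkin diagram) and suitable $u,v$,
\[
\Delta_{uc(\varpi_i),\,vc(\varpi_i)}\,\Delta_{u(\varpi_i),\,v(\varpi_i)}
= \Delta_{uc(\varpi_i),\,v(\varpi_i)}\,\Delta_{u(\varpi_i),\,vc(\varpi_i)}
+ \prod_{j:\,c_{ij}=-1}\Delta_{u(\varpi_j),\,v(\varpi_j)},
\]
evaluated at $g_c(q^r)$. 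Each factor on the right that lies on the column-$i$ staircase is a known $Q$-variable via Definition~\ref{def-g}; the mixed factors $\Delta_{u(\varpi_j),v(\varpi_j)}$ for $j\ne i$ are, by the analogous staircase description of the seed $\SS_c$ in the column-$j$ direction (and Proposition~\ref{prop-9-3}), again equal to $Q$-variables $\bQ_{c^*(\varpi_j),q^*}$. Feeding these into Proposition~\ref{Prop-renorm-QQ} (which is exactly this minor identity transported through the isomorphism $\C\otimes\BB_c\cong\C[G^{w_0,w_0}]$ of Proposition~\ref{prop-9-3} composed with $F$) shows that $\Delta_{c^k(\varpi_i),\,c^\ell(\varpi_i)}(g_c(q^r)) = \bQ_{c^\ell(\varpi_i),\,q^{2+r+2(k-m_i)-l_c(i)}}$ continues to hold for \emph{all} $0\le k,\ell\le m_i$, not merely on the staircase. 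Once that extended formula is established, the Wronskian equation is immediate by the shift computation of the first paragraph, since $r'=r''$.

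I also need to confirm the well-definedness input: by Proposition~\ref{prop-9-7} applied to $k = \mathrm{Frac}(K_\Z)$ and the cluster $\SS_c$, the element $g_c(q^r)\in G(k)$ in Definition~\ref{def-g} exists and is unique, and by Proposition~\ref{prop-invertible} all the prescribed cluster-variable values $\bQ_{c^\ell(\varpi_i),q^*}$ are invertible in $K_\Z$, so in fact $g_c(q^r)\in G(K_\Z)$; this is what makes all the minor evaluations legitimate elements of $K_\Z$. The main obstacle I anticipate is purely bookkeeping: matching the various spectral-parameter shifts and the height-function offsets $l_c(i)$ across the two columns $i$ and $j$ in the minor identity, and checking that the ranges of $(k,\ell)$ produced by iterating \cite[Theorem 1.17]{FZ-Bruhat} from the staircase exhaust the full square $0\le k,\ell\le m_i$ with consistent values (no over-determination). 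This is a finite, explicit verification, best illustrated on the column-$i$ portion of the quiver $\gamma_c$ together with Figure~\ref{fig-17}, and it runs exactly parallel to the proof of Proposition~\ref{Prop-renorm-QQ}; there is no new conceptual ingredient beyond Proposition~\ref{prop-9-3} and the $QQ$-system.
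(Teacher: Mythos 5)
Your proposal is correct and matches the paper's own argument in substance: both proofs rest on the simultaneous interpretation, through the isomorphism of Proposition~\ref{prop-9-3}, of the mutation sequence at red (resp.\ green) vertices as the Fomin–Zelevinsky generalized minor identity \cite[Theorem~1.17]{FZ-Bruhat} on the $G^{w_0,w_0}$ side and as the $QQ$-relation of Proposition~\ref{Prop-renorm-QQ} on the $K_\Z$ side, thereby propagating the equality $\Delta_{c^k(\varpi_i),c^\ell(\varpi_i)}(g_c(q^r))=\bQ_{c^\ell(\varpi_i),\,q^{2+r+2(k-m_i)-l_c(i)}}$ outward from the staircase $m_i-1\le k+\ell\le m_i$. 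The only presentational difference is that you first establish this extended formula on the whole square $0\le k,\ell\le m_i$ and then deduce all Wronskian equations at once from $r'=r''$, whereas the paper derives the Wronskian identity level by level as it mutates; and your worry about possible over-determination is dissolved once the iteration is organized, as in the paper, by successive mutations $\mu^{(1)},\mu^{(2)},\dots$ of the seed $\SS_c$, which makes consistency automatic.
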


\begin{proof}
The formula of Definition~\ref{def-g} is obtained in the following way. 
Consider the seed $\uSS_c$ of $K_\Z$ given by Theorem~\ref{main-Thm}.
As already noted in Proposition~\ref{prop-9-3}, the infinite quiver $\Gamma_c$ of $\uSS_c$ contains 
a finite subquiver $\gamma_c$ isomorphic to the quiver of the initial seed
$\SS_c$ of $k[G^{w_0,w_0}]$. The cluster variables of $\SS_c$ are exactly the generalized
minors occurring in Definition~\ref{def-g}, and their evaluations at $g(q^r)$ 
are the corresponding $Q$-variables located at the same position on the subquiver $\gamma_c$
of $\uSS_c$, except that their spectral parameter has been multiplied uniformly by $q^r$.
In fact, this spectral parameter shift by $q^r$ can be implemented by performing $|r|/2$ times
the canonical sequence of $N$ mutations at all red vertices if $r>0$, or at all green vertices
if $r<0$. We shall denote this seed by $\uSS_c[r]$.

Let us now perform the same sequence of $N$ mutations at all red vertices of $\SS_c$. (Note that
none of the red vertices of $\SS_c$ is frozen.) Using the generalized minor identities \cite[Theorem 1.17]{FZ-Bruhat}, this will replace every red cluster variable 
$\De_{c^{k-1}(\varpi_i),\,c^{m_i-k}(\varpi_i)}$ by $\De_{c^{k}(\varpi_i),\,c^{m_i-k+1}(\varpi_i)}$.On the other hand, because of the $QQ$-relations, the same sequence of $N$ mutations at all red vertices of $\uSS_c[r]$ will replace every red cluster variable $\bQ_{c^{m_i-k}(\varpi_i),\, q^{r+2(k-m_i)-l_c(i)}}$ by $\bQ_{c^{m_i-k+1}(\varpi_i),\, q^{r+2+2(k-m_i)-l_c(i)}}$.
Now, by definition of $g_c(q^{r+2})$, we have
\[
\De_{c^{k-1}(\varpi_i),\ c^{m_i-k+1}(\varpi_i)}(g_c(q^{r+2})) =
\bQ_{c^{m_i-k+1}(\varpi_i),\, q^{r+2+2(k-m_i)-l_c(i)}}, 
\]
hence we obtain 
\[
\De_{c^{k}(\varpi_i),\,c^{m_i-k+1}(\varpi_i)}(g_c(q^r)) = 
\De_{c^{k-1}(\varpi_i),\ c^{m_i-k+1}(\varpi_i)}(g_c(q^{r+2})).
\]
This is the quantum $(G,c)$-Wronskian property of Definition~\ref{def-qW} when $k+\ell = m_i+1$.

Let us call $\mu^{(1)}$ the sequence of $N$ mutations that we have performed. In the new seed 
$\mu^{(1)}(\SS_c)$ we consider all non-frozen vertices which are sinks for the vertical arrows and
call them red vertices of $\mu^{(1)}(\SS_c)$. There are $N-n$ of them. Let $\mu^{(2)}$ denote the sequence of mutations at these red vertices of $\mu^{(1)}(\SS_c)$. Repeating the same reasoning for 
$\mu^{(2)}$ as we did for $\mu^{(1)}$, we will deduce the quantum $(G,c)$-Wronskian property for 
$k+\ell = m_i+2$. Iterating, we will verify in this way the quantum $(G,c)$-Wronskian property for 
$k+\ell > m_i$.

Finally, the same reasoning but replacing everywhere red vertices by green vertices will give the 
proof of the quantum $(G,c)$-Wronskian property for 
$k+\ell \le m_i$.
\cqfd
\end{proof}

\begin{remark}\label{rem-qW}
{\rm
Definition~\ref{def-qW} is an algebraic version of the notion of $(G,q)$-Wronskian introduced by Koroteev and Zeitlin in \cite{KZ} to generalize the $(SL(n),q)$-Wronskian of \cite{KSZ}. A $(G,q)$-Wronskian is a certain type of section of a principal $G$-bundle on the projective line equipped with a $q$-connection depending on the choice of a Coxeter element $c$.  
Proposition~\ref{thm-qw} corresponds to \cite[Proposition 4.17]{KZ}. In our situation, the rôle of the $q$-connection is played by the sequence of cluster mutations at red vertices.

Note that in \cite{KSZ, KZ}, the $QQ$-systems have twisting parameters $\xi_i$, $\widetilde{\xi_i}$, and singularities encoded by certain polynomials $\L_i(z)$ (see \cite[\S 3.6]{KZ}). In contrast, our $QQ$-system equations  do not involve any additional parameters (see Proposition~\ref{Prop-renorm-QQ}). A consequence of this is that they cannot have polynomial solutions. The solutions $\bQ_{w(\varpi_i),q^r}\in K_\Z$ that we consider in this paper are all formal power series for $w\not = e$. We were informed by Koroteev and Zeitlin that $QQ$-systems without twisting parameters and singularities naturally occur in relation with $q$-opers on a disc (instead of a projective line).  

}
\end{remark}


\bigskip
\small
\noindent
\begin{tabular}{ll}
Christof {\sc Geiss} & Instituto de Matem\'aticas,\\
&Universidad Nacional Aut\'onoma de M\'exico,\\
&Ciudad Universitaria, 04510
Ciudad de M\'exico, M\'exico\\
& email : {\tt christof.geiss@im.unam.mx}
\\[5mm]
David {\sc Hernandez}  
& Université Paris Cité and Sorbonne Université, \\
&CNRS, IMJ-PRG, \\
& F-75006, Paris, France\\
& email : {\tt david.hernandez@imj-prg.fr}\\
[5mm]
Bernard {\sc Leclerc}  & Universit\'e de Caen Normandie,\\
&CNRS UMR 6139 LMNO,\\ &14032 Caen, France\\
&email : {\tt bernard.leclerc@unicaen.fr}
\end{tabular}

\end{document}